\newtheorem{theorem}{Theorem}[section]
\newtheorem{thm}[theorem]{Theorem}
\newtheorem{prop}[theorem]{Proposition}
\newtheorem{lem}[theorem]{Lemma}
\newtheorem{cor}[theorem]{Corollary}
\newtheorem{ex}[theorem]{Example}
\makeatletter \@addtoreset{equation}{section}
\newcommand{\red}[1]{{\color{red}  #1}}
\newcommand{\qbinom}[2]{\genfrac{[}{]}{0pt}{}{#1}{#2}}
\newcommand{\Mid}{\:|\:}  
\DeclareMathOperator*{\CT}{CT}
\newcommand{\CC}{\mathbb{C}}
\begin{document}

\title[Symmetric function generalizations of the $q$-Baker--Forrester ex-conjecture]
{Symmetric function generalizations of the $q$-Baker--Forrester ex-conjecture and Selberg-type integrals}

\author{Guoce Xin}
\address{School of Mathematical Sciences, Capital Normal University,
Beijing 100048, P.R. China}

\email{guoce\_xin@163.com}

\author{Yue Zhou*}

\thanks{*Corresponding author}

\address{School of Mathematics and Statistics, HNP-LAMA, Central South University,
Changsha 410075, P.R. China}

\email{zhouyue@csu.edu.cn}

\subjclass[2010]{05A30, 33D70, 05E05}

\date{June 24, 2022}

\begin{abstract}
It is well-known that the famous Selberg integral is equivalent to the Morris constant term identity.
In 1998, Baker and Forrester conjectured a generalization of the $q$-Morris constant term identity.
This conjecture was proved and extended by K\'{a}rolyi, Nagy, Petrov and Volkov in 2015.
In this paper, we obtain two symmetric function generalizations of the $q$-Baker--Forrester
ex-conjecture. These includes: (i) a $q$-Baker--Forrester type constant term identity for a product of
a complete symmetric function and a Macdonald polynomial;
(ii) a complete symmetric function generalization of KNPV's result.

\noindent
\textbf{Keywords:} Constant term identities, Selberg integrals, $q$-Morris identity,
$q$-Baker--Forrester conjecture, Macdonald polynomials, complete symmetric functions.
\end{abstract}

\maketitle

\section{Introduction}\label{s-intr}

In 1944, Atle Selberg \cite{Selberg} gave the following remarkable multiple integral:
\begin{multline*}
\int_{0}^1\cdots \int_{0}^1\prod_{i=1}^nx_{i}^{\alpha-1}(1-x_i)^{\beta-1}\prod_{1\leq i<j\leq n}
|x_i-x_j|^{2\gamma}\mathrm{d}x_{1}\cdots \mathrm{d}x_{n} \nonumber \\
=\prod_{j=0}^{n-1}\frac{\Gamma(\alpha+j\gamma)\Gamma(\beta+j\gamma)
\Gamma(1+(j+1)\gamma)}{\Gamma(\alpha+\beta+(n+j-1)\gamma)\Gamma(1+\gamma)},
\end{multline*}
where $\alpha, \beta, \gamma$ are complex parameters such that
\[
\mathrm{Re}(\alpha)>0, \quad \mathrm{Re}(\beta)>0,\quad
\mathrm{Re}(\gamma)>-\min\{1/n,\mathrm{Re}(\alpha)/(n-1),\mathrm{Re}(\beta)/(n-1)\}.
\]
When $n=1$, the above Selberg integral reduces to the Euler beta integral.
For the importance of the Selberg integral, one can refer \cite{FW}.

It is well-known that the Selberg integral is equivalent to the Morris constant term identity \cite{Morris1982}
\begin{equation}\label{Morris}
\CT_x \prod_{i=1}^{n}(1-x_0/x_i)^a(1-x_i/x_0)^b
\prod_{1\leq i\neq j\leq n}(1-x_i/x_j)^c
=\prod_{i=0}^{n-1}\frac{(a+b+ic)!\big((i+1)c\big)!}{(a+ic)!(b+ic)!c!}
\end{equation}
for nonnegative integers $a,b,c$,
where $\CT\limits_x $ denotes taking the constant term with respect to $x:=(x_1,\dots,x_n)$.
Note that we can set $x_0=1$ in \eqref{Morris} without changing the constant term, since we take the constant term in a homogeneous Laurent polynomial.
The same principle applies to all the homogeneous Laurent polynomials in this paper.
In his Ph.D. thesis \cite{Morris1982}, Morris also conjectured the following
$q$-analogue constant term identity
\begin{equation}\label{q-Morris}
\CT_x \prod_{i=1}^{n}(x_0/x_i)_a(qx_i/x_0)_b
\prod_{1\leq i<j\leq n}(x_i/x_j)_c(qx_j/x_i)_c
=\prod_{i=0}^{n-1}\frac{(q)_{a+b+ic}(q)_{(i+1)c}}{(q)_{a+ic}(q)_{b+ic}(q)_{c}},
\end{equation}
where $(z)_c=(z;q)_c:=(1-z)(1-zq)\cdots (1-z q^{c-1})$ is the $q$-factorial for a positive integer $c$ and
$(z)_0:=1$.
In 1988,  Habsieger \cite{Habsieger} and Kadell \cite{Kad} independently proved Askey's conjectured
$q$-analogue of the Selberg integral \cite{Askey}.
Expressing their $q$-analogue integral as a constant term identity
they thus proved Morris' $q$-constant term conjecture \eqref{q-Morris}.
Now the constant term identity \eqref{q-Morris} is called the Habsieger--Kadell $q$-Morris identity.

Recently, Albion, Rains and Warnaar obtained an AFLT type $q$-Selberg integral, which is a $q$-Selberg integral over a pair of two Macdonald polynomials \cite{ARW}. Their integral gives a $q$-analogue of the AFLT Selberg integral \cite{AFLT}, which arising from AGT conjecture.
By the standard transformation between $q$-Selberg type integrals and $q$-Morris type constant term identities \cite{Morris1982}, we obtain an equivalent constant term identity, see \eqref{AFLT} below.
For nonnegative integers $a,b,c$ and partitions $\lambda$ and $\mu$, denote
\begin{multline}\label{Defi-A}
A_n(a,b,c,\lambda,\mu):=\CT_x x_0^{-|\lambda|-|\mu|}P_{\lambda}(x_1,\dots,x_n;q,q^c)
P_{\mu}\Big(\Big[\frac{q^{c-b-1}-q^a}{1-q^c}x_0+\sum_{i=1}^nx_i\Big];q,q^c\Big)\\
\times
\prod_{i=1}^{n}(x_0/x_i)_a(qx_i/x_0)_b
\prod_{1\leq i<j\leq n}(x_i/x_j)_c(qx_j/x_i)_c,
\end{multline}
where $P_{\lambda}(x;q,t)$ is the Macdonald polynomial,
$|\lambda|:=\sum_{i=1}^{\infty}\lambda_i$ is the size of $\lambda$,
and $f[t+z]$ is plethystic notation for the symmetric function $f$.
Then, for $\ell(\lambda)\leq n$
\begin{align}\label{AFLT}
A_n(a,b,c,\lambda,\mu)&=(-1)^{|\lambda|}q^{\sum_{i=1}^n\binom{\lambda_i}{2}-cn(\lambda)}
P_{\lambda}\Big(\Big[\frac{1-q^{nc}}{1-q^c}\Big];q,q^c\Big)
P_{\mu}\Big(\Big[\frac{q^{c-b-1}-q^{a+nc}}{1-q^c}\Big];q,q^c\Big) \\
&\quad \times
\prod_{i=1}^n\prod_{j=1}^{\ell(\mu)}(q^{b+(n-i-j)c+\lambda_i+\mu_{j+1}+1})_{\mu_j-\mu_{j+1}}
\prod_{i=1}^n \frac{(q^{a+(i-1)c-\lambda_i+1})_{b+\lambda_i}(q)_{ic}}{(q)_{b+(n-i)c+\lambda_i+\mu_1}(q)_c},\nonumber
\end{align}
where $n(\lambda):=\sum_{i=1}^n(i-1)\lambda_i$ and $\ell(\lambda)$ is the length of the partition $\lambda$.
If $b=c-a-1$, the constant term identity \eqref{AFLT} reduces to \cite[Theorem 1.7]{Warnaar05}.
Note that if $\ell(\lambda)>n$ then both sides of \eqref{AFLT} reduces to zero.

For nonnegative integers $n,n_0,a,b,m$ and a positive integer $c$, denote
\begin{equation}\label{eq-qM}
F_{n,n_0}(x;a,b,c,m)=\prod_{i=1}^{n}(x_0/x_i)_a
(qx_i/x_0)_{b+\chi(i>n-m)}\prod_{1\leq i<j\leq n}
(x_i/x_j)_{c-\chi(i\leq n_0)}(qx_j/x_i)_{c-\chi(i\leq n_0)},
\end{equation}
where $\chi(true)=1$ and $\chi(false)=0$.
Intimately related to the theory of random matrices, Baker and Forrester \cite{BF} conjectured the following extension of the Habsieger--Kadell $q$-Morris identity:
\begin{multline}\label{q-BF}
\CT_x F_{n,n_0}(x;a,b,c,0)\\
=\prod_{i=1}^{n-n_0-1}(1-q^{(i+1)c})
\prod_{i=0}^{n-1} \frac{(q^{a+ic-n_0-\chi(i\leq n_0)(i-n_0)+1})_b(q)_{(i+1)c-n_0-\chi(i\leq n_0)(i-n_0)-1}}
{(q)_{b+ic-n_0-\chi(i\leq n_0)(i-n_0)}(q)_{c-\chi(i\leq n_0)}}.
\end{multline}
Using Combinatorial Nullstellensatz, K\'{a}rolyi, Nagy, Petrov and Volkov
\cite{KNPV} obtained a constant term identity for $F_{n,n_0}(x;a,b,c,m)$ (see the $l=0$ case of Theorem~\ref{thm-qForrester} below), which extends
the original $q$-Baker--Forrester conjecture \eqref{q-BF} by adding the new parameter $m$.

Let $X=(x_0,x_1,x_2,\dots)$ be an alphabet of countably many variables.
Then for nonnegative integer $l$, the $l$-th complete symmetric function $h_l(X)$
may be defined in terms of its generating function as
\begin{equation}\label{e-gfcomplete}
\sum_{l\geq 0} z^l h_l(X)=\prod_{i\geq 0}
\frac{1}{1-zx_i}.
\end{equation}
For $l$ a nonnegative integer and $\mu$ a partition, let
\begin{multline}\label{eq-qF2}
B_{n,n_0}(a,b,c,l,\mu):=\CT_x x_0^{-l-|\mu|}F_{n,n_0}(x;a,b,c,0)
\times h_{l}\Big[\sum_{i=1}^{n}\frac{1-q^{c-\chi(i\leq n_0)}}{1-q}x_i\Big] \\
\times P_{\mu}\Big(\Big[\frac{q^{c-b-1}-q^a}{1-q^c}x_0+\sum_{i=1}^{n}\frac{1-q^{c-\chi(i\leq n_0)}}{1-q^{c}}x_i\Big];q,q^c\Big).
\end{multline}
Our first result is the next $q$-Baker--Forrester type constant term identity for a product of a complete symmetric function and a Macdonald polynomial.
\begin{thm}\label{thm-qF2}
Let $B_{n,n_0}(a,b,c,l,\mu)$ be defined in \eqref{eq-qF2}.
Then for $0\leq n_0<n$ and $\ell(\mu)<n-n_0$,
\begin{align}\label{main-B}
B_{n,n_0}(a,b,c,l,\mu)
&=(-1)^{l+|\mu|}q^{\binom{l}{2}+\sum_{i=1}^n\binom{\mu_i}{2}-cn(\mu)}
h_{l}\Big[\frac{1-q^{nc-n_0}}{1-q}\Big]\times\frac{(q^{a-l+1})_l}{(q^{n_0(c-1)+b+1})_l} \\
&\quad \times P_{\mu}\Big(\Big[\frac{1-q^{a+b+(n-1)c-n_0+1}}{1-q^c}\Big];q,q^c\Big)\nonumber \\
&\quad \times\prod_{j=0}^{n-1} \frac{(q^{a+jc-n_0-\chi(j\leq n_0)(j-n_0)+1})_b
(q)_{(j+1)c-n_0-\chi(j\leq n_0)(j-n_0)-1}}
{(q)_{b+jc-n_0-\chi(j\leq n_0)(j-n_0)}(q)_{c-\chi(j\leq n_0)}}\nonumber\\
&\quad \times\prod_{j=1}^{n-n_0-1}\frac{(1-q^{(j+1)c})(q^{jc-b-\mu_j})_{\mu_j}(q^{(n-j-1)c-n_0+b+\mu_j+1})_l}
{(q^{(n-j)c+b+1-n_0})_{u_j+l}}.
\nonumber
\end{align}
\end{thm}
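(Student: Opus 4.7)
The plan is to deduce Theorem~\ref{thm-qF2} from the AFLT-type constant term identity \eqref{AFLT} together with the companion complete symmetric function extension of KNPV's $q$-Baker--Forrester identity announced in the introduction. The starting observation is the plethystic identity
\[
h_l\Big[\tfrac{1-t}{1-q}(y_1+\cdots+y_n)\Big]
=\tfrac{(t;q)_l}{(q;q)_l}\,P_{(l)}(y_1,\ldots,y_n;q,t),
\]
which identifies the complete symmetric function appearing inside \eqref{eq-qF2} with a scalar multiple of a one-row Macdonald polynomial whenever the plethystic coefficients $\frac{1-q^{c-\chi(i\leq n_0)}}{1-q}$ are uniform in $i$. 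In particular, for $n_0=0$ this gives $B_{n,0}(a,b,c,l,\mu) = \tfrac{(q^c)_l}{(q)_l}\,A_n(a,b,c,(l),\mu)$, and specializing \eqref{AFLT} to $\lambda=(l)$, together with the principal one-row evaluation $P_{(l)}([\tfrac{1-q^{nc}}{1-q^c}];q,q^c)=(q^{nc})_l/(q^c)_l$ and the identity $h_l[\tfrac{1-q^{nc}}{1-q}] = (q^{nc})_l/(q)_l$, verifies the $n_0=0$ instance of \eqref{main-B} after routine rearrangement of $q$-Pochhammer symbols.

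For $n_0>0$ the plethystic argument of $h_l$ splits as $[c]_q\sum_{i>n_0}x_i+[c-1]_q\sum_{i\leq n_0}x_i$, and the additivity $h_l[A+B]=\sum_{k=0}^{l}h_k[A]\,h_{l-k}[B]$ expresses it as a finite sum of products of a one-row Macdonald polynomial in $(x_{n_0+1},\ldots,x_n)$ with parameter $q^c$ and a one-row Macdonald polynomial in $(x_1,\ldots,x_{n_0})$ with parameter $q^{c-1}$. The block structure of $F_{n,n_0}(x;a,b,c,0)$ matches this split exactly: the interaction exponent is $c$ on $\{x_{n_0+1},\ldots,x_n\}$ and $c-1$ elsewhere. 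We then apply the companion two-block AFLT-type evaluation tailored to $F_{n,n_0}$ to evaluate each summand in closed form, and collapse the resulting $k$-summation via a terminating $q$-Chu--Vandermonde--type identity to reach the compact product appearing in \eqref{main-B}. The Macdonald polynomial $P_\mu\big([\tfrac{1-q^{a+b+(n-1)c-n_0+1}}{1-q^c}];q,q^c\big)$ on the right-hand side then arises as the standard $P_\mu$-specialization built into the AFLT framework, now with the principal alphabet shifted by the $n_0$-contribution.

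The principal obstacle lies in the $n_0>0$ step: one must both invoke (or establish in parallel) the block generalization of \eqref{AFLT} for $F_{n,n_0}$, and then verify that the $k$-summation collapses cleanly. The specific Pochhammer factor $(q^{a-l+1})_l/(q^{n_0(c-1)+b+1})_l$ together with the product $\prod_{j=1}^{n-n_0-1}(q^{(n-j-1)c-n_0+b+\mu_j+1})_l/(q^{(n-j)c+b+1-n_0})_{\mu_j+l}$ in \eqref{main-B} have the signature of a terminating ${}_3\phi_2$ or $q$-Saalsch\"utz summation evaluated in closed form, and the remaining combinatorial work is to identify and execute that summation so as to match \eqref{main-B} term by term.
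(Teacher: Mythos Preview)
Your $n_0=0$ reduction is correct: when $n_0=0$ the plethystic weights in both $h_l$ and $P_\mu$ become uniformly $\frac{1-q^c}{1-q^c}=1$, so $B_{n,0}(a,b,c,l,\mu)=\frac{(q^c)_l}{(q)_l}A_n(a,b,c,(l),\mu)$ and \eqref{AFLT} with $\lambda=(l)$ gives the result after bookkeeping.

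The gap is the $n_0>0$ step. The ``companion two-block AFLT-type evaluation tailored to $F_{n,n_0}$'' that you invoke is not an existing result; it is essentially the content of Theorem~\ref{thm-qF2} itself. Two concrete obstructions to your plan: (i) even after splitting $h_l$ by additivity, the factor $P_\mu$ still carries the \emph{mixed} plethystic alphabet $\sum_i \frac{1-q^{c-\chi(i\le n_0)}}{1-q^c}x_i$, which does not separate into a $(q,q^c)$-Macdonald polynomial on one block times a $(q,q^{c-1})$-Macdonald polynomial on the other; and (ii) the cross terms $\prod_{i\le n_0<j}(x_i/x_j)_{c-1}(qx_j/x_i)_{c-1}$ in $F_{n,n_0}$ prevent any factorization of the constant term over the two blocks, so there is no known ``block AFLT'' to apply. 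The terminating ${}_3\phi_2$/$q$-Saalsch\"utz collapse you anticipate therefore never materializes, because the summands you would be trying to add up are not individually evaluable.

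The paper proceeds quite differently. It shows (Corollary~\ref{cor-poly-BC}) that $B_{n,n_0}(a,b,c,l,\mu)$ is a polynomial in $q^a$ of degree at most $nb+l+|\mu|$, reduces via a rationality argument (Corollary~\ref{cor-rationality}) to proving \eqref{main-B} for $c>b+\mu_1$, and then determines this polynomial by locating $nb+l+|\mu|$ roots (the sets $B_1\cup B_2\cup B_3$) plus one extra value. The root-finding is the technical heart: it extends the Gessel--Xin method to rational functions of positive degree via a new splitting formula (Proposition~\ref{prop-split}) combined with plethystic cardinality bounds (Proposition~\ref{lem-subs}, Proposition~\ref{prop-Mac-vanish}). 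The AFLT identity \eqref{AFLT} does enter, but only at the final step (Lemma~\ref{lem-extra-B}), where the value at the single extra point $a=-n_0(c-1)-b-1$ is shown---after the Gessel--Xin operation collapses the sum to a single surviving term---to equal a constant multiple of $A_{n-n_0-1}(c-b-1,(n_0+1)(c-1)+b+1,c,\mu,l)$.
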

Note that if $n_0\geq n$ then this forces $\mu=0$ by the restriction $\ell(\mu)<n-n_0$.
Then it is easy to see that $B_{n,n_0}(a,b,c,l,0)=B_{n,0}(a,b,c-1,l,0)$ for $n_0\geq n$.

For $l$ a nonnegative integer, define
\begin{equation}\label{eq-qForrester}
C_{n,n_0}(a,b,c,l,m):=\CT_xx_0^{-l}h_{l}\Big[\sum_{i=1}^{n}\frac{1-q^{c-\chi(i\leq n_0)}}{1-q}x_i\Big]
F_{n,n_0}(x;a,b,c,m).
\end{equation}
Our second result is the next complete symmetric function generalization of the result of K\'{a}rolyi et al.
\begin{thm}\label{thm-qForrester}
Let $C_{n,n_0}(a,b,c,l,m)$ be defined in \eqref{eq-qForrester}.
For $m\geq n-n_0$,
\begin{multline}\label{main-second}
C_{n,n_0}(a,b,c,l,m)
=(-1)^lq^{\binom{l}{2}}h_l\Big[\frac{1-q^{nc-n_0}}{1-q}\Big]
\frac{(q^{a-l+1})_l}{(q^{(n-1)c-n_0+b+2})_l}\prod_{j=2}^{n-n_0}(1-q^{jc})\\
 \times \prod_{j=0}^{n-1}\frac{(q^{a+j(c-1)+\chi(j>n_0)(j-n_0)+1})_{b+\chi(j\geq n-m)}
(q)_{(j+1)(c-1)+\chi(j>n_0)(j-n_0)}}
{(q)_{b+j(c-1)+\chi(j>n_0)(j-n_0)+\chi(j\geq n-m)}(q)_{c-\chi(j\leq n_0)}}.
\end{multline}
\end{thm}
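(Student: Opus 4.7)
My plan is to deduce Theorem \ref{thm-qForrester} from Theorem \ref{thm-qF2} in two stages: first establish the boundary case $m = n$ directly, then descend to general $m \in [n - n_0, n)$.

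\emph{Stage 1 (boundary case $m = n$).} At $m = n$ the indicator $\chi(i > n - m) = 1$ for every $i \in \{1, \ldots, n\}$, so every variable receives the extra $(qx_i/x_0)$ factor. This yields the identifications
\[
F_{n,n_0}(x;a,b,c,n) = F_{n,n_0}(x;a,b+1,c,0), \qquad C_{n,n_0}(a,b,c,l,n) = B_{n,n_0}(a,b+1,c,l,\emptyset).
\]
Applying Theorem \ref{thm-qF2} with $\mu = \emptyset$ (so $P_\mu = 1$ and all $\mu_j = 0$) and $b$ replaced by $b+1$, the last line of \eqref{main-B} collapses to
\[
\prod_{j=1}^{n-n_0-1}\frac{(1-q^{(j+1)c})(q^{(n-j-1)c-n_0+b+2})_l}{(q^{(n-j)c-n_0+b+2})_l}.
\]
The rational part telescopes (via the substitution $k = n - j - 1$) to $\frac{(q^{n_0(c-1)+b+2})_l}{(q^{(n-1)c-n_0+b+2})_l}$, which combines with the prefactor $\frac{(q^{a-l+1})_l}{(q^{n_0(c-1)+b+2})_l}$ to give $\frac{(q^{a-l+1})_l}{(q^{(n-1)c-n_0+b+2})_l}$. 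The main product of \eqref{main-B} already coincides with that of \eqref{main-second} upon applying the identity $jc - n_0 - \chi(j\leq n_0)(j-n_0) = j(c-1) + \chi(j > n_0)(j - n_0)$ to reconcile the two indexing conventions, so the $m = n$ case of \eqref{main-second} is established.

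\emph{Stage 2 (descent in $m$).} For $m \in [n-n_0, n-1]$, use the factorization
\[
F_{n,n_0}(x;a,b,c,n) = F_{n,n_0}(x;a,b,c,m) \prod_{i=1}^{n-m}\bigl(1 - q^{b+1}x_i/x_0\bigr).
\]
Expanding the product via the binomial theorem and taking the constant term with the weight $x_0^{-l}h_l[\cdots]$ yields
\[
C_{n,n_0}(a,b,c,l,n) = \sum_{k=0}^{n-m}(-q^{b+1})^k \Phi_k,
\]
where $\Phi_0 = C_{n,n_0}(a,b,c,l,m)$ and, for $k \geq 1$,
\[
\Phi_k = \CT_x\, x_0^{-l-k} h_l\Big[\sum_{i=1}^n \frac{1-q^{c-\chi(i\leq n_0)}}{1-q}x_i\Big] F_{n,n_0}(x;a,b,c,m)\, e_k(x_1, \ldots, x_{n-m}).
\]
Each $\Phi_k$ is evaluated by reinterpreting $x_0^{-k}e_k(x_1, \ldots, x_{n-m})$ as a plethystic specialization of the column Macdonald polynomial $P_{(1^k)}$ and invoking Theorem \ref{thm-qF2} with $\mu = (1^k)$ and suitably shifted parameters. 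Once the $\Phi_k$ are known in closed form, solving the displayed linear relation for $\Phi_0$ produces the right-hand side of \eqref{main-second}.

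\emph{Main obstacle.} The chief difficulty is the evaluation of $\Phi_k$ for $k \geq 1$ in Stage 2: the factor $e_k(x_1, \ldots, x_{n-m})$ involves only a subset of the first-group variables, while the plethystic Macdonald factor of Theorem \ref{thm-qF2} is a symmetric function of all of them simultaneously. Matching them therefore requires a careful plethystic identification and parameter choice. Once this matching is set up, the subsequent collapse of the alternating sum into the compact closed form of \eqref{main-second} is an essentially routine, though lengthy, $q$-Pochhammer telescoping exercise analogous to that in Stage 1.
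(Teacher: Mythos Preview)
Your Stage~1 is correct, and the paper itself records the identity $C_{n,n_0}(a,b,c,l,n)=B_{n,n_0}(a,b+1,c,l,0)$ as a remark. The telescoping you describe is valid.

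Stage~2, however, does not go through. The constant terms $\Phi_k$ with $k\ge 1$ are \emph{not} instances of Theorem~\ref{thm-qF2}, for two independent reasons. First, they carry the product $F_{n,n_0}(x;a,b,c,m)$ with $m>0$, whereas $B_{n,n_0}$ is defined with $F_{n,n_0}(x;a,b,c,0)$; the extra factors $(1-q^{b+1}x_i/x_0)$ for $i>n-m$ cannot be absorbed by any choice of parameters in \eqref{eq-qF2}. Second, the symmetric-function insertion in $\Phi_k$ is $e_k(x_1,\dots,x_{n-m})$, a polynomial in a \emph{proper subset} of the variables, while the Macdonald factor in $B_{n,n_0}$ is $P_\mu$ evaluated at the fixed plethystic alphabet
\[
\frac{q^{c-b-1}-q^a}{1-q^c}\,x_0+\sum_{i=1}^{n}\frac{1-q^{c-\chi(i\le n_0)}}{1-q^c}\,x_i,
\]
which necessarily involves \emph{all} of $x_0,x_1,\dots,x_n$. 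To kill the $x_i$ with $i>n-m$ from this alphabet one would need $1-q^{c-\chi(i\le n_0)}=0$, i.e.\ $c=0$ for any $i>n_0$, which is impossible. So no specialization of $\mu$, $a$, $b$ produces $e_k(x_1,\dots,x_{n-m})$, and you correctly flag this as the ``main obstacle''---but it is not merely a technicality to be ``set up''; it is a genuine mismatch that blocks the reduction.

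The paper does not attempt any such direct reduction. Instead it reruns the polynomial method independently for $C_{n,n_0}$: Corollary~\ref{cor-poly-BC} gives polynomiality in $q^a$; Proposition~\ref{prop-roots-1} (with the choice \eqref{M-poly} of $M_{\mathfrak r}$) and Lemma~\ref{lem-Roots-2} locate the roots $C_1\cup C_2\cup C_3$; and Proposition~\ref{extra-C} supplies the value at the extra point $a=-(n-m-1)(c-1)-b-1$. Only at this last step is Theorem~\ref{thm-qF2} invoked (via Lemmas~\ref{lem-extra-C1}--\ref{lem-extra-C2}), and there the Gessel--Xin operation has already collapsed the constant term to a single surviving $(u,k)$, which after a change of variables becomes a genuine $B_{m,n_0-n+m}(\cdots,0,l)$. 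That is where the reduction to Theorem~\ref{thm-qF2} is possible---at one specific value of $a$, not identically in $a$.
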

Note that by the definition of $C_{n,n_0}(a,b,c,l,m)$, for $m\geq n$ we have
$C_{n,n_0}(a,b,c,l,m)=C_{n,n_0}(a,b+1,c,l,0)=B_{n,n_0}(a,b+1,c,l,0)$.
One can also notice that $C_{n,n_0}(a,b,c,l,m)=C_{n,0}(a,b,c-1,l,m)$ for $n_0\geq n$.
The $l=0$ case of Theorem~\ref{thm-qForrester} reduces to the result of K\'{a}rolyi et al
\cite[Theorem 6.2]{KNPV}.

The method employed to prove Theorems \ref{thm-qF2} and \ref{thm-qForrester}
is based on the Gessel--Xin method, which first appeared in \cite{GX} to prove the Zeilberger--Bressoud $q$-Dyson theorem \cite{zeil-bres1985}. We managed to extend the Gessel--Xin method in the proof of the first-layer formulas for the $q$-Dyson product \cite{LXZ} and in dealing with the $q$-Dyson orthogonality problem \cite{Zhou}.
The basic idea of the Gessel--Xin method is the well-known fact that to prove the equality of two polynomials of degree at most $d$, it is sufficient to prove that they agree at $d+1$ distinct points.
We briefly outline the key steps to prove our main results \eqref{main-B} and \eqref{main-second}.
\vskip 0.2cm

\begin{enumerate}
\item \textbf{Polynomiality.}
It is routine to show that the constant terms
$B_{n,n_0}(a,b,c,l,\mu)$ and $C_{n,n_0}(a,b,c,l,m)$ (we refer these two constant terms as $B$ and $C$ for short respectively in the following of this section) are polynomials in $q^{a}$, assuming that all parameters but $a$ are fixed. See Corollary~\ref{cor-poly-BC}.
Then, we can extend the definitions of $B$ and $C$ for negative $a$, especially negative integers.
\vskip 0.2cm

\item \textbf{Rationality.}
By a rationality result, see Corollary~\ref{cor-rationality} below, it suffices to prove \eqref{main-B} and \eqref{main-second} for sufficiently large $c$.
\vskip 0.2cm

\item \textbf{Determination of roots.}
Let
\begin{align}\label{Roots-B}
B_1&=C_1=\{-ic+n_0-\chi(i\leq n_0)(n_0-i)-1,\dots,\\
&\qquad \qquad -ic+n_0-\chi(i\leq n_0)(n_0-i)-b\Mid i=0,1,\dots,n-1\},\nonumber\\
B_2&=C_2=\{l-1,l-2,\dots,0\}, \nonumber\\
B_3&=\{-(n-j)c+n_0-b-1,\dots, -(n-j)c+n_0-b-\mu_j \Mid j=1,2,\dots,\ell(\mu)\}. \nonumber
\end{align}
Suppose $c>b+\mu_1$, then all the elements of $B_1\cup B_2\cup B_3$ are distinct.
Let
\begin{equation}\label{Roots-C}
C_3=\{-ic+n_0-\chi(i\leq n_0)(n_0-i)-b-1\Mid i=n-m,\dots,n-1\}.
\end{equation}
Suppose $c>b+1$, then all the elements of $C_1\cup C_2\cup C_3$ are distinct.

For $B$ and $C$ viewed as polynomials in $q^a$, we will determine all their roots.
Explicitly, $B$ and $C$ vanish only when
$a\in B_1\cup B_2\cup B_3$ and $a\in C_1\cup C_2\cup C_3$ respectively.
Note that $B_1,B_3,C_1,C_3$ are sets of negative integers. For $B$ and $C$ at a negative integer $a$,
we are actually concerned with constant terms of rational functions. Hence, we need the theory of the field of iterated Laurent series, which was developed by Xin \cite{xiniterate,xinresidue}.

\item \textbf{Value at an additional point.}
We obtain explicit expressions for $B$ and $C$ at $a=-n_0(c-1)-b-1$ and $a=-(n-m-1)(c-1)-b-1$ respectively.
\end{enumerate}
\vskip 0.2cm

We can uniquely determine the closed-form expressions for $B$ and $C$ by the above steps.
The steps (1) and (2) are routine but not trivial. The step (3) is quite lengthy but conceptually simple.
In the step (4) we reduce $B$ and $C$ to similar types.
When carried out the details, we made a breakthrough to the Gessel--Xin method by mixing the ideas of the original Gessel--Xin method, plethystic substitutions and the splitting formula for the $q$-Dyson product by Cai \cite{cai}. We will explain this below.

Let the degree of a rational function of $x$ be the degree of the numerator in $x$ minus the degree in $x$ of the denominator. The original Gessel--Xin method \cite{GX} is a constant term method to rational functions with negative degrees. We extended it to rational functions with non-positive degrees
to obtain the first-layer formulas for the $q$-Dyson product in \cite{LXZ}.
We attempted to prove the Forrester conjecture in \cite{Gessel-Lv-Xin-Zhou2008}. However, the method failed in general. The main obstacle is dealing with constant terms of rational functions with positive degrees.
In this paper, we overcome this difficulty.

For the rational functions with positive degrees in this paper, we find that they reduce to Laurent polynomials under certain conditions, see Lemma~\ref{lem-QLaurentpoly}. Then, using Cai's idea, we obtain similar splitting formulas for the $q$-Baker--Forrester type constant terms, see Proposition~\ref{prop-split}.
By the splitting formulas, we can confirm whether the constant terms of those Laurent polynomials mentioned above vanish.
By such process, we extend the original Gessel--Xin method to rational functions with positive degrees.

To complete the step (3), we also need to combine the Gessel--Xin method with plethystic substitutions. We developed this method recently to deal with the $q$-Dyson orthogonality problem \cite{Zhou}.

The structure of this paper is as follows. The first section is this introduction.
In Section~\ref{sec-2}, we introduce the basic notation. In Section~\ref{sec-tools},
we introduce the three main tools in this paper --- plethystic notation, iterated Laurent series and a splitting formula. In Section~\ref{sec-Mac}, we give several essential results for Macdonald polynomials.
In Section~\ref{sec-poly}, we obtain the properties of polynomiality and rationality for $B_{n,n_0}(a,b,c,l,\mu)$ and $C_{n,n_0}(a,b,c,l,m)$. In Section~\ref{sec-preliminaries}, we present some preliminaries for the determination of the roots of $B_{n,n_0}(a,b,c,l,\mu)$ and $C_{n,n_0}(a,b,c,l,m)$.
In Section~\ref{sec-proof1} and Section~\ref{sec-proof2}, we complete the proof of Theorem~\ref{thm-qF2}
and Theorem~\ref{thm-qForrester} respectively.
In Section~\ref{sec-case3}, we complete the proof of Case (3) of Lemma~\ref{lem-Q}.

\section{Basic notation}\label{sec-2}

In this section we introduce some basic notation used throughout this paper.

A partition is a sequence $\lambda=({\lambda_1,\lambda_2,\dots)}$ of nonnegative integers such that
${\lambda_1\geq \lambda_2\geq \cdots}$ and
only finitely-many $\lambda_i$ are positive.
The length of a partition $\lambda$, denoted
$\ell(\lambda)$ is defined to be the number of non-zero $\lambda_i$ (such $\lambda_i$ are known as the parts of $\lambda$).
We adopt the convention of not displaying the
tails of zeros of a partition.
Sometimes, it is convenient to denote a partition by
\[
\lambda=(1^{m_1}2^{m_2}\cdots r^{m_r}\cdots),
\]
which means that exactly $m_i$ of the parts of $\lambda$ are equal to $i$.
As usual, we identify a partition with its Young diagram --- a collection of left-aligned rows of squares such that the $i$th row contains $\lambda_i$ squares. For example, the partition $(6,4,3,1)$ corresponds to
\begin{center}
\includegraphics[height=0.15\textwidth]{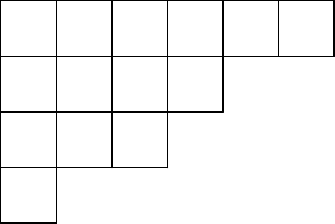}
\end{center}
The conjugate of a partition $\lambda$ is the partition $\lambda'$ whose diagram is the transpose of the diagram $\lambda$. For example, the conjugate of $(6,4,3,1)$
is $(4,3,3,2,1,1)$.
If $\lambda,\mu$ are partitions, we shall write $\mu \subset \lambda$ if $\mu_i\leq \lambda_i$
for all $i\geq 1$. We can construct a skew diagram $\lambda/\mu$ whenever $\mu \subset \lambda$
by removing the squares of $\mu$ from those of $\lambda$.
For example, if $\lambda=(6,4,3,1)$ and $\mu=(5,3,1)$, then
$\mu \subset \lambda$ and the skew diagram $\lambda/\mu$ is the following:
\begin{center}
\includegraphics[height=0.15\textwidth]{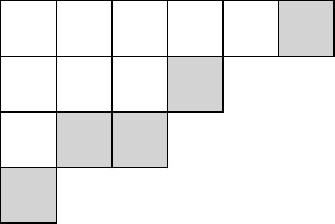}
\end{center}
A skew diagram $\theta$ is said to be a horizontal $r$-strip if $|\theta|=r$ and it contains at most one square in every column.
For example, the above diagram for $(6,4,3,1)/(5,3,1)$ is a horizontal 5-strip.
We say that $|\lambda|=\lambda_1+\lambda_2+\cdots$ is the size of the partition $\lambda$.
We adopt the standard dominance order on
the set of partitions of the same size.
If $\lambda,\mu$ are partitions such that $|\lambda|=|\mu|$ then $\mu\leq \lambda$ if
$\mu_1+\cdots+\mu_i\leq \lambda_1+\cdots+\lambda_i$ for all $i\geq 1$.
As usual, we write $\mu<\lambda$ if $\mu\leq \lambda$ but $\mu\neq \lambda$.

The infinite $q$-shifted factorial is defined as
\[
(z)_{\infty}=(z;q)_{\infty}:=
\prod_{i=0}^{\infty}(1-zq^i)
\]
where, typically, we suppress the base $q$.
Then, for $k$ an integer,
\[
(z)_k=(z;q)_k:=\frac{(z;q)_{\infty}}
{(zq^k;q)_{\infty}}.
\]
Note that
\[
(z)_k=
\begin{cases}
(1-z)(1-zq)\cdots (1-zq^{k-1}) & \text{if $k>0,$}
\\
1 & \text{if $k=0,$}\\
\displaystyle
\frac{1}{(1-zq^k)(1-zq^{k+1})\cdots(1-zq^{-1})}
& \text{if $k<0$.}
\end{cases}
\]
Using the above we can define the
$q$-binomial coefficient as
\[
\qbinom{n}{k}=\frac{(q^{n-k+1})_k}{(q)_k}
\]
for $n$ an arbitrary integer and $k$ a nonnegative integer.

By convention, for $k<j$ we set $\sum_{i=j}^k n_i:=0$ and $\prod_{i=j}^k n_i:=1$.

\section{Main tools}\label{sec-tools}

In this section, we introduce our three main tools --- plethystic notation, iterated Laurent series and a splitting formula in the subsections~\ref{sec-ple}, \ref{sec-iterate} and \ref{sec-splitting} respectively.
In Subsection~\ref{sec-splitting-conseq}, we give several consequences by the splitting formula.

\subsection{Plethystic notation and substitution}\label{sec-ple}

Plethystic or $\lambda$-ring notation is a device to facilitate computations in the ring of symmetric functions.
In this subsection, we briefly introduce plethystic notation and substitution.
The reader can refer \cite{haglund,Lascoux,RW} for more details.

Let $\mathbb{F}$ be a field and $\Lambda_{\mathbb{F}}$ be the ring of symmetric functions in countably many variables with coefficients in $\mathbb{F}$.
For a set of countably many variables $X=\{x_1,x_2,\dots\}$ (we refer $X$ as an alphabet),
we additively write $X:=x_1+x_2+\cdots$, and
use plethystic brackets to indicate this additive notation:
\[
f(X)=f(x_1,x_2,\dots)=f[x_1+x_2+\cdots]=f[X], \quad
\text{for $f\in \Lambda_{\mathbb{F}}$.}
\]

Let $p_r$ be the power sum
symmetric function in the alphabet $X$, defined by
\[
p_r=\sum_{i\geq 1}x_i^r
\]
for $r$ a positive integer and $p_0=1$.
For a partition $\lambda=(\lambda_1,\lambda_2,\dots)$, define
\[
p_{\lambda}=p_{\lambda_1}p_{\lambda_2}\cdots.
\]
The $p_r$ are algebraically independent over
$\mathbb{Q}$, and the $p_{\lambda}$ form a basis of $\Lambda_{\mathbb{Q}}$ \cite{Mac95}.
Hence, we can write
\[
\Lambda_{\mathbb{Q}}=\mathbb{Q}[p_1,p_2,\dots].
\]

A power sum whose argument is the sum, difference or Cartesian product
of two alphabets $X$ and $Y$ is defined as
\begin{subequations}\label{asm}
\begin{align}
\label{asm1}
p_r[X+Y]&=p_r[X]+p_r[Y], \\
p_r[X-Y]&=p_r[X]-p_r[Y], \\
p_r[XY]&=p_r[X]p_r[Y].\label{asm3}
\end{align}
\end{subequations}
Occasionally we need to use an ordinary minus sign in plethystic notation.
To distinguish this from a plethystic minus sign, we denote by $\epsilon$ the alphabet consisting of the single letter $-1$, so that for $f\in \Lambda_{\mathbb{F}}$
\[
f(-x)=f(-x_1,-x_2,\dots)=f[\epsilon x_1+\epsilon x_2+\cdots]=f[\epsilon X].
\]
Hence
\[
p_r[\epsilon X]=(-1)^rp_r[X], \quad p_r[-\epsilon X]=(-1)^{r-1}p_r[X].
\]
If $f$ is a homogeneous symmetric function of degree $k$ then
\begin{equation}\label{e-homo-sym}
f[aX]=a^kf[X]
\end{equation}
for a single-letter alphabet $a$.
In particular,
\begin{equation}\label{e-Mac3}
f[\epsilon X]=(-1)^kf[X].
\end{equation}
From \eqref{asm1}, we also have
\[
p_r[2X]:=p_r[X+X]=2p_r[X].
\]
We extend the above to any $k\in \mathbb{F}$ via
\begin{equation}
p_r[kX]=kp_r[X].
\end{equation}
Note that this leads to some notational ambiguities,
and whenever not clear from the context we will indicate if a symbol such as $a$ or $k$
represents a letter or a binomial element\footnote{In \cite[p. 32]{Lascoux} Lascoux refers to $k\in \mathbb{F}$ as a binomial element.}.
Suppose $t$ is a single-letter alphabet such that $|t|<1$. By \eqref{asm1} and \eqref{asm3}
\[
p_r\Big[\frac{X}{1-t}\Big]=p_r[X(1+t+t^2+\cdots)]=p_r[X]\sum_{k=0}^{\infty} p_r[t^k]=
p_r[X]\sum_{k=0}^{\infty} t^{kr}=\frac{p_r[X]}{1-t^r}.
\]
Hence, we have the next division rule:
\begin{equation}\label{division}
p_r\Big[\frac{X}{1-t}\Big]=\frac{p_r[X]}{1-t^r}.
\end{equation}
Note that we can not define the plethystic division by an arbitrary
alphabet.

The plethystic substitution greatly simplifies our computation of constant terms because of the following rules:

1. We can simplify and compute the alphabets in the bracket.
For instance $f[X-X]=f[0]=f(0,0,\dots)$ and $f[(1-q^k)/(1-q)] =f[1+q+\cdots +q^{k-1}]$ for
$k$ a nonnegative integer and any $f\in \Lambda_{\mathbb{F}}$, as this is easily checked for the $f=p_r$ case.

2. We can make the substitution inside the bracket if we substitute
an $x$(or $y$)-variable by a one-letter alphabet (or a product of finitely many one-letter alphabets). For instance, if we want to substitute $Y$ by $M$
in $f[X-Y]$, then we can check that
$p_r[X-Y] \big|_{Y\to M}= p_r[X]-p_r[Y]\big|_{Y\to M}=p_r[X]-p_r[M]=p_r[X-M]$.
In fact, we can summarize this rule simply by:
a variable can only be substituted by a one-letter alphabet
(or a product of finitely many one-letter alphabets), not a signed one-letter alphabet.

Let the elementary symmetric
function be defined as
\[
e_r=\sum_{1\leq i_1<\dots<i_r}x_{i_1}x_{i_2}\cdots x_{i_r}
\]
for $r$ a positive integer and $e_0=1$.
The next simple fact plays an important role in proving vanishing
properties of the constant terms $B_{n,n_0}(a,b,c,l,\mu)$ and $C_{n,n_0}(a,b,c,l,m)$.
By the definition of the elementary symmetric function,
\begin{equation}\label{e-vanish}
e_r[X]=0 \qquad \text{if $|X|<r$,}
\end{equation}
for $r$ a positive integer and $X$ an alphabet of finitely many variables.
Here $|X|$ denotes the cardinality of $X$.

Finally, we need the following two basic plethystic identities.
One can find proofs in \cite[Theorem 1.27]{haglund}.
\begin{prop}\label{Ple-basic}
Let $X$ and $Y$ be two alphabets. For $r$ a nonnegative integer,
\begin{align}\label{e-xy}
h_{r}[X+Y]&=\sum_{i=0}^rh_i[X]h_{r-i}[Y], \\
h_{r}[-X]&=(-1)^re_r[X].\label{e-he}
\end{align}
\end{prop}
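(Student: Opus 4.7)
The plan is to derive both identities from a single plethystic generating function for $h_r$. Starting from the defining product \eqref{e-gfcomplete} and taking logarithms, I obtain the exponential form
\[
\sum_{r\geq 0} z^r h_r[X] \;=\; \exp\!\Bigl(\sum_{r\geq 1}\frac{z^r}{r}\,p_r[X]\Bigr),
\]
which extends unambiguously to an arbitrary plethystic argument because $\Lambda_{\mathbb{Q}}=\mathbb{Q}[p_1,p_2,\dots]$ is freely generated by the power sums. Once this identity is in place, both \eqref{e-xy} and \eqref{e-he} follow from the additivity rule \eqref{asm1} for power sums by a short formal power series manipulation.

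For \eqref{e-xy}, I would apply the exponential form to the argument $X+Y$. Since $p_r[X+Y]=p_r[X]+p_r[Y]$ by \eqref{asm1}, the exponent splits as a sum, and the exponential of a sum factors as a product:
\[
\sum_{r\geq 0} z^r h_r[X+Y] \;=\; \Bigl(\sum_{i\geq 0}z^i h_i[X]\Bigr)\Bigl(\sum_{j\geq 0}z^j h_j[Y]\Bigr).
\]
Extracting the coefficient of $z^r$ yields the convolution identity.

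For \eqref{e-he}, I would apply the exponential form to $-X$. Taking the $Y=X$ instance of the second line of \eqref{asm} against the trivial identity $p_r[0]=0$ gives $p_r[-X]=-p_r[X]$, so the exponent changes sign and the exponential inverts:
\[
\sum_{r\geq 0} z^r h_r[-X] \;=\; \exp\!\Bigl(-\sum_{r\geq 1}\frac{z^r}{r}p_r[X]\Bigr) \;=\; \prod_{i\geq 1}(1-zx_i) \;=\; \sum_{r\geq 0}(-z)^r e_r[X],
\]
where the middle equality is just the reverse of the derivation of the exponential form applied to the generating product for elementary symmetric functions. Comparing coefficients of $z^r$ on the two ends gives $h_r[-X]=(-1)^r e_r[X]$. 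I do not anticipate any serious obstacle here: once the plethystic exponential form is in hand, both identities are immediate. The only conceptual subtlety worth flagging is that the plethystic minus used in \eqref{e-he} genuinely negates power sums ($p_r[-X]=-p_r[X]$), and must not be confused with the sign alphabet $\epsilon$ introduced in the paper, which merely contributes a global sign $(-1)^r$ to any symmetric function homogeneous of degree $r$, so that $h_r[\epsilon X]=(-1)^r h_r[X]$ rather than $(-1)^r e_r[X]$.
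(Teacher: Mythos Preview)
Your argument is correct and well written; the exponential form of the generating function for $h_r$ in terms of power sums is exactly the right tool, and both identities fall out immediately once you have it. Note that the paper does not actually give its own proof of this proposition: it simply cites \cite[Theorem~1.27]{haglund}. Your approach is the standard one and is essentially what you would find in that reference, so there is nothing to contrast.
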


\subsection{Constant term evaluations using iterated Laurent series}\label{sec-iterate}

In this subsection we present a basic lemma for extracting
constant terms from rational functions in the framework of the field of iterated Laurent series.

Throughout this paper we let $K=\CC(q)$ and work in the field of iterated
Laurent series $K\langle\!\langle x_n, x_{n-1},\dots,x_0\rangle\!\rangle
=K(\!(x_n)\!)(\!(x_{n-1})\!)\cdots (\!(x_0)\!)$.
Elements of $K\langle\!\langle x_n,x_{n-1},\dots,x_0\rangle\!\rangle$
are regarded first as Laurent series in $x_0$, then as
Laurent series in $x_1$, and so on.
In~\cite{GX}, the authors showed that the field $K\langle\!\langle x_n, x_{n-1},\dots,x_0\rangle\!\rangle$ is highly suitable for proving the $q$-Dyson style constant term identities.
The reader may refer \cite{xinresidue} and~\cite{xiniterate} for more detailed
account of the properties of this field.
The most applicable fact in what is to follow is that the field $K(x_0,\dots,x_n)$ of
rational functions in the variables $x_0,\dots,x_n$ with coefficients in $K$
forms a subfield of $K\langle\!\langle x_n, x_{n-1},\dots,x_0\rangle\!\rangle$, so that every rational function can be identified with its unique Laurent
series expansion.

The following series expansion of $1/(1-cx_i/x_j)$
for $c\in K\setminus \{0\}$ forms a key ingredient in our approach:
\[
\frac{1}{1-c x_i/x_j}=
\begin{cases} \displaystyle \sum_{l\geq 0} c^l (x_i/x_j)^l
& \text{if $i<j$}, \\[5mm]
\displaystyle -\sum_{l<0} c^l (x_i/x_j)^l
& \text{if $i>j$}.
\end{cases}
\]
Thus,
\begin{equation}
\label{e-ct}
\CT_{x_i} \frac{1}{1-c x_i/x_j} =
\begin{cases}
    1 & \text{if $i<j$}, \\
    0 & \text{if $i>j$}, \\
\end{cases}
\end{equation}
where, for $f\in K\langle\!\langle x_n, x_{n-1},\dots,x_0\rangle\!\rangle$,
we also use the notation $\displaystyle\CT_{x_i} f$ to denote taking the constant term
of $f$ with respect to $x_i$.
The constant term operators defined in $K\langle\!\langle x_n,\dots,x_0\rangle\!\rangle$ have the property of commutativity:
\[
\CT_{x_i} \CT _{x_j} f = \CT_{x_j} \CT_{x_i} f
\]
for any $i$ and $j$.
This means that we are taking constant term in a set of variables.
Then $\CT\limits_x$ is well-defined in this field.

The following lemma has appeared previously in~\cite{GX}.
It is a basic tool for
extracting constant terms from rational functions.
\begin{lem}\label{lem-almostprop}
For a positive integer $m$, let $p(x_k)$ be a Laurent polynomial
in $x_k$ of degree at most $m-1$ with coefficients in
$K\langle\!\langle x_n,\dots,x_{k-1},x_{k+1},\dots,x_0\rangle\!\rangle$.
Let $0\leq i_1\leq\dots\leq i_m\leq n$ such that all $i_r\neq k$,
and define
\begin{equation}\label{e-defF}
f=\frac{p(x_k)}{\prod_{r=1}^m (1-c_r x_k/x_{i_r})},
\end{equation}
where $c_1,\dots,c_m\in K\setminus \{0\}$ such that $c_r\neq c_s$ if $x_{i_r}=x_{i_s}$.
Then
\begin{equation}\label{e-almostprop}
\CT_{x_k} f=\sum_{\substack{r=1 \\[1pt] i_r>k}}^m
\big(f\,(1-c_rx_k/x_{i_r})\big)\Big|_{x_k=c_r^{-1}x_{i_r}}.
\end{equation}
\end{lem}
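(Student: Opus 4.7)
The plan is to decompose $f$ into partial fractions with respect to $x_k$ over the field of rational functions in the variables other than $x_k$, and then apply the expansion rule \eqref{e-ct} term by term.

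By hypothesis $c_r \neq c_s$ whenever $x_{i_r} = x_{i_s}$, so the $m$ roots $x_k = c_r^{-1}x_{i_r}$ of the denominator of $f$ are pairwise distinct in that field. Standard partial fractions in $x_k$ yields
\[
f = L(x_k) + \sum_{r=1}^m \frac{A_r}{1 - c_r x_k/x_{i_r}},\qquad A_r := \bigl((1 - c_r x_k/x_{i_r})\,f\bigr)\Big|_{x_k = c_r^{-1}x_{i_r}},
\]
where each $A_r$ is independent of $x_k$ and $L(x_k)$ has no poles other than possibly at $x_k = 0$; hence $L$ is a Laurent polynomial in $x_k$. The crucial claim is that $L$ has only strictly negative powers of $x_k$. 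Since $\deg_{x_k}p \le m-1$ while $Q(x_k) := \prod_{r=1}^m(1 - c_r x_k/x_{i_r})$ has degree exactly $m$ in $x_k$, the quotient $f = p/Q$ is $O(x_k^{-1})$ for large $x_k$, and each summand $A_r/(1 - c_r x_k/x_{i_r})$ is likewise $O(x_k^{-1})$. Hence $L = O(x_k^{-1})$, and combined with its Laurent-polynomial nature this forces $L$ to contain only strictly negative powers of $x_k$, so $\CT_{x_k} L(x_k) = 0$.

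Extracting $\CT_{x_k}$ from the partial fraction decomposition then finishes the argument: for each $r$ the coefficient $A_r$ factors out, and \eqref{e-ct} gives
\[
\CT_{x_k} \frac{A_r}{1 - c_r x_k/x_{i_r}} = A_r\cdot\chi(i_r > k),
\]
so summing over $r$ produces the stated identity. The only nonroutine point is the degree estimate on $L$: the strict bound $\deg_{x_k}p \le m-1$ is essential, since any nonnegative-power piece of $L$ would contribute to $\CT_{x_k}$ and destroy the formula. Everything else — distinctness of the poles, the residue-style computation of each $A_r$, and term-by-term constant-term extraction — is routine once one works inside $K\langle\!\langle x_n,\dots,x_0\rangle\!\rangle$ so that the series expansions \eqref{e-ct} are valid and commute with $\CT_{x_k}$.
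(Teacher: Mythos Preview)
Your proof is correct. The paper does not supply its own proof of this lemma but simply cites \cite{GX}; the partial-fraction argument you give---using the degree bound $\deg_{x_k}p\le m-1$ to force the Laurent-polynomial remainder $L$ to carry only strictly negative powers of $x_k$, and then applying \eqref{e-ct} to each simple-pole term---is the standard route and matches the original.

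One minor imprecision: the base field for the partial-fraction step should be $K\langle\!\langle x_n,\dots,x_{k-1},x_{k+1},\dots,x_0\rangle\!\rangle$ rather than ``rational functions in the variables other than $x_k$,'' since $p(x_k)$ is allowed to have arbitrary iterated-Laurent-series coefficients. This does not affect the argument, as partial-fraction decomposition with respect to $x_k$ is valid over any coefficient field, and the substitution $x_k=c_r^{-1}x_{i_r}$ defining $A_r$ lands in that field because each remaining factor $1-c_s c_r^{-1}x_{i_r}/x_{i_s}$ is a unit there.
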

Note that we extended Lemma~\ref{lem-almostprop} to \cite[Lemma~3.1]{LXZ},
in which $p(x_k)$ is a Laurent polynomial in $x_k$ of degree at most $m$.
In this paper, we find a way to deal with the cases when the degree in $x_k$ of $p(x_k)$ is larger than $m$.
In our application of plethystic substitution, a crucial fact is that every $c_r$ is of the form $q^u$ for
$u\in \mathbb{Z}$. In other words, $q$ and $q^{-1}$ are treated as single letter alphabets in plethystic notation,
and each $c_r$ is a product of finite many $q$ or $q^{-1}$.

The next proposition presents an equivalence between two kinds of constant terms.
\begin{prop}\label{prop-equiv}
For a positive integer $n$, a nonnegative integer $c$ and $x:=(x_1,\dots,x_n)$, let $f(x)$ be a rational function invariant under any permutation of $x$. Then
\begin{equation}\label{e-equiv}
\CT_{x} f(x)\prod_{1\leq i<j\leq n}(x_i/x_j)_c(qx_j/x_i)_c
=\frac{1}{n!}\prod_{i=1}^{n-1}\frac{1-q^{(i+1)c}}{1-q^c}\times\CT_{x} f(x)\prod_{1\leq i\neq j\leq n} (x_i/x_j)_c.
\end{equation}
\end{prop}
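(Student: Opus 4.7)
My plan is to reduce Proposition~\ref{prop-equiv} to the classical Macdonald/Hall--Littlewood symmetrization identity by first establishing a Laurent polynomial identity and then pairing it with $\CT_x f(x)$.

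First, using the elementary relation $(qz;q)_c = (z;q)_c(1-q^c z)/(1-z)$ at $z = x_j/x_i$, I factor
\[
\prod_{1\leq i<j\leq n}(x_i/x_j)_c(qx_j/x_i)_c = \prod_{1\leq i\neq j\leq n}(x_i/x_j)_c \cdot \prod_{1\leq i<j\leq n}\frac{x_i-q^c x_j}{x_i-x_j}.
\]
The first factor is $S_n$-invariant; the apparent poles $x_i=x_j$ of the second are cancelled by the $(1-x_j/x_i)$ factors hidden inside $(x_j/x_i)_c$, so the full left-hand side remains a Laurent polynomial.

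Next I apply the classical Macdonald symmetrization identity
\[
\sum_{\sigma\in S_n}\prod_{1\leq i<j\leq n}\frac{x_{\sigma(i)}-tx_{\sigma(j)}}{x_{\sigma(i)}-x_{\sigma(j)}} = \prod_{k=1}^n\frac{1-t^k}{1-t}
\]
at $t=q^c$ (with the $k=1$ factor on the right being $1$). Combined with the factorization above, this yields the purely Laurent polynomial identity
\[
\sum_{\sigma\in S_n}\prod_{1\leq i<j\leq n}(x_{\sigma(i)}/x_{\sigma(j)})_c(qx_{\sigma(j)}/x_{\sigma(i)})_c = \prod_{i=1}^{n-1}\frac{1-q^{(i+1)c}}{1-q^c}\prod_{1\leq i\neq j\leq n}(x_i/x_j)_c.
\]

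Finally, writing $P(x) := \prod_{i<j}(x_i/x_j)_c(qx_j/x_i)_c$, I multiply the last identity by $f(x)$ and apply $\CT_x$. By the symmetry of $f$ one has $f(x)P(x_\sigma) = f(x_\sigma)P(x_\sigma) = (fP)\circ\sigma(x)$ for each $\sigma$; since $fP$ is a Laurent polynomial (the case relevant to all applications of this proposition in the paper), its constant term is simply the coefficient of $x_1^0\cdots x_n^0$, which is manifestly $S_n$-invariant — a fact also visible from the commutativity $\CT_{x_i}\CT_{x_j}=\CT_{x_j}\CT_{x_i}$ in the iterated Laurent series framework of Section~\ref{sec-iterate}. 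Hence each of the $n!$ summands on the left contributes $\CT_x fP$, and dividing by $n!$ produces exactly Proposition~\ref{prop-equiv}.

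The main subtle point is precisely this $S_n$-invariance of $\CT_x$ on $fP$: it works cleanly because $fP$ is a Laurent polynomial, for which the constant term coincides with a single monomial coefficient. For a generic symmetric rational $f$ the Laurent expansion in iterated Laurent series need not itself be symmetric, so some care is needed — but in every use of this proposition the integrand $fP$ is a Laurent polynomial, and the argument above suffices. All other ingredients — the elementary factorization in the first step and the Macdonald symmetrization in the second — are standard.
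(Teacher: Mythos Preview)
Your proof is correct and follows essentially the same route as the paper's: both factor $\prod_{i<j}(x_i/x_j)_c(qx_j/x_i)_c$ as the symmetric product $\prod_{i\neq j}(x_i/x_j)_c$ times $\prod_{i<j}\frac{1-q^cx_j/x_i}{1-x_j/x_i}$, symmetrize over $\mathfrak{S}_n$, and invoke the Macdonald identity \eqref{e-equiv2}. Your treatment of the $\mathfrak{S}_n$-invariance of $\CT_x$ is in fact more careful than the paper's terse ``the action of $w$ on any Laurent polynomial (or Laurent series) does not change the constant term,'' and your observation that the argument is clean precisely when $fP$ is a Laurent polynomial (as in all applications) is well taken.
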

\begin{proof}
For $w:=(w_1,\dots,w_n)\in \mathfrak{S}_n$ and any function $g(x)$, define
\[
w\circ g(x):=g(x_{w_1},\dots,x_{w_n}).
\]
Since the action of $w$ on any Laurent polynomial (or Laurent series) does not change the constant term, we have
\begin{multline}\label{e-equiv1}
\CT_{x} f(x)\prod_{1\leq i<j\leq n}(x_i/x_j)_c(qx_j/x_i)_c \\
\quad =\frac{1}{n!}\sum_{w\in \mathfrak{S}_n}\CT_{x} w\circ \bigg(f(x)\prod_{1\leq i\neq j\leq n} (x_i/x_j)_c
\prod_{1\leq i<j\leq n}\frac{1-q^cx_j/x_i}{1-x_j/x_i}\bigg).
\end{multline}
Because $f(x)\prod_{1\leq i\neq j\leq n} (x_i/x_j)_c$ is invariant under the action of $w$, the right-hand side of \eqref{e-equiv1} becomes
\[
\frac{1}{n!}\CT_{x} f(x)\prod_{1\leq i\neq j\leq n} (x_i/x_j)_c \cdot \sum_{w\in \mathfrak{S}_n}w\circ
\bigg(\prod_{1\leq i<j\leq n}\frac{1-q^cz_j/x_i}{1-x_j/x_i}\bigg),
\]
which is the right-hand side of \eqref{e-equiv} by the next identity \cite[III, (1.4)]{Mac95}.
\begin{equation}\label{e-equiv2}
\sum_{w\in \mathfrak{S}_n}w\circ
\bigg(\prod_{1\leq i<j\leq n}\frac{1-q^cx_j/x_i}{1-x_j/x_i}\bigg)
=\prod_{i=1}^{n-1}\frac{1-q^{(i+1)c}}{1-q^c}. \qedhere
\end{equation}
\end{proof}

\subsection{A splitting formula}\label{sec-splitting}

In this subsection, we give a splitting formula for $S_{n,n_0}(x)$ defined in \eqref{FD} below.
The idea first appeared in Cai's proof of Kadell's $q$-Dyson orthogonality conjecture \cite{cai}.

For nonnegative integers $n,n_0$ and a positive integer $c$, let
\begin{equation}\label{D}
D_{n,n_0}(x)=D_{n,n_0}(x,c):=\prod_{1\leq i<j\leq n}\big(x_i/x_j\big)_{c-\chi(i\leq n_0)}\big(qx_j/x_i\big)_{c-\chi(i\leq n_0)},
\end{equation}
where $x:=(x_1,x_2,\dots,x_n)$.
Recall the definition of $F_{n,n_0}(x;a,b,c,m)$ in \eqref{eq-qM}. It is clear that
\[
D_{n,n_0}(x,c)=F_{n,n_0}(x;0,0,c,0).
\]
Denote
\begin{equation}\label{FD}
S_{n,n_0}(x)=S_{n,n_0}(x,c):=D_{n,n_0}(x,c)\times
\prod_{i=1}^{n_0}(x_i/w)^{-1}_{c-1}
\prod_{i=n_0+1}^{n}(x_i/w)^{-1}_{c},
\end{equation}
where $w$ is a parameter such that
all terms of the form $q^ux_i/w$ in \eqref{FD}
satisfy $|q^ux_i/w|<1$. Hence,
\[
\frac{1}{1-q^ux_i/w}=\sum_{j\geq 0} (q^ux_i/w)^j.
\]
Then, it is easy to see that
\begin{equation}\label{e-relation}
\CT_x x^vD_{n,n_0}(x)=\CT_{x,w}x^vS_{n,n_0}(x),
\end{equation}
where $v:=(v_1,v_2,\dots,v_n)\in \mathbb{Z}^n$ and
$x^v$ denotes the monomial $x_1^{v_1}\cdots x_n^{v_n}$.
Note that if $|v|:=v_1+\cdots+v_n\neq 0$ then
$\CT\limits_x x^vD_{n,n_0}(x)=0$ by the homogeneity of $D_{n,n_0}(x)$.

We need the following simple result in the proof of the splitting formula mentioned above.
\begin{lem}
Let $i,j$ be nonnegative integers and $t$ be an integer. Then, for $0\leq t\leq j$
\begin{subequations}\label{e-ab}
\begin{equation}\label{prop-b1}
\frac{(1/y)_{i}(qy)_j}{(q^{-t}/y)_i}=q^{it}(q^{1-i}y)_t(q^{t+1}y)_{j-t};
\end{equation}
for $-1\leq t\leq j-1$
\begin{equation}\label{prop-b2}
\frac{(y)_j(q/y)_i}{(q^{-t}/y)_i}=q^{i(t+1)}(q^{-i}y)_{t+1}(q^{t+1}y)_{j-t-1};
\end{equation}
and for $0\leq t\leq j-1$
\begin{equation}\label{prop-c}
\frac{(y)_j(q/y)_i}{(q^{-t}/y)_{i+1}}=-yq^{(i+1)t}(q^{-i}y)_t(q^{t+1}y)_{j-t-1}.
\end{equation}
\end{subequations}
\end{lem}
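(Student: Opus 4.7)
The plan is to establish \eqref{prop-b1}--\eqref{prop-c} by direct manipulation of $q$-shifted factorials using two elementary tools. First, the splitting rule $(a)_{m+n}=(a)_m(aq^m)_n$ lets me strip matching subfactors from both sides. Second, the reflection formula
\[
(z)_n = (-z)^n q^{\binom{n}{2}}(q^{1-n}/z)_n,
\]
applied to the $q$-factorials with argument $1/y$, converts the left-hand-side ratio into a ratio of $q$-factorials in $y$ of matching length; the sign and $q^{\binom{i}{2}}$ contributions from numerator and denominator then cancel cleanly, leaving the exact power of $q$ claimed on the right.

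For \eqref{prop-b1}, I would split $(qy)_j=(qy)_t(q^{t+1}y)_{j-t}$, using $0\le t\le j$, and cancel $(q^{t+1}y)_{j-t}$ from both sides. After applying reflection to $(1/y)_i$ and $(q^{-t}/y)_i$, the $q^{\binom{i}{2}}$ factors cancel and the sign ratio $(-1/y)^i/(-q^{-t}/y)^i$ produces exactly $q^{it}$; what remains is the polynomial identity $(qy)_t(q^{1-i}y)_i=(q^{1-i}y)_t(q^{1-i+t}y)_i$. Both sides are products of $i+t$ factors $(1-q^{e}y)$, and comparing the sets of exponents $e$ — namely $\{1-i,\dots,0\}\cup\{1,\dots,t\}$ on the left and $\{1-i,\dots,t-i\}\cup\{1-i+t,\dots,t\}$ on the right — shows both equal $\{1-i,2-i,\dots,t\}$.

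The proof of \eqref{prop-b2} is entirely parallel: I split $(y)_j=(y)_{t+1}(q^{t+1}y)_{j-t-1}$ using $-1\le t\le j-1$, apply reflection to $(q/y)_i$ and $(q^{-t}/y)_i$ (giving the prefactor $q^{i(t+1)}$), and verify by exponent-comparison that $(y)_{t+1}(q^{-i}y)_i=(q^{-i}y)_{t+1}(q^{1-i+t}y)_i$. Identity \eqref{prop-c} is then deduced from \eqref{prop-b2}: writing $(q^{-t}/y)_{i+1}=(1-q^{-t}/y)(q^{-(t-1)}/y)_i$ and applying \eqref{prop-b2} with shift parameter $t-1$ (which lies in $[-1,j-2]$ when $0\le t\le j-1$) reduces the claim to showing that dividing by the leftover $(1-q^{-t}/y)$ produces the correct factor. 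Using $(q^ty)_{j-t}=(1-q^ty)(q^{t+1}y)_{j-t-1}$ together with the trivial identity $1-q^{-t}/y=-q^{-t}y^{-1}(1-q^ty)$, the two occurrences of $(1-q^ty)$ cancel and the leftover $-q^{-t}y^{-1}$ combines with the existing $q^{it}$ to give exactly $-yq^{(i+1)t}$.

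No step is conceptually difficult; the only place requiring care is the sign-and-exponent bookkeeping in the reflection formula and the range restrictions on $t$, which are precisely what make the splittings above valid.
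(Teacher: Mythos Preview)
Your proof is correct and uses the same tools as the paper's---the splitting rule and the reflection formula $(z)_n=(-z)^nq^{\binom{n}{2}}(q^{1-n}/z)_n$. The paper organizes the computation a bit differently: it first rewrites $\frac{(1/y)_i}{(q^{-t}/y)_i}=\frac{(q^{i-t}/y)_t}{(q^{-t}/y)_t}$ and reflects the length-$t$ factors (so your exponent-comparison step becomes a direct cancellation), obtains \eqref{prop-b2} from \eqref{prop-b1} via the substitution $y\mapsto y/q$, $t\mapsto t+1$ rather than repeating the argument, and proves \eqref{prop-c} directly instead of deducing it from \eqref{prop-b2}; but these are cosmetic reorderings of the same elementary manipulation.
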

Note that the $t=j$ case of \eqref{prop-b1} (taking $y\mapsto y/q$) is the standard fact in \cite[Equation~(I.13)]{GR}.
The condition $0\leq t\leq j-1$ in \eqref{prop-c} forces $j$ to be a positive integer.
\begin{proof}
For $0\leq t\leq j$,
\[
\frac{(1/y)_{i}(qy)_j}{(q^{-t}/y)_i}=\frac{(q^{i-t}/y)_t(qy)_j}{(q^{-t}/y)_t}
=\frac{(-1/y)^tq^{it-\binom{t+1}{2}}(q^{1-i}y)_t(qy)_j}{(-1/y)^{t}q^{-\binom{t+1}{2}}(qy)_t}
=q^{it}(q^{1-i}y)_t(q^{t+1}y)_{j-t}.
\]

Taking $y\mapsto y/q$ and $t\mapsto t+1$ in \eqref{prop-b1} yields
\eqref{prop-b2} for $-1\leq t\leq j-1$.

For $0\leq t\leq j-1$,
\[
\frac{(y)_j(q/y)_i}{(q^{-t}/y)_{i+1}}=
\frac{(y)_j(q^{i-t+1}/y)_t}{(q^{-t}/y)_{t+1}}=
\frac{(y)_j(-1/y)^tq^{it-\binom{t}{2}}(q^{-i}y)_t}{(-1/y)^{t+1}q^{-\binom{t+1}{2}}(y)_{t+1}}=
-yq^{(i+1)t}(q^{-i}y)_t(q^{t+1}y)_{j-t-1}. \qedhere
\]
\end{proof}

The rational function $S_{n,n_0}(x)$ admits the following partial fraction expansion.
We refer it as the splitting formula for $S_{n,n_0}(x)$.
\begin{prop}\label{prop-split}
Let $S_{n,n_0}(x)$ be defined in \eqref{FD}. Then
\begin{equation}\label{e-Fsplit}
S_{n,n_0}(x)=\sum_{i=1}^{n_0}\sum_{j=0}^{c-2}\frac{A_{ij}}{1-q^jx_i/w}
+\sum_{i=n_0+1}^{n}\sum_{j=0}^{c-1}\frac{B_{ij}}{1-q^jx_i/w},
\end{equation}
where
\begin{multline}\label{A}
A_{ij}=\frac{q^{(c-1)\big(j(n-1)+n_0-i\big)+j(n-n_0)}}{(q^{-j})_j(q)_{c-j-2}}
\prod_{l=1}^{i-1}\big(q^{2-c}x_i/x_l\big)_j\big(q^{j+1}x_i/x_l\big)_{c-j-1}
\times D_{n-1,n_0-1}(x^{(i)})
\\
\times\prod_{l=i+1}^{n_0}\big(q^{1-c}x_i/x_l\big)_{j+1}\big(q^{j+1}x_i/x_l\big)_{c-j-2}
\prod_{l=n_0+1}^n(-x_i/x_l)\big(q^{1-c}x_i/x_l\big)_j\big(q^{j+1}x_i/x_l\big)_{c-j-2},
\end{multline}
and
\begin{multline}\label{B}
B_{ij}=\frac{q^{(n-1)jc+(n-i)c-n_0j}}{(q^{-j})_j(q)_{c-j-1}}
\prod_{l=1}^{n_0}\big(q^{2-c}x_i/x_l\big)_j\big(q^{j+1}x_i/x_l\big)_{c-j-1}
\times D_{n-1,n_0}(x^{(i)})
\\
\times\prod_{l=n_0+1}^{i-1}\big(q^{1-c}x_i/x_l\big)_{j}\big(q^{j+1}x_i/x_l\big)_{c-j}
\prod_{l=i+1}^n\big(q^{-c}x_i/x_l\big)_{j+1}\big(q^{j+1}x_i/x_l\big)_{c-j-1},
\end{multline}
where $x^{(i)}:=(x_1,\dots,x_{i-1},x_{i+1},\dots,x_n)$.
\end{prop}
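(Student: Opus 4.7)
The plan is to verify \eqref{e-Fsplit} as the partial-fraction expansion of $S_{n,n_0}(x)$, viewed as a rational function in the single variable $w$. Since $D_{n,n_0}(x,c)$ does not depend on $w$, the poles of $S_{n,n_0}(x)$ in $w$ arise entirely from the inverse Pochhammers $(x_i/w)_{c-1}^{-1}$ for $1\le i\le n_0$ and $(x_i/w)_c^{-1}$ for $n_0<i\le n$, giving simple poles at $w=q^{j}x_i$ in the indicated ranges of $j$. Treating $q$ and $x_1,\ldots,x_n$ as formal indeterminates, all $nc-n_0$ such poles are distinct; since the denominator has degree $nc-n_0$ in $1/w$ and the numerator is $w$-free, a partial-fraction expansion of the asserted form exists and is unique, with $A_{ij}$ and $B_{ij}$ given by the residue formula $\lim_{w\to q^{j}x_i}(1-q^{j}x_i/w)\,S_{n,n_0}(x)$.

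Evaluating that limit splits the residue into three parts. First, the factors of $D_{n,n_0}(x,c)$ that do not involve $x_i$ collapse to $D_{n-1,n_0-\chi(i\le n_0)}(x^{(i)})$; this is a short check from \eqref{D}, using that $\chi(a\le n_0)=\chi(\tilde a\le n_0-\chi(i\le n_0))$ for every surviving index $a\neq i$ with $\tilde a=a-\chi(a>i)$. Second, the self-interaction of the $(x_i/w)$-Pochhammer at its own pole contributes $\prod_{j'\ne j}(1-q^{j'-j})$, which factors cleanly as $(q^{-j})_j(q)_{c-j-2}$ in the $A$-case and $(q^{-j})_j(q)_{c-j-1}$ in the $B$-case, matching the prefactor denominators in \eqref{A} and \eqref{B}. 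Third, for each $l\neq i$ the ratio of the $(l,i)$-factor of $D_{n,n_0}$ and the $(x_l/w)$-factor of the denominator, evaluated at $w=q^{j}x_i$, fits the left-hand side of exactly one of \eqref{prop-b1}--\eqref{prop-c}.

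The right choice among \eqref{prop-b1}, \eqref{prop-b2}, \eqref{prop-c} is dictated by the pair of Pochhammer lengths $(\alpha,\beta)$: \eqref{prop-b1} applies whenever $l<i$ (so $\alpha=\beta$); \eqref{prop-b2} applies when $i<l$ and $\chi(i\le n_0)=\chi(l\le n_0)$ (again $\alpha=\beta$); and \eqref{prop-c} applies in the asymmetric case $i\le n_0<l$, where $\alpha=c-1$ and $\beta=c$, producing the extra factor $-x_i/x_l$ visible in the last product of \eqref{A}. The range hypotheses on $t$ in \eqref{e-ab} are automatically satisfied by the bounds $0\le j\le c-2$ in $A$ and $0\le j\le c-1$ in $B$. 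Once the case-matching is fixed, the four products in \eqref{A} and \eqref{B} can be read off directly, and the accumulated $q$-exponents $q^{(c-1)j}$, $q^{cj}$, $q^{(c-1)(j+1)}$, $q^{c(j+1)}$ contributed by the respective sub-ranges of $l$ sum to exactly $(c-1)\bigl(j(n-1)+n_0-i\bigr)+j(n-n_0)$ for $A_{ij}$ and $(n-1)jc+(n-i)c-n_0 j$ for $B_{ij}$.

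The main obstacle is organisational rather than conceptual: keeping the four sub-ranges of $l$ (separated by $l<i$ vs.\ $l>i$ and $l\le n_0$ vs.\ $l>n_0$) and the two ranges of $i$ aligned with the correct specialisation of \eqref{prop-b1}, \eqref{prop-b2}, or \eqref{prop-c}, while simultaneously confirming that the reindexed diagonal factor is $D_{n-1,n_0-\chi(i\le n_0)}(x^{(i)})$ on the nose. With this alignment set, no ingredient deeper than the residue calculus for a proper rational function in $w$ together with the three identities of \eqref{e-ab} is required.
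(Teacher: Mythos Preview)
Your proposal is correct and follows essentially the same route as the paper: partial fractions of $S_{n,n_0}(x)$ in $w^{-1}$, residue evaluation at $w=q^jx_i$, identification of the surviving cross-terms with $D_{n-1,n_0-\chi(i\le n_0)}(x^{(i)})$, and case-by-case application of \eqref{prop-b1}--\eqref{prop-c} to the $(l,i)$-factors. The paper's proof is organised in the same way, so there is nothing to add.
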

Note that the $A_{ij}$ and the $B_{ij}$ are polynomials in $x_i$. Furthermore, the degrees in $x_i$ of the $A_{ij}$ are at least $n-n_0$. If $c=1$ then the first double sum in \eqref{e-Fsplit} vanishes.
\begin{proof}
By partial fraction decomposition of $S_{n,n_0}(x)$ with respect to $w^{-1}$,
we can rewrite $S_{n,n_0}(x)$ as \eqref{e-Fsplit} and
\begin{equation}\label{Aij}
A_{ij}=S_{n,n_0}(x)(1-q^jx_i/w)|_{w=q^jx_i} \quad \text{for $i=1,\dots,n_0$ and $j=0,\dots,c-2$,}
\end{equation}
and
\begin{equation}\label{Bij}
B_{ij}=S_{n,n_0}(x)(1-q^jx_i/w)|_{w=q^jx_i} \quad \text{for $i=n_0+1,\dots,n$ and $j=0,\dots,c-1$.}
\end{equation}

Carrying out the substitution $w=q^jx_i$ in $S_{n,n_0}(x)(1-q^jx_i/w)$ for $i=1,\dots,n_0$ yields
\begin{multline}\label{A1}
A_{ij}=
\frac{1}{(q^{-j})_j(q)_{c-j-2}}
\prod_{l=1}^{i-1}\frac{(x_l/x_i)_{c-1}(qx_i/x_l)_{c-1}}{(q^{-j}x_l/x_i)_{c-1}}
\prod_{l=i+1}^{n_0}\frac{(x_i/x_l)_{c-1}(qx_l/x_i)_{c-1}}{(q^{-j}x_l/x_i)_{c-1}}
\\
\times
\prod_{l=n_0+1}^n\frac{(x_i/x_l)_{c-1}(qx_l/x_i)_{c-1}}{(q^{-j}x_l/x_i)_{c}}
\prod_{\substack{1\leq v<u\leq n\\v,u\neq i}}
(x_v/x_u)_{c-\chi(v\leq n_0)}(qx_u/x_v)_{c-\chi(v\leq n_0)}.
\end{multline}
Using \eqref{e-ab} with $(i,j,t,y)\mapsto (c-1,c-1,j,x_i/x_l)$,
we have
\begin{subequations}\label{e-B}
\begin{equation}\label{B1}
\frac{(x_l/x_i)_{c-1}(qx_i/x_l)_{c-1}}{(q^{-j}x_l/x_i)_{c-1}}
=q^{j(c-1)}\big(q^{2-c}x_i/x_l\big)_j\big(q^{j+1}x_i/x_l\big)_{c-j-1} \quad \text{for $j=0,\dots,c-1$},
\end{equation}
\begin{equation}\label{B2}
\frac{(x_i/x_l)_{c-1}(qx_l/x_i)_{c-1}}{(q^{-j}x_l/x_i)_{c-1}}
=q^{(j+1)(c-1)}\big(q^{1-c}x_i/x_l\big)_{j+1}\big(q^{j+1}x_i/x_l\big)_{c-j-2}
\quad \text{for $j=-1,\dots,c-2$},
\end{equation}
and
\begin{equation}
\frac{(x_i/x_l)_{c-1}(qx_l/x_i)_{c-1}}{(q^{-j}x_l/x_i)_{c}}=
-q^{cj}x_i/x_l(q^{1-c}x_i/x_l)_j(q^{j+1}x_i/x_l)_{c-j-2} \quad \text{for $j=0,\dots,c-2$},
\end{equation}
respectively.
\end{subequations}
Substituting \eqref{e-B} into \eqref{A1}, and using
\[
\prod_{\substack{1\leq v<u\leq n\\v,u\neq i}}
(x_v/x_u)_{c-\chi(v\leq n_0)}(qx_u/x_v)_{c-\chi(v\leq n_0)}=D_{n-1,n_0-1}(x^{(i)}) \quad \text{for $i=1,\dots,n_0$},
\]
we obtain \eqref{A}.

Carrying out the substitution $w=q^jx_i$ in $FD_{n,n_0}(x)(1-q^jx_i/w)$ for $i=n_0+1,\dots,n$ yields
\begin{multline}\label{BB}
B_{ij}=
\prod_{l=1}^{n_0}\frac{(x_l/x_i)_{c-1}(qx_i/x_l)_{c-1}}{(q^{-j}x_l/x_i)_{c-1}}
\prod_{l=n_0+1}^{i-1}\frac{(x_l/x_i)_{c}(qx_i/x_l)_{c}}{(q^{-j}x_l/x_i)_{c}}
\prod_{l=i+1}^n\frac{(x_i/x_l)_{c}(qx_l/x_i)_{c}}{(q^{-j}x_l/x_i)_{c}}
\\
\times
\frac{1}{(q^{-j})_j(q)_{c-j-1}}
\prod_{\substack{1\leq v<u\leq n\\v,u\neq i}}
(x_v/x_u)_{c-\chi(v\leq n_0)}(qx_u/x_v)_{c-\chi(v\leq n_0)}.
\end{multline}
Taking $c\mapsto c+1$ in \eqref{B1} and \eqref{B2} respectively, we have
\begin{equation}\label{B3}
\frac{(x_l/x_i)_{c}(qx_i/x_l)_{c}}{(q^{-j}x_l/x_i)_{c}}
=q^{jc}\big(q^{1-c}x_i/x_l\big)_j\big(q^{j+1}x_i/x_l\big)_{c-j}
\quad \text{for $j=0,\dots,c$},
\end{equation}
and
\begin{equation}\label{B4}
\frac{(x_i/x_l)_{c}(qx_l/x_i)_{c}}{(q^{-j}x_l/x_i)_{c}}
=q^{(j+1)c}\big(q^{-c}x_i/x_l\big)_{j+1}\big(q^{j+1}x_i/x_l\big)_{c-j-1}
\quad \text{for $j=-1,\dots,c-1$}.
\end{equation}
Substituting \eqref{B1}, \eqref{B3} and \eqref{B4} into \eqref{BB}, and using
\[
\prod_{\substack{1\leq v<u\leq n\\v,u\neq i}}
(x_v/x_u)_{c-\chi(v\leq n_0)}(qx_u/x_v)_{c-\chi(v\leq n_0)}=D_{n-1,n_0}(x^{(i)}) \quad \text{for $i=n_0+1,\dots,n$},
\]
we obtain \eqref{B}.
\end{proof}

\subsection{Consequences of the splitting formula}\label{sec-splitting-conseq}

In this subsection, we obtain several vanishing constant terms by the splitting formula for $S_{n,n_0}(x)$.

We begin with the next vanishing lemma.
\begin{lem}\label{lem-positiveroots}
Let $D_{n,n_0}(x;c)$ be defined in \eqref{D},
$v=(v_1,\dots,v_n)\in \mathbb{Z}^n$, and $\lambda$ be a partition such that $|v|=|\lambda|$.
If $\lambda_1>\max\{v_i\mid i=1,\dots,n\}$, then
\begin{equation}
\CT_x x^{-v}h_{\lambda}\Big[\sum_{t=1}^{n}\frac{1-q^{c-\chi(t\leq n_0)}}{1-q}x_t\Big]
D_{n,n_0}(x;c)=0.
\end{equation}
\end{lem}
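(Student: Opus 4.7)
The plan is to extract each factor $h_{\lambda_k}[Y]$ (where $Y:=\sum_{t=1}^n\frac{1-q^{c-\chi(t\leq n_0)}}{1-q}x_t$ and $r:=\ell(\lambda)$) as a constant-term extraction in an auxiliary variable $w_k$, so that the splitting formula (Proposition~\ref{prop-split}) can be applied. The generating function~\eqref{e-gfcomplete} gives
\[
\sum_{l\geq 0}w^{-l}h_l[Y]=\prod_{t=1}^n\prod_{j=0}^{c-\chi(t\leq n_0)-1}(1-q^jx_t/w)^{-1},
\]
so in the iterated Laurent series with $w$ outermost, $h_l[Y]=\CT_w w^l\prod_{t,j}(1-q^jx_t/w)^{-1}$. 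Introducing one auxiliary $w_k$ per factor of $h_\lambda[Y]$ and working with the order $w_1\succ\cdots\succ w_r\succ x_n\succ\cdots\succ x_1$, we rewrite
\[
\CT_x x^{-v}h_\lambda[Y]D_{n,n_0}(x;c)=\CT_{w_1,\ldots,w_r,x}\,x^{-v}\prod_{k=1}^r w_k^{\lambda_k}\cdot D_{n,n_0}(x;c)\prod_{k,t,j}(1-q^jx_t/w_k)^{-1}.
\]

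Since $D_{n,n_0}(x;c)\prod_{t,j}(1-q^jx_t/w_1)^{-1}=S_{n,n_0}(x,w_1)$ by~\eqref{FD}, Proposition~\ref{prop-split} supplies a partial-fraction decomposition for the $w_1$-dependent portion. Using $\CT_{w_1}\bigl(w_1^{\lambda_1}/(1-q^{j'}x_i/w_1)\bigr)=q^{j'\lambda_1}x_i^{\lambda_1}$ (valid since $w_1$ is outermost and $\lambda_1\geq 0$), taking the $w_1$ constant term converts $w_1^{\lambda_1}S_{n,n_0}(x,w_1)$ into
\[
\sum_{i=1}^{n_0}\sum_{j'=0}^{c-2}q^{j'\lambda_1}A_{ij'}(x)\,x_i^{\lambda_1}\;+\;\sum_{i=n_0+1}^n\sum_{j'=0}^{c-1}q^{j'\lambda_1}B_{ij'}(x)\,x_i^{\lambda_1},
\]
where, by the remark following Proposition~\ref{prop-split}, each $A_{ij'}$ and $B_{ij'}$ is a polynomial in $x_i$.

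The conclusion now follows from an $x_i$-degree count applied to each summand. Since $A_{ij'}(x)$ and $B_{ij'}(x)$ are polynomials in $x_i$, the product $A_{ij'}(x)x_i^{\lambda_1}$ (respectively $B_{ij'}(x)x_i^{\lambda_1}$) is a polynomial in $x_i$ of minimum $x_i$-degree at least $\lambda_1$. The remaining factor $x^{-v}\prod_{k\geq 2}w_k^{\lambda_k}\prod_{k\geq 2,t,j}(1-q^jx_t/w_k)^{-1}$ expands, in our iterated Laurent series order, as a power series in each $x_t/w_k$ for $k\geq 2$, contributing only nonneg powers of $x_i$; the sole negative $x_i$-power comes from $x_i^{-v_i}$. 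Hence the minimum $x_i$-power in each summand of the full integrand is at least $\lambda_1-v_i>0$ by the hypothesis $\lambda_1>\max_i v_i$, so $\CT_{x_i}$ annihilates every summand. Since the constant-term operators in this iterated Laurent series ring commute, the full constant term vanishes. The only delicate point is the bookkeeping: one must set up the iterated Laurent series order so that the generating function identity, the splitting formula, and the final $x_i$-degree analysis are all mutually consistent; no deeper combinatorial input is required beyond Proposition~\ref{prop-split}.
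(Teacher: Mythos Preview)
Your proof is correct and follows essentially the same approach as the paper's: both use the generating-function representation of $h_{\lambda_1}$ to introduce an auxiliary variable, apply the splitting formula (Proposition~\ref{prop-split}) for $S_{n,n_0}$, and then kill each summand by an $x_i$-degree count using $\lambda_1>v_i$. The only difference is that the paper introduces a single auxiliary variable $w$ for $h_{\lambda_1}$ alone and leaves $h_{\lambda^{(1)}}[Y]$ as an honest polynomial in the $x_t$ (which manifestly contributes nonnegative $x_i$-powers), whereas you introduce $w_1,\dots,w_r$ for every factor and then argue that the unexpanded $k\geq 2$ factors contribute nonnegative $x_i$-powers via the series expansion in the chosen variable order; this extra bookkeeping is harmless but unnecessary.
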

\begin{proof}
By the generating function for complete symmetric functions \eqref{e-gfcomplete},
\[
h_{l}\Big[\sum_{t=1}^{n}\frac{1-q^{c-\chi(t\leq n_0)}}{1-q}x_t\Big]
=\CT_w w^l\prod_{i=1}^n(x_i/w)^{-1}_{c-\chi(i\leq n_0)}
\]
for a nonnegative integer $l$.
Thus
\begin{multline}\label{pr-1}
\CT_x x^{-v}h_{\lambda}\Big[\sum_{t=1}^{n}\frac{1-q^{c-\chi(t\leq n_0)}}{1-q}x_t\Big]
D_{n,n_0}(x;c)\\
=\CT_{x,w}\frac{w^{\lambda_1}h_{\lambda^{(1)}}\big[\sum_{t=1}^{n}
\frac{1-q^{c-\chi(t\leq n_0)}}{1-q}x_t\big]D_{n,n_0}(x;c)}{x^v\prod_{i=1}^n(x_i/w)_{c-\chi(i\leq n_0)}},
\end{multline}
where $\lambda^{(1)}=(\lambda_2,\lambda_3,\dots)$.
By the splitting formula \eqref{e-Fsplit} for $S_{n,n_0}(x)$,
\begin{multline}\label{pr-2}
\CT_{x,w}\frac{w^{\lambda_1}
h_{\lambda^{(1)}}\big[\sum_{t=1}^{n}\frac{1-q^{c-\chi(t\leq n_0)}}{1-q}x_t\big]
D_{n,n_0}(x;c)}{x^v\prod_{i=1}^n(x_i/w)_{c-\chi(i\leq n_0)}}
=\CT_{x,w}\sum_{i=1}^{n_0}\sum_{j=0}^{c-2}\frac{w^{\lambda_1}
h_{\lambda^{(1)}}\big[\sum_{t=1}^{n}\frac{1-q^{c-\chi(t\leq n_0)}}{1-q}x_t\big]
A_{ij}}{x^v(1-q^jx_i/w)}\\
+\CT_{x,w}\sum_{i=n_0+1}^{n}\sum_{j=0}^{c-1}\frac{w^{\lambda_1}
h_{\lambda^{(1)}}\big[\sum_{t=1}^{n}\frac{1-q^{c-\chi(t\leq n_0)}}{1-q}x_t\big]
B_{ij}}
{x^v(1-q^jx_i/w)}.
\end{multline}
Here the $A_{ij}$ and the $B_{ij}$ are polynomials in $x_i$.
Taking the constant term with respect to $w$ in the right-hand side of \eqref{pr-2} yields
\begin{align}
&\CT_{x,w}\frac{w^{\lambda_1}
h_{\lambda^{(1)}}\big[\sum_{t=1}^{n}\frac{1-q^{c-\chi(t\leq n_0)}}{1-q}x_t\big]D_{n,n_0}(x;c)}
{x^v\prod_{i=1}^n(x_i/w)_{c-\chi(i\leq n_0)}}\label{pr-3}\\
&\quad=\CT_{x}\sum_{i=1}^{n_0}\sum_{j=0}^{c-2}q^{j\lambda_1}x^{-v}x_i^{\lambda_1}
h_{\lambda^{(1)}}\Big[\sum_{t=1}^{n}\frac{1-q^{c-\chi(t\leq n_0)}}{1-q}x_t\Big]A_{ij}\nonumber\\
&\quad+\CT_{x}\sum_{i=n_0+1}^{n}\sum_{j=0}^{c-1}q^{j\lambda_1}x^{-v}x_i^{\lambda_1}
h_{\lambda^{(1)}}\Big[\sum_{t=1}^{n}\frac{1-q^{c-\chi(t\leq n_0)}}{1-q}x_t\Big]
B_{ij}.\nonumber
\end{align}
Since $\max\{v\}<\lambda_1$, and $A_{ij}$ and $B_{ij}$ are polynomials in $x_i$, every term in the sums of the right-hand side of \eqref{pr-3} is a polynomial in $x_i$ with no constant term.
Thus
\[
\CT_{x,w}\frac{w^{\lambda_1}
h_{\lambda^{(1)}}\big[\sum_{t=1}^{n}\frac{1-q^{c-\chi(t\leq n_0)}}{1-q}x_t\big]D_{n,n_0}(x;c)}
{x^v\prod_{i=1}^n(x_i/w)_{c-\chi(i\leq n_0)}}=0.
\]
Together with \eqref{pr-1}, the lemma follows.
\end{proof}

A consequence of Lemma~\ref{lem-positiveroots} is the next result.
\begin{lem}\label{lem-Roots-2}
For nonnegative integers $a,m,c,l$ such that $c,l\neq 0$,
let
\begin{equation}
P_{n,n_0}(a,c,l,m;H):=x_0^{-l-m}h_{l}\Big[\sum_{i=1}^{n}\frac{1-q^{c-\chi(i\leq n_0)}}{1-q}x_i\Big] H\times
\prod_{i=1}^n(x_0/x_i)_a \times D_{n,n_0}(x;c),
\end{equation}
where $H$ is a homogenous polynomial in $x_0,x_1,\dots,x_n$ of degree $m$, and $D_{n,n_0}(x;c)$ is defined in \eqref{D}. Then $\CT\limits_{x} P_{n,n_0}(a,c,l,m;H)=0$ for $a=0,1,\dots,l-1$.
\end{lem}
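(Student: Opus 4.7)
My plan is to follow the template of Lemma~\ref{lem-positiveroots}, introducing an auxiliary variable $w$ to represent the complete symmetric function, applying the splitting formula to $S_{n,n_0}(x)$, and then carrying out a degree count in $x_i$ to conclude termwise vanishing. The extra factor $\prod_i (x_0/x_i)_a$, which is not present in Lemma~\ref{lem-positiveroots}, is the only new ingredient; its only effect on the argument is to shift the lower bound on the $x_i$-degree down by at most $a$, and this shift is harmless precisely in the range $0\leq a\leq l-1$.

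First, by the generating function \eqref{e-gfcomplete},
\[
h_l\Big[\sum_{i=1}^n \frac{1-q^{c-\chi(i\leq n_0)}}{1-q}x_i\Big] = \CT_w\, w^l \prod_{i=1}^n (x_i/w)^{-1}_{c-\chi(i\leq n_0)},
\]
so that, using the definition \eqref{FD} of $S_{n,n_0}(x)$,
\[
\CT_x P_{n,n_0}(a,c,l,m;H) = \CT_{x,w}\, w^l x_0^{-l-m} H \prod_{k=1}^n (x_0/x_k)_a\, S_{n,n_0}(x).
\]
I would then invoke the splitting formula \eqref{e-Fsplit} to express $S_{n,n_0}(x)$ as a sum of simple rational functions in $w$, and take $\CT_w$ of each summand via $\CT_w\, w^l/(1-q^j x_i/w)=q^{jl}x_i^l$. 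By the linearity of $\CT_x$ and the commutativity of constant-term operators in the iterated Laurent series ring, the lemma reduces to showing
\[
\CT_x\!\Big(x_0^{-l-m}\, H \prod_{k=1}^n (x_0/x_k)_a\, x_i^l\, C_{ij}\Big)=0
\]
for each index $(i,j)$ appearing in \eqref{e-Fsplit}, where $C_{ij}\in\{A_{ij},B_{ij}\}$.

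The key step is then a single-variable degree count in $x_i$. Viewed as a function of $x_i$ alone, the factors $x_0^{-l-m}$ and $\prod_{k\neq i}(x_0/x_k)_a$ are independent of $x_i$; the homogeneous polynomial $H$ contributes $x_i$-powers of degree at least $0$; the expansion $(x_0/x_i)_a=\sum_{r=0}^{a}(-1)^r q^{\binom{r}{2}}\qbinom{a}{r}(x_0/x_i)^r$ has minimum $x_i$-degree $-a$; and, crucially, $x_i^l C_{ij}$ has $x_i$-degree at least $l$, since Proposition~\ref{prop-split} guarantees $C_{ij}$ is a polynomial in $x_i$ of nonnegative degree. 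Summing these four contributions yields a minimum $x_i$-degree of at least $l-a$, which is $\geq 1$ for every $a\in\{0,1,\dots,l-1\}$. Hence each summand is a Laurent polynomial in $x_1,\dots,x_n$ whose $x_i$-expansion contains only strictly positive powers of $x_i$, so $\CT_{x_i}$, and therefore $\CT_x$, annihilates it.

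The main obstacle is really only the polynomiality claim used in the degree count, namely that $C_{ij}$ is a polynomial in $x_i$ of nonnegative degree; this is explicit in the statement of Proposition~\ref{prop-split} and requires no new work. Apart from that, everything is linearity of $\CT_x$, commutativity of constant-term operators in iterated Laurent series, and the partial-fraction decomposition provided by the splitting formula, all established earlier in the paper. I therefore do not anticipate any genuine difficulty beyond careful bookkeeping.
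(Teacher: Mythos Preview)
Your proposal is correct and follows essentially the same approach as the paper: both arguments rest on the splitting formula (Proposition~\ref{prop-split}) followed by an $x_i$-degree count showing each summand carries only strictly positive powers of $x_i$. The only difference is organizational---the paper first extracts $\CT_{x_0}$ (handling $a=0$ by a separate $x_0$-degree argument) to reduce to Lemma~\ref{lem-positiveroots}, whereas you carry the factor $(x_0/x_i)_a$ through the splitting and absorb its contribution of at most $-a$ to the $x_i$-degree directly into the count, arriving uniformly at $l-a\geq 1$.
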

Note that it is easy to see that $B_{n,n_0}(a,b,c,l,\mu)=C_{n,n_0}(a,b,c,l,m)=0$ for $a\in \{0,\dots,l-1\}$
by Lemma~\ref{lem-Roots-2}.
\begin{proof}
We first show the $a=0$ case. If $a=0$ then we can write
\begin{equation*}
P_{n,n_0}(0,c,l,m;H)=x_0^{-l-m}h_{l}\Big[\sum_{i=1}^{n}\frac{1-q^{c-\chi(i\leq n_0)}}{1-q}x_i\Big] H\times D_{n,n_0}(x;c).
\end{equation*}
It is clear that $P_{n,n_0}(0,c,l,m;H)$ is a Laurent polynomial in $x_0$ of degree at most $-l<0$.
Thus
\[
\CT_{x} P_{n,n_0}(0,c,l,m;H)=\CT_{x_0} P_{n,n_0}(0,c,l,m;H)=0.
\]

In the following of the proof, we assume that $l>1$. Otherwise, if $l=1$ then we only need to show
that $\CT\limits_{x} P_{n,n_0}(0,c,l,m;H)=0$ and we have completed it in the above.
For $a>0$, expanding the factor $H\prod_{i=1}^n(x_0/x_i)_a$ of
$P_{n,n_0}(a,c,l,m;H)$ and taking the constant term with respect to $x_0$ gives
\begin{equation}\label{expand-factor}
\CT_{x} P_{n,n_0}(a,c,l,m;H)=\sum_v\CT_x x^{-v}
h_{l}\Big[\sum_{i=1}^{n}\frac{1-q^{c-\chi(i\leq n_0)}}{1-q}x_i\Big]
D_{n,n_0}(x;c),
\end{equation}
where $v:=(v_1,\dots,v_n)\in \mathbb{Z}^n$, $x^v=x_1^{v_1}\cdots x_n^{v_n}$, and the sum
is over a finite set of $v$ such that $|v|=l$ and $\max\{v\}\leq a$.
We omit the explicit range of $v$ in \eqref{expand-factor} since we do not need it here.
Then, by Lemma~\ref{lem-positiveroots} with $\ell(\lambda)=1$ every constant term in the sum of \eqref{expand-factor} vanishes for $a=1,\dots,l-1$.
It follows that $\CT\limits_{x} P_{n,n_0}(a,c,l,m;H)=0$ for $a=1,\dots,l-1$.
\end{proof}

In the following of this subsection, we find that some monomials do not appear in the expansion of $D_{n,n_0}(x,c)$.

Denote
\begin{equation}\label{T}
T_{n,n_0}(x)=T_{n,n_0}(x,c):=D_{n,n_0}(x,c)\times
\prod_{i=1}^{n_0}(y/x_i)^{-1}_{c-1}
\prod_{i=n_0+1}^{n}(q^{-1}y/x_i)^{-1}_{c},
\end{equation}
where $y$ is a parameter such that
all terms of the form $q^uy/x_i$ in \eqref{T}
satisfy $|q^uy/x_i|<1$. Hence,
\[
\frac{1}{1-q^uy/x_i}=\sum_{j\geq 0} (q^uy/x_i)^j.
\]
Then, it is easy to see that
\begin{equation}\label{e-relation-2}
\CT_x x^vD_{n,n_0}(x)=\CT_{y,x}x^vT_{n,n_0}(x),
\end{equation}
where $v:=(v_1,v_2,\dots,v_n)\in \mathbb{Z}^n$ and
$x^v$ denotes the monomial $x_1^{v_1}\cdots x_n^{v_n}$.
The rational function $T_{n,n_0}(x)$ admits the following partial fraction expansion.
\begin{prop}\label{prop-split2}
Let $T_{n,n_0}(x)$ be defined in \eqref{T}. Then
\begin{equation}\label{e-Fsplit2}
T_{n,n_0}(x)=\sum_{i=1}^{n_0}\sum_{j=0}^{c-2}\frac{A_{ij}}{1-q^jy/x_i}
+\sum_{i=n_0+1}^{n}\sum_{j=-1}^{c-2}\frac{B_{ij}}{1-q^jy/x_i},
\end{equation}
where
\begin{multline}\label{A-2}
A_{ij}=\frac{1}{(q^{-j})_j(q)_{c-j-2}}
\prod_{l=1}^{i-1}q^{(c-1)(j+1)}\big(q^{1-c}x_l/x_i\big)_{j+1}\big(q^{j+1}x_l/x_i\big)_{c-j-2}
\times D_{n-1,n_0-1}(x^{(i)})
\\
\times\prod_{l=i+1}^{n_0}q^{(c-1)j}\big(q^{2-c}x_l/x_i\big)_j\big(q^{j+1}x_l/x_i\big)_{c-j-1}
\prod_{l=n_0+1}^n(-x_l/x_i)q^{cj+1}(q^{2-c}x_l/x_i)_j(q^{j+2}x_l/x_i)_{c-j-2},
\end{multline}
and
\begin{multline}\label{B-2}
B_{ij}=\frac{1}{(q^{-j-1})_{j+1}(q)_{c-j-2}}
\prod_{l=1}^{n_0}q^{(c-1)(j+1)}\big(q^{1-c}x_l/x_i\big)_{j+1}\big(q^{j+1}x_l/x_i\big)_{c-j-2}
\times D_{n-1,n_0}(x^{(i)})
\\
\times\prod_{l=n_0+1}^{i-1}q^{c(j+2)}\big(q^{-c}x_l/x_i\big)_{j+2}\big(q^{j+2}x_l/x_i\big)_{c-j-2}
\prod_{l=i+1}^nq^{c(j+1)}\big(q^{1-c}x_l/x_i\big)_{j+1}\big(q^{j+2}x_l/x_i\big)_{c-j-1},
\end{multline}
where $x^{(i)}:=(x_1,\dots,x_{i-1},x_{i+1},\dots,x_n)$.
\end{prop}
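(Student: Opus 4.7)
The argument runs parallel to the proof of Proposition~\ref{prop-split}, with $y$ playing the role that $w$ played there and the sign of the shift in $q$ reversed. The first step is to observe that the only $y$-dependent factors in $T_{n,n_0}(x)$ are the denominator factors $\prod_{i=1}^{n_0}(y/x_i)_{c-1}$ and $\prod_{i=n_0+1}^{n}(q^{-1}y/x_i)_c$, so that the simple poles in $y$ occur at $y = q^{-j}x_i$ for $1 \leq i \leq n_0$, $0 \leq j \leq c-2$ and at $y = q^{-j}x_i$ for $n_0+1 \leq i \leq n$, $-1 \leq j \leq c-2$. Since $T_{n,n_0}(x)$ has degree $0$ in $y$ while its denominator has positive degree in $y$, partial fraction expansion in $y$ yields \eqref{e-Fsplit2} with
\begin{equation*}
A_{ij} = T_{n,n_0}(x)(1-q^jy/x_i)\big|_{y = q^{-j}x_i}, \qquad B_{ij} = T_{n,n_0}(x)(1-q^jy/x_i)\big|_{y = q^{-j}x_i}
\end{equation*}
over the respective index ranges.

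The second step carries out the substitution $y = q^{-j}x_i$ in each residue. After the pole factor is removed, the $i$-th block of the $y$-denominator contributes $1/\big((q^{-j})_j(q)_{c-j-2}\big)$ when $i \leq n_0$ and $1/\big((q^{-j-1})_{j+1}(q)_{c-j-2}\big)$ when $i > n_0$; the surviving $y$-denominator factors at indices $l \neq i$ evaluate to $(q^{-j}x_i/x_l)_{c-1}$ for $l \leq n_0$ and to $(q^{-j-1}x_i/x_l)_c$ for $l > n_0$. The part of $D_{n,n_0}(x)$ not involving $x_i$ assembles into $D_{n-1,n_0-1}(x^{(i)})$ when $i \leq n_0$ and into $D_{n-1,n_0}(x^{(i)})$ when $i > n_0$, after a routine check that the $\chi$-conventions match up under relabelling of indices.

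The third step pairs each remaining factor of $D_{n,n_0}(x)$ involving $x_i$ with the surviving $y$-denominator factor at the same index $l$, giving three types of ratios. For $i \leq n_0$ these are simplified by invoking \eqref{e-ab}: on $1 \leq l \leq i-1$ apply \eqref{prop-b2} with $(i,j,t,y) \mapsto (c-1,c-1,j,x_l/x_i)$; on $i+1 \leq l \leq n_0$ apply \eqref{prop-b1} with the same parameter choice; on $n_0+1 \leq l \leq n$ first peel the factor $1-q^{-j-1}x_i/x_l$ out of $(q^{-j-1}x_i/x_l)_c$ to reduce the denominator subscript to $c-1$, which (after cancellation against a factor inside $(qx_l/x_i)_{c-1}$) produces the prefactor $-(x_l/x_i)q^{cj+1}$, and then apply \eqref{prop-b1} to the residual ratio. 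Collecting all $q$-powers gives \eqref{A-2}; an analogous three-case analysis for $i > n_0$ with the pole index shifted by one yields \eqref{B-2}. The main obstacle is purely bookkeeping: the identities in \eqref{e-ab} have narrow ranges of validity on the auxiliary parameter $t$, so one must select the correct identity and parameter assignment in each of the three $l$-ranges, and in particular must peel off the exceptional factor on the range $n_0+1 \leq l \leq n$ before the identities apply.
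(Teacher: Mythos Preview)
Your proposal is correct and follows exactly the approach the paper intends: the paper omits the proof, stating only that it is ``similar to Proposition~\ref{prop-split}'', and your outline is precisely that parallel argument---partial fractions in $y$, followed by case-by-case simplification of the residues using the identities \eqref{e-ab} with suitably shifted parameters. Your handling of the range $n_0+1\leq l\leq n$ in the $A_{ij}$ case (peeling off $1-q^{-j-1}x_i/x_l$, rewriting it as $-q^{-j-1}(x_i/x_l)(1-q^{j+1}x_l/x_i)$, and cancelling against the first factor of $(q^{j+1}x_l/x_i)_{c-1-j}$) is the right maneuver, and the $B_{ij}$ simplifications go through by applying \eqref{prop-b1} and \eqref{prop-b2} with the shift $t\mapsto j+1$ as you indicate.
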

Note that the $A_{ij}$ and the $B_{ij}$ are polynomials in $x_i^{-1}$. Furthermore, the degrees in $x_i^{-1}$ of the $A_{ij}$ are at least $n-n_0$. If $c=1$ then the first double sum in \eqref{e-Fsplit2} vanishes.
The proof of the proposition is similar to Proposition~\ref{prop-split}. Hence, we omit the proof.

Using the splitting formula~\eqref{e-Fsplit2}, we can prove the next simple example.
\begin{ex}\label{lem-vanfund-2}
For a positive integer $c$,
\begin{equation}
\CT_x\frac{x_1x_2}{x_3x_4}
\times D_{4,2}(x,c)=0.
\end{equation}
\end{ex}
\begin{proof}
Recall the definition of $T_{n,n_0}(x)$ in \eqref{T}.
By \eqref{e-relation-2},
\[
\CT_x\frac{x_1x_2}{x_3x_4}
\times D_{4,2}(x)=\CT_{y,x}\frac{x_1x_2}{x_3x_4}\times T_{4,2}(x).
\]
Using the splitting formula \eqref{e-Fsplit2}
for $T_{n,n_0}(x)$ with $n=4,n_0=2$, we have
\begin{equation}\label{fund-1-2}
\CT_x\frac{x_1x_2}{x_3x_4}\times D_{4,2}(x)
=\CT_{y,x}\frac{x_1x_2}{x_3x_4}
\times\bigg(
\sum_{i=1}^{2}\sum_{j=0}^{c-2}\frac{A_{ij}}{1-q^jy/x_i}
+\sum_{i=3}^{4}\sum_{j=-1}^{c-2}\frac{B_{ij}}{1-q^jy/x_i}\bigg).
\end{equation}
Extracting out the constant term with respect to $y$ in the right-hand side of \eqref{fund-1-2} gives
\[
\CT_x\frac{x_1x_2}{x_3x_4}\times D_{4,2}(x)
=\CT_{x}\frac{x_1x_2}{x_3x_4}
\times\bigg(
\sum_{i=1}^{2}\sum_{j=0}^{c-2}A_{ij}
+\sum_{i=3}^{4}\sum_{j=-1}^{c-2}B_{ij}\bigg).
\]
Since $A_{ij}$ and $B_{ij}$ are polynomials in $x_i^{-1}$ and the degree in $x_i^{-1}$ of $A_{ij}$
is at least 2,
\begin{align*}
\CT_{x}\frac{x_1x_2}{x_3x_4}A_{ij}&=0,\quad \text{for $i=1,2$ and $j=0,\dots,c-2$}, \\
\CT_{x}\frac{x_1x_2}{x_3x_4}B_{ij}&=0, \quad \text{for $i=3,4$ and $j=-1,\dots,c-2$}.
\end{align*}
Then
\[
\CT_x\frac{x_1x_2}{x_3x_4}\times D_{4,2}(x)=0. \qedhere
\]
\end{proof}
By the splitting formula \eqref{e-Fsplit2} for $T_{n,n_0}(x)$, we find that a family of coefficients
in $D_{n,n_0}(x,c)$ vanish.
\begin{lem}\label{lem-vaniconst2}
For $2\leq n_0\leq n-1$ and an integer $h$ such that $1\leq h\leq n_0-1$,
\begin{equation}\label{V0-2}
\CT_x\frac{x_1x_2\cdots x_{n_0}}{x_{n_0+1}^hx_{n_0+2}^h\cdots x_n^h}\prod_{l=1}^nx_l^{t_l}
\times D_{n,n_0}(x,c)=0,
\end{equation}
where $t_1,t_2,\dots,t_n$ are nonnegative integers such that
$\sum_{l=1}^nt_l=h(n-n_0)-n_0$.
\end{lem}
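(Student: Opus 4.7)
The plan is to mimic the proof of Example~\ref{lem-vanfund-2}, applying the splitting formula \eqref{e-Fsplit2} and supplementing the degree argument with an induction that handles the surviving terms. Denote $M:=\frac{x_1\cdots x_{n_0}}{x_{n_0+1}^h\cdots x_n^h}\prod_{l=1}^n x_l^{t_l}$. By \eqref{e-relation-2} followed by \eqref{e-Fsplit2} and extraction of $\CT_y$,
\[
\CT_x M\,D_{n,n_0}(x,c)=\sum_{i=1}^{n_0}\sum_{j=0}^{c-2}\CT_x M\,A_{ij}+\sum_{i=n_0+1}^{n}\sum_{j=-1}^{c-2}\CT_x M\,B_{ij}.
\]
From the explicit formulas \eqref{A-2} and \eqref{B-2}, every monomial of $A_{ij}$ has $x_i$-power at most $-(n-n_0)$ for $i\le n_0$, and every monomial of $B_{ij}$ has $x_i$-power at most $0$ for $i>n_0$. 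Hence $M\cdot A_{ij}$ carries $x_i$-power at most $1+t_i-(n-n_0)$ and $M\cdot B_{ij}$ carries $x_i$-power at most $t_i-h$. Consequently $\CT_{x_i}M\,A_{ij}=0$ whenever $t_i\le n-n_0-2$, and $\CT_{x_i}M\,B_{ij}=0$ whenever $t_i\le h-1$, exactly as in Example~\ref{lem-vanfund-2}.

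For the surviving terms -- say $A_{i^*j}$ with $i^*\le n_0$ and $t_{i^*}\ge n-n_0-1$ -- I would extract the coefficient of $x_{i^*}^{-1-t_{i^*}}$ in $A_{i^*j}$. Looking at \eqref{A-2}, the factor $\prod_{l>n_0}(-x_l/x_{i^*})$ contributes the fixed piece $(-1)^{n-n_0}x_{i^*}^{-(n-n_0)}\prod_{l>n_0}x_l$, while the surplus $s:=t_{i^*}-(n-n_0-1)\ge 0$ is produced by distributing total degree $s$ across the polynomial factors $(q^u x_l/x_{i^*})_m$. This yields a finite sum of terms of the shape $c_\gamma\prod_{l\neq i^*}x_l^{k_l}\cdot D_{n-1,n_0-1}(x^{(i^*)})$ with $\sum_l k_l=s$, and combining with $M/x_{i^*}^{1+t_{i^*}}$ each surviving piece collapses to
\[
c_\gamma(-1)^{n-n_0}\CT_{x^{(i^*)}}\!\frac{x_1\cdots\widehat{x_{i^*}}\cdots x_{n_0}}{x_{n_0+1}^{h-1}\cdots x_n^{h-1}}\prod_{l\neq i^*}x_l^{t'_l}\cdot D_{n-1,n_0-1}(x^{(i^*)}),
\]
where $t'_l:=t_l+k_l$ satisfy $\sum_{l\neq i^*} t'_l=h(n-n_0)-n+1=(h-1)\big((n-1)-(n_0-1)\big)-(n_0-1)$, matching the hypothesis of Lemma~\ref{lem-vaniconst2} at parameters $(n-1,n_0-1,h-1)$. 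The analogous $B$-extraction at $i^*>n_0$ with $t_{i^*}\ge h$ reduces the problem to parameters $(n-1,n_0,h)$ instead, with the same type of numerical check confirming the identity $\sum t'_l=h(n-1-n_0)-n_0$.

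The argument then closes by induction on $n+h$: both reductions strictly decrease $n+h$ while preserving the structural constraint $1\le h'\le n_0'-1$. When $h=1$ the constraint $\sum t_l=n-2n_0$ together with $n_0\ge 2$ forces $t_i\le n-n_0-2$ for every $i\le n_0$, so $A$-extraction cannot arise and the induction reduces only via the $B$-type to $(n-1,n_0,1)$; the base cases are vacuous since $h(n-n_0)-n_0<0$ leaves no admissible $(t_l)$. The main obstacle will be the coefficient extraction itself: carrying out the explicit distribution of the surplus $s$ across the $(q^u x_l/x_{i^*})_m$ factors, keeping track of the $q$-binomial coefficients produced by each distribution, and verifying that every resulting monomial $M_\gamma$ really does satisfy both the numerical identity on $\sum t'_l$ and the monomial shape required to invoke Lemma~\ref{lem-vaniconst2} at the smaller parameters.
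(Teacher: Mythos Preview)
Your proposal is correct and follows essentially the same approach as the paper: apply the splitting formula \eqref{e-Fsplit2}, kill terms by the $x_i$-degree bound, and reduce the surviving $A$- and $B$-terms to the same lemma at parameters $(n-1,n_0-1,h-1)$ and $(n-1,n_0,h)$ respectively. The paper inducts on $n$ (with Example~\ref{lem-vanfund-2} as base) rather than on $n+h$, but the two reductions each decrease $n$, so either choice works; your observation that $h(n-n_0)-n_0\ge 0$ forces $n_0\le n-2$ is exactly what keeps the $B$-reduction inside the domain, and the paper makes the same point in slightly different language. Two small remarks: the paper disposes of $c=1$ separately (there the first double sum in \eqref{e-Fsplit2} is empty and $D_{n,n_0}(x,1)$ does not involve $x_1,\dots,x_{n_0}$ at all), and the ``main obstacle'' you flag is not one---you never need the explicit $q$-binomial coefficients, only the monomial shape and the identity $\sum t'_l=h'(n'-n'_0)-n'_0$, both of which you have already verified.
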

\begin{proof}
Denote by $L$ the left-hand side of \eqref{V0-2}.
If $c=1$ then $L$ reduces to
\[
\CT_x\frac{x_1x_2\cdots x_{n_0}}{x_{n_0+1}^hx_{n_0+2}^h\cdots x_n^h}
\prod_{l=1}^nx_l^{t_l}\times
\prod_{n_0+1\leq i<j\leq n}\big(1-x_i/x_j\big)\big(1-qz_j/x_i\big),
\]
which is clearly 0.
Then we prove $L=0$ for $c\geq 2$ by the induction on $n$.
The induction basis holds by Example~\ref{lem-vanfund-2}.
We assume that the lemma holds for $n\mapsto n-1$.

By \eqref{e-relation-2}, we have
\[
L=\CT_{x,w}\frac{x_1x_2\cdots x_{n_0}}{x_{n_0+1}^hx_{n_0+2}^h\cdots x_n^h}
\prod_{l=1}^nx_l^{t_l}\times
T_{n,n_0}(x).
\]
Using the splitting formula \eqref{e-Fsplit2} for $T_{n,n_0}(x)$ yields
\begin{equation*}
L=\CT_{y,x}\frac{x_1x_2\cdots x_{n_0}}{x_{n_0+1}^hx_{n_0+2}^h\cdots x_n^h}
\prod_{l=1}^nx_l^{t_l}\times\bigg(
\sum_{i=1}^{n_0}\sum_{j=0}^{c-2}\frac{A_{ij}}{1-q^jy/x_i}
+\sum_{i=n_0+1}^{n}\sum_{j=-1}^{c-2}\frac{B_{ij}}{1-q^jy/x_i}\bigg).
\end{equation*}
Extracting the constant term with respect to $y$ gives
\begin{equation}\label{V1}
L=\CT_{x}\frac{x_1x_2\cdots x_{n_0}}{x_{n_0+1}^hx_{n_0+2}^h\cdots x_n^h}
\prod_{l=1}^nx_l^{t_l}\times\bigg(
\sum_{i=1}^{n_0}\sum_{j=0}^{c-2}A_{ij}
+\sum_{i=n_0+1}^{n}\sum_{j=-1}^{c-2}B_{ij}\bigg).
\end{equation}
Let
\begin{equation}\label{C1}
C_1:=\CT_x\frac{x_1\cdots \hat{x_i}\cdots x_{n_0}}{x_{n_0+1}^h\cdots x_n^h}
\times x_i^{t_i+1}\prod_{\substack{l=1\\l\neq i}}^nx_l^{t_l}\times A_{ij}
\end{equation}
for $i=1,\dots,n_0$ and $j=0,\dots,c-2$,
and
\begin{equation}\label{C2}
C_2:=\CT_x\frac{x_1\cdots x_{n_0}}{x_{n_0+1}^h\cdots \hat{x_i}\cdots x_n^h}
\times x_i^{t_i-h}\prod_{\substack{l=1\\l\neq i}}^nx_l^{t_l}\times B_{ij}
\end{equation}
for $i=n_0+1,\dots,n$ and $j=-1,\dots,c-2$, where $\hat{x}_i$ means the omission of $x_i$. Note that we suppress the parameters in $C_1$ and
$C_2$. By \eqref{V1}, it is clear that $L$ is a finite sum of
constant terms of the forms of $C_1$ and $C_2$.
We show that $C_1=C_2=0$.

Notice that $A_{ij}$ is a polynomial in $x_i^{-1}$ of degree at least $n-n_0$.
If $t_i+1<n-n_0$ then $C_1=0$ since the Laurent polynomial in \eqref{C1} is actually a polynomial in $x_i^{-1}$ with no constant term.
If $h=1$ then
\[
t_i+1\leq \sum_{l=1}^nt_l+1=n-2n_0+1<n-n_0
\]
for $n_0>1$. Then $C_1=0$ by the same reason as above. Note that if $n_0=2$ then this forces $h=1$ and we also have  $C_1=0$.
The remainder cases for $C_1$ satisfy $n_0>2$, $t_i+1\geq n-n_0$ and $1<h\leq n_0-1$.
By the expression for $A_{ij}$ in \eqref{A-2} and taking the constant term with respect to $x_i$, $C_1$ can be written as a finite sum of the form
\begin{equation}\label{C1-1}
c\times\CT_{x^{(i)}}\frac{x_1\cdots \hat{x_i}\cdots x_{n_0}}{x_{n_0+1}^{h-1}\cdots x_n^{h-1}}
\times \prod_{\substack{l=1\\l\neq i}}^nx_l^{t'_l}\times D_{n-1,n_0-1}(x^{(i)}).
\end{equation}
Here $c\in K$ and the $t'_l$ are nonnegative integers such that
$\sum_{\substack{l=1\\l\neq i}}^nt'_l=(n-n_0)(h-1)-n_0+1$.
Since $1\leq h-1\leq n_0-2$ for $1<h\leq n_0-1$, the constant term \eqref{C1-1} equals 0 by the induction hypothesis.

Since $B_{ij}$ is a polynomial in $x_i^{-1}$,
if $t_i-h<0$ then $C_2=0$ since the Laurent polynomial in \eqref{C2} is in fact a polynomial in $x_i^{-1}$ with no constant term.
If $t_i-h\geq 0$,  by the expression for $B_{ij}$ in \eqref{B-2} and taking the constant term with respect to $x_i$, $C_2$ can be written as a finite sum of the form
\begin{equation}\label{C2-1}
c\times\CT_{x^{(i)}}\frac{x_1\cdots x_{n_0}}{x_{n_0+1}^h\cdots \hat{x_i}\cdots x_n^h}
\times \prod_{\substack{l=1\\l\neq i}}^nx_l^{t'_l}\times D_{n-1,n_0}(x^{(i)}).
\end{equation}
Here $c\in K$ and the $t'_j$ are nonnegative integers such that
$\sum_{\substack{l=1\\l\neq i}}^nt'_l=(n-n_0-1)h-n_0$.
The constant term \eqref{C2-1} equals 0 by the induction hypothesis for $n_0<n-1$.
For $n_0=n-1$, that the homogeneity of the Laurent polynomial in \eqref{C2} implies $h>t_i$.
This contradicts the assumption $t_i-h\geq 0$.

In conclusion, we obtain that all the forms of $C_1$ and $C_2$ defined in \eqref{C1} and \eqref{C2} respectively are zeros. Then $L=0$, completing the proof.
\end{proof}

\section{Macdonald polynomials}\label{sec-Mac}

In this section, we give several essential results for Macdonald polynomials \cite{MacSMC,Mac95}.

Given a sequence $\alpha=(\alpha_1,\alpha_2,\dots)$ of nonnegative integers such that $|\alpha|=\alpha_1+\alpha_2+\cdots$ is finite.
Let $X=\{x_1,x_2,\dots\}$ be an alphabet of countably many variables. Then the monomial symmetric function indexed by a partition $\lambda$ is defined as
\[
m_{\lambda}=m_{\lambda}(X):=\sum x_1^{\alpha_1}x_2^{\alpha_2}\cdots,
\]
where the sum is over all distinct permutations $\alpha$ of $\lambda$.
Denote $\mathbb{F}=\mathbb{Q}(q,t)$.
Let $\langle\cdot,\cdot\rangle:\Lambda_{\mathbb{F}}\times \Lambda_{\mathbb{F}} \rightarrow \mathbb{F}$
be the $q,t$-Hall scalar product on $\Lambda_{\mathbb{F}}$ given by \cite[page 306]{Mac95}
\[
\langle p_{\lambda}, p_{\mu}\rangle:=\delta_{\lambda,\mu}z_{\lambda}\prod_{i\geq 1}\frac{1-q^{\lambda_i}}{1-t^{\lambda_i}},
\]
where $z_{\lambda}:=\prod_{i\geq 1}i^{m_i}m_i!$ if $m_i$ is the number of parts $i$ in $\lambda$,
and the Kronecker symbol $\delta_{\lambda,\mu}=1$ if $\lambda=\mu$ and 0 otherwise.
The Macdonald polynomials
$P_{\lambda}=P_{\lambda}(q,t)=P_{\lambda}(X;q,t)$ are the unique symmetric functions \cite[VI, (4.7)]{Mac95} such that
\[
\langle P_{\lambda},P_{\mu}\rangle=0 \quad \text{if} \quad \lambda\neq \mu
\]
and
\begin{equation}\label{Mac-m}
P_{\lambda}=m_{\lambda}+\sum_{\mu<\lambda}u_{\lambda\mu}m_{\mu}, \quad u_{\lambda\mu}\in \mathbb{F}.
\end{equation}
Let
\[
b_{\lambda}=b_{\lambda}(q,t)=\langle P_{\lambda},P_{\lambda}\rangle^{-1},
\]
and
\begin{equation}\label{PQ}
Q_{\lambda}=b_{\lambda}P_{\lambda},
\end{equation}
so that
\[
\langle P_{\lambda}, Q_{\mu}\rangle=\delta_{\lambda \mu}
\]
for all partitions $\lambda$ and $\mu$. See \cite[VI, (4.11), (4.12)]{Mac95}.
For any three partitions $\lambda,\mu,\nu$ let
\[
f_{\mu \nu}^{\lambda}=f_{\mu \nu}^{\lambda}(q,t)=\langle Q_{\lambda},P_{\mu}P_{\nu}\rangle\in \mathbb{F}.
\]
The coefficient $f_{\mu \nu}^{\lambda}$ is called the $q,t$-Littlewood--Richardson coefficient.
Let $\lambda,\mu$ be partitions and define skew functions
$Q_{\lambda/\mu}\in \Lambda_{\mathbb{F}}$
by
\begin{equation}\label{skewQ1}
Q_{\lambda/\mu}=\sum_{\nu}f_{\mu \nu}^{\lambda}Q_{\nu}
\end{equation}
so that
\begin{equation}\label{skewQ}
\langle Q_{\lambda/\mu}, P_{\nu} \rangle=\langle Q_{\lambda}, P_{\mu}P_{\nu} \rangle.
\end{equation}
Likewise we define the skew Macdonald polynomial $P_{\lambda/\mu}$ by interchanging the $P$'s and $Q$'s in \eqref{skewQ}
\begin{equation}\label{skewP}
\langle P_{\lambda/\mu}, Q_{\nu} \rangle=\langle P_{\lambda}, Q_{\mu}Q_{\nu} \rangle.
\end{equation}
Since $Q_{\lambda}=b_{\lambda}P_{\lambda}$, it follows that
\begin{equation}\label{skewQP}
Q_{\lambda/\mu}=b_{\lambda}b_{\mu}^{-1}P_{\lambda/\mu}.
\end{equation}
See \cite[VI, (7.8)]{Mac95}.
For $u,v\in \mathbb{F}$ such that $v\neq \pm 1$, let $\omega_{u,v}$ denote the $\mathbb{F}$-algebra
endomorphism of $\Lambda_{\mathbb{F}}$ defined by
\[
\omega_{u,v}(p_{r})=(-1)^{r-1}\frac{1-u^r}{1-v^r}p_r
\]
for all $r\geq 1$, so that
\[
\omega_{u,v}(p_{\lambda})=(-1)^{|\lambda|-\ell(\lambda)}p_{\lambda}\prod_{i=1}^{\ell(\lambda)}\frac{1-u^{\lambda_i}}{1-v^{\lambda_i}}.
\]
Then we have the duality \cite[VI, (7.16)]{Mac95}
\begin{equation}\label{e-Mac1}
\omega_{q,t} P_{\lambda/\mu}(q,t)=Q_{\lambda'/\mu'}(t,q).
\end{equation}
For any alphabet $X$, the endomorphism $\omega_{q,t}$ can be interpret plethystically.
That is
\begin{equation}\label{e-Mac2}
X\mapsto \epsilon X \frac{1-q}{t-1}.
\end{equation}
For two alphabets $X$ and $Y$,
\begin{equation}\label{e-Mac4}
P_{\lambda}[X+Y]
=\sum_{\mu\subset \lambda}P_{\lambda/\mu}[X]P_{\mu}[Y].
\end{equation}
See \cite[VI, (7.9')]{Mac95}.
If $n<\ell(\lambda)$ then
\begin{equation}\label{e-Mac5}
P_{\lambda}(x_1,\dots,x_n;q,t)=0.
\end{equation}
See \cite[VI, (4.10)]{Mac95}.
If $\lambda$ is a partition of length $n$, then
\begin{equation}\label{e-Mac7}
P_{\lambda}(x_1,\dots,x_n;q,t)=x_1\dots x_n P_{\mu}(x_1,\dots,x_n;q,t),
\end{equation}
where $\mu=(\lambda_1-1,\dots,\lambda_n-1)$.
See \cite[VI, (4.17)]{Mac95}.
For the skew functions
\begin{equation}\label{e-Mac6}
Q_{\lambda/\mu}(x_1,\dots,x_n;q,t)=0
\end{equation}
unless $0\leq \lambda'_i-\mu'_i\leq n$ for each $i\geq 1$.
See \cite[VII, (7.15)]{Mac95}.
\begin{lem}\label{lem-Mac-1}
Let $\lambda$ and $\mu$ be partitions such that $\mu\subset \lambda$.
Then for $l$ a positive integer
\begin{equation}\label{lem-Mac-e2}
P_{\lambda/\mu}\Big(\Big[\frac{1-q}{t-1}\big(m_1+m_2+\cdots+m_l\big)\Big];q,t\Big)
=(-1)^{|\lambda|-|\mu|}Q_{\lambda'/\mu'}
\Big(\big[m_1+m_2+\cdots+m_l\big];t,q\Big),
\end{equation}
where the $m_i$ are single-letter alphabets (or monic monomials).
\end{lem}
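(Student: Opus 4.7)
The plan is to read the identity as a direct consequence of the Macdonald duality \eqref{e-Mac1} together with the plethystic description \eqref{e-Mac2} of the involution $\omega_{q,t}$, with the sign being supplied by homogeneity \eqref{e-Mac3}. Set $X = m_1 + m_2 + \cdots + m_l$, viewed as an alphabet via the plethystic bracket, so the right-hand side of \eqref{lem-Mac-e2} is $(-1)^{|\lambda|-|\mu|} Q_{\lambda'/\mu'}(X; t, q)$.

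First I would apply the duality \eqref{e-Mac1}, $\omega_{q,t} P_{\lambda/\mu}(q,t) = Q_{\lambda'/\mu'}(t,q)$, with the plethystic interpretation \eqref{e-Mac2} that $\omega_{q,t}$ acts on any alphabet $X$ by the substitution $X \mapsto \epsilon X \frac{1-q}{t-1}$. Evaluating both sides of \eqref{e-Mac1} at the alphabet $X$ therefore yields
\[
P_{\lambda/\mu}\Big[\epsilon X \frac{1-q}{t-1}; q, t\Big]
= Q_{\lambda'/\mu'}\big[X; t, q\big].
\]

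Next I would remove the plethystic minus sign. Since $P_{\lambda/\mu}$ is homogeneous of degree $|\lambda|-|\mu|$, the identity \eqref{e-Mac3} gives
\[
P_{\lambda/\mu}\Big[\epsilon X \frac{1-q}{t-1}; q, t\Big]
= (-1)^{|\lambda|-|\mu|} \, P_{\lambda/\mu}\Big[X \frac{1-q}{t-1}; q, t\Big].
\]
Combining the two displays and solving for $P_{\lambda/\mu}[\frac{1-q}{t-1}X; q,t]$ produces exactly \eqref{lem-Mac-e2}, with $X = m_1+\cdots+m_l$ specialized to the finite alphabet of single-letter monomials.

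There is essentially no obstacle beyond bookkeeping: the whole argument reduces to combining three identities already recalled in this section, namely \eqref{e-Mac1}, \eqref{e-Mac2}, and \eqref{e-Mac3}. The only thing to keep an eye on is the convention that $\epsilon$ multiplies the full alphabet $X \frac{1-q}{t-1}$ (with $\frac{1-q}{t-1}$ treated as a binomial element, not a letter), so that the homogeneity rule yields the scalar $(-1)^{|\lambda|-|\mu|}$ and not some other sign.
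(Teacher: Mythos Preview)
Your proof is correct and follows essentially the same route as the paper's own argument: both apply the duality \eqref{e-Mac1}, interpret $\omega_{q,t}$ plethystically via \eqref{e-Mac2}, and then extract the sign $(-1)^{|\lambda|-|\mu|}$ from homogeneity \eqref{e-Mac3}. The only cosmetic difference is that the paper starts from $Q_{\lambda'/\mu'}[X;t,q]$ and works backward, whereas you phrase it as evaluating the duality at $X$ and then removing $\epsilon$.
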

\begin{proof}
We prove by straightforward calculation.
\begin{align*}\label{lem-Mac-e1}
&Q_{\lambda'/\mu'}\Big(\big[m_1+m_2+\cdots+m_l\big];t,q\Big) \\
&\quad=\omega_{q,t}P_{\lambda/\mu}\Big(\big[m_1+m_2+\cdots+m_l\big];q,t\Big)
\quad &\text{by \eqref{e-Mac1}}\ \\
&\quad =P_{\lambda/\mu}\Big(\Big[\big(\epsilon \frac{1-q}{t-1}\big)\big(m_1+m_2+\cdots+m_l\big)\Big];q,t\Big) \quad &\text{by \eqref{e-Mac2}}\ \\
&\quad =(-1)^{|\lambda|-|\mu|}P_{\lambda/\mu}\Big(\Big[\frac{1-q}{t-1}\big(m_1+m_2+\cdots+m_l\big)\Big];q,t\Big) \quad & \text{by \eqref{e-Mac3}}.
\end{align*}
Then we obtain \eqref{lem-Mac-e2}.
\end{proof}

By Lemma~\ref{lem-Mac-1} we obtain the next vanishing property for Macdonald polynomials.
\begin{prop}\label{prop-Mac-vanish}
Let $\lambda$ be a partition and $P_{\lambda}(x;q,t)$ be a Macdonald polynomial.
For any nonzero part $\lambda_i$,
\begin{equation}\label{prop-Mac-e1}
P_{\lambda}\Big(\Big[\frac{1-q}{t-1}\big(m_1+m_2+\cdots+m_{\lambda_i-1}\big)+n_1+\cdots+n_{i-1}\Big];q,t\Big)=0,
\end{equation}
where the $m_i$ and the $n_i$ are single-letter alphabets (or monic monomials).
\end{prop}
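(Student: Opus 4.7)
The strategy is to combine the addition formula \eqref{e-Mac4}, the standard length-cutoff \eqref{e-Mac5}, the duality \eqref{lem-Mac-e2}, and the skew-vanishing rule \eqref{e-Mac6}. First I would split the alphabet using \eqref{e-Mac4} with $X=\tfrac{1-q}{t-1}\bigl(m_1+\cdots+m_{\lambda_i-1}\bigr)$ and $Y=n_1+\cdots+n_{i-1}$, writing
\[
P_{\lambda}[X+Y]=\sum_{\mu\subset\lambda}P_{\lambda/\mu}[X]\,P_{\mu}[Y].
\]
Since $Y$ consists of only $i-1$ single letters, \eqref{e-Mac5} forces $P_{\mu}[Y]=0$ unless $\ell(\mu)\leq i-1$; in particular every surviving $\mu$ satisfies $\mu_i=0$.

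Next I would apply Lemma~\ref{lem-Mac-1} to each surviving term, rewriting
\[
P_{\lambda/\mu}\Big(\Big[\tfrac{1-q}{t-1}\bigl(m_1+\cdots+m_{\lambda_i-1}\bigr)\Big];q,t\Big)
=(-1)^{|\lambda|-|\mu|}Q_{\lambda'/\mu'}\bigl([m_1+\cdots+m_{\lambda_i-1}];t,q\bigr),
\]
thereby exchanging the problematic plethystic $\tfrac{1-q}{t-1}$ prefactor for a genuine alphabet of $\lambda_i-1$ single letters but at the cost of swapping $(q,t)\leftrightarrow(t,q)$ and passing to conjugate partitions.

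Finally, I would invoke the skew-vanishing criterion \eqref{e-Mac6} (which applies to $P_{\lambda'/\mu'}$ and hence, via \eqref{skewQP}, to $Q_{\lambda'/\mu'}$): a skew Macdonald polynomial in $N$ variables vanishes unless each conjugate-difference $(\lambda')'_j-(\mu')'_j=\lambda_j-\mu_j$ lies in $[0,N]$. Taking $N=\lambda_i-1$ and $j=i$ gives the required obstruction, since $\lambda_i-\mu_i=\lambda_i\geq\lambda_i$ violates $\lambda_i-\mu_i\leq\lambda_i-1$. Hence $Q_{\lambda'/\mu'}$ vanishes for every $\mu$ with $\mu_i=0$, and the whole sum collapses to zero.

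I do not expect a serious obstacle: the only subtle point is to correctly interpret \eqref{e-Mac6} after the conjugation, namely that the condition on conjugates of $\lambda'/\mu'$ is precisely a condition on the row-differences $\lambda_j-\mu_j$, together with noting that the hypothesis $\ell(\mu)\leq i-1$ coming from \eqref{e-Mac5} guarantees $\mu_i=0$ so that the index $j=i$ indeed witnesses the failure.
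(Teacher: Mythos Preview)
Your proposal is correct and follows essentially the same route as the paper: split via \eqref{e-Mac4}, kill the terms with $\ell(\mu)\geq i$ using \eqref{e-Mac5}, convert the remaining $P_{\lambda/\mu}[X]$ to $Q_{\lambda'/\mu'}$ via Lemma~\ref{lem-Mac-1}, and finish with \eqref{e-Mac6} at the index $j=i$. The only cosmetic wobble is the phrase ``$\lambda_i-\mu_i=\lambda_i\geq\lambda_i$ violates $\lambda_i-\mu_i\leq\lambda_i-1$''; write it as $\lambda_i-\mu_i=\lambda_i>\lambda_i-1$ (using that $\lambda_i$ is a nonzero part) and the argument reads cleanly.
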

\begin{proof}
By \eqref{e-Mac4}
\begin{multline}\label{prop-Mac-e2}
P_{\lambda}\Big(\Big[\frac{1-q}{t-1}\big(m_1+m_2+\cdots+m_{\lambda_i-1}\big)+n_1+\cdots+n_{i-1}\Big];q,t\Big) \\
=\sum_{\mu\subset \lambda}P_{\lambda/\mu}\Big(\Big[\frac{1-q}{t-1}\big(m_1+m_2+\cdots+m_{\lambda_i-1}\big)\Big];q,t\Big)P_{\mu}\big([n_1+\cdots+n_{i-1}];q,t\big).
\end{multline}
By Lemma~\ref{lem-Mac-1}, the summand in the sum of \eqref{prop-Mac-e2} can be written as
\[
(-1)^{|\lambda|-|\mu|}Q_{\lambda'/\mu'}\Big(\big[m_1+m_2+\cdots+m_{\lambda_i-1}\big];t,q\Big)
P_{\mu}\big([n_1+\cdots+n_{i-1}];q,t\big).
\]
If $\ell(\mu)\geq i$ then $P_{\mu}\big([n_1+\cdots+n_{i-1}];q,t\big)=0$ by \eqref{e-Mac5}.
On the other hand, if $\ell(\mu)<i$ then
\[
\lambda_i-\mu_i=\lambda_i>\mathrm{Car}(m_1+\cdots+m_{\lambda_i-1})=\lambda_i-1,
\]
where $\mathrm {Car}(X)$ means the cardinality of the alphabet $X$.
It follows that
\[
Q_{\lambda'/\mu'}\Big(\big[m_1+m_2+\cdots+m_{\lambda_i-1}\big];t,q\Big)=0
\]
by \eqref{e-Mac6} with $(q,t,\lambda,\mu)\mapsto (t,q,\lambda',\mu')$.
\end{proof}

Define the modified complete symmetric function $g_n(X;q,t)$ by its generating function
\[
\prod_{i\geq 1}\frac{(tx_iy;q)_{\infty}}{(x_iy;q)_{\infty}}=\sum_{n\geq 0}g_n(X;q,t)y^n,
\]
and for any partition $\lambda=(\lambda_1,\lambda_2,\dots)$ define
\[
g_{\lambda}=g_{\lambda}(X;q,t)=\prod_{i\geq 1}g_{\lambda_i}(X;q,t).
\]
Note that $g_n$ can be written in plethystic notation
\begin{equation}\label{modi-complete}
g_n(X;q,t)=h_n\Big[\frac{1-t}{1-q}X\Big].
\end{equation}
For $\lambda=(r)$,
\begin{equation}\label{relation-P-g}
P_{(r)}=\frac{(q)_r}{(t)_r}g_r.
\end{equation}
See \cite[VI, (4.9)]{Mac95}.

The next result is called the Pieri formula for Macdonald polynomials \cite[VI, (6.24)]{Mac95}.
\begin{prop}\label{prop-Pieri}
Let $\mu$ be a partition. For $r$ a positive integer,
\begin{equation}
P_{\mu}g_r=\sum_{\lambda}\varphi_{\lambda/\mu}P_{\lambda},
\end{equation}
where the sum is over partitions $\lambda$ such that $\lambda/\mu$ is a horizontal $r$-strip.
\end{prop}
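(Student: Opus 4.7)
My plan is to obtain the Pieri formula by specializing a Cauchy-type kernel identity to a single variable, following the classical route in Macdonald theory. The starting point is Macdonald's skew Cauchy identity (VI, (7.9') of \cite{Mac95}):
\[
\sum_{\nu}Q_{\nu/\mu}(Y;q,t)\,P_{\nu}(X;q,t)=P_{\mu}(X;q,t)\prod_{i,j}\frac{(ty_jx_i;q)_{\infty}}{(y_jx_i;q)_{\infty}}.
\]
I would specialize $Y$ to a single letter $y$. The double product on the right collapses to $\prod_i(tyx_i;q)_{\infty}/(yx_i;q)_{\infty}$, which by the generating function defining the modified complete symmetric function equals $\sum_{r\ge 0}g_r(X;q,t)\,y^r$.

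Next, I would use the vanishing property \eqref{e-Mac6} applied in a single variable: $Q_{\nu/\mu}(y;q,t)=0$ unless $0\le\nu'_i-\mu'_i\le 1$ for every $i\ge1$, which is exactly the condition that $\nu/\mu$ be a horizontal strip. For each such $\nu$, homogeneity of $Q_{\nu/\mu}$ of degree $|\nu/\mu|$ forces $Q_{\nu/\mu}(y;q,t)=\varphi_{\nu/\mu}\,y^{|\nu/\mu|}$ for a scalar $\varphi_{\nu/\mu}\in\mathbb{F}$ that depends only on the skew shape. Extracting the coefficient of $y^r$ on both sides of the specialized identity then yields
\[
P_{\mu}\,g_r=\sum_{\substack{\lambda\supset\mu\\ \lambda/\mu\ \text{horizontal $r$-strip}}}\varphi_{\lambda/\mu}\,P_{\lambda},
\]
which is the Pieri formula as stated.

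The main obstacle is the invocation of the skew Cauchy identity itself, which is a substantial theorem in Macdonald theory; however, since this paper freely cites Macdonald's book, it may be taken as given, and the derivation above then reduces to a short bookkeeping exercise. An equivalent dual route would compute $\varphi_{\lambda/\mu}=\langle P_{\mu}g_r,Q_{\lambda}\rangle$ directly, rewrite it via \eqref{relation-P-g} and \eqref{skewQ} as $\tfrac{(t)_r}{(q)_r}\langle Q_{\lambda/\mu},P_{(r)}\rangle$, and then deduce the vanishing outside horizontal strips by expanding $Q_{\lambda/\mu}$ in a basis of symmetric functions and applying \eqref{e-Mac6}; both approaches isolate the same combinatorial support and produce the same coefficients $\varphi_{\lambda/\mu}$.
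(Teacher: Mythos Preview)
The paper does not prove this proposition; it simply cites it as \cite[VI, (6.24)]{Mac95} and records the statement for later use, so there is no in-paper argument to compare against. Your sketch is a correct derivation---indeed it is essentially the route Macdonald himself takes in Chapter~VI, \S6--7: specialize the skew Cauchy/kernel identity to a single $y$-variable, recognize the right-hand side as the generating function $\sum_r g_r(X)y^r$, invoke the one-variable vanishing of $Q_{\nu/\mu}$ outside horizontal strips, and read off the coefficient of $y^r$. One small caveat on your citations: the identity you invoke at the top is the skew Cauchy identity (Macdonald~VI,~(7.13)), not the addition formula labelled \eqref{e-Mac4}/(7.9$'$) in this paper, so if you keep this write-up you should adjust the reference accordingly.
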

The explicit expression for $\varphi_{\lambda/\mu}$ is given in \cite[VI, (6.24)]{Mac95}.
We do not need the explicit form of $\varphi_{\lambda/\mu}$ in this paper.

The symmetric functions $g_{\lambda}$ form a basis of
$\Lambda_{\mathbb{F}}$ \cite[VI, (2.19)]{Mac95}.
Then the Macdonald polynomials can be written as a linear combination of
modified complete symmetric functions.
The formula is referred as the generalized Jacobi-Trudi expansions for Macdonald polynomials.
We state that in the next theorem.
\begin{thm}\cite{Lassalle}\label{thm-Lassalle}
Let $\lambda$ and $\mu$ be partitions with length at most $n$. Then
\begin{equation}\label{expand-P}
P_{\lambda}=\sum_{\mu\geq \lambda}c_{\mu}g_{\mu}.
\end{equation}
\end{thm}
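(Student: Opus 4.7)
The plan is to derive the expansion from the duality between $\{m_\lambda\}$ and $\{g_\mu\}$ under the $q,t$-Hall scalar product $\langle\cdot,\cdot\rangle$, combined with the triangularity \eqref{Mac-m} of $P_\lambda$ in the monomial basis. Since $\{g_\mu\}$ is a basis of $\Lambda_{\mathbb{F}}$ (as recorded just before the statement), the expansion $P_\lambda=\sum_{\mu}c_{\mu}g_\mu$ exists and is unique, and it suffices to show $c_\mu=0$ whenever $\mu\not\geq\lambda$.

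First I invoke the standard duality $\langle m_\rho,g_\sigma\rangle=\delta_{\rho\sigma}$, which is a direct consequence of the $q,t$-Cauchy identity and is recorded in \cite[VI, (5.4)]{Mac95}. Since $\{m_\rho\}$ and $\{g_\sigma\}$ are dual bases, the coefficients in the expansion are given simply by
$$c_\mu=\langle P_\lambda,m_\mu\rangle.$$

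To evaluate this pairing, I express $m_\mu$ in the Macdonald basis $\{Q_\nu\}$. By \eqref{Mac-m} and the definition \eqref{PQ} of $Q_\nu$, the matrix expressing $Q_\nu$ in terms of the $m_\rho$ is lower triangular in dominance order; inverting this triangular matrix yields an expansion $m_\mu=\sum_{\nu\leq\mu}B_{\nu\mu}Q_\nu$ for suitable $B_{\nu\mu}\in\mathbb{F}$. Combining this with the orthogonality $\langle P_\lambda,Q_\nu\rangle=\delta_{\lambda\nu}$ gives
$$c_\mu=\sum_{\nu\leq\mu}B_{\nu\mu}\langle P_\lambda,Q_\nu\rangle=B_{\lambda\mu},$$
which vanishes unless $\lambda\leq\mu$, i.e.\ unless $\mu\geq\lambda$, as required.

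There is no serious obstacle in this argument; the only point requiring care is tracking the direction of the dominance order, since passing from $\{m_\rho\}$ to its dual basis $\{g_\sigma\}$ reverses the triangularity and turns the lower-triangular expansion of $P_\lambda$ in monomials into the upper-triangular expansion in $g$'s that the theorem asserts. The restriction to partitions of length at most $n$ is then automatic after specialising $\Lambda_{\mathbb{F}}$ to $n$ variables, using \eqref{e-Mac5} to discard any terms indexed by partitions of length greater than $n$.
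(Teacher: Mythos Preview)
The paper does not actually prove this theorem; it is quoted from \cite{Lassalle}, with the remark that the explicit coefficients $c_\mu$ are given in \cite[Theorem~3]{Lassalle}. So there is no paper proof to compare against directly.

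Your argument is correct and self-contained. The duality $\langle g_\sigma,m_\rho\rangle=\delta_{\rho\sigma}$ under the $q,t$-Hall scalar product is standard (it is \cite[VI,~(2.8)]{Mac95}, following from the Cauchy kernel expansion), and combined with the triangularity \eqref{Mac-m} it yields exactly the upper-triangularity in the $g$-basis that the theorem asserts. One small point: your closing remark about specialising to $n$ variables via \eqref{e-Mac5} is not quite the right justification, since $g_\mu$ does not vanish when $\ell(\mu)>n$; rather, the length condition on $\mu$ in the statement is automatic because $\mu\geq\lambda$ in dominance forces $\ell(\mu)\leq\ell(\lambda)$.

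The difference from the cited source is one of scope: Lassalle produces explicit closed formulas for the $c_\mu$, whereas your argument establishes only the support condition $c_\mu=0$ unless $\mu\geq\lambda$. Since the paper never uses the explicit values of $c_\mu$---only the triangularity, in Proposition~\ref{prop-Mac-vanish-3} and Corollary~\ref{cor-poly-BC}---your shorter argument already supplies everything the paper needs.
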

Note that the explicit expression for $c_{\mu}$ is quite complicated, we refer the reader to
\cite[Theorem 3]{Lassalle}. When $q=t$, \eqref{expand-P} reduces to the Jacobi-Trudi identity
\[
s_{\lambda}=\det(h_{\lambda_i-i+j})_{1\leq i,j\leq n},
\]
where $s_{\lambda}$ is the Schur function.

We can express the skew Macdonald polynomials as (usual) Madonald polynomials.
\begin{prop}\label{prop-Mac-vanish-3}
For  partitions $\lambda$ and $\mu$ such that $\mu\subset \lambda$ and $\ell(\mu)<\ell(\lambda)$,
let
\begin{equation}\label{coeff-r}
P_{\lambda/\mu}=\sum_{\nu}r_{\mu \nu}^{\lambda}\cdot P_{\nu},
\end{equation}
where $\nu$ is over all partitions such that $|\nu|=|\lambda|-|\mu|$.
Then $r_{\mu \nu}^{\lambda}=0$ for $\nu_1<\lambda_{\ell(\mu)+1}$, or equivalently,
$\nu_1\geq \lambda_{\ell(\mu)+1}$.
\end{prop}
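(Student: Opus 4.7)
The plan is to apply the $\mathbb{F}$-algebra endomorphism $\omega_{q,t}$ to transpose the expansion \eqref{coeff-r} into an identity among skew/ordinary $Q$-polynomials, and then specialise to finitely many variables so that the column-length vanishing \eqref{e-Mac6} forces the left-hand side to be zero. Linear independence of the surviving terms on the right will then yield the claim.

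Concretely, I would first apply $\omega_{q,t}$ to both sides of \eqref{coeff-r}. Using \eqref{e-Mac1} directly for the left-hand side, and the $\mu=\emptyset$ instance of \eqref{e-Mac1} (so that $P_{\nu/\emptyset}=P_\nu$ and $Q_{\nu'/\emptyset}=Q_{\nu'}$) for each $P_\nu$ on the right-hand side, the identity \eqref{coeff-r} is converted into
\[
Q_{\lambda'/\mu'}(t,q) \;=\; \sum_{\nu} r_{\mu\nu}^{\lambda}\, Q_{\nu'}(t,q)
\]
in $\Lambda_{\mathbb{F}}$. I would then specialise both sides to $n:=\lambda_{\ell(\mu)+1}-1$ variables $x_1,\dots,x_n$; the hypothesis $\ell(\mu)<\ell(\lambda)$ ensures $\lambda_{\ell(\mu)+1}\ge 1$, so $n\ge 0$. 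Applying \eqref{e-Mac6} with $(\lambda,\mu,q,t)\mapsto(\lambda',\mu',t,q)$, the vanishing condition becomes $0\le \lambda_i-\mu_i\le n$ for every $i\ge 1$, which fails at $i=\ell(\mu)+1$ since there $\lambda_i-\mu_i=\lambda_{\ell(\mu)+1}=n+1$. Hence $Q_{\lambda'/\mu'}(x_1,\dots,x_n;t,q)=0$, and the displayed identity collapses to
\[
\sum_{\nu} r_{\mu\nu}^{\lambda}\, Q_{\nu'}(x_1,\dots,x_n;t,q) \;=\; 0.
\]

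To finish, observe that by \eqref{e-Mac5} together with $Q_\nu=b_\nu P_\nu$ and $b_\nu\neq 0$, the summands with $\ell(\nu')>n$, i.e.\ $\nu_1>n$, vanish automatically, so the nontrivial part of the sum is over $\nu$ with $\nu_1\le n=\lambda_{\ell(\mu)+1}-1$. The remaining $Q_{\nu'}(x_1,\dots,x_n;t,q)$ form a linearly independent family in the space of homogeneous symmetric polynomials of degree $|\lambda|-|\mu|$ in $n$ variables, because $\{P_{\nu'}(x_1,\dots,x_n;t,q):\ell(\nu')\le n,\,|\nu'|=|\lambda|-|\mu|\}$ is a basis of that space and rescaling by the nonzero scalars $b_{\nu'}$ preserves linear independence. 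Consequently $r_{\mu\nu}^{\lambda}=0$ for every $\nu$ with $\nu_1<\lambda_{\ell(\mu)+1}$, which is the desired statement. There is no real obstacle in this argument; the only care needed is in the $(q,t)\leftrightarrow(t,q)$ and $\lambda\leftrightarrow\lambda'$ bookkeeping when invoking \eqref{e-Mac1} and \eqref{e-Mac6}.
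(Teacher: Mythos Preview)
Your argument is correct and takes a genuinely different route from the paper's proof.

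The paper first rewrites $r_{\mu\nu}^{\lambda}$ in terms of the $q,t$-Littlewood--Richardson coefficient $f_{\mu\nu}^{\lambda}=\langle Q_{\lambda},P_{\mu}P_{\nu}\rangle$, then expands $P_{\mu}$ via the generalized Jacobi--Trudi formula (Theorem~\ref{thm-Lassalle}) as a combination of $g_{\omega}$ with $\omega\geq\mu$, and finally uses the Pieri rule (Proposition~\ref{prop-Pieri}) repeatedly to argue that $P_{\nu}g_{\omega}$ cannot contain $P_{\lambda}$ when $\nu_1<\lambda_{\ell(\mu)+1}$ and $\ell(\omega)<\ell(\lambda)$: there are too few horizontal strips to fill row $\ell(\mu)+1$ of $\lambda$ starting from $\nu$. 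Your approach instead transposes the problem via $\omega_{q,t}$ and kills the skew side by specialising to $\lambda_{\ell(\mu)+1}-1$ variables using \eqref{e-Mac6}, reducing the claim to linear independence of the surviving $Q_{\nu'}$. This is shorter and avoids the heavier tools (Lassalle's Jacobi--Trudi expansion and iterated Pieri); the paper's argument, on the other hand, makes the combinatorial obstruction---that $\ell(\mu)$ horizontal strips cannot reach $\lambda$ from a $\nu$ with $\nu_1<\lambda_{\ell(\mu)+1}$---explicit.
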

\begin{proof}
Substituting \eqref{PQ} into \eqref{skewQ1} yields
\[
Q_{\lambda/\mu}=\sum_{\nu}f_{\mu \nu}^{\lambda}Q_{\nu}
=\sum_{\nu}f_{\mu \nu}^{\lambda}b_{\nu}P_{\nu}.
\]
Together with $Q_{\lambda/\mu}=b_{\lambda}b_{\mu}^{-1}P_{\lambda/\mu}$ in \eqref{skewQP}, we have
\[
P_{\lambda/\mu}
=\sum_{\nu}f_{\mu \nu}^{\lambda}b_{\mu}b_{\lambda}^{-1}b_{\nu}P_{\nu}.
\]
Since the $P_{\lambda}$ form a basis of $\Lambda_{\mathbb{F}}$ \cite{Mac95},
comparing this with \eqref{coeff-r}, we have
\[
r_{\mu \nu}^{\lambda}=b_{\mu}b_{\nu}b_{\lambda}^{-1}f_{\mu \nu}^{\lambda}.
\]
Hence, to prove the proposition it suffices to prove that a family of $q,t$-Littlewood--Richardson coefficients are zeros. That is
\begin{equation}\label{Mac-vanish1}
f_{\mu \nu}^{\lambda}=\langle Q_{\lambda},P_{\mu}P_{\nu}\rangle=0
\end{equation}
for $\nu_1<\lambda_{\ell(\mu)+1}$, $\mu\subset \lambda$ and $\ell(\mu)<\ell(\lambda)$.

By \eqref{expand-P}, we can write
\[
P_{\mu}=\sum_{\omega\geq \mu}c_{\omega}g_{\omega}.
\]
Using this and by the linearity of the $q,t$-Hall scalar product, we have
\begin{equation}\label{Mac-vanish2}
f_{\mu \nu}^{\lambda}=\langle Q_{\lambda},P_{\mu}P_{\nu}\rangle=\sum_{\omega\geq \mu}c_{\omega}
\langle Q_{\lambda},P_{\nu}g_{\omega}\rangle.
\end{equation}
We can get $\ell(\omega)\leq \ell(\mu)$ by $\omega\geq \mu$.
Consequently, $\ell(\omega)<\ell(\lambda)$.
Then we prove \eqref{Mac-vanish1} by showing that each term in the sum of \eqref{Mac-vanish2}
\begin{equation}\label{Mac-vanish3}
\langle Q_{\lambda},P_{\nu}g_{\omega}\rangle=0
\end{equation}
for $\nu_1<\lambda_{\ell(\mu)+1}$, $\ell(\omega)<\ell(\lambda)$.
Repeatedly using of the Pieri formulas for Macdonald polynomials (Proposition~\ref{prop-Pieri}) for
$\ell(\omega)$ times, we have
\[
P_{\nu}g_{\omega}=P_{\nu}\cdot g_{\omega_1}\cdot g_{\omega_2}\cdots
=\sum_{\lambda^*}\overline{c}_{\lambda^*}P_{\lambda^*},
\]
where $\overline{c}_{\lambda^*}$ is the coefficient for $P_{\lambda^*}$.
Since $\nu_1<\lambda_{\ell(\mu)+1}$ and $\ell(\omega)<\ell(\lambda)$,
none of the partitions $\lambda^*$ can be $\lambda$.
(It needs at least $(\ell(\mu)+1)$-th use of the Pieri formula.)
It follows that
\[
\langle Q_{\lambda},P_{\nu}g_{\omega}\rangle
=\sum_{\lambda^*}\overline{c}_{\lambda^*}\langle Q_{\lambda},P_{\lambda^*}\rangle=0. \qedhere
\]
\end{proof}

The next proposition is concerned about the degrees of the Macdonald polynomials.
\begin{prop}\label{prop-Macdeg}
For positive integers $s$ and $n$, let $\lambda$ be a partition and $u:=(u_1,u_2,\dots,u_s)$ be a vector of integers such that $1\leq u_1<u_2<\cdots<u_s\leq n$. For a polynomial $f(x)=f(x_1,\dots,x_n)$ and $c_i\in \mathbb{Q}(q)$, we define a substitution $T_u\big(f(x)\big)$ by replacing $x_{u_i}=c_ix_{u_s}$ for $i=1,\dots,s-1$ in $f(x)$.
Then the degree in $x_{u_s}$ of $T_u\big(P_{\lambda}(x;q,t)\big)$ is at most $\lambda_1+\cdots+\lambda_s$.
\end{prop}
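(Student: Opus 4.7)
The plan is to exploit the monomial-triangularity of Macdonald polynomials given by \eqref{Mac-m} and reduce the problem to a simple combinatorial fact about monomial symmetric functions. Since we only need an upper bound on the degree in $x_{u_s}$, any cancellations after substitution can only help us, so it suffices to bound the degree term by term in a monomial expansion.

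First, I would write
\[
P_{\lambda}(x;q,t) = m_{\lambda}(x) + \sum_{\mu < \lambda} u_{\lambda\mu}\, m_{\mu}(x),
\]
using \eqref{Mac-m}. Thus it is enough to prove that for every partition $\mu \leq \lambda$ (in the dominance order) the degree of $T_u\bigl(m_{\mu}(x_1,\dots,x_n)\bigr)$ in $x_{u_s}$ is at most $\lambda_1+\cdots+\lambda_s$.

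Next, for a partition $\mu$, $m_{\mu}(x_1,\dots,x_n)=\sum_{\alpha} x^{\alpha}$, where the sum runs over distinct permutations $\alpha=(\alpha_1,\dots,\alpha_n)$ of $(\mu_1,\mu_2,\dots)$ padded with zeros. After applying $T_u$, each monomial $x^{\alpha}$ becomes
\[
\Bigl(\prod_{i=1}^{s-1} c_i^{\alpha_{u_i}}\Bigr) x_{u_s}^{\alpha_{u_1}+\alpha_{u_2}+\cdots+\alpha_{u_s}} \prod_{j\notin \{u_1,\dots,u_s\}} x_j^{\alpha_j},
\]
so the degree in $x_{u_s}$ equals $\alpha_{u_1}+\cdots+\alpha_{u_s}$. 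Maximizing over all rearrangements $\alpha$ of $\mu$, the largest such sum of $s$ entries is obtained by placing the $s$ largest parts of $\mu$ at positions $u_1,\dots,u_s$, giving $\mu_1+\mu_2+\cdots+\mu_s$. Hence $\deg_{x_{u_s}} T_u(m_{\mu}) \leq \mu_1+\cdots+\mu_s$.

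Finally, I would invoke the dominance order: for $\mu\leq \lambda$ we have $\mu_1+\cdots+\mu_s \leq \lambda_1+\cdots+\lambda_s$ (when $s\geq \ell(\lambda)$ this still holds, since both sides equal $|\mu|=|\lambda|$). Combining with the monomial expansion of $P_{\lambda}$ above, we conclude that $\deg_{x_{u_s}} T_u\bigl(P_{\lambda}\bigr)\leq \lambda_1+\cdots+\lambda_s$, as claimed. The argument is short and essentially combinatorial; the only mildly delicate point is handling the edge case $s\geq \ell(\lambda)$ in the dominance inequality, but this is immediate from the equality of sizes $|\mu|=|\lambda|$.
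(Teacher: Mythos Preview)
Your proof is correct and follows essentially the same approach as the paper: expand $P_\lambda$ in the monomial basis via \eqref{Mac-m}, bound the degree of $T_u(m_\mu)$ by $\mu_1+\cdots+\mu_s$, and use dominance to conclude. Your write-up is slightly more explicit about why the monomial bound is $\mu_1+\cdots+\mu_s$ and handles the edge case $s\geq\ell(\lambda)$, but the argument is the same.
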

\begin{proof}
By \eqref{Mac-m}, the Macdonald polynomial $P_{\lambda}$ is a linear combination of those monomial symmetric functions $m_{\mu}$ for $\mu\leq \lambda$.
Hence, to prove the proposition, it suffices to prove that for $\mu\leq \lambda$,
the degree in $x_{u_s}$ of $T_u\big(m_{\mu}(x)\big)$ is no more than $\lambda_1+\cdots+\lambda_s$.
By the definition of $m_{\mu}$, it is clear that
this degree is no more than $\mu_1+\cdots+\mu_s$,
which is no more than $\lambda_1+\cdots+\lambda_s$ by
$\mu\leq \lambda$.
\end{proof}

\section{Polynomiality and rationality}\label{sec-poly}

The aim of this section is to show that $B_{n,n_0}(a,b,c,l,\mu)$ and
$C_{n,n_0}(a,b,c,l,m)$ satisfy the following two properties: i) they are polynomials in $q^a$;
ii) if the expressions for them hold for sufficiently many integers $c$,
then they hold for all positive integers $c$.

We begin with the next polynomiality lemma.
\begin{lem}\label{lem1}
Let $L(x_1,\dots,x_n)$ be an arbitrary Laurent polynomial independent of $a$ and $x_0$.
Then, for fixed nonnegative integers $b$ and $t$ such that $t\leq nb$,
\begin{equation}\label{p1}
\CT_x x_0^{t} L(x_1,\dots,x_n) \prod_{i=1}^n (x_0/x_i)_a(qx_i/x_0)_b
\end{equation}
is a polynomial in $q^a$ of degree at most $nb-t$.
Moreover, if $t>nb$, then the constant term \eqref{p1} vanishes.
\end{lem}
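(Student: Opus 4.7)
The plan is to expand both $q$-Pochhammer products via the $q$-binomial theorem and read off the constant term as an explicit finite sum whose $a$-dependence is isolated inside the factors $\qbinom{a}{k_i}$.

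First, I apply the $q$-binomial theorem
\[
(z;q)_N=\sum_{k=0}^{N}\qbinom{N}{k}(-z)^kq^{\binom{k}{2}}
\]
to each factor and multiply out, obtaining
\[
\prod_{i=1}^n(x_0/x_i)_a(qx_i/x_0)_b=\sum_{\vec k,\vec j}\bigg(\prod_{i=1}^n\qbinom{a}{k_i}\qbinom{b}{j_i}(-1)^{k_i+j_i}q^{\binom{k_i}{2}+\binom{j_i}{2}+j_i}\bigg)x_0^{|\vec k|-|\vec j|}\prod_{i=1}^n x_i^{j_i-k_i},
\]
where $\vec k=(k_1,\dots,k_n)\in\NN^n$, $\vec j=(j_1,\dots,j_n)\in\{0,1,\dots,b\}^n$, and $|\vec k|=k_1+\cdots+k_n$. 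The convention $\qbinom{a}{k}=0$ for $k>a$ is built into the explicit formula in Section~\ref{sec-2}, so the $\vec k$-sum is effectively finite for each fixed nonnegative integer $a$.

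Second, multiplying by $x_0^tL(x_1,\dots,x_n)$, the exponent of $x_0$ in each summand is $t+|\vec k|-|\vec j|$, and taking the constant term with respect to $x_0$ imposes $|\vec k|=|\vec j|-t$. Since $|\vec j|\le nb$, when $t>nb$ this forces $|\vec k|<0$ which is impossible for $k_i\ge 0$, so the whole constant term vanishes, yielding the ``moreover'' statement of the lemma. When $t\le nb$, the constraint gives $0\le|\vec k|\le nb-t$ and in particular each $k_i\le nb-t$. Taking the subsequent constant term with respect to $x=(x_1,\dots,x_n)$ against the fixed Laurent polynomial $L$ then selects only finitely many pairs $(\vec k,\vec j)$, since $L$ has finitely many monomials and $\vec j$ lies in the finite box $\{0,\dots,b\}^n$. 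The answer is therefore a finite $\mathbb{F}$-linear combination of the products $\prod_{i=1}^n\qbinom{a}{k_i}$ with $|\vec k|\le nb-t$, whose coefficients depend only on $q,b,L,t$ and not on $a$.

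Third, and crucially, I view $y=q^a$ as a formal variable and observe from the definition in Section~\ref{sec-2} that
\[
\qbinom{a}{k_i}=\frac{(q^{a-k_i+1};q)_{k_i}}{(q;q)_{k_i}}=\frac{1}{(q)_{k_i}}\prod_{\ell=1}^{k_i}\bigl(1-yq^{-k_i+\ell}\bigr)
\]
is a polynomial in $y$ of degree exactly $k_i$, and it vanishes at $y=q^a$ whenever $k_i>a$, so the formal extension agrees with the integer-indexed values. Consequently every summand is a polynomial in $q^a$ of degree $|\vec k|\le nb-t$, and the finite sum of them is a polynomial in $q^a$ of degree at most $nb-t$, as claimed. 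The argument is essentially formal bookkeeping; the one mild subtlety, which I expect to be the only point requiring care, is verifying in this last step that the ``polynomial-in-$q^a$'' interpretation of $\qbinom{a}{k_i}$ is consistent with its truncation for $k_i>a$, which is immediate from the displayed product formula.
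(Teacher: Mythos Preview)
Your proof is correct and follows the standard $q$-binomial expansion argument, which is precisely the method in the referenced \cite[Lemma~2.2]{XZ} that the paper defers to. One minor remark: your argument implicitly takes the constant term in $x_0$ as well as in $x_1,\dots,x_n$, which is consistent with how the lemma is applied (the ambient Laurent polynomials are always homogeneous of total degree~$0$, so $\CT_x$ coincides with $\CT_{x_0,x}$).
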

\begin{proof}
The proof is almost the same as that of \cite[Lemma 2.2]{XZ}.
\end{proof}
By Lemma~\ref{lem1}, it is not hard to conclude that $B_{n,n_0}(a,b,c,l,\mu)$ and $C_{n,n_0}(a,b,c,l,m)$
are polynomials in $q^a$.
\begin{cor}\label{cor-poly-BC}
The constant terms $B_{n,n_0}(a,b,c,l,\mu)$ and
$C_{n,n_0}(a,b,c,l,m)$ are polynomials in $q^a$ of degree at most $nb+l+|\mu|$ and $nb+l+m$ respectively, assuming all the parameters but $a$ are fixed.
\end{cor}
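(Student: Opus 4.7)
The plan is to split each of $B_{n,n_0}(a,b,c,l,\mu)$ and $C_{n,n_0}(a,b,c,l,m)$ into a finite sum of constant terms with homogeneous (total degree $0$) integrands, and then apply Lemma~\ref{lem1} summand-by-summand.

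For $B$, note first that $h_l\bigl[\sum_{i=1}^n\tfrac{1-q^{c-\chi(i\leq n_0)}}{1-q}x_i\bigr]$ is a polynomial in $x_1,\dots,x_n$ of degree $l$, independent of $a$ and $x_0$. To separate the $a$- and $x_0$-dependence of the Macdonald factor, set $\alpha:=(q^{c-b-1}-q^a)/(1-q^c)$ and $Y:=\sum_i\tfrac{1-q^{c-\chi(i\leq n_0)}}{1-q^c}x_i$, and use the plethystic addition rule $P_\mu[\alpha x_0+Y]=\sum_{\nu\subset\mu}P_{\mu/\nu}[\alpha x_0]P_\nu[Y]$. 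Expanding $P_{\mu/\nu}$ in the power-sum basis and using $p_r[\alpha x_0]=\alpha x_0^r$ gives $P_{\mu/\nu}[\alpha x_0]=x_0^{|\mu|-|\nu|}Q_\nu(\alpha)$, where $Q_\nu$ is a polynomial in $\alpha$ of degree at most $|\mu|-|\nu|$; since $\alpha$ is affine in $q^a$, the same bound holds in $q^a$. Collecting by powers of $q^a$ and $x_0$ yields
\[
x_0^{-l-|\mu|}\,h_l\,P_\mu[\alpha x_0+Y]=\sum_{0\leq k\leq j\leq|\mu|}q^{ak}\,x_0^{j-l-|\mu|}\,L_{j,k}(x_1,\dots,x_n),
\]
where each $L_{j,k}$ is a polynomial in $x_1,\dots,x_n$ of degree $l+|\mu|-j$, independent of $a$ and $x_0$. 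Multiplying by $F_{n,n_0}(x;a,b,c,0)$, each summand is homogeneous of total degree $0$ in $(x_0,x_1,\dots,x_n)$, so Lemma~\ref{lem1} (applied with $t=j-l-|\mu|$; homogeneity of the integrand guarantees that the constant term is a scalar polynomial in $q^a$) bounds
$\CT_x\bigl(x_0^{j-l-|\mu|}L_{j,k}\,F_{n,n_0}(x;a,b,c,0)\bigr)$
by a polynomial in $q^a$ of degree at most $nb+l+|\mu|-j$. Combining with the prefactor $q^{ak}$, $k\leq j$, gives total degree at most $nb+l+|\mu|$.

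For $C$, use the identity $(qx_i/x_0)_{b+1}=(qx_i/x_0)_b\,(1-q^{b+1}x_i/x_0)$ to factor
\[
F_{n,n_0}(x;a,b,c,m)=F_{n,n_0}(x;a,b,c,0)\prod_{i=n-m+1}^n(1-q^{b+1}x_i/x_0),
\]
and expand the trailing product as $\sum_{S\subset\{n-m+1,\dots,n\}}(-q^{b+1})^{|S|}\,x_0^{-|S|}\prod_{i\in S}x_i$. Substituting yields
\[
C_{n,n_0}(a,b,c,l,m)=\sum_S(-q^{b+1})^{|S|}\,\CT_x\!\left(x_0^{-l-|S|}\,h_l\prod_{i\in S}x_i\cdot F_{n,n_0}(x;a,b,c,0)\right).
\]
Each $h_l\prod_{i\in S}x_i$ is a polynomial in $x_1,\dots,x_n$ of degree $l+|S|$ independent of $a$ and $x_0$, the full integrand is again homogeneous of degree $0$, and the prefactor $(-q^{b+1})^{|S|}$ carries no $q^a$-dependence. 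Lemma~\ref{lem1} with $t=-l-|S|$ bounds the inner constant term's $q^a$-degree by $nb+l+|S|\leq nb+l+m$, producing the claimed bound.

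The only non-routine step is the plethystic bookkeeping in the $B$-argument: one must verify that $P_{\mu/\nu}[\alpha x_0]$ decouples cleanly into $x_0^{|\mu|-|\nu|}$ times a polynomial in $\alpha$ of degree at most $|\mu|-|\nu|$, and then align the $q^a$- and $x_0$-powers so that the per-summand bound $nb+l+|\mu|-j+k$ telescopes, via $k\leq j$, to the global bound $nb+l+|\mu|$. Once this is set up, the remaining steps are direct invocations of Lemma~\ref{lem1}.
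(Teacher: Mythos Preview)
Your approach is essentially the paper's: for $C$, factor out $\prod_{i>n-m}(1-q^{b+1}x_i/x_0)$ and expand; for $B$, split $P_\mu[\alpha x_0+Y]$ via the plethystic addition formula \eqref{e-Mac4}, extract a power of $x_0$ by homogeneity, bound the $q^a$-degree of the remaining scalar factor, and apply Lemma~\ref{lem1} termwise. The paper puts the non-skew $P_\nu$ (rather than your $P_{\mu/\nu}$) on the $\alpha x_0$ side and bounds $\deg_{q^a}P_\nu[\alpha]$ via the generalized Jacobi--Trudi expansion (Theorem~\ref{thm-Lassalle}), whereas you argue through the power-sum basis; both routes give the same bound, and yours is slightly more elementary since it avoids Theorem~\ref{thm-Lassalle}.

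One correction is needed in your plethystic bookkeeping. In the paper's conventions $\alpha=(q^{c-b-1}-q^a)/(1-q^c)$ is an \emph{alphabet}, not a binomial element, so by \eqref{asm3} and \eqref{division}
\[
p_r[\alpha x_0]=p_r[\alpha]\,x_0^r=\frac{q^{r(c-b-1)}-q^{ra}}{1-q^{rc}}\,x_0^r,
\]
not $\alpha x_0^r$. Consequently $P_{\mu/\nu}[\alpha x_0]=x_0^{|\mu|-|\nu|}P_{\mu/\nu}[\alpha]$ is \emph{not} a polynomial in $\alpha$, and your ``$\alpha$ is affine in $q^a$'' justification does not apply as written. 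The desired conclusion is unaffected, however: for each $\lambda$ with $|\lambda|=|\mu|-|\nu|$ one has $p_\lambda[\alpha]=\prod_i\bigl(q^{\lambda_i(c-b-1)}-q^{\lambda_i a}\bigr)/(1-q^{\lambda_i c})$, visibly a polynomial in $q^a$ of degree $\sum_i\lambda_i=|\mu|-|\nu|$, so $P_{\mu/\nu}[\alpha]$ has $q^a$-degree at most $|\mu|-|\nu|$ and the rest of your argument goes through unchanged.
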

\begin{proof}
We can write $C_{n,n_0}(a,b,c,l,m)$ as
\begin{equation}\label{poly-C}
\CT_xx_0^{-l}\prod_{i=n-m+1}^n(1-q^{b+1}x_i/x_0)
\prod_{i=1}^n (x_0/x_i)_a(qx_i/x_0)_bL(x_1,\dots,x_n),
\end{equation}
where
\begin{equation}\label{Lx}
L(x_1,\dots,x_n)=h_{l}\Big[\sum_{i=1}^{n}\frac{1-q^{c-\chi(i\leq n_0)}}{1-q}x_i\Big]
\prod_{1\leq i<j\leq n}(x_i/x_j)_{c-\chi(i\leq n_0)}(qx_j/x_i)_{c-\chi(i\leq n_0)}.
\end{equation}
Expanding the first product in \eqref{poly-C}, we can see that $C_{n,n_0}(a,b,c,l,m)$ is a polynomial in $q^a$ of degree at most $nb+l+m$ by Lemma~\ref{lem1}.

By \eqref{e-Mac4} and \eqref{e-homo-sym}, we can write $B_{n,n_0}(a,b,c,l,\mu)$ as
\begin{multline}\label{poly-B}
\sum_{\nu\subset \mu}P_{\nu}\big(\big[\frac{q^{c-b-1}-q^a}{1-q^c}\big];q,q^c\big)
\CT_xx_0^{-l-|\mu|+|\nu|}
P_{\mu/\nu}\Big(\Big[\sum_{i=1}^n\frac{1-q^{c-\chi(i\leq n_0)}}{1-q^c}x_i\Big];q,q^c\Big) \\
\times\prod_{i=1}^n (x_0/x_i)_a(qx_i/x_0)_bL(x_1,\dots,x_n),
\end{multline}
where $L(x_1,\dots,x_n)$ is given by \eqref{Lx}.
We write
\begin{align*}
P_{\nu}\big(\big[\frac{q^{c-b-1}-q^a}{1-q^c}\big];q,q^c\big)
&=\sum_{\lambda\geq \nu}c_{\lambda}g_{\lambda}\Big(\Big[\frac{q^{c-b-1}-q^a}{1-q^c}\Big];q,q^c\Big)
\quad &\text{by \eqref{expand-P}}\\
&=\sum_{\lambda\geq \nu}c_{\lambda}h_{\lambda}\Big[\frac{q^{c-b-1}-q^a}{1-q}\Big]
\quad &\text{by \eqref{modi-complete}}\\
&=\sum_{\lambda\geq \nu}c_{\lambda}q^{(c-b-1)|\lambda|}h_{\lambda}\Big[\frac{1-q^{a+b+1-c}}{1-q}\Big]
\quad &\text{by \eqref{e-homo-sym}}\\
&=\sum_{\lambda\geq \nu}c_{\lambda}q^{(c-b-1)|\lambda|}
\prod_{i\geq 1}\frac{(q^{a+b+1-c})_{\lambda_i}}{(q)_{\lambda_i}}.
\end{align*}
The right-most equality holds by $h_r[(1-z)/(1-q)]=(z)_r/(q)_r$, see e.g., \cite[page 27]{Mac95}.
Then one can see that $P_{\nu}\big[\frac{q^{c-b-1}-q^a}{1-q^c}\big]$
is a polynomial in $q^a$ of degree at most $|\nu|$. Together with the fact that
the constant term in \eqref{poly-B} is a polynomial in $q^a$ of degree at most $nb+l+|\mu|-|\nu|$
by Lemma~\ref{lem1}, we conclude that $B_{n,n_0}(a,b,c,l,\mu)$ is a polynomial in $q^a$ of degree
at most $nb+l+|\mu|$.
\end{proof}

The following rationality result, which is implicitly due to
Stembridge \cite{stembridge1987}, as can be seen from the proof. One can also see this result in
\cite[Proposition~3.1]{XZ} and \cite[Lemma~7.5]{KNPV}.
The $q=1$ case of this result
is the equal parameter case of \cite[Proposition 2.4]{Gessel-Lv-Xin-Zhou2008}.
\begin{prop}\label{prop-rationality}
Let $n$ be a positive integer and $\alpha=(\alpha_1,\dots,\alpha_n)\in \mathbb{Z}^{n}$ such that
$|\alpha|=0$. For $c$ a nonnegative integer
\begin{align}
\CT_x x_1^{\alpha_1}\cdots  x_n^{\alpha_n}\prod_{1\leq i<j\leq n}
\Big(\frac{x_{i}}{x_{j}}\Big)_{c}\Big(\frac{x_{j}}{x_{i}}q\Big)_{c}=\frac{(q)_{nc}}{(q)_{c}^{n}}\cdot
R_n(q^c;q,\alpha),
\end{align}
where $R_n(q^c;q,\alpha)$ is a rational function in $q^c$ and $q$.
\end{prop}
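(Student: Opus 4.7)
The plan is to reduce the claim to a Macdonald-polynomial scalar-product calculation, where rationality in $q$ and $t := q^c$ is built in. Since the weight $\prod_{1 \le i < j \le n}(x_i/x_j)_c(qx_j/x_i)_c$ is symmetric in $x = (x_1, \dots, x_n)$, the constant term is unchanged if $x^{\alpha}$ is replaced by its symmetrization
\[
f_{\alpha}(x) := \frac{1}{n!} \sum_{w \in \mathfrak{S}_n} w \circ x^{\alpha},
\]
a symmetric Laurent polynomial of total degree zero. Applying Proposition~\ref{prop-equiv} to $f_{\alpha}$ yields
\[
\CT_x x^{\alpha}\!\prod_{1\le i<j\le n}\!\Big(\frac{x_i}{x_j}\Big)_c\Big(\frac{qx_j}{x_i}\Big)_c = \frac{1}{n!}\prod_{i=1}^{n-1}\frac{1-q^{(i+1)c}}{1-q^c}\;\CT_x f_{\alpha}(x)\!\prod_{1\le i\ne j\le n}\!\Big(\frac{x_i}{x_j}\Big)_c,
\]
where the explicit prefactor is already a rational function in $q^c$ and $q$; it therefore suffices to establish rationality for the constant term on the right.

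Setting $t = q^c$, the weight $\prod_{i\ne j}(x_i/x_j)_c$ is the Macdonald density $\Delta(x;q,t) = \prod_{i \ne j}(x_i/x_j;q)_{\infty}/(tx_i/x_j;q)_{\infty}$ in $n$ variables. The constant term in question is then $n!$ times the standard Macdonald scalar product $\langle f_{\alpha}, 1 \rangle_n$ (see \cite[VI, \S9]{Mac95}). I would expand $f_{\alpha}$ in the finite, orthogonal family $\{P_{\lambda}(x;q,t)\,P_{\mu}(x^{-1};q,t) : |\lambda| = |\mu|,\ \ell(\lambda), \ell(\mu) \le n\}$: the expansion is finite and has coefficients in $\mathbb{Q}(q,t)$, by the triangular transition between Macdonald's basis and the monomial basis. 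Macdonald's orthogonality
\[
\frac{1}{n!}\CT_x P_{\lambda}(x;q,t)\,P_{\mu}(x^{-1};q,t)\,\Delta(x;q,t) = \delta_{\lambda\mu}\,\langle P_{\lambda}, P_{\lambda} \rangle_n
\]
then collapses the calculation to a finite sum of diagonal contributions, each involving a Macdonald norm $\langle P_{\lambda}, P_{\lambda} \rangle_n$ that is explicitly a rational function in $(q,t)$ by the product formula \cite[VI, (6.19) and (9.8)]{Mac95}. Hence the constant term is rational in $q$ and $q^c$.

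Factoring out the Morris normalization $(q)_{nc}/(q)_c^n$, obtained as the $\alpha=0$ specialization of \eqref{q-Morris}, then packages the residue as the desired $R_n(q^c;q,\alpha)$. The main technical subtlety — and the one place where Stembridge's approach enters implicitly — is confirming that both the monomial-to-Macdonald transition matrix in $n$ variables and the norm $\langle P_{\lambda}, P_{\lambda} \rangle_n$ have entries rational in $(q,t)$. Both facts are part of Macdonald's foundational setup \cite[Ch. VI]{Mac95}, so no delicate new input is required; once granted, the orthogonality computation above completes the argument.
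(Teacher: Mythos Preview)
The paper does not actually prove this proposition; it attributes the result to Stembridge \cite{stembridge1987} and points to \cite[Proposition~3.1]{XZ} and \cite[Lemma~7.5]{KNPV}. So there is no in-paper argument to compare against, and your proposal must stand on its own.

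It does not. Your opening claim that the weight $W(x)=\prod_{1\le i<j\le n}(x_i/x_j)_c(qx_j/x_i)_c$ is symmetric in $x_1,\dots,x_n$ is false, and the consequence you draw from it---that replacing $x^{\alpha}$ by its symmetrization $f_{\alpha}$ leaves the constant term unchanged---is also false. For $n=2$, $c=1$ one has $W=(1+q)-x_1/x_2-qx_2/x_1$, and
\[
\CT_x \frac{x_1}{x_2}\,W=-q,\qquad \CT_x \frac{x_2}{x_1}\,W=-1,
\]
so the constant term genuinely depends on $\alpha$ and not merely on its $\mathfrak{S}_n$-orbit. This breaks your reduction to Proposition~\ref{prop-equiv} (whose hypothesis is precisely that the factor paired with $W$ be symmetric) and with it the passage to the Macdonald scalar product against the symmetric density $\Delta(x;q,t)$.

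A Macdonald-theoretic route to rationality is available, but it must retain the nonsymmetric monomial $x^{\alpha}$. One option, in the spirit of Stembridge, is to write $W(x)=\prod_{i\ne j}(x_i/x_j)_c\cdot\prod_{i<j}\frac{1-q^cx_j/x_i}{1-x_j/x_i}$ and antisymmetrize the second factor against $x^{\alpha}$, so that Weyl-character/Schur expansions (with structure constants rational in $q$ and $q^c$) carry the rationality; another is to work with nonsymmetric Macdonald polynomials and the Cherednik inner product. Either way, the symmetrization shortcut you take at the outset is not available.
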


A straightforward consequence of Proposition~\ref{prop-rationality} is the next result.
\begin{cor}\label{cor-rationality}
Let $c,d$ be nonnegative integers, $n$ be a positive integer, and $H$ be a homogeneous Laurent polynomial in $x_0,x_1,\dots,x_n$ of degree 0. If an expression for
\[
\CT_{x} H\prod_{1\leq i<j\leq n}
\Big(\frac{x_{i}}{x_{j}}\Big)_{c}\Big(\frac{x_{j}}{x_{i}}q\Big)_{c}
\]
holds for $c\geq d$, then it holds for all nonnegative integers $c$.
Here $d$ is independent of $c$, but it may depend on $H$.
\end{cor}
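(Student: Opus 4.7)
The plan is to realize both sides of the claimed equality as $\frac{(q)_{nc}}{(q)_c^n}$ times a rational function of $q^c$, and then appeal to the identity principle for rational functions. The key input is Proposition~\ref{prop-rationality}.

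First, since $H$ is homogeneous of total degree zero in $(x_0,x_1,\dots,x_n)$ and $\prod_{1\leq i<j\leq n}(x_i/x_j)_c(qx_j/x_i)_c$ is homogeneous of degree zero in $(x_1,\dots,x_n)$, setting $x_0=1$ leaves the constant term unchanged. Among the finitely many monomials $x^\alpha=x_1^{\alpha_1}\cdots x_n^{\alpha_n}$ appearing in $H|_{x_0=1}$, only those with $|\alpha|=0$ can contribute. Applying Proposition~\ref{prop-rationality} monomial by monomial and summing produces
\[
\CT_x H\prod_{1\leq i<j\leq n}(x_i/x_j)_c(qx_j/x_i)_c=\frac{(q)_{nc}}{(q)_c^n}\,\mathcal{R}(q^c;q),
\]
where $\mathcal{R}$ is a rational function of $q^c$ with coefficients in $\mathbb{Q}(q)$, depending on $H$ but not on $c$.

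Next I would observe that the proposed closed-form ``expression'' has the same shape $\frac{(q)_{nc}}{(q)_c^n}\,\mathcal{S}(q^c;q)$. Each factor $(q)_{kc+m}$ occurring in it (with $k\geq 0$ an integer and $m$ independent of $c$) splits as $(q)_{kc}\cdot(q^{kc+1})_m$, the second piece being a polynomial in $q^c$ of degree $m$; after collecting the ``base'' factorials $(q)_c,(q)_{2c},\dots,(q)_{nc}$ the whole expression reduces to a rational multiple of $(q)_{nc}/(q)_c^n$ times a rational function of $q^c$. The accounting of base factorials on the two sides is forced to agree by Proposition~\ref{prop-rationality}.

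The hypothesis of equality for $c\geq d$ then yields $(\mathcal{R}-\mathcal{S})(q^c;q)=0$ for every $c\geq d$. Since $q$ is transcendental over $\mathbb{Q}$, the specialisations $q^d,q^{d+1},\dots$ are pairwise distinct elements of $\mathbb{Q}(q)$, so the rational function $\mathcal{R}-\mathcal{S}$ vanishes at infinitely many points of its domain and must be identically zero. Hence the identity extends to every nonnegative integer $c$. The main obstacle is not conceptual but a matter of bookkeeping: one must confirm, for each concrete application, that the candidate closed form indeed admits the $(q)_{nc}/(q)_c^n$-times-rational-of-$q^c$ reduction; this is routine in every situation encountered in this paper.
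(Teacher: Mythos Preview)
Your proposal is correct and follows essentially the same route as the paper's proof: both reduce to Proposition~\ref{prop-rationality} and the identity principle for rational functions in $q^c$. You are in fact more explicit than the paper on one point the paper glosses over, namely that the candidate closed form must itself be of the shape $\frac{(q)_{nc}}{(q)_c^n}$ times a rational function of $q^c$; the paper simply asserts that ``both sides are rational functions in $q^c$'' without justifying the expression side, whereas you flag this as a routine verification to be performed in each application.
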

\begin{proof}
By Proposition~\ref{prop-rationality}, it exists a rational function $R_n(q^c;q,H)$ such that
\[
\CT_{x} H\prod_{1\leq i<j\leq n}
\Big(\frac{x_{i}}{x_{j}}\Big)_{c}\Big(\frac{x_{j}}{x_{i}}q\Big)_{c}
=\CT_x H|_{x_0=1}\prod_{1\leq i<j\leq n}
\Big(\frac{x_{i}}{x_{j}}\Big)_{c}\Big(\frac{x_{j}}{x_{i}}q\Big)_{c}
=\frac{(q)_{nc}}{(q)_{c}^{n}}\cdot R_n(q^c;q,H).
\]
Here the first equality holds by the homogeneity of $H$.
Then
\begin{equation}\label{rational-p1}
\frac{(q)_c^n}{(q)_{nc}}\CT_{x} H\prod_{1\leq i<j\leq n}
\Big(\frac{x_{i}}{x_{j}}\Big)_{c}\Big(\frac{x_{j}}{x_{i}}q\Big)_{c}
=R_n(q^c;q,H).
\end{equation}
It follows that if \eqref{rational-p1} holds for $c\geq d$ for some nonnegative integer $d$,
then it holds for all nonnegative integers $c$ since the both sides are rational functions in $q^c$.
\end{proof}

\section{Preliminaries for determination of the roots}\label{sec-preliminaries}

In this section, we present several essential results to determine the roots
of $B_{n,n_0}(a,b,c,l,\mu)$ and $C_{n,n_0}(a,b,c,l,m)$.

\subsection{The cardinality of an alphabet}

The main observation in this subsection is Proposition~\ref{lem-subs}, which
concerns the cardinality of an alphabet in $B_{n,n_0}(a,b,c,l,\mu)$
under certain substitutions.

For a positive integer $s$, let $\mathfrak{S}_s$ be the set of all the permutations of $\{1,2,\dots,s\}$.
For $w\in \mathfrak{S}_s$ and $r$ an integer such that $0\leq r<s$,
define
\begin{equation}\label{Nwr}
N_{w,r}:=\sum_{j=1}^su_j(w)+\chi(w(j)>r)\chi(w(j-1)>r),
\end{equation}
where $w(0):=0$ and
\begin{equation}\label{def-uj}
u_j(w)=\begin{cases}
1 &\text{if $w(j-1)<w(j)$,}\\
0 &\text{otherwise.}
\end{cases}
\end{equation}
For example, if $s=7$, $r=3$ and $w=(3147526)$, then  $N_{w,r}=6$.
Let
\begin{equation}\label{e-weight}
w(0)\mathop{\longrightarrow}^{e_1}w(1)\mathop{\longrightarrow}^{e_2} \cdots\mathop{\longrightarrow}^{e_{s-1}} w(s-1)\mathop{\longrightarrow}^{e_s}w(s)
\end{equation}
be a weighted directed path on $w\in \mathfrak{S}_s$,
where
\begin{equation}\label{defi-ej}
e_j:=u_j(w)+\chi(w(j)>r)\chi(w(j-1)>r).
\end{equation}
Then we can view $N_{w,r}$ as the sum of the weights on this directed path.
Note that $e_j$ is determined by $w(j), w(j-1)$ and the integer $r$,
and $e_1=1$.
For $r\in \{0,1,\dots,s-1\}$, denote $I_1:=\{1,2,\dots,r\}$ and $I_2:=\{r+1,r+2,\dots,s\}$.
Let $\mathfrak{S}_{s,r}$ be the subset of $\mathfrak{S}_s$ that includes all the permutations
that all the elements of $I_1$ and $I_2$ are in a decreasing order respectively.
We define $\varepsilon_1: \mathfrak{S}_s\mapsto \mathfrak{S}_{s,r}$
to be the map of permuting the elements of $I_1$ and $I_2$ in $w\in \mathfrak{S}_s$ respectively
such that all the elements of $I_1$ and $I_2$ in $w$ are decreasing.
For example, if $s=7$, $r=3$ and $w=(3147526)$, then $I_1=\{1,2,3\}, I_2=\{4,5,6,7\}$,
$\varepsilon_1\circ w=(3276514)$.
Having the above notation, we obtain the following property for
the number $N_{w,r}$.
\begin{lem}\label{lem-varepsilon1}
For a positive integer $s$, let $w\in \mathfrak{S}_s$ and $r$ be an integer such that $0\leq r<s$.
Let $\varepsilon_1$ be defined as above. Then
\begin{equation}\label{Neq0}
N_{w, r}\geq N_{\varepsilon_1\circ w, r}.
\end{equation}
Furthermore, if there is an $i\in \{2,\dots,s\}$ such that $w(i-1),w(i)\in I_1$ (resp. $I_2$) and $w(i-1)<w(i)$,
then
\begin{equation}\label{Neq}
N_{w,r}>N_{\varepsilon_1\circ w,r}.
\end{equation}
\end{lem}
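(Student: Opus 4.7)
The plan is to exploit the fact that the map $\varepsilon_1$ only rearranges values within the two classes $I_1$ and $I_2$ while preserving which positions are occupied by $I_1$-values and which by $I_2$-values. Write $w'=\varepsilon_1\circ w$ and compare $N_{w,r}$ with $N_{w',r}$ weight-by-weight along the path \eqref{e-weight}. For each $j\in\{1,\dots,s\}$, let $c_j=\chi(w(j)>r)$; since $\varepsilon_1$ sends $I_1$-positions to $I_1$-positions and $I_2$-positions to $I_2$-positions, we have $\chi(w'(j)>r)=c_j$ for all $j\ge 1$, and of course $w(0)=w'(0)=0$ so the $j=1$ factor $\chi(w(j-1)>r)$ equals $0$ for both. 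Therefore the second summand $\chi(w(j)>r)\chi(w(j-1)>r)$ in the definition \eqref{defi-ej} of $e_j$ is invariant under $\varepsilon_1$, and the difference $N_{w,r}-N_{w',r}$ reduces to $\sum_{j=1}^s\bigl(u_j(w)-u_j(w')\bigr)$.

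Next I would classify each step $j\ge 2$ according to the classes of $w(j-1)$ and $w(j)$. At a mixed step (one value in $I_1$, the other in $I_2$) the ascent indicator $u_j$ is forced: it equals $1$ whenever $w(j-1)\in I_1$ and $w(j)\in I_2$, and $0$ in the opposite case, in both $w$ and $w'$ (because the class of each position is the same for both). At a pure step where $w(j-1),w(j)$ lie in the same class, $u_j(w')=0$ because $\varepsilon_1$ makes both $I_1$ and $I_2$ appear in decreasing order on the positions they occupy. The step $j=1$ is also unaffected because $u_1=1$ for every permutation. Consequently
\begin{equation*}
N_{w,r}-N_{\varepsilon_1\circ w,r}=\sum_{\substack{2\le j\le s\\ w(j-1),w(j)\in I_1}}u_j(w)+\sum_{\substack{2\le j\le s\\ w(j-1),w(j)\in I_2}}u_j(w),
\end{equation*}
which is manifestly nonnegative, proving \eqref{Neq0}. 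Moreover, the hypothesis of the second assertion says there is an index $i\ge 2$ with $w(i-1),w(i)$ both in $I_1$ (or both in $I_2$) and $w(i-1)<w(i)$, i.e.\ $u_i(w)=1$; that term contributes a strict positive summand, yielding \eqref{Neq}.

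I expect no serious obstacle here: the argument is purely bookkeeping once one observes the key invariant that class-of-position is preserved by $\varepsilon_1$. The only point deserving care is the boundary case $j=1$, where one has to check that the convention $w(0)=0$ does not spoil the invariance of the $\chi\chi$ factor (it does not, as this factor is $0$ for $j=1$ regardless of $w$), and that $u_1=1$ contributes equally to $N_{w,r}$ and $N_{\varepsilon_1\circ w,r}$. So the proof will consist of formalising the case analysis on the pair $(w(j-1)\in I_?,w(j)\in I_?)$ and then summing.
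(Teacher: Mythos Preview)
Your proposal is correct and follows essentially the same approach as the paper's proof: both arguments rest on the observation that $\varepsilon_1$ preserves the class ($I_1$ or $I_2$) of the value at each position, so the $\chi(\cdot>r)\chi(\cdot>r)$ contributions are unchanged, and then compare the ascent indicators $u_j$ term-by-term via the same four-case analysis on the classes of $w(j-1)$ and $w(j)$. Your presentation is slightly more streamlined in that you write the difference $N_{w,r}-N_{\varepsilon_1\circ w,r}$ explicitly as a sum of nonnegative terms, but the underlying reasoning is identical.
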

\begin{proof}
Let $w_1=\varepsilon_1\circ w$.
We will show that for $j=2,\dots,s$
\begin{subequations}\label{w_1}
\begin{equation}\label{w_1-1}
\chi(w(j)>r)=\chi(w_1(j)>r),
\end{equation}
and
\begin{equation}\label{w_1-2}
u_j(w)\geq u_j(w_1).
\end{equation}
\end{subequations}
Then it is clear that $N_{w,r}\geq N_{\varepsilon_1\circ w,r}$ from the definition of $N_{w,r}$ in \eqref{Nwr}
and the fact that
\[
u_1(w)+\chi(1>r)\chi(0>r)=u_1(w_1)+\chi(1>r)\chi(0>r)=1.
\]

Since $\varepsilon_1$ permutes the elements of $I_1$ and $I_2$ in $w$ respectively,
for some $j$ if $w(j)\in I_1$ (resp. $I_2$) then $w_1(j)\in I_1$ (resp. $I_2$).
Thus \eqref{w_1-1} holds.

Since $u_j(w)$ is determined by $w(j)$, $w(j-1)$ and $r$, we prove \eqref{w_1-2} by discussing the following four cases:
\begin{enumerate}
 \item $w(j-1)\in I_1, w(j)\in I_2$;
 \item $w(j-1)\in I_2, w(j)\in I_1$;
 \item $w(j-1)\in I_1, w(j)\in I_1$;
 \item $w(j-1)\in I_2, w(j)\in I_2$.
\end{enumerate}
If $w(j-1)\in I_1$ and $w(j)\in I_2$ for some $j$,
then by the definition of $\varepsilon_1$, $w_1(j-1)\in I_1$ and $w_1(j)\in I_2$.
It follows that $w(j-1)<w(j)$ and $w_1(j-1)<w_1(j)$. Then $u_j(w)=u_j(w_1)=1$.
Similarly, if $w(j-1)\in I_2$ and $w(j)\in I_1$ for some $j$,
then $u_j(w)=u_j(w_1)=0$.
If $w(j-1), w(j)\in I_1$ for some $j$,
then $w_1(j-1), w_1(j)\in I_1$.
By the definition of $\varepsilon_1$,
all the elements of $I_1$ in $w_1$ are in a decreasing order.
Thus $w_1(j-1)>w_1(j)$.
Then by the definition of $u_j$ in \eqref{def-uj} we have
$u_j(w_1)=0$.
It follows that
\begin{equation}\label{w-case3}
u_j(w)\geq u_j(w_1).
\end{equation}
For the case $w(j-1),w(j)\in I_2$, similarly we can also get \eqref{w-case3}.
In conclusion, \eqref{w_1-2} holds for all the four cases.
Then $N_{w,r}\geq N_{\varepsilon_1\circ w,r}$ by \eqref{w_1}.

If for some $i\in \{2,\dots,s\}$ such that $w(i-1),w(i)\in I_1$ (or $I_2$) and $w(i-1)<w(i)$,
then $u_i(w)>u_i(w_1)$. Thus, in this case the equality of \eqref{Neq0} can not hold
and we obtain \eqref{Neq}.
\end{proof}

We define another map $\varepsilon_2: \mathfrak{S}_{s,r}\mapsto \mathfrak{S}_{s,r}$.
If $w=(s,s-1,\dots,1)$, then $\varepsilon_2\circ w=w$.
If $w\in \mathfrak{S}_{s,r}$ but $w\neq (s,s-1,\dots,1)$, then
it exists an integer $1\leq j<s$ such that $w(i)=s-i+1$ for $i=1,\dots,j-1$ but $w(j)\neq s-j+1$.
The map $\varepsilon_2$ acts on $w$ by putting $w(t)=s-j+1$ for some $t>j$ between $w(j-1)$ and $w(j)$ in $w$, and other entries of $w$ keep their original order in $w$.
That is
\begin{equation}\label{defi-varepsilon2}
\varepsilon_2\circ w=\big(w(1),\dots,w(j-1),w(t),w(j),\dots,w(t-1),w(t+1),\dots,w(s)\big).
\end{equation}
In particular, if $j=1$ then $w(t)=s$ and $\varepsilon_2\circ w=\big(s,w(1),\dots,w(t-1),w(t+1),\dots,w(s)\big)$.
Note that if $w\neq (s,s-1,\dots,1)$ then $w(t)>r$ since $w\in \mathfrak{S}_{s,r}$.
For example, if $s=7$, $r=3$ and $w=(3276514)$, then $w_1:=\varepsilon_2\circ (3276514)=(7326514)$.
In this case, $N_{w,r}=5$ and $N_{w_1,r}=4$.
We can decrease the number $N_{w,r}$ further by acting $\varepsilon_2$ on $w$.
This is the next lemma.
\begin{lem}\label{lem-varepsilon2}
For a positive integer $s$ and $r\in \{0,1,\dots,s-1\}$, let $w\in \mathfrak{S}_{s,r}$, $N_{w,r}$ be defined
in \eqref{Nwr} and $\varepsilon_2$ be defined as above.
Then
\begin{equation}\label{varep2}
N_{w,r}\geq N_{\varepsilon_2\circ w,r}.
\end{equation}
In particular, if the equality of \eqref{varep2} holds, then $w(1)=s$.
\end{lem}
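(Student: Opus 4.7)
The plan is to dispatch the trivial case $w=(s,s-1,\ldots,1)$ immediately (here $\varepsilon_2$ fixes $w$, equality holds, and $w(1)=s$), and then treat the nontrivial case by comparing the weighted paths of $w$ and $w_1:=\varepsilon_2\circ w$ edge by edge. Since $\varepsilon_2$ only modifies $w$ inside the window $\{j,j+1,\ldots,t\}$ described in \eqref{defi-varepsilon2}, the weights $e_i$ defined in \eqref{defi-ej} at positions $i\leq j-1$ and $i\geq t+2$ are identical for $w$ and $w_1$, so the entire comparison reduces to the positions $j,j+1,\ldots,t+1$.

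First I will pin down the combinatorial shape of $w$. Since $w(1),\ldots,w(j-1)$ equal $s,s-1,\ldots,s-j+2$ by the choice of $j$, the values $\{s-j+1,s-j,\ldots,1\}$ fill positions $j,\ldots,s$. Using that the $I_2$-entries of $w$ are decreasing, I will rule out $s-j+1\in I_1$: if it were, then $j=s-r+1$, all of $I_2$ would already be placed, and the decreasing order of the $I_1$-entries would force $w(j)=r=s-j+1$, contradicting $w(j)\neq s-j+1$. Hence $s-j+1\in I_2$, $j\leq s-r$, and $w(t)=s-j+1$ is the first $I_2$-value in the tail, which in turn forces $w(j),w(j+1),\ldots,w(t-1)$ to be the top $t-j$ elements of $I_1$ in decreasing order, namely $r,r-1,\ldots,r-(t-j)+1$.

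With this structure in hand I will evaluate each $e_i$ on the window explicitly. When $j\geq 2$ one gets $e_j(w)=0$ (because $w(j-1)=s-j+2>r\geq w(j)$ kills both $u_j$ and the $\chi$-product) while $e_j(w_1)=1$ (now $w_1(j-1),w_1(j)\in I_2$ with $w_1(j-1)>w_1(j)$, so $u_j=0$ but $\chi\cdot\chi=1$); the weights $e_i$ vanish for $j+1\leq i\leq t-1$ in both $w$ and $w_1$ because each such step lies inside $I_1$ and is strictly decreasing; at position $t$ one has $e_t(w)=1$ (the transition from $I_1$ to $I_2$ activates $u_t$) and $e_t(w_1)=0$ (another decreasing $I_1$-step); finally a short case split on whether $w(t+1)\in I_1$ or $w(t+1)\in I_2$ shows $e_{t+1}(w)=e_{t+1}(w_1)=\chi(w(t+1)>r)$. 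The two discrepancies at positions $j$ and $t$ cancel, yielding $N_{w,r}=N_{w_1,r}$; this case is exactly when $w(1)=s$.

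The remaining case $j=1$ is precisely when $w(1)\neq s$. Here position $1$ contributes $e_1(w)=e_1(w_1)=1$ (the convention $w(0)=0$ wipes out the $\chi$-product in both), so the $-1$ cancellation at position $j$ disappears while the $+1$ gain at position $t$ survives, giving $N_{w,r}-N_{w_1,r}=1>0$. Hence equality in \eqref{varep2} forces $j\geq 2$, i.e.\ $w(1)=s$, which is the second assertion. The step I expect to be most delicate is the verification at position $t+1$, where the inserted value $s-j+1$ in $w_1$ lands next to an $I_1$-entry; the careful case split on whether $w(t+1)\in I_1$ or $I_2$ is what guarantees that the weights at $t+1$ agree, so that only the swap at positions $j$ and $t$ contributes to the net difference.
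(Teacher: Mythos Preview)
Your proof is correct and follows essentially the same strategy as the paper: both arguments localize the comparison to the edges in the window $\{j,\ldots,t+1\}$ and verify the inequality by a direct case analysis of those few edge weights, with the strict case coming from $j=1$. Your version is slightly more explicit in first pinning down the exact values $w(j),\ldots,w(t-1)=r,r-1,\ldots$ and then computing each $e_i(w)-e_i(w_1)$ position by position, whereas the paper reindexes the middle block and reduces everything to the pair of relations $e_j+1\ge e'_j+e'_{j+1}$ and $e_t+e_{t+1}=e'_{t+1}+1$; but these are cosmetic differences in bookkeeping, not in substance. Two tiny caveats that do not affect correctness: your parenthetical ``another decreasing $I_1$-step'' for $e_t(w_1)$ is literally an $I_2\to I_1$ step when $t=j+1$, though the value $0$ you claim is still right; and when $t=s$ there is no position $t+1$ to check, but since you show $e_{t+1}(w)=e_{t+1}(w_1)$ this contributes nothing either way.
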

\begin{proof}
If $w=(s,s-1,\dots,1)$ then $\varepsilon_2 \circ w=w$ and the lemma holds. Hence, we assume
$w\neq (s,s-1,\dots,1)$ in the following of the proof.
By the definition of $\varepsilon_2$, $N_{w,r}$ and $N_{\varepsilon_2\circ w,r}$ only differ in two parts (view them as weighted directed paths as in \eqref{e-weight}). That is
\begin{align*}
\big(w(j-1)\mathop{\longrightarrow}^{e_{j}} w(j)\big)&\mapsto
\big(w(j-1)\mathop{\longrightarrow}^{e'_{j}}w(t)\mathop{\longrightarrow}^{e'_{j+1}}w(j)\big),\\
\big(w(t-1)\mathop{\longrightarrow}^{e_{t}}w(t)\mathop{\longrightarrow}^{e_{t+1}}w(t+1)\big)&\mapsto \big(w(t-1)\mathop{\longrightarrow}^{e'_{t+1}}w(t+1)\big),
\end{align*}
where $j$ is the integer such that
$w(i)=s-i+1$ for $i=1,\dots,j-1$ but $w(j)\neq s-j+1$, and $w(t)=s-j+1$ for some $t>j$.
Recall that $N_{w,r}$ equals the sum of the weights on its corresponding directed path.
Thus, to prove $N_{w,r}\geq N_{\varepsilon_2\circ w,r}$ it is sufficient to prove
$e_{j}+e_{t}+e_{t+1}\geq e'_{j}+e'_{j+1}+e'_{t+1}$.
We will prove this by showing that
\begin{subequations}
\begin{equation}\label{claim2-1}
e_{j}+1\geq e'_{j}+e'_{j+1},
\end{equation}
and
\begin{equation}\label{claim2-2}
e_{t}+e_{t+1}=e'_{t+1}+1.
\end{equation}
\end{subequations}

If $j=1$ then $e_j=e'_j=1$, and $e'_{j+1}=1$ for $w(j)\in I_2$ and $e'_{j+1}=0$ for $w(j)\in I_1$. It follows that \eqref{claim2-1} holds for $j=1$.
By the definition of $\varepsilon_2$, for $j\geq 2$ we have $w(j-1), w(t)\in I_2$ and $w(j-1)=w(t)+1=s-j+2$.
We then prove \eqref{claim2-1} for $j\geq 2$ by discussing the two cases: $w(j)\in I_1$ and $w(j)\in I_2$.
The case $w(j)\in I_2$ can not occur. Otherwise, since $w(j-1)\in I_2$ and $w\in \mathfrak{S}_{s,r}$,
all the elements of $I_2$ in $w$ are in a decreasing order, $w(j)=w(j-1)-1=s-j+1$. This contradicts the assumption $w(j)\neq s-j+1$ in \eqref{defi-varepsilon2}.
If $w(j)\in I_1$ and $j\geq 2$ then $e_{j}=e'_{j+1}=0$. Together with the fact $e'_{j}=1$ by $w(j-1)>w(t)$ and $w(j-1),w(t)\in I_2$, we have $e_{j}+1=e'_{j}+e'_{j+1}=1$.
Therefore \eqref{claim2-1} holds for $j=1,2,\dots,s-1$.

By the definition of $\varepsilon_2$ in \eqref{defi-varepsilon2} again, we have $w(t)\in I_2, w(t)>w(t-1)$ and $w(t)>w(t+1)$. Since $w\in \mathfrak{S}_{s,r}$ and all the elements of $I_2$ in $w$ are in a decreasing order, $w(t-1)\in I_1$.
We then prove \eqref{claim2-2} by discussing the following two cases:
\begin{enumerate}
 \item $w(t-1)\in I_1, w(t+1)\in I_2$;
 \item $w(t-1)\in I_1, w(t+1)\in I_1$.
 \end{enumerate}
If $w(t-1)\in I_1, w(t+1)\in I_2$ then $e_{t}=e_{t+1}=e'_{t+1}=1$ and $e_{t}+e_{t+1}=e'_{t+1}+1=2$.
If $w(t-1), w(t+1)\in I_1$ then $e_{t}=1, e_{t+1}=0$. Since all the elements of $I_1$ in $w$ are in a decreasing order, $e'_{t+1}=0$. Then $e_{t}+e_{t+1}=e'_{t+1}+1=1$.
Therefore \eqref{claim2-2} holds.

We prove the particular case by showing that $N_{w,r}>N_{\varepsilon_2\circ w,r}$ for $w(1)\neq s$.
If $w(1)\neq s$ then $j=1$ and $w(1)\in I_1$.
Correspondingly, $e_1=e'_1=1$ and $e'_2=0$.
Hence, $e_1+1=2>e'_1+e'_2=1$. Together with \eqref{claim2-2} and the above discussion,
$N_{w,r}>N_{\varepsilon_2\circ w,r}$.
\end{proof}

By Lemma~\ref{lem-varepsilon1} and Lemma~\ref{lem-varepsilon2}, we obtain the next proposition.
\begin{prop}\label{prop-lowerbound}
For a positive integer $s$, let $w\in\mathfrak{S}_s$, $w(0):=0$ and $d_1,\dots,d_s$ be nonnegative integers such that $d_j\geq 1$ if $w(j)>w(j-1)$ for $j=1,\dots,s$.
For an integer $r$ such that $0\leq r\leq s$,
\begin{subequations}\label{lowerbounds}
\begin{equation}\label{e-lowerbound}
\sum_{j=1}^s\big(d_j+\chi(w(j)>r)\chi(w(j-1)>r)\big)\geq s-r,
\end{equation}
and for $i\in \{1,2,\dots,s\}$,
\begin{equation}\label{lowerbound1}
\sum_{\substack{j=1\\j\neq i}}^s\big(d_j+\chi(w(j)>r)\chi(w(j-1)>r)\big)\geq s-r-1.
\end{equation}
\end{subequations}
In particular, if the equality of \eqref{e-lowerbound} holds, then $w(1)>r$ and $d_1=1$.
\end{prop}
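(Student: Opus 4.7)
The plan is to observe that, since $d_j\ge 0$ and $d_j\ge 1$ whenever $w(j)>w(j-1)$, one has $d_j\ge u_j(w)$ for every $j$, where $u_j(w)$ is the ascent indicator in \eqref{def-uj}. Consequently the left-hand side of \eqref{e-lowerbound} dominates the quantity $N_{w,r}$ of \eqref{Nwr}, and it suffices to prove $N_{w,r}\ge s-r$. (The edge case $r=s$ is trivial since $s-r=0$ while $u_1(w)=1$.)

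For $0\le r<s$, my strategy is a two-step reduction to the decreasing permutation $w_0:=(s,s-1,\dots,1)$. First, by Lemma~\ref{lem-varepsilon1}, replace $w$ by $\varepsilon_1\circ w\in\mathfrak{S}_{s,r}$; this does not increase $N_{\cdot,r}$, and strictly decreases it unless $w$ already lies in $\mathfrak{S}_{s,r}$. Second, iterate the map $\varepsilon_2$ inside $\mathfrak{S}_{s,r}$: by construction each application strictly extends the initial segment on which the permutation agrees with $w_0$, so the iteration terminates at $w_0$ after at most $s$ steps, with $N_{\cdot,r}$ non-increasing throughout by Lemma~\ref{lem-varepsilon2}. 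A direct count at $w_0$ then gives $N_{w_0,r}=s-r$: only $u_1(w_0)=1$ contributes among the ascent terms (since $w_0$ is strictly decreasing after position $1$), while $\chi(w_0(j)>r)\chi(w_0(j-1)>r)=1$ exactly for $2\le j\le s-r$, yielding $1+(s-r-1)=s-r$ as required.

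For the ``in particular'' clause, equality in \eqref{e-lowerbound} forces simultaneously $d_j=u_j(w)$ for every $j$ and $N_{w,r}=s-r$. The strict-inequality part of Lemma~\ref{lem-varepsilon1} then compels $w\in\mathfrak{S}_{s,r}$, and the ``in particular'' clause of Lemma~\ref{lem-varepsilon2}, applied to the first step of the $\varepsilon_2$-iteration, forces $w(1)=s$. Hence $w(1)>r$, and $d_1=u_1(w)=1$ follows from $w(0)=0<w(1)$.

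To derive the removed-term bound \eqref{lowerbound1}, I would use $d_j\ge u_j(w)$ to write
\[
\sum_{\substack{j=1\\ j\neq i}}^{s}\bigl(d_j+\chi(w(j)>r)\chi(w(j-1)>r)\bigr)\ \ge\ N_{w,r}-\bigl(u_i(w)+\chi(w(i)>r)\chi(w(i-1)>r)\bigr),
\]
and split on the value of the subtracted quantity. When it is at most $1$, the bound $N_{w,r}\ge s-r$ from \eqref{e-lowerbound} gives at once the required $s-r-1$. When it equals $2$, we must have $w(i-1),w(i)>r$ and $w(i-1)<w(i)$; but then the strict-inequality clause of Lemma~\ref{lem-varepsilon1} yields $N_{w,r}> N_{\varepsilon_1\circ w,r}\ge s-r$, so $N_{w,r}\ge s-r+1$, and the bound again follows. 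The main bookkeeping obstacle is simply verifying that the $\varepsilon_2$-iteration terminates at $w_0$, which is clear from the ``longest common prefix with $w_0$ grows by at least one'' observation used above.
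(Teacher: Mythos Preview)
Your overall strategy coincides with the paper's: reduce to $N_{w,r}$ via $d_j\ge u_j(w)$, then push $N_{w,r}$ down to $N_{w_0,r}=s-r$ by first applying $\varepsilon_1$ and then iterating $\varepsilon_2$. Your treatment of \eqref{e-lowerbound} and of \eqref{lowerbound1} (including the split on whether the omitted term is $\le 1$ or $=2$) matches the paper's proof.

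There is, however, a genuine gap in your handling of the ``in particular'' clause. From $N_{w,r}=s-r$ you infer $w\in\mathfrak{S}_{s,r}$ via the strict-inequality part of Lemma~\ref{lem-varepsilon1}, and then $w(1)=s$ via Lemma~\ref{lem-varepsilon2}. Both intermediate claims are false. Lemma~\ref{lem-varepsilon1} only rules out \emph{adjacent} same-block ascents, which is weaker than membership in $\mathfrak{S}_{s,r}$. A concrete counterexample: take $s=3$, $r=1$, $w=(2,1,3)$. There is no adjacent same-block pair at all, yet the $I_2$-values $2,3$ occur in increasing order, so $w\notin\mathfrak{S}_{3,1}$; one checks $N_{w,1}=2=s-r$ while $w(1)=2\neq s$. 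Thus you cannot apply Lemma~\ref{lem-varepsilon2} to $w$ itself, and the conclusion $w(1)=s$ does not hold in general.

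The fix is short and is what the paper does: since $N_{w,r}=s-r$ forces equality throughout the chain $N_{w,r}\ge N_{\varepsilon_1\circ w,r}\ge N_{\varepsilon_2\circ(\varepsilon_1\circ w),r}\ge\cdots\ge s-r$, apply the ``in particular'' clause of Lemma~\ref{lem-varepsilon2} to $\varepsilon_1\circ w\in\mathfrak{S}_{s,r}$ to get $(\varepsilon_1\circ w)(1)=s$. Because $\varepsilon_1$ permutes the values within each block among the positions they already occupy, $(\varepsilon_1\circ w)(1)\in I_2$ forces $w(1)\in I_2$, i.e.\ $w(1)>r$. (Equivalently, argue by contradiction as in the paper: if $w(1)\le r$ then $(\varepsilon_1\circ w)(1)\le r<s$, so Lemma~\ref{lem-varepsilon2} gives a strict drop at the first $\varepsilon_2$-step, contradicting $N_{w,r}=s-r$.) The conclusion $d_1=1$ then follows as you wrote.
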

\begin{proof}
By the definition of $u_j(w)$ in \eqref{def-uj}, we have $d_j\geq u_j(w)$.
It follows that
\begin{subequations}\label{geq1}
\begin{equation}\label{geq1-1}
\sum_{j=1}^s\big(d_j+\chi(w(j)>r)\chi(w(j-1)>r)\big)
\geq \sum_{j=1}^s\big(u_j(w)+\chi(w(j)>r)\chi(w(j-1)>r)\big)=N_{w,r},
\end{equation}
and
\begin{equation}\label{geq2}
\sum_{\substack{j=1\\j\neq i}}^s\big(d_j+\chi(w(j)>r)\chi(w(j-1)>r)\big)\geq
\sum_{\substack{j=1\\j\neq i}}^su_j(w)+\chi(w(j)>r)\chi(w(j-1)>r).
\end{equation}
\end{subequations}
We then prove \eqref{e-lowerbound} by showing that $N_{w,r}\geq s-r$,
and prove \eqref{lowerbound1} by showing that the right-hand side of \eqref{geq2}
is no less than $s-r-1$.

For any permutation $w\in \mathfrak{S}_s$, let $\varepsilon_1$ act on $w$ and then let
$\varepsilon_2$ act on $\varepsilon_1\circ w$ recursively until the resulting permutation
is $w_1:=(s,s-1,\dots,1)$.
By the definition of $\varepsilon_1$ and $\varepsilon_2$, these operations are valid.
By Lemma~\ref{lem-varepsilon1} and Lemma~\ref{lem-varepsilon2},
\begin{equation}\label{N-chain}
N_{w,r}\geq N_{\varepsilon_1 \circ w,r}\geq N_{\varepsilon_2\circ(\varepsilon_1 \circ w),r}
\geq N_{\varepsilon_2\circ\cdots \circ\varepsilon_2\circ(\varepsilon_1 \circ w),r}
=N_{w_1,r}=s-r.
\end{equation}
Together with \eqref{geq1-1} we obtain \eqref{e-lowerbound}.

For a fixed $i\in \{1,2,\dots,s\}$,
it is clear that
$u_i(w)+\chi(w(i)>r)\chi(w(i-1)>r)\in \{0,1,2\}$.
If $u_i(w)+\chi(w(i)>r)\chi(w(i-1)>r)\leq 1$,
then
\begin{multline*}
\sum_{\substack{j=1\\j\neq i}}^su_j(w)+\chi(w(j)>r)\chi(w(j-1)>r)\\
=N_{w,r}-u_i(w)-\chi(w(i)>r)\chi(w(i-1)>r)
\geq s-r-1
\end{multline*}
using $N_{w,r}\geq s-r$.
Together with \eqref{geq2} gives \eqref{lowerbound1}.
In particular, \eqref{lowerbound1} holds for $i=1$ since in this case
$u_1(w)+\chi(w(1)>r)\chi(w(0)>r)=1$.
If $u_i(w)+\chi(w(i)>r)\chi(w(i-1)>r)=2$ for some $i\in \{2,\dots,s\}$,
then $w(i)>w(i-1)>r$, i.e., $w(i-1),w(i)\in I_2$ and $w(i-1)<w(i)$.
Let $\varepsilon_1$ act on $w$ and then let
$\varepsilon_2$ act on $\varepsilon_1\circ w$ recursively again until the resulting permutation
is $w_1:=(s,s-1,\dots,1)$. Using \eqref{Neq} we have
\[
N_{w,r}>N_{w_1,r}=s-r.
\]
It follows that
\begin{multline*}
\sum_{\substack{j=1\\j\neq i}}^su_j(w)+\chi(w(j)>r)\chi(w(j-1)>r)\\
>N_{w_1,r}-u_i(w)-\chi(w(i)>r)\chi(w(i-1)>r)
=s-r-2.
\end{multline*}
Hence, in all the cases the right-hand side of \eqref{geq2} is no less than $s-r-1$ and we obtain \eqref{lowerbound1}.

Let
\[
S:=\sum_{j=1}^s\big(d_j+\chi(w(j)>r)\chi(w(j-1)>r)\big).
\]
Finally,  we prove that if $S=s-r$ then $w(1)>r$ and $d_1=1$.
By the argument in the above, we know that $S\geq N_{w,r}\geq s-r$.
If $S=s-r$ then it forces $N_{w,r}=s-r$. It follows that $d_j=u_j(w)$ for $j=1,\dots,s$.
In particular, $d_1=u_1(w)=1$.
We then prove $w(1)>r$ by contradiction.
If $w(1)\leq r\neq s$, then
\[
N_{w,r}\geq N_{\varepsilon_1 \circ w,r}>N_{\varepsilon_2\circ(\varepsilon_1 \circ w),r}\geq N_{w_1,r}=s-r.
\]
Here $N_{\varepsilon_1 \circ w,r}\neq N_{\varepsilon_2\circ(\varepsilon_1 \circ w),r}$
by Lemma~\ref{lem-varepsilon2}.
Consequently,
$S\geq N_{w,r}>s-r$. This contradicts the fact $S=s-r$.
Hence, $w(1)>r$ for $S=s-r$.
\end{proof}

Now we can prove the key lemma in this paper, which plays an important role in proving the vanishing properties
of $B_{n,n_0}(a,b,c,l,\mu)$ and $C_{n,n_0}(a,b,c,l,m)$.
\begin{lem}\label{lem-key}
For $s,c$ positive integers, let $b,t$ and $k_1,\dots,k_s$ be nonnegative integers such that
$1\leq k_{i}\leq (s-1)(c-1)+b+t$ for $1\leq i\leq s$.
Then for an integer $r$ such that $0\leq r\leq s$, at least one of the following holds:
\begin{enumerate}
\item $1\leq k_i\leq b$ for some $i$ with $1\leq i\leq s$;
\item $-c+1\leq k_i-k_j\leq c-2$ for some $(i,j)$ such that $1\leq i<j\leq s$ and $i\leq r$;
\item $-c\leq k_{i}-k_{j}\leq c-1$ for some $(i,j)$ such that $r<i<j\leq s$;
\item there exists a permutation $w\in\mathfrak{S}_s$ and nonnegative integers $d_1,\dots,d_s$
such that
\begin{subequations}\label{e-k1}
\begin{equation}\label{e-k1-1}
k_{w(1)}=b+d_1,
\end{equation}
and
\begin{equation}\label{e-k1-2}
k_{w(j)}-k_{w(j-1)}=c-1+\chi(w(j)>r)\chi(w(j-1)>r)+d_j \quad \text{for $2\leq j\leq s$.}
\end{equation}
\end{subequations}
Here the $d_j$ satisfy
\begin{equation}\label{t}
s-r\leq \sum_{j=1}^{s}\big(d_j+\chi(w(j)>r)\chi(w(j-1)>r)\big)\leq t,
\end{equation}
$w(0):=0$, and $d_j>0$ if $w(j-1)<w(j)$ for $1\leq j\leq s$.
In particular, if $t=s-r$ and $r<s$, then
\begin{equation}\label{t-specialcase}
k_i=b+1 \quad  \text{for some $i>r$}.
\end{equation}
\end{enumerate}
\end{lem}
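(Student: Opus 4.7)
The plan is to prove Lemma~\ref{lem-key} by contraposition: assuming conditions (1), (2), and (3) all fail, I will construct a permutation $w \in \mathfrak{S}_s$ and nonnegative integers $d_1,\dots,d_s$ witnessing (4). First, note that the simultaneous failure of (2) and (3) forces the values $k_1,\dots,k_s$ to be pairwise distinct: if $k_i=k_j$ for some $i<j$, then $k_i-k_j=0$ lies in $[-c+1,c-2]$ when $i\leq r$ and in $[-c,c-1]$ when $i>r$, contradicting the failure of (2) or (3) respectively. I then take $w \in \mathfrak{S}_s$ to be the unique permutation with $k_{w(1)}<k_{w(2)}<\cdots<k_{w(s)}$, and define $d_1:=k_{w(1)}-b$ together with $d_j:=k_{w(j)}-k_{w(j-1)}-(c-1)-\chi(w(j)>r)\chi(w(j-1)>r)$ for $j\geq 2$. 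With the convention $w(0)=0$ these formulas match \eqref{e-k1-1} and \eqref{e-k1-2} tautologically, and $\chi(w(1)>r)\chi(w(0)>r)=0$.

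The substantive content is verifying $d_j\geq 0$, with strict positivity whenever $w(j-1)<w(j)$. For $j=1$, the failure of (1) yields $k_{w(1)}\geq b+1$, so $d_1\geq 1$. For $j\geq 2$, I split on whether $\min\{w(j-1),w(j)\}$ is at most $r$ or exceeds $r$. If it is at most $r$, the failure of (2) applied to that pair, combined with $k_{w(j)}>k_{w(j-1)}$, forces the ordered gap $k_{w(j)}-k_{w(j-1)}$ to be at least $c-1$ (improving to $c$ when $w(j-1)<w(j)$, since then the failure of (2) reads $k_{w(j-1)}-k_{w(j)}\leq -c$); in this case the $\chi\chi$ term vanishes, so $d_j\geq 0$ always and $d_j\geq 1$ when $w(j-1)<w(j)$. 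If both indices exceed $r$, the failure of (3) forces a gap of at least $c$ (improving to $c+1$ when $w(j-1)<w(j)$), exactly offset by the $\chi\chi=1$ correction, yielding the same conclusion.

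The upper bound $\sum_{j=1}^{s}(d_j+\chi(w(j)>r)\chi(w(j-1)>r))\leq t$ is then a telescoping computation: the sum equals $k_{w(s)}-b-(s-1)(c-1)$, which is at most $t$ by the hypothesis $k_{w(s)}\leq (s-1)(c-1)+b+t$. The lower bound $s-r$ is precisely Proposition~\ref{prop-lowerbound} applied to this $w$ and the $d_j$ (whose hypothesis $d_j\geq 1$ when $w(j-1)<w(j)$ I have just verified). For the special case $t=s-r$ with $r<s$, these bounds collapse to equality, so the equality clause of Proposition~\ref{prop-lowerbound} forces $w(1)>r$ and $d_1=1$; hence $k_{w(1)}=b+1$ with $w(1)>r$, establishing \eqref{t-specialcase}. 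The main obstacle is the four-way case analysis for $d_j\geq 0$, since one must carefully track how the strict gap constraints from the negations of (2) and (3) interact with the $\chi(w(j)>r)\chi(w(j-1)>r)$ correction depending on whether both of $w(j-1),w(j)$ lie beyond $r$; once this bookkeeping is handled, the remaining bounds are routine telescoping and a direct appeal to Proposition~\ref{prop-lowerbound}.
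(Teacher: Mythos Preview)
Your argument is essentially the paper's proof stripped of its tournament language: the paper builds a weighted tournament on $\{1,\dots,s\}$ whose unique Hamilton path is precisely your increasing-$k$ permutation, and then verifies the same gap inequalities you handle by direct case analysis on whether $\min\{w(j-1),w(j)\}\le r$. The telescoping for the upper bound and the appeal to Proposition~\ref{prop-lowerbound} for the lower bound and the special case are identical in both proofs.

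One small point to repair: your distinctness claim fails when $c=1$, since then $[-c+1,c-2]=[0,-1]$ is empty and $k_i=k_j$ with $i<j$ and $i\le r$ does not trigger (2). Ties among such pairs are therefore possible, and your sorting permutation is not uniquely determined. The fix is to break ties by placing the larger index first; then consecutive equal values satisfy $w(j-1)>w(j)$, so $d_j=0$ is permitted and the rest of your case analysis goes through unchanged (note that ties among indices both exceeding $r$ are already excluded by the failure of (3), since $0\in[-c,c-1]$ for all $c\ge 1$). The paper's tournament handles this tie-breaking automatically: for $k_i\ge k_j$ with $i<j$ and $i\le r$ it draws the edge $j\to i$, which is exactly the rule above. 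For $c\ge 2$ your proof is complete as written.
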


\begin{proof}
We prove the lemma by showing that if (1)--(3) fail then (4) must hold.

Assume that (1)--(3) are all false.
Then we construct a weighted tournament $T$ on a complete graph
on $s$ vertices, labelled $1,\dots,s$, as follows.
For the edge $(i,j)$ with $1\leq i<j\leq s$ and $i\leq r$, we draw an arrow from $j$ to $i$ and attach a weight $c-1$ if $k_i-k_j\ge c-1$.
If, on the other hand, $k_i-k_j\le -c$ then we draw an arrow
from $i$ to $j$ and attach the weight $c$.
Similarly, for the edge $(i,j)$ with $1\leq i<j\leq s$ and $i>r$,
we draw an arrow from $j$ to $i$ and attach a weight $c$ if $k_i-k_j\ge c$,
and we draw an arrow from $i$ to $j$ and attach the weight $c+1$
if $k_i-k_j\le -c-1$.
Note that the weight of each edge of a tournament is nonnegative since $c$ is a positive integer.

We call a directed edge from $i$ to $j$ ascending if $i<j$.
It is immediate from our construction that
(i) the weight of the edge $i\to j$ is less than or equal $k_j-k_i$, and
(ii) the weight of an ascending edge is positive.

We will use (i) and (ii) to show that any of the above-constructed
tournaments is acyclic and hence transitive.
As consequence of (i), the weight of a directed path from $i$ to $j$ in $T$,
defined as the sum of the weights of its edges, is at most $k_j-k_i$.
Proceeding by contradiction, assume that $T$ contains a cycle $C$.
By the above, the weight of $C$ must be non-positive, and hence $0$.
Since $C$ must have at least one ascending edge, which by (ii) has positive
weight, the weight of $C$ is positive, a contradiction.

Since each $T$ is transitive, there is exactly one directed Hamilton path $P$ in $T$,
corresponding to a total order of the vertices.
Assume $P$ is given by
\[
P=w(1)\rightarrow w(2)\rightarrow\cdots\rightarrow w(s-1)\rightarrow w(s),
\]
where we have suppressed the edge weights.
Then
\[
k_{w(s)}-k_{w(1)}\ge (s-1)(c-1)+\sum_{j=2}^s \chi(w(j)>r)\chi(w(j-1)>r),
\]
and thus
\begin{align}\label{e-contradiction}
k_{w(s)}&\ge k_{w(1)}+(s-1)(c-1)+\sum_{j=2}^s \chi(w(j)>r)\chi(w(j-1)>r)\nonumber\\
    & \ge b+1+(s-1)(c-1)+\sum_{j=2}^s \chi(w(j)>r)\chi(w(j-1)>r).
\end{align}
Together with the assumption that $k_{w(s)}\leq (s-1)(c-1)+b+t$
this implies that $P$ has at most $t-1$ ascending edges.
Let $d_1,\dots,d_s$ be nonnegative integers such that \eqref{e-k1} holds.
Since (1) does not hold and by \eqref{e-k1-1}, $k_{w(1)}=b+d_1\geq b+1$, so that $d_1\geq 1$.
For $2\leq j\leq s$, if $w(j-1)\to w(j)$ is an ascending edge, then $d_j$ is a positive integer.
That is, for $2\leq j\leq s$ if $w(j-1)<w(j)$ then $d_j\geq 1$.
Set $k_0:=0$.
Since
\begin{align*}
\sum_{j=1}^s (k_{w(j)}-k_{w(j-1)})=k_{w(s)}
&=b+(s-1)(c-1)+\sum_{j=1}^s \big(d_j+\chi(w(j)>r)\chi(w(j-1)>r)\big) \\
\leq b+(s-1)(c-1)+t,
\end{align*}
we have
\[
\sum_{j=1}^s \big(d_j+\chi(w(j)>r)\chi(w(j-1)>r)\big)\leq t.
\]
By Proposition~\ref{prop-lowerbound},
\[
\sum_{j=1}^s \big(d_j+\chi(w(j)>r)\chi(w(j-1)>r)\big)\geq s-r.
\]
This completes the proof of the assertion that (4) must hold if (1)--(3) fail.

If $t=s-r$ and $r<s$, then
\[
\sum_{j=1}^{s}\big(d_j+\chi(w(j)>r)\chi(w(j-1)>r)\big)=s-r.
\]
By Proposition~\ref{prop-lowerbound} we have $w(1)>r$, $d_1=1$ and $k_{w(1)}=b+1$.
That is the assertion \eqref{t-specialcase}.
\end{proof}

For $r=s$ and $t=1$, we have the next corollary of Lemma~\ref{lem-key}.
\begin{cor}\label{cor-key}
For $s,c$ positive integers, let $b$ and $k_1,\dots,k_s$ be nonnegative integers such that $1\leq k_{i}\leq (s-1)(c-1)+b+1$ for $1\leq i\leq s$.
Then, at least one of the following holds:
\begin{enumerate}
\item $1\leq k_i\leq b$ for some $i$ with $1\leq i\leq s$;
\item $-c+1\leq k_i-k_j\leq c-2$ for some $(i,j)$ with $1\leq i<j\leq s$;
\item For $i=1,\dots,s$, $k_i=(s-i)(c-1)+b+1$.
\end{enumerate}
\end{cor}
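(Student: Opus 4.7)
The plan is to deduce Corollary~\ref{cor-key} by specializing Lemma~\ref{lem-key} to the parameters $r=s$ and $t=1$, and then translating each of the four possible conclusions back into the language of the corollary. The hypotheses line up immediately: the bound $1\leq k_i\leq (s-1)(c-1)+b+1$ of the corollary is precisely the bound of Lemma~\ref{lem-key} with $t=1$, and the $k_i$ are nonnegative as required.

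Matching conclusions (1)--(3) of the lemma to those of the corollary is mostly bookkeeping. Condition~(1) is identical. Condition~(2) of the lemma imposes $i\le r$, which under $r=s$ is automatic and reduces to condition~(2) of the corollary. Condition~(3) of the lemma imposes $r<i$, which under $r=s$ is impossible, so that alternative is vacuous.

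The substantive step is to show that condition~(4) of the lemma collapses, under $r=s$ and $t=1$, to condition~(3) of the corollary. Because $r=s$, the indicator $\chi(w(j)>r)$ is identically zero, so the recurrence \eqref{e-k1-2} simplifies to
\[
k_{w(j)}-k_{w(j-1)}=c-1+d_j\qquad (2\le j\le s),
\]
and the summation constraint \eqref{t} becomes $0\le \sum_{j=1}^s d_j\le 1$. Since conclusion~(1) of the corollary is assumed to fail, we have $k_{w(1)}\ge b+1$, hence $d_1\ge 1$ by \eqref{e-k1-1}. Combined with $\sum d_j\le 1$, this forces $d_1=1$ and $d_j=0$ for all $j\ge 2$. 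The condition that $d_j>0$ whenever $w(j-1)<w(j)$ then forces $w(j-1)>w(j)$ for every $j\ge 2$, so $w$ must be the reverse permutation $w=(s,s-1,\dots,1)$. Telescoping the recurrence with $k_{w(1)}=k_s=b+1$ yields $k_{s-j+1}=b+1+(j-1)(c-1)$, and reindexing by $i=s-j+1$ gives $k_i=(s-i)(c-1)+b+1$ for $1\le i\le s$, which is exactly condition~(3) of the corollary.

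The only real ``obstacle'' is the last step of unpacking: one must observe that the monotonicity clause ``$d_j>0$ if $w(j-1)<w(j)$'' in part~(4) of Lemma~\ref{lem-key}, once combined with $d_j=0$ for all $j\ge 2$, leaves no freedom at all for $w$, pinning it down to the reverse permutation. Everything else in the proof is direct substitution.
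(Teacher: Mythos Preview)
Your proof is correct and follows exactly the route the paper indicates: specialize Lemma~\ref{lem-key} with $r=s$ and $t=1$, then unpack condition~(4). One small remark: you need not appeal to the failure of conclusion~(1) to obtain $d_1\ge 1$, since the clause ``$d_j>0$ if $w(j-1)<w(j)$'' in part~(4) of Lemma~\ref{lem-key} already forces $d_1>0$ (because $w(0)=0<w(1)$); this makes the argument self-contained within case~(4).
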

Note that this corollary corresponds to \cite[Lemma~4.2]{XZ} with minor modification.

By Lemma~\ref{lem-key}, we obtain another corollary.
\begin{cor}\label{prop-specialcase}
For positive integers $s,c$, let $n,n_0,b,m$ and $k_1,\dots,k_s$ be nonnegative integers such that
$1\leq k_{i}\leq (s-1)(c-1)+b+\chi(s>n_0+1)(s-n_0-1)+\chi(s>n-m)$ for $1\leq i\leq s\leq n$.
Let $m=0$ or $0\leq n-n_0\leq m\leq n$.
Then, for an integer $r$ such that $0\leq r\leq \min\{s,n_0\}$, at least one of the following four cases holds:
\begin{itemize}
\item[(i)] $1\leq k_i\leq b$ for some $i$ with $1\leq i\leq s$;
\item[(ii)] $-c+1\leq k_i-k_j\leq c-2$ for some $(i,j)$ such that $1\leq i<j\leq s$ and $i\leq r$;
\item[(iii)] $-c\leq k_i-k_j\leq c-1$ for some $(i,j)$ such that $r<i<j\leq s$;
\item[(iv)] if $m>0$ then $s>n-m$ and $k_i=b+1$ for some $i>n-m$.
\end{itemize}
\end{cor}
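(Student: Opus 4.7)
The plan is to apply Lemma~\ref{lem-key} directly to the $k_i$ with the given $r$ and with
\[
t:=\chi(s>n_0+1)(s-n_0-1)+\chi(s>n-m),
\]
so that the bound $(s-1)(c-1)+b+t$ of the lemma coincides with the upper bound on $k_i$ in the corollary. Conclusions (1), (2), (3) of Lemma~\ref{lem-key} are then verbatim cases (i), (ii), (iii) of the corollary, so the entire task is to show that conclusion (4) of the lemma implies case (iv). If $m=0$ this is vacuous, so I will assume $m\ge 1$; the hypothesis $m\ge n-n_0$ then yields the key inequality $n-m\le n_0$.

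Granted case (4), set $S:=\sum_{j=1}^{s}\bigl(d_j+\chi(w(j)>r)\chi(w(j-1)>r)\bigr)$. I will first show $s>n-m$. Since $w(0)=0<w(1)$ forces $d_1\ge 1$, we have $S\ge 1$; by Proposition~\ref{prop-lowerbound} also $S\ge s-r$; and by case (4), $S\le t$. If $s\le n_0+1$, then $t=\chi(s>n-m)$ and $S\ge 1$ forces $\chi(s>n-m)=1$. If $s>n_0+1$, then $t=(s-n_0-1)+\chi(s>n-m)$, and combining $S\ge s-r\ge s-n_0$ with $S\le t$ again yields $\chi(s>n-m)=1$.

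Next I will produce an index $i>n-m$ with $k_i=b+1$. When $r<s$, a case-check on whether $s\le n_0$, $s=n_0+1$, or $s>n_0+1$ shows that the inequalities collapse to $s-r=t$ (forcing $r=s-1$ in the first two subcases and $r=n_0$ in the last). The ``$t=s-r$, $r<s$'' sharpening of Lemma~\ref{lem-key} then yields $k_i=b+1$ for some $i>r$, and in every subcase one verifies $r\ge n-m$ (either $r=s-1\ge n-m$ using $s>n-m$, or $r=n_0\ge n-m$ by hypothesis), so $i>n-m$. The remaining case $r=s$ occurs only when $s\le n_0$; here every $\chi(w(j)>r)$ vanishes, so $S$ reduces to $\sum_j d_j=1$, forcing $d_1=1$ and $d_j=0$ for $j\ge 2$. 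The constraint that $d_j\ge 1$ whenever $w(j-1)<w(j)$ then compels $w$ to be strictly decreasing, so $w(1)=s$ and $k_s=b+1$. The main obstacle is this last case, because the sharpening of Lemma~\ref{lem-key} explicitly excludes $r=s$, and so the shape of the permutation $w$ must instead be extracted directly from the constraints defining case (4).
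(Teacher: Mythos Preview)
Your proposal is correct and follows essentially the same route as the paper: apply Lemma~\ref{lem-key} with $t=\chi(s>n_0+1)(s-n_0-1)+\chi(s>n-m)$, identify (1)--(3) with (i)--(iii), and then squeeze the inequalities $s-r\le S\le t$ in case~(4) to force $t=s-r$ (pinning down $r$) and invoke the sharpening \eqref{t-specialcase}; the residual $r=s$ case is handled by reading off $w=(s,s-1,\dots,1)$ directly from $\sum d_j=1$. Your treatment of $m=0$ by observing that (iv) is vacuously true is slightly slicker than the paper's argument, which instead rules out case~(4) altogether in that situation (this extra fact is recorded in the remark following the corollary).
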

Note that only the first three cases can occur for $m=0$.
\begin{proof}
The cases (i)--(iii) of this proposition are exactly the same as (1)--(3) of Lemma~\ref{lem-key} respectively.
Hence, we only need to show the next two claims:
(a) For the $m=0$ case, take $t=\chi(s>n_0+1)(s-n_0-1)$ in Lemma~\ref{lem-key}, then the case (4) of Lemma~\ref{lem-key} can not occur;
(b) For the $0\leq n-n_0\leq m\leq n$ case, take $t=\chi(s>n_0+1)(s-n_0-1)+\chi(s>n-m)$ in Lemma~\ref{lem-key}, then (4) of Lemma~\ref{lem-key} leads to (iv).

Carrying out the substitution $t=\chi(s>n_0+1)(s-n_0-1)+\chi(s>n-m)$ in \eqref{t} gives
\begin{equation}\label{inequality}
s-r\leq \sum_{j=1}^{s}\big(d_j+\chi(w(j)>r)\chi(w(j-1)>r)\big)\leq
\chi(s>n_0+1)(s-n_0-1)+\chi(s>n-m).
\end{equation}
Here $w\in\mathfrak{S}_s$ and $d_1,\dots,d_s$ are nonnegative integers such that
$w(0):=0$, and $d_j>0$ if $w(j-1)<w(j)$ for $1\leq j\leq s$.
Notice that $d_1\geq 1$ since $0=w(0)<w(1)$.
It follows that
\begin{equation}\label{sc-1}
\sum_{j=1}^{s}\big(d_j+\chi(w(j)>r)\chi(w(j-1)>r)\big)\geq 1.
\end{equation}

We first prove the claim (a). If $s\leq n_0+1$ then $t=\chi(s>n_0+1)(s-n_0-1)=0$.
Substituting this and $m=0$ into \eqref{inequality} gives
\begin{equation}\label{e-contr2}
\sum_{j=1}^{s}\big(d_j+\chi(w(j)>r)\chi(w(j-1)>r)\big)\leq 0.
\end{equation}
This contradicts \eqref{sc-1}.
On the other hand, if $s>n_0+1$ and $m=0$ then
\begin{equation}\label{e-contr}
t=\chi(s>n_0+1)(s-n_0-1)=s-n_0-1.
\end{equation}
By \eqref{inequality}, we have $t\geq s-r$.
Together with $r\leq \min\{s,n_0\}$ gives $t\geq s-r\geq s-n_0$.
But this contradicts $t=s-n_0-1$ in \eqref{e-contr}.
Therefore, the claim (a) holds.

To prove the claim (b), we first show that it holds for the $r=s$ case.
For $r=s$, by $r\leq \min\{s,n_0\}$ we have $s\leq n_0$.
It follows that $t=\chi(s>n_0+1)(s-n_0-1)+\chi(s>n-m)=\chi(s>n-m)$.
Furthermore, if $s\leq n-m$ then $t=0$.
By \eqref{inequality} we also get \eqref{e-contr2}, which
contradicts \eqref{sc-1}.
Hence, $s>n-m$ and $t=\chi(s>n-m)=1$.
It follows that $d_1=1$, $d_i=0$ for $i=2,\dots,s$.
This implies that $w=(s,s-1,\dots,1)$ and $k_{w(1)}=k_s=b+1$
by \eqref{e-k1-1} with $d_1=1$.
Since $s>n-m$, we find an $i=s>n-m$ such that $k_i=b+1$.

Assume $m>0$ and $r<s$ in the following of the proof. We prove the claim (b).
We begin with the proof of $s>n-m$. Assume the contrary $s\leq n-m$.
Furthermore, if $s\leq n_0+1$, then $t=\chi(s>n_0+1)(s-n_0-1)+\chi(s>n-m)=0$.
By \eqref{inequality} we get \eqref{e-contr2} again and arrive at a contradiction to \eqref{sc-1}.
If $s>n_0+1$, using the assumption $s\leq n-m$ we get $m<n-n_0-1$. This contradicts the condition
$m\geq n-n_0$ in the claim (b).
Therefore, $s>n-m$.

For $s>n-m$ and $s>n_0+1$, we have
\[
t=\chi(s>n_0+1)(s-n_0-1)+\chi(s>n-m)=s-n_0.
\]
But by $r\leq \min\{s,n_0\}$ we have $s-r\geq s-n_0$.
Then, for $s>n-m$ and $s>n_0+1$, \eqref{inequality} holds only when $r=n_0$, and in this case
\[
\sum_{j=1}^{s}\big(d_j+\chi(w(j)>r)\chi(w(j-1)>r)\big)=s-n_0.
\]
By \eqref{t-specialcase} with $r=n_0$,
we have $k_i=b+1$ for some $i>r$. Here $r=n_0\geq n-m$. Thus, the index $i$ for $k_i=b+1$ satisfies
$i>n-m$.

On the other hand, for $s>n-m$ and $s\leq n_0$, we have
\[
t=\chi(s>n_0+1)(s-n_0-1)+\chi(s>n-m)=1.
\]
Together with \eqref{sc-1}, we find that \eqref{inequality} holds only when
\[
\sum_{j=1}^{s}\big(d_j+\chi(w(j)>r)\chi(w(j-1)>r)\big)=1.
\]
By the same discussion as above, we find an $i=s>n-m$ such that $k_i=k_s=b+1$.
\end{proof}

For positive integers $s,c$ and nonnegative integers $b,r$ such that $0\leq r<s$, let
\begin{equation}\label{N}
T_{s,r}:=(s-1)(c-1)+\chi(s-1>r)(s-r-1)+b+1.
\end{equation}
Note that we suppress the parameters $b$ and $c$ in $T_{s,r}$.
By Lemma~\ref{lem-key} we have the next consequence.
\begin{prop}\label{lem-subs}
For $s,c$ positive integers, let $k_1,\dots,k_s$, $b,r$ and $\mathfrak{t}$ be nonnegative integers such that
$1\leq k_i\leq T_{s,r}+\mathfrak{t}$ for $1\leq i\leq s$ and $0\leq r<s$.
If the $k_i$ are such that (4) of Lemma~\ref{lem-key} holds with
$t\mapsto \chi(s-1>r)(s-r-1)+1+\mathfrak{t}$, then
\begin{equation}\label{e-subs1}
-\frac{q^{c-b-1}-q^a}{1-q}x_0-\sum_{i=1}^{s}\frac{1-q^{c-\chi(i\leq r)}}{1-q}x_i
\bigg|_{\substack{-a=T_{s,r}+\mathfrak{t}, \\[1pt] x_i=q^{k_s-k_i},\,0\leq i\leq s}}
=q^{n_1}+\dots+q^{n_{\mathfrak{t}}},
\end{equation}
where $\{n_1,\dots,n_{\mathfrak{t}}\}$ is a set of integers determined by $s,r,b,c$ and the $k_i$,
and we set $k_0:=0$.
In particular, let $q^{n_1}+\dots+q^{n_{\mathfrak{t}}}|_{\mathfrak{t}=0}:=0$.
\end{prop}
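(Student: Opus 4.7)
The plan is to clear the $(1-q)$ denominators, reindex via $w$, and telescope using the recursion from Case~(4) of Lemma~\ref{lem-key}. After multiplying through by $(1-q)$ and substituting $x_i = q^{k_s-k_i}$ (with $k_0:=0$) together with $q^a=q^{-T_{s,r}-\mathfrak{t}}$, one obtains
\[
(1-q)E = q^{k_s+a} - q^{k_s+c-b-1} - \sum_{j=1}^s q^{\alpha_j} + \sum_{j=1}^s q^{\alpha_j+c-\chi(w(j)\le r)},
\]
where $\alpha_j := k_s-k_{w(j)}$ after reindexing $i=w(j)$.

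Setting $\epsilon_j := \chi(w(j)>r)\chi(w(j-1)\le r)$ with the convention $w(0):=0$, a direct case analysis of the recursion $k_{w(j)} - k_{w(j-1)} = c-1 + \chi(w(j)>r)\chi(w(j-1)>r) + d_j$ (for $j\ge 2$) yields the key identity $q^{\alpha_j+c-\chi(w(j)\le r)} = q^{\alpha_{j-1}-d_j+\epsilon_j}$; separately, the $j=1$ boundary analog gives $q^{k_s+c-b-1} = q^{\alpha_1+c-\chi(w(1)\le r)+d_1-\epsilon_1}$. Shifting the index in the second sum then regroups the expression as
\[
(1-q)E = q^{\alpha_1+c-\chi(w(1)\le r)}(1-q^{d_1-\epsilon_1}) + \sum_{j=2}^s q^{\alpha_j+c-\chi(w(j)\le r)}(1-q^{d_j-\epsilon_j}) + q^{k_s+a}(1-q^M),
\]
where $M := \chi(s-1>r)(s-r-1) + 1 + \mathfrak{t} - (D+X)$, $D := \sum_j d_j$, and $X := \sum_{j=2}^s \chi(w(j)>r)\chi(w(j-1)>r)$. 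Because $d_j \ge \epsilon_j$ (since $\epsilon_j=1$ forces $w(j-1)<w(j)$, hence $d_j\ge 1$) and $M \ge 0$ by the upper bound $D+X \le \chi(s-1>r)(s-r-1)+1+\mathfrak{t}$ from Case~(4), each factor $1-q^{\bullet}$ is divisible by $1-q$. Dividing out, $E$ becomes a Laurent polynomial in $q$ expressible as a sum of monomials.

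It remains to count the monomials. The three pieces contribute $d_1-\epsilon_1$, $\sum_{j=2}^s(d_j-\epsilon_j)$, and $M$ monomials respectively, giving a total of $D-\sum_j\epsilon_j+M$. The plan is then to invoke the combinatorial identity $X+\sum_{j=1}^s\epsilon_j = s-r$, which follows by partitioning $\sum_j \chi(w(j)>r)=s-r$ according to whether $w(j-1)>r$ or $w(j-1)\le r$. Substituting this into the total and separating into the subcases $s>r+1$ and $s=r+1$ (the only possibilities under $r<s$), both collapse to exactly $\mathfrak{t}$, yielding $E = q^{n_1}+\cdots+q^{n_{\mathfrak{t}}}$. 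The main technical hurdle will be verifying the telescoping identities, which require a four-case analysis of whether $w(j)$ and $w(j-1)$ lie above or at most $r$; once those identities are established, the counting via the combinatorial identity above finishes the proof cleanly.
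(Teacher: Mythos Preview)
Your proposal is correct and follows essentially the same route as the paper: reindex the sum via $w$, regroup the substituted expression into pieces of the form $q^{\bullet}(1-q^{\bullet})$ with nonnegative exponents, and count the resulting monomials. Your notation $\epsilon_j=\chi(w(j)>r)\chi(w(j-1)\le r)$ and the identity $X+\sum_j\epsilon_j=s-r$ package the same bookkeeping the paper does directly with the $\chi$-terms; the ``four-case analysis'' you flag as the main hurdle reduces to the one-line identity $\chi(w(j)\le r)+\chi(w(j)>r)\chi(w(j-1)>r)+\chi(w(j)>r)\chi(w(j-1)\le r)=1$, so there is no real obstacle remaining.
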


We remark that the set $\{n_1,\dots,n_{\mathfrak{t}}\}$ ($n_i\neq n_j$ for $1\le i<j\le \mathfrak{t}$) can be explicitly determined.
However, since the precise values of the $n_i$ are irrelavent in the following, we
have omitted them from the above statement.
Indeed, the important fact about the right-hand side is that, viewed as an
alphabet, has cardinality $\mathfrak{t}$.

\begin{proof}
Denote the left-hand side of \eqref{e-subs1} by $L$.
Carrying out the substitutions
\[
a\mapsto -T_{s,r}-\mathfrak{t}\quad\text{and}\quad  x_i\mapsto q^{k_s-k_i} \text{ for $0\leq i\leq s$}
\]
in
\[
-\frac{q^{c-b-1}-q^a}{1-q}x_0-\sum_{i=1}^{s}\frac{1-q^{c-\chi(i\leq r)}}{1-q}x_i,
\]
we obtain
\begin{align}
L&=
-\frac{q^{k_s}}{1-q}\bigg(\big(q^{c-b-1}-q^{-T_{s,r}-\mathfrak{t}}\big)
+\sum_{i=1}^s\big(1-q^{c-\chi(i\leq r)}\big)q^{-k_i}\bigg)\nonumber \\
&=-\frac{q^{k_s}}{1-q}\bigg(\big(q^{c-b-1}-q^{-T_{s,r}-\mathfrak{t}}\big)
+\sum_{i=1}^s\big(1-q^{c-\chi\big(w(i)\leq r\big)}\big)q^{-k_{w(i)}}\bigg), \label{e-prop-subs1}
\end{align}
where $w\in \mathfrak{S}_s$ is any permutation such that \eqref{e-k1} holds.
By rearranging the terms in \eqref{e-prop-subs1}, it can be written as
\begin{multline*}
L=\frac{q^{k_s}}{1-q}\bigg(q^{c-\chi\big(w(1)\leq r\big)-k_{w(1)}}\big(1-q^{k_{w(1)}+\chi\big(w(1)\leq r\big)-b-1}\big)
+q^{-T_{s,r}-\mathfrak{t}}\big(1-q^{T_{s,r}+\mathfrak{t}-k_{w(s)}}\big)\\
+\sum_{i=2}^{s}q^{c-\chi\big(w(i)\leq r\big)-k_{w(i)}}\big(1-q^{-c+\chi\big(w(i)\leq r\big)+k_{w(i)}-k_{w(i-1)}}\big)
\bigg).
\end{multline*}

Next we will show that
\begin{subequations}\label{s}
\begin{equation}\label{s1}
k_{w(1)}+\chi\big(w(1)\leq r\big)-b-1\in\mathbb{N},
\end{equation}
\begin{equation}\label{s2}
T_{s,r}+\mathfrak{t}-k_{w(s)}\in\mathbb{N},
\end{equation}
and
\begin{equation}\label{s3}
\mathfrak{p}_i:=-c+\chi\big(w(i)\leq r\big)+k_{w(i)}-k_{w(i-1)}\in\mathbb{N} \quad\text{for $i=2,3,\dots,s$},
\end{equation}
\end{subequations}
where $\mathbb{N}=\{0,1,2,\dots\}$.
By \eqref{e-k1-1} we have $k_{w(1)}\geq b+1$. This implies \eqref{s1}.
By the assumption $k_i\leq T_{s,r}+\mathfrak{t}$ for all $i$,
we have \eqref{s2}.
By \eqref{e-k1-2},
\begin{equation}\label{s4}
\mathfrak{p}_i=\chi\big(w(i)\leq r\big)+\chi\big(w(i)>r\big)\chi\big(w(i-1)>r\big)+d_i-1
\end{equation}
for $i=2,\dots,s$.
If $w(i-1)<w(i)$, then $d_i\geq 1$. It follows that $\mathfrak{p}_i\geq 0$.
On the other hand, if $w(i-1)>w(i)$, then it is easy to see that
\begin{equation*}
\chi(w(i)>r)\chi(w(i-1)>r)=\chi(w(i)>r)=1-\chi(w(i)\leq r).
\end{equation*}
Substituting this into \eqref{s4} gives $\mathfrak{p}_i=d_i$.
Together with $d_i\geq 0$ yields $\mathfrak{p}_i\geq 0$.
Thus \eqref{s3} holds.

Since \eqref{s} holds,
and $(1-q^n)/(1-q)=1+\dots+q^{n-1}$ for $n$ a positive integer and is 0 for $n=0$, we conclude
that $L=q^{n_1}+\dots+q^{n_p}$,
where $p$ is given by
\[
k_{w(1)}+\chi\big(w(1)\leq r\big)-b-1+T_{s,r}+\mathfrak{t}-k_{w(s)}+\sum_{i=2}^s\Big(-c+\chi\big(w(i)\leq r\big)+k_{w(i)}-k_{w(i-1)}\Big).
\]
This can be written as
\[
-b-1+T_{s,r}+\mathfrak{t}-(s-1)c+\sum_{i=1}^s\chi\big(w(i)\leq r\big)
=-b-1+T_{s,r}+\mathfrak{t}-(s-1)c+r.
\]
Substituting the expression \eqref{N} for $T_{s,r}$ into above gives
\[
p=\chi(s-1>r)(s-r-1)-s+1+\mathfrak{t}+r.
\]
If $s-1>r$ then
\[
p=(s-r-1)-s+1+\mathfrak{t}+r=\mathfrak{t}.
\]
If $r=s-1$ then
\[
p=-s+1+\mathfrak{t}+s-1=\mathfrak{t}.
\]
Thus $p=\mathfrak{t}$ for $0\leq r<s$, completing the proof.
\end{proof}

\subsection{The rational function $Q_{n,n_0}(d,b,c;M_{\mathfrak{r}}\Mid u;k)$}

In this subsection, we are concerned with the rational function $Q_{n,n_0}(d,b,c;M_{\mathfrak{r}}\Mid u;k)$,
which is defined in \eqref{eq-Qrk} below.

For nonnegative integers $n,n_0,d,b,\mathfrak{r}$ and a positive integer $c$, define
\begin{equation}\label{def-Q3}
Q(d)=Q_{n,n_0}(d,b,c;M_{\mathfrak{r}})
:=x_0^{-\mathfrak{r}}M_{\mathfrak{r}}
\prod_{j=1}^{n}\frac{\big(qx_{j}/x_{0}\big)_b}
{\big(q^{-d}x_{0}/x_{j}\big)_{d}}
\prod_{1\leq i<j\leq n}
\big(x_{i}/x_{j}\big)_{c-\chi(i\leq n_0)}\big(qx_{j}/x_{i}\big)_{c-\chi(i\leq n_0)},
\end{equation}
where $M_{\mathfrak{r}}$ is a homogenous polynomial in $x_0,x_1,\dots,x_n$ of degree
$\mathfrak{r}$.
It is clear that
\[
\CT_x Q(d)=B_{n,n_0}(-d,b,c,l,\mu)
\]
by taking
\[
M_{\mathfrak{r}}=h_{l}\Big[\sum_{i=1}^{n}\frac{1-q^{c-\chi(i\leq n_0)}}{1-q}x_i\Big] \\
\times P_{\mu}\Big(\Big[\frac{q^{c-b-1}-q^{-d}}{1-q^c}x_0+\sum_{i=1}^{n}\frac{1-q^{c-\chi(i\leq n_0)}}{1-q^{c}}x_i\Big];q,q^c\Big)
\]
for $\mathfrak{r}=l+|\mu|$.
One can also see that
\[
\CT_x Q(d)=C_{n,n_0}(-d,b,c,l,m)
\]
by taking
\[
M_{\mathfrak{r}}=h_{l}\Big[\sum_{i=1}^{n}\frac{1-q^{c-\chi(i\leq n_0)}}{1-q}x_i\Big] \\
\times \prod_{j=n-m+1}^n(x_0-q^{b+1}x_j)
\]
for $\mathfrak{r}=l+m$.

For any rational function $F$ of $x_0, x_1, \dots, x_n$ and $s$ an integer such that $1\leq s\leq n$, and for
sequences of integers $k = (k_1, k_2, \dots, k_s)$ and $u = (u_1,
u_2, \dots, u_s)$ let $E_{u,k}F$ be the result of
replacing $x_{u_i}$ in $F$ with $x_{u_s}q^{k_s-k_i}$ for $i = 0,
1,\dots, s-1$, where we set $u_0 = k_0 = 0$. Then for $0 < u_1 <
u_2 <\dots < u_s \leq n$ and $1\leq k_i\leq d$, we define
\begin{equation}\label{eq-Qrk}
Q(d\Mid u;k)=Q_{n,n_0}(d,b,c;M_{\mathfrak{r}}\Mid u;k)
:=E_{u,k}\bigg(Q(d)\prod_{i=1}^{s}(1-\frac{x_{0}}{x_{u_{i}}q^{k_{i}}})\bigg).
\end{equation}
Set $Q(d\Mid u;k)|_{s=0}:=Q(d)$.
Note that the product in \eqref{eq-Qrk} cancels all the factors in the denominator of $Q(d)$ that would be
taken to zero by $E_{u,k}$.

The numerator of $Q(d)$ ---
\[
x_0^{-\mathfrak{r}}M_{\mathfrak{r}}
\prod_{j=1}^{n}\big(qx_{i}/x_{0}\big)_b
\prod_{1\leq i<j\leq n}
\big(x_{i}/x_{j}\big)_{c-\chi(i\leq n_0)}(qx_{j}/x_{i}\big)_{c-\chi(i\leq n_0)}
\]
is a Laurent polynomial in $x_0$ of non-positive degree.
The denominator of $Q(d)$ ---
\[
\prod_{j=1}^{n}\big(q^{-d}x_{0}/x_{j}\big)_{d}
\]
is of the form
\[
\prod_{r=1}^{nd} (1-c_r x_0/ x_{i_r}),
\]
where all the $i_r\neq 0$, and $c_r\neq c_v$ if $i_r=i_v$.
It is a polynomial in $x_0$ of degree $nd$.
Thus, for $d$ a positive integer $Q(d)$ is a rational function of the form
\eqref{e-defF} with respect to $x_0$. Then we can apply Lemma~\ref{lem-almostprop}
to $Q(d)$ with respect to $x_0$ and obtain
\begin{equation}\label{Q3-1}
\CT_{x_0}Q(d)=\sum_{\substack{1\leq k_1\leq d\\1\leq u_1\leq n}}
Q(d\Mid u_1;k_1),
\end{equation}
where $Q(d\Mid u_1;k_1)$ is defined in \eqref{eq-Qrk} with $s=1$.
We can further apply Lemma~\ref{lem-almostprop} to each $Q(d\Mid u_1;k_1)$ with respect to $x_{u_1}$ if applicable, and get a sum.
Continue this operation until Lemma~\ref{lem-almostprop} does not apply to every summand. In other words,  Lemma~\ref{lem-almostprop} is no longer valid to every summand. Finally we write
\begin{equation}\label{Q3-sum}
\CT_{x}Q(d)=\sum_{s\in T}\sum_{\substack{1\leq u_1<\cdots<u_s\leq n\\1\leq k_1,\dots,k_s\leq d}}
\CT_xQ(d\Mid u;k),
\end{equation}
where $T$ is a subset of $\{1,\dots,n\}$.
We call this operation the Gessel--Xin operation to the rational function $Q(d)$, since it first appeared in \cite{GX}.

Let $r$ be the integer such that
$u_1,\dots,u_r\leq n_0$ and $u_{r+1},\dots,u_s>n_0$.
That is
\begin{equation}\label{r}
r:=\big|\{u_i\Mid u_i\leq n_0, i=1,2,\dots,s\}\big|,
\end{equation}
where $|S|$ is the number of the elements in the set $S$.
Denote $U:=\{u_1,u_2,\dots,u_s\}$.
Then $Q(d\Mid u;k)$ can be written as
\begin{equation}\label{defi-Q}
Q(d\Mid u;k)=H\times V\times L \times
\prod_{i=1}^s(q^{k_i-d})^{-1}_{d-k_i}(q)^{-1}_{k_i-1}
\prod_{\substack{1\leq i<j\leq n\\i,j\notin U}}\big(x_i/x_j\big)_{c-\chi(i\leq n_0)}
(qx_{j}/x_{i}\big)_{c-\chi(i\leq n_0)},
\end{equation}
where
\[
H=E_{u,k}\big(x_{0}^{-\mathfrak{r}}\times M_{\mathfrak{r}}\big),
\]
\[
V=\prod_{i=1}^s(q^{1-k_i})_b\prod_{\substack{1\leq i<j\leq s\\i\leq r}}(q^{k_j-k_i})_{c-1}
(q^{k_i-k_j+1})_{c-1}
\prod_{r+1\leq i<j\leq s}(q^{k_j-k_i})_{c}(q^{k_i-k_j+1})_{c},
\]
and
\begin{multline*}
L=\prod_{\substack{i=1\\ i\notin U}}^n
\frac{(q^{1-k_s}x_i/x_{u_s})_{b}}{\big(q^{k_s-d}x_{u_s}/x_i\big)_d}
\prod_{\substack{i=1\\ i\notin U}}^n\prod_{j=1}^r
\big(q^{k_j-k_s+\chi(i>u_j)}x_i/x_{u_s}\big)_{c-1}
\big(q^{k_s-k_j+\chi(u_j>i)}x_{u_s}/x_i\big)_{c-1}\\
\times \prod_{\substack{i=1\\ i\notin U}}^n\prod_{j=r+1}^s
\big(q^{k_j-k_s+\chi(i>u_j)}x_i/x_{u_s}\big)_{c-\chi(i\leq n_0)}
\big(q^{k_s-k_j+\chi(u_j>i)}x_{u_s}/x_i\big)_{c-\chi(i\leq n_0)}.
\end{multline*}

The following properties of $Q(d\Mid u;k)$ is crucial in proving the main theorems.
\begin{lem}\label{lem-Q}
Let $Q(d\Mid u;k)$ and $r$ be defined in \eqref{eq-Qrk} and \eqref{r} respectively.
Assume $\prod_{j=n-m+1}^n(x_0-q^{b+1}x_j)|M_{\mathfrak{r}}$ for $m$ an integer such that $n-n_0\leq m\leq n$.
Then for $s$ an integer such that $1\leq s\leq n$, the rational function $Q(d\Mid u;k)$ has the following properties:
\begin{enumerate}
\item If $d\leq (s-1)(c-1)+b+\chi(s>n_0+1)(s-n_0-1)+\chi(s>n-m)$, then $Q(d\Mid u;k)=0$;

\item If $d>s(c-1)+(n-n_0-s+r)(s-r)/(n-s)$ and $s\neq n$, then
\begin{equation}\label{Q1}
\CT_{x_{u_s}}Q(d\Mid u;k)=
\begin{cases}\displaystyle
\sum_{\substack{u_s<u_{s+1}\leq n\\1\leq k_{s+1}\leq d}}
Q(d\Mid u_1,\dots,u_s,u_{s+1};k_1,\dots,k_s,k_{s+1}) \quad &\text{for $u_s<n$,}\\
0 \quad &\text{for $u_s=n$;}
\end{cases}
\end{equation}

\item If $s(c-1)+\chi(s>n_0)(s-n_0)+1\leq d\leq s(c-1)+(n-n_0-s+r)(s-r)/(n-s)$ and $s\neq n$, then
\[
\CT_x Q(d\Mid u;k)=0.
\]
\end{enumerate}
\end{lem}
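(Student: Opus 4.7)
The plan is to prove the three parts by exploiting the factored form~\eqref{defi-Q} of $Q(d\Mid u;k)$, combining the combinatorial dichotomy of Corollary~\ref{prop-specialcase} with Lemma~\ref{lem-almostprop} and careful degree bookkeeping in $x_{u_s}$.

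For part (1), I would invoke Corollary~\ref{prop-specialcase} on the integers $k_1,\dots,k_s$, which satisfy $1\leq k_i\leq d$. Under the hypothesis $d\leq (s-1)(c-1)+b+\chi(s>n_0+1)(s-n_0-1)+\chi(s>n-m)$, one of the four cases (i)--(iv) of that corollary must hold. Cases (i)--(iii) each force a vanishing factor inside the product $V$ appearing in \eqref{defi-Q}: case (i) kills a factor $(q^{1-k_i})_b$ since $k_i\in\{1,\dots,b\}$; case (ii) kills a factor in $\prod_{i<j,\,i\leq r}(q^{k_j-k_i})_{c-1}(q^{k_i-k_j+1})_{c-1}$; and case (iii) kills a factor in $\prod_{r<i<j\leq s}(q^{k_j-k_i})_c(q^{k_i-k_j+1})_c$. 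Case (iv), which can only occur when $m>0$, supplies an index $i>n-m$ with $k_i=b+1$; then after applying $E_{u,k}$ to the factor $x_0-q^{b+1}x_{u_i}$ that divides $M_{\mathfrak{r}}$, this factor becomes $q^{k_s}x_{u_s}-q^{b+1+k_s-k_i}x_{u_s}=0$. Thus $Q(d\Mid u;k)=0$ in every subcase.

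For part (2), the first step is to verify that $Q(d\Mid u;k)$, viewed as a rational function of $x_{u_s}$ with the remaining variables as parameters, fits the hypothesis of Lemma~\ref{lem-almostprop}. The denominator of $L$ contributes $\prod_{i\notin U}(q^{k_s-d}x_{u_s}/x_i)_d$, a polynomial in $x_{u_s}$ of degree exactly $(n-s)d$. The numerator is a Laurent polynomial in $x_{u_s}$ whose degree is controlled by adding the degree contributions from the $b$-block, the $(c-1)$-block over $j=1,\dots,r$, the mixed $c$ or $c-1$ block over $j=r+1,\dots,s$ (split according to whether the free index $i$ is $\leq n_0$ or not), and the piece coming from $H$ (bounded using homogeneity of $M_{\mathfrak{r}}$ and the substitution $E_{u,k}$). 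A direct summation gives a bound of the form $s(c-1)(n-s)+(s-r)(n-n_0-s+r)+$ lower-order terms independent of $d$, which is strictly less than $(n-s)d$ precisely when $d>s(c-1)+(n-n_0-s+r)(s-r)/(n-s)$. Lemma~\ref{lem-almostprop} then applies and produces residues at $x_{u_s}=q^{k_{s+1}-k_s}x_{u_{s+1}}$ over $u_{s+1}>u_s$ and $1\leq k_{s+1}\leq d$; by comparison with the definition~\eqref{eq-Qrk}, this sum equals $\sum Q(d\Mid u_1,\dots,u_{s+1};k_1,\dots,k_{s+1})$. When $u_s=n$ there are no admissible $u_{s+1}$, so the sum is empty and $\CT_{x_{u_s}}Q(d\Mid u;k)=0$.

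Part (3) will be the main obstacle, because the degree in $x_{u_s}$ of the numerator may equal or exceed that of the denominator, so Lemma~\ref{lem-almostprop} is no longer directly applicable. The plan, postponed to Section~\ref{sec-case3}, is to combine the Gessel--Xin style recursion with the splitting formulas of Proposition~\ref{prop-split} and Proposition~\ref{prop-split2}: introduce an auxiliary variable so the relevant product can be expanded by partial fractions with respect to $x_{u_s}$, and show that the extra polynomial pieces produced by these splittings have no constant term, by invoking vanishing statements in the spirit of Lemma~\ref{lem-positiveroots} and Lemma~\ref{lem-vaniconst2}. The inequality $s(c-1)+\chi(s>n_0)(s-n_0)+1\leq d$ is exactly what is needed to guarantee that the residual Laurent polynomial in $x_{u_s}$ has no constant term after the splitting-formula reduction.
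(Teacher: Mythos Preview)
Parts (1) and (2) of your proposal match the paper's proof almost verbatim. For (1) you correctly invoke Corollary~\ref{prop-specialcase} and identify the vanishing factor in each subcase; for (2) your degree count followed by Lemma~\ref{lem-almostprop} is exactly the paper's argument. One small correction: your ``lower-order terms independent of $d$'' should in fact be zero, since $H=E_{u,k}(x_0^{-\mathfrak r}M_{\mathfrak r})$ has nonpositive $x_{u_s}$-degree by homogeneity of $M_{\mathfrak r}$, so the numerator bound is exactly $s(n-s)(c-1)+(s-r)(n-n_0-s+r)$.

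Part (3) is where your plan diverges from the paper, and there is a real gap. You propose to introduce an auxiliary variable and expand $Q(d\Mid u;k)$ by partial fractions in $x_{u_s}$, then kill the ``extra polynomial pieces'' using Lemmas~\ref{lem-positiveroots} and~\ref{lem-vaniconst2}. The paper's mechanism is quite different, and you are missing its two key steps. First, one checks that the assumed range of $d$ in (3) forces $0\le r\le n_0-2$: the values $r=n_0-1$ and $r=n_0$ make the lower bound on $d$ exceed the upper. Second --- and this is the crux (Lemmas~\ref{lem-QLaurentpoly} and~\ref{lem-Laurent}) --- one shows that whenever case~(4) of Lemma~\ref{lem-key} holds with $t=c-1+\chi(s>n_0)(s-n_0)$, every factor of the denominator $\prod_{i\notin U}(q^{k_s-d}x_{u_s}/x_i)_d$ is already present among the numerator factors of $L'$, so $Q(d\Mid u;k)$ is a \emph{genuine Laurent polynomial} of the explicit shape~\eqref{Q-Laurent}. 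Only then does one take $\CT_{x_{u_s}}$ directly (no partial fractions, no auxiliary variable) and recognise each resulting term, after the substitution $(n,n_0,h)\mapsto(n-s,n_0-r,sc-s-d+s-r)$, as an instance of Lemma~\ref{lem-vaniconst2}. The splitting formulas of Propositions~\ref{prop-split} and~\ref{prop-split2} are not applied to $Q(d\Mid u;k)$ itself; they were used earlier, inside the proof of Lemma~\ref{lem-vaniconst2}. Without the denominator-divides-numerator step your partial-fraction route would leave an uncontrolled polynomial part in $x_{u_s}$, and it is not clear how your outline could be completed. (Lemma~\ref{lem-positiveroots} plays no role in Case~(3).)
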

Note that for $n_0=0$ and 1, Case (3) of Lemma~\ref{lem-Q} does not occur since the lower bound of $d$ exceeds its upper bound by routine calculations.
\begin{proof}
(1)
Since $1\leq k_i\leq d$ for $i=1,\dots,s$, if $d$ satisfies the condition in (1) then
\[
1\leq k_i\leq (s-1)(c-1)+b+\chi(s>n_0+1)(s-n_0-1)+\chi(s>n-m) \quad \text{for $i=1,\dots,s$}.
\]
By the definition of $r=\big|\{u_i\Mid u_i\leq n_0, i=1,2,\dots,s\}\big|$,
it is clear that $0\leq r\leq \min\{s,n_0\}$. Then, at least one of the cases (i)--(iv) of
Corollary~\ref{prop-specialcase} holds.

If (i) holds, then $Q(d\Mid u;k)$ has the factor
\[
E_{u,k}\Big[\big(qx_{u_i}/x_0\big)_b\Big]=\big(q^{1-k_i}\big)_b=0.
\]

If (ii) holds, then $Q(d\Mid u;k)$ has the factor
\[
E_{u,k}\Big[\big(x_{u_i}/x_{u_j}\big)_{c-1}\big(qx_{u_j}/x_{u_i}\big)_{c-1}\Big],
\]
which is equal to
\[
E_{u,k}\Big[q^{\binom{c}{2}}(-x_{u_j}/x_{u_i})^{c-1}\big(q^{1-c}x_{u_i}/x_{u_j}\big)_{2c-2}\Big]
=q^{\binom{c}{2}}(-q^{k_i-k_j})^{c-1}\big(q^{k_j-k_i-c+1}\big)_{2c-2}=0.
\]

If (iii) holds, then $Q(d\Mid u;k)$ has the factor
\[
E_{u,k}\Big[\big(x_{u_i}/x_{u_j}\big)_c\big(qx_{u_j}/x_{u_i}\big)_c\Big],
\]
which is equal to
\[
E_{u,k}\Big[q^{\binom{c+1}{2}}(-x_{u_j}/x_{u_i})^{c}\big(q^{-c}x_{u_i}/x_{u_j}\big)_{2c}\Big]
=q^{\binom{c+1}{2}}(-q^{k_i-k_j})^{c}\big(q^{k_j-k_i-c}\big)_{2c}=0.
\]

Since $\prod_{j=n-m+1}^n(x_0-q^{b+1}x_j)=x_0^{m}\prod_{j=n-m+1}^n(1-q^{b+1}x_j/x_0)$ is a factor of $M_{\mathfrak{r}}$, if (iv) holds then $Q(d\Mid u;k)$ has the factor
\[
E_{u,k}\Big[(qx_{u_{i}}/x_0)_{b+1}\Big]=\big(q^{1-k_{i}}\big)_{b+1}
=(q^{-b})_{b+1}=0.
\]
Here $u_i>n-m$ because $u_i\geq i>n-m$.

\vskip 0.2cm

(2)
We first show that $Q(d\Mid u;k)$ is of the form \eqref{e-defF} for $d>s(c-1)+(s-r)(n-n_0-s+r)/(n-s)$ and $s\neq n$.

Recall that $U=\{u_1,u_2,\dots,u_s\}$. By the expression for $Q(d\Mid u;k)$ in \eqref{defi-Q}, the parts contribute to the degree in $x_{u_s}$
of the numerator of  $Q(d\Mid u;k)$ is
\[
H\times \prod_{\substack{i=1\\ i\notin U}}^n\bigg(\prod_{j=1}^r
\big(q^{k_s-k_j+\chi(u_j>i)}x_{u_s}/x_i\big)_{c-1}
\prod_{j=r+1}^s\big(q^{k_s-k_j+\chi(u_j>i)}x_{u_s}/x_i\big)_{c-\chi(i\leq n_0)}\bigg),
\]
which has degree at most
\begin{align*}
s(n-s)(c-1)+\sum_{i\notin U}\sum_{j=r+1}^s\chi(i>n_0)
&=s(n-s)(c-1)+\sum_{\substack{i=n_0+1\\i\notin U}}^n\sum_{j=r+1}^s1 \\
&=s(n-s)(c-1)+(n-n_0-s+r)(s-r).
\end{align*}
The parts contribute to the degree in $x_{u_s}$ of the denominator of $Q(d\Mid u;k)$ is
\[
\prod_{\substack{i=1\\ i\notin U}}^n
\big(q^{k_s-d}x_{u_s}/x_i\big)_d,
\]
which has degree $(n-s)d$.
If $d>s(c-1)+(n-n_0-s+r)(s-r)/(n-s)$ then
\[
s(n-s)(c-1)+(n-n_0-s+r)(s-r)<(n-s)d.
\]
Thus $Q(d\Mid u;k)$ is of the form \eqref{e-defF}.
Applying Lemma~\ref{lem-almostprop} gives
\[
\CT_{x_{u_s}}Q(d\Mid u;k)=
\begin{cases}\displaystyle
\sum_{\substack{u_s<u_{s+1}\leq n\\1\leq k_{s+1}\leq d}}
Q(d\Mid u_1,\dots,u_s,u_{s+1};k_1,\dots,k_s,k_{s+1}) \quad &\text{for $u_s<n$,}\\
0 \quad &\text{for $u_s=n$.}
\end{cases}
\]

(3) The proof of the Case (3) is lenthy, we give the proof in Lemma~\ref{Case3} below.
\end{proof}

A direct consequence of Lemma~\ref{lem-Q} is the next result.
\begin{prop}\label{prop-roots-1}
Let $C_1$, $C_3$ and $Q(d)$ be defined in \eqref{Roots-B}, \eqref{Roots-C} and \eqref{def-Q3} respectively.
Assume $\prod_{j=n-m+1}^n(x_0-q^{b+1}x_j)|M_{\mathfrak{r}}$ for $m$ an integer such that
$n-n_0\leq m\leq n$.
If $-d\in C_1\cup C_3$, then $\CT\limits_x Q(d)=0$.
\end{prop}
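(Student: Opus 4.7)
The plan is to apply the Gessel--Xin operation to $Q(d)$ and then invoke Lemma~\ref{lem-Q} to kill every summand of the resulting expansion. By the rationality result in Corollary~\ref{cor-rationality}, I may assume $c$ is sufficiently large; in particular $c > b + 1$, which also ensures the elements of $C_1 \cup C_3$ are distinct. Iterating Lemma~\ref{lem-almostprop} (beginning with \eqref{Q3-1}) as long as Case~(2) of Lemma~\ref{lem-Q} allows, I obtain the expansion \eqref{Q3-sum}
\[
\CT_x Q(d) = \sum_{s}\ \sum_{\substack{1 \leq u_1 < \cdots < u_s \leq n \\ 1 \leq k_1, \dots, k_s \leq d}} \CT_x Q(d \mid u; k),
\]
in which each tuple $(s, u, k)$ is \emph{terminal}, meaning either $s = n$ or Case~(2) fails, i.e., $d \leq s(c-1) + (n - n_0 - s + r)(s - r)/(n - s)$. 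It then suffices to verify that at every terminal node, either Case~(1) of Lemma~\ref{lem-Q} forces $Q(d \mid u; k) \equiv 0$, or Case~(3) forces $\CT_x Q(d \mid u; k) = 0$.

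To carry this out I would parametrize the hypothesis $-d \in C_1 \cup C_3$ in two families: either $d = i(c-1) + j$ with $0 \leq i \leq n_0$, or $d = ic - n_0 + j$ with $n_0 < i \leq n - 1$, where $j \in \{1, \dots, b\}$ corresponds to $C_1$ and $j = b + 1$ (subject to the extra restriction $i \geq n - m$) corresponds to $C_3$. Fix such a $d$ and a terminal $(s, u, k)$. For $s < i$ one sees that $d > s(c-1) + O(1)$, so Case~(2) would in fact apply, contradicting terminality; hence $s \geq i$ or $s = n$. When $s = i$, a direct calculation in both parametrizations shows that $d$ lies at or above the Case~(3) lower bound $s(c-1) + \chi(s > n_0)(s - n_0) + 1$, while terminality supplies the matching upper bound. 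When $s > i$ (including $s = n$), $d$ lies below the Case~(1) upper bound $(s-1)(c-1) + b + \chi(s > n_0 + 1)(s - n_0 - 1) + \chi(s > n - m)$; crucially, when $j = b + 1$ the $C_3$ constraint $i \geq n - m$ together with $s > i$ forces $\chi(s > n - m) = 1$, providing the extra $+1$ needed.

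The main technical obstacle is the arithmetic bookkeeping at the transitional values $s \in \{n_0, n_0 + 1, n - m\}$, where the indicator functions $\chi(s > n_0)$, $\chi(s > n_0 + 1)$, and $\chi(s > n - m)$ change value. Equivalently, I would have to verify that the ``gap''
\[
\bigl((s-1)(c-1) + b + \chi(s > n_0 + 1)(s - n_0 - 1) + \chi(s > n - m),\ s(c-1) + \chi(s > n_0)(s - n_0) + 1\bigr)
\]
between the Case~(1) and Case~(3) ranges never meets $C_1 \cup C_3$. This is a routine case split on whether $s \leq n_0$, $s = n_0 + 1$, or $s > n_0 + 1$, and on whether $s > n - m$. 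The hypothesis that $\prod_{j = n-m+1}^{n}(x_0 - q^{b+1} x_j)$ divides $M_{\mathfrak{r}}$ is precisely what allows Case~(1) (via sub-case~(iv) of Corollary~\ref{prop-specialcase}) to absorb the $C_3$ values, so its role is structural rather than cosmetic.
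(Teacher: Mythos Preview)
Your overall strategy---iterate the Gessel--Xin operation to terminal tuples and then invoke the trichotomy of Lemma~\ref{lem-Q}---is exactly the paper's argument, which is phrased as an induction on $n-s$. Your parametrization of $d$ by the index $i$ and the case split on $s$ versus $i$ is a correct and more explicit way to carry out what the paper leaves implicit: that no $d$ with $-d\in C_1\cup C_3$ ever lands in the interval strictly between the Case~(1) upper bound and the Case~(3) lower bound.

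There is, however, one genuine misstep. Your appeal to Corollary~\ref{cor-rationality} to reduce to large $c$ is not justified here: that corollary concerns constant terms $\CT_x H\prod_{i<j}(x_i/x_j)_c(qx_j/x_i)_c$ with $H$ fixed, whereas in $Q(d)$ both $d$ itself and the product $\prod_{i<j}(x_i/x_j)_{c-\chi(i\le n_0)}(qx_j/x_i)_{c-\chi(i\le n_0)}$ depend on $c$, so the rationality argument does not transfer. Fortunately you do not need it. Your gap verification actually holds for every positive integer $c$: for $s\le i$ one checks directly that $d\ge B(s)=s(c-1)+\chi(s>n_0)(s-n_0)+1$, and for $s>i$ that $d\le A(s)=(s-1)(c-1)+b+\chi(s>n_0+1)(s-n_0-1)+\chi(s>n-m)$, using only $c\ge 1$. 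In particular your assertion that $s<i$ forces non-terminality is stronger than required and can fail for small $c$; the correct statement is simply that at a terminal node with $s\le i$ one has $B(s)\le d\le C(s,r)$, so Case~(3) applies. Drop the rationality reduction and fold the $s<i$ case into the $s\le i$ analysis, and your argument matches the paper's.
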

Note that if $m=0$ then $C_3=\emptyset$ and $\prod_{j=n-m+1}^n(x_0-q^{b+1}x_j):=1$.
It is not hard to see that $B_{n,n_0}(a,b,c,l,\mu)$
vanishes for $a\in B_1$, and $C_{n,n_0}(a,b,c,l,m)$ vanishes for $a\in C_1\cup C_3$
by this proposition.
\begin{proof}
We prove by induction on $n-s$ that
\begin{equation}\label{prop-roots13-1}
\CT_{x} Q(d\Mid u;k)=0 \quad  \text{for $-d\in C_1\cup C_3$}.
\end{equation}
The proposition is the $s=0$ case of \eqref{prop-roots13-1}. Note that taking the constant
term with respect to a variable that does not appear has no effect.
We may assume that $0\leq s\leq n$ and $0<u_1<\cdots<u_s\leq n$, since
otherwise $Q(d\Mid u;k)$ is not defined. If $s=n$ then $u_i$ must equal $i$
for $i=1,\dots ,n$ and thus
\[
Q(d\Mid u;k)=Q(d\Mid 1,2,\dots,n;k_1,k_2,\dots,k_n).
\]
Since $d\leq(n-1)(c-1)+b+\chi(n>n_0+1)(n-n_0-1)+\chi(n>n-m)$,
by the property (1) of Lemma~\ref{lem-Q} with $s=n$ we have
\[
Q(d\Mid 1,2,\dots,n;k_1,k_2,\dots,k_n)=0.
\]

Now suppose that $0\leq s<n$.
If $d\leq (s-1)(c-1)+b+\chi(s>n_0+1)(s-n_0-1)+\chi(s>n-m)$, then
by the property (1) of Lemma~\ref{lem-Q} we have $Q(d\Mid u;k)=0$.
If $s(c-1)+\chi(s>n_0)(s-n_0)+1\leq d\leq s(c-1)+(s-r)(n-n_0-s+r)/(n-s)$, then
by the property (3) of Lemma~\ref{lem-Q}
\[
\CT_x Q(d\Mid u;k)=0.
\]
If $d>s(c-1)+(n-n_0-s+r)(s-r)/(n-s)$, then by the property (2) of Lemma~\ref{lem-Q}
\[
\CT_{x_{u_s}}Q(d\Mid u;k)
=
\begin{cases}\displaystyle
\sum_{\substack{u_s<u_{s+1}\leq n\\1\leq k_{s+1}\leq d}}
Q(d\Mid u_1,\dots,u_s,u_{s+1};k_1,\dots,k_s,k_{s+1}) \quad &\text{for $u_s<n$,}\\
0 \quad &\text{for $u_s=n$.}
\end{cases}
\]
For $u_s<n$, applying $\CT\limits_x$ to both sides of the above equation gives
\[
\CT_xQ(d\Mid u;k)=\sum_{\substack{u_s<u_{s+1}\leq n\\1\leq k_{s+1}\leq d}}
\CT_xQ(d\Mid u_1,\dots,u_s,u_{s+1};k_1,\dots,k_s,k_{s+1}).
\]
Each summand in the above is 0 by induction.

In conclusion, we show that $\CT\limits_x Q(d\Mid u;k)=0$ for all $-d\in C_1\cup C_3$,
completing the proof.
\end{proof}

\section{Proof of Theorem~\ref{thm-qF2}}\label{sec-proof1}

We prove Theorem~\ref{thm-qF2} along the outline in the introduction.
We show that $B_{n,n_0}(a,b,c,l,\mu)$ is a polynomial in $q^a$ of degree at most
$nb+l+|\mu|$ in Corollary~\ref{cor-poly-BC}. In Corollary~\ref{cor-rationality} we prove that
if \eqref{main-B} holds for sufficiently many integers $c$ (here for $c>b+\mu_1$), then it holds for all positive integers $c$. We can show that $B_{n,n_0}(a,b,c,l,\mu)=0$
for $a\in B_1$ and $a\in B_2$
by Proposition~\ref{prop-roots-1} and Lemma~\ref{lem-Roots-2} respectively.
To complete the proof of Theorem~\ref{thm-qF2},
it remains to prove that $B_{n,n_0}(a,b,c,l,\mu)=0$ for $a\in B_3$ and
to determine a closed-form expression for $B_{n,n_0}(a,b,c,l,\mu)$ at an additional point of $a$.
We show these in the next two subsections respectively.

\subsection{The roots $B_3$ of $B_{n,n_0}(a,b,c,l,\mu)$}

In this subsection, we show that the constant term $B_{n,n_0}(a,b,c,l,\mu)$ vanishes
for $a\in B_3$. The result is stated in the next lemma.
\begin{lem}\label{lem-roots-B3}
Let $B_{n,n_0}(a,b,c,l,\mu)$ and $B_3$ be defined in \eqref{eq-qF2} and \eqref{Roots-B} respectively.
If $\ell(\mu)<n-n_0$ and $c>b+\mu_1$, then $B_{n,n_0}(a,b,c,l,\mu)=0$ for $a\in B_3$.
\end{lem}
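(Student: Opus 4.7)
The plan is to express $B_{n,n_0}(-d,b,c,l,\mu)=\CT_x Q(d)$ for $Q(d)$ as in \eqref{def-Q3}, with $M_{\mathfrak{r}}$ being the product of the $h_l$ and $P_\mu$ factors appearing in \eqref{eq-qF2}, and then run the Gessel--Xin iteration augmented by the plethystic substitution of Proposition~\ref{lem-subs}. A root in $B_3$ has the form $d=(n-j)c-n_0+b+h$ with $1\leq j\leq \ell(\mu)$ and $1\leq h\leq \mu_j$; setting $s^{*}=n-j$, the hypotheses $\ell(\mu)<n-n_0$ and $c>b+\mu_1$ give $s^{*}>n_0$ and $1\leq h<c-b$.

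Iterating Lemma~\ref{lem-almostprop} on $Q(d)$ and invoking Lemma~\ref{lem-Q} at each level $s'$, case (2) drives the iteration for $s'\leq s^{*}$ while cases (1) and (3) dispose of intermediate summands. The crucial observation is that at level $s'=s^{*}+1$ the exponent $d$ falls strictly between the case (1) threshold $s^{*}(c-1)+b-n_0$ and the case (3) lower bound $(s^{*}+1)c-n_0+1$, so none of cases (1)--(3) of Lemma~\ref{lem-Q} disposes of the surviving terms $\CT_x Q(d\mid u;k)$. For each such term, set $r=|\{u_i\leq n_0\}|$. When $r=n_0$ the positions $u_1,\dots,u_{n_0}$ are forced to be $1,\dots,n_0$; Proposition~\ref{lem-subs} then gives $\mathfrak{t}=d-T_{s^{*}+1,n_0}=h-1$, and with $t=q^c$ the Macdonald argument after $E_{u,k}$ becomes an alphabet of the form $\frac{1-q}{t-1}(m_1+\cdots+m_{h-1})+n_1+\cdots+n_{j-1}$, where the $j-1$ plain letters come from the $x_i$ with $i\notin U$, $i>n_0$. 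Applying Proposition~\ref{prop-Mac-vanish} with index $i=j$ and the inequality $h\leq\mu_j$ gives $P_\mu=0$, hence $\CT_x Q(d\mid u;k)=0$.

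The main obstacle is the remaining range $r<n_0$. For $r<n_0-h+1$ the parameter $\mathfrak{t}=h+r-n_0-1$ is negative, so case (4) of Lemma~\ref{lem-key} cannot hold; the zero-factor argument from the proof of Lemma~\ref{lem-Q}(1) then gives $Q(d\mid u;k)=0$ outright. The genuinely hard sub-case is $n_0-h+1\leq r<n_0$, in which the substituted Macdonald argument also picks up $\frac{1-q^{c-1}}{1-q^c}x_i$ contributions from the $n_0-r$ indices $i\leq n_0$, $i\notin U$, and Proposition~\ref{prop-Mac-vanish} does not kill these directly. I plan to dispose of this sub-case by expanding $P_\mu=\sum_{\nu\geq\mu}c_\nu g_\nu$ via the generalized Jacobi--Trudi identity of Theorem~\ref{thm-Lassalle}, then applying the splitting formula of Proposition~\ref{prop-split} to peel the $\frac{1-q^{c-1}}{1-q^c}x_i$ factors out as rational contributions, and finally invoking Proposition~\ref{prop-Mac-vanish-3} together with Lemma~\ref{lem-positiveroots} and Lemma~\ref{lem-vaniconst2} to show the residual constant terms vanish. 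This mixture of Gessel--Xin, plethystic substitution and Cai's splitting is precisely the breakthrough described in the introduction and should be the technical heart of the proof.
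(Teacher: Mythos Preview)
Your overall strategy matches the paper's: write $B_{n,n_0}(-d,b,c,l,\mu)=\CT_x Q(d)$, run the Gessel--Xin iteration, show that only the level $s=n-j+1$ survives for $-d\in B_{3j}$, and then kill each $Q(d\mid u;k)$ at that level via the Macdonald vanishing of Proposition~\ref{prop-Mac-vanish}. Your treatment of the case $r=n_0$ is correct, as is your observation that for $\mathfrak t=h+r-n_0-1<0$ case~(4) of Lemma~\ref{lem-key} cannot occur.

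The gap is in the sub-case $n_0-h+1\le r<n_0$, which you flag as ``genuinely hard'' and propose to attack with Jacobi--Trudi, the splitting formula, and Lemma~\ref{lem-vaniconst2}. None of that is needed, and it is not clear your proposed argument would close. The paper disposes of \emph{all} $r$ uniformly by the elementary plethystic identity
\[
\frac{1-q^{c-1}}{1-q^c}\,x_i \;=\; x_i \;+\; \frac{q-1}{1-q^c}\,(q^{c-1}x_i).
\]
After the substitution $E_{u,k}$, the Macdonald argument is
\[
\frac{q-1}{1-q^c}\bigl(q^{n_1}+\cdots+q^{n_{\mathfrak t}}\bigr)x_{u_s}
\;+\;\frac{1-q^{c-1}}{1-q^c}\sum_{\substack{i\le n_0\\ i\notin U}}x_i
\;+\;\sum_{\substack{i>n_0\\ i\notin U}}x_i,
\]
and the identity above converts each of the $n_0-r$ surviving ``small-index'' terms into one plain letter $x_i$ plus one letter $q^{c-1}x_i$ of the $(1-q)/(t-1)$ type. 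The resulting alphabet therefore has
\[
i_1=\mathfrak t+(n_0-r)=h-1\le \mu_j-1,\qquad i_2=n-s=j-1,
\]
independently of $r$, and Proposition~\ref{prop-Mac-vanish} applies directly with index $j$. So the ``hard'' sub-case collapses to the same computation as $r=n_0$; you are missing only this one-line rewriting, not a new mechanism.
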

\begin{proof}
Assume that $B_3\neq \emptyset$, i.e., $\mu\neq 0$. Recall the definition of $Q(d)=Q_{n,n_0}(d,b,c;M_{\mathfrak{r}})$ in \eqref{def-Q3}. In the following of the proof we assume that the $M_{\mathfrak{r}}$ in $Q(d)$ is
\begin{equation}\label{Mr-for-B2}
h_{l}\Big[\sum_{i=1}^{n}\frac{1-q^{c-\chi(i\leq n_0)}}{1-q}x_i\Big]
P_{\mu}\Big(\Big[\frac{q^{c-b-1}-q^{-d}}{1-q^{c}}x_0
+\frac{1-q^{c-1}}{1-q^{c}}\sum_{i=1}^{n_0}x_i+\sum_{i=n_0+1}^nx_i\Big];q,q^c\Big).
\end{equation}
Then
\begin{equation}\label{Q_2}
\CT_x Q(d)=B_{n,n_0}(-d,b,c,l,\mu)
\end{equation}
for $d$ an integer.
Hence, we prove the lemma by showing that
\begin{equation}\label{e-B3-0}
\CT_x Q(d)=0 \quad \text{for $-d\in B_3$}.
\end{equation}
Notice that if $B_3$ is not an empty set, then all the elements of $B_3$ are distinct negative integers for
$c>b+\mu_1$.
For $d$ a positive integer, applying the Gessel--Xin operation to $Q(d)$ and using \eqref{Q3-sum}, we can write
\begin{equation}\label{e-B3-1}
\CT_{x}Q(d)=\sum_{s\in T\subseteq \{1,\dots,n\}}\sum_{\substack{1\leq u_1<\cdots<u_s\leq n\\1\leq k_1,\dots,k_s\leq d}}
\CT_xQ(d\Mid u;k),
\end{equation}
where $Q(d\Mid u;k)$ is defined in \eqref{eq-Qrk} with $M_{\mathfrak{r}}$ defined in \eqref{Mr-for-B2}.
Note that each $s$ in $Q(d\Mid u;k)$ is maximal by the Gessel--Xin operation.
Denote
\[
B_{3j}=\{-(n-j)c+n_0-b-1,\dots, -(n-j)c+n_0-b-\mu_j\}
\]
for $j=1,\dots,\ell(\mu)$.
Then $B_3=\cup_{j=1}^{\ell(\mu)}B_{3j}$.
We will show that every $\CT\limits_x Q(d\Mid u;k)$ in \eqref{e-B3-1} vanishes
for $-d\in B_{3j}$ for $j=1,\dots,\ell(\mu)$.

We first show that for a fixed $j\in \{1,2,\dots,\ell(\mu)\}$ and $-d\in B_{3j}$,
if $s\neq n-j+1$ then the constant term $\CT\limits_x Q(d\Mid u;k)$ in \eqref{e-B3-1} vanishes.

Since $-d\in B_{3j}$, we have
\[
(n-j)c-n_0+b+1\leq d \leq (n-j)c-n_0+b+\mu_j.
\]
If $s\leq n-j$ then
\[
d\geq (n-j)c-n_0+b+1=(n-j)(c-1)+n-j-n_0+b+1\geq s(c-1)+\chi(s>n_0)(s-n_0)+1.
\]
Here the last inequality holds by discussing the two cases $s>n_0$ and $s\leq n_0$
with the fact that $b\geq 0$, $n-j>n_0$ and $s\leq n-j$.
By the above inequality and using Lemma~\ref{lem-Q} with $m=0$ and $M_{\mathfrak{r}}$ equals the polynomial in \eqref{Mr-for-B2},
we conclude that only the cases (2) and (3) of Lemma~\ref{lem-Q} can occur.
If the case (3) holds, then $\CT\limits_x Q(d\Mid u;k)=0$.
Otherwise, the case (2) holds, and
$\CT\limits_{x_{u_s}} Q(d\Mid u;k)$ is either zero or a sum.
It can not be written as a sum as the form of the nonzero part of \eqref{Q1}
since the $s$ in $Q(d\Mid u;k)$ is maximal by the Gessel--Xin operation.
Hence, we have $\CT\limits_x Q(d\Mid u;k)=0$ for $s\leq n-j$.
On the other hand, if $s\geq n-j+2$ then $s>n_0+1$ since $j\leq \ell(\mu)<n-n_0$.
It follows that
\begin{multline*}
(s-1)(c-1)+b+\chi(s>n_0+1)(s-n_0-1)
=(s-1)(c-1)+b+(s-n_0-1)\geq (n-j+1)c+b-n_0\\
\geq (n-j)c-n_0+2b+1+\mu_1>d.
\end{multline*}
The second to the last inequality holds by the assumption $c>b+\mu_1$.
Using Lemma~\ref{lem-Q} with $m=0$, $M_{\mathfrak{r}}$ equals the polynomial in \eqref{Mr-for-B2}
again and by the fact that $d<(s-1)(c-1)+b+\chi(s>n_0+1)(s-n_0-1)$, we can conclude that the case (1) of Lemma~\ref{lem-Q} holds and $\CT\limits_x Q(d\Mid u;k)=0$ for $s\geq n-j+2$.

In conclusion, the $s$ in \eqref{e-B3-1} can only be $n-j+1$ if $-d\in B_{3j}$ for a fixed integer $j\in \{1,2,\dots,\ell(\mu)\}$. Therefore, \eqref{e-B3-1} reduces to
\begin{equation}\label{e-B3-3}
\CT_{x}Q(d)=\sum_{\substack{1\leq u_1<\cdots<u_s\leq n\\1\leq k_1,\dots,k_s\leq d}}
\CT_xQ(d\Mid u;k),
\end{equation}
where $s=n-j+1$.
We continue the proof by showing that
every $\CT\limits_xQ(d\Mid u;k)$ in \eqref{e-B3-3} vanishes for $-d\in B_{3j}$ and $s=n-j+1$.

Now Lemma~\ref{lem-Q} does not apply. Notice that $s=n-j+1>n_0+1$ since $j\leq \ell(\mu)<n-n_0$.
For $-d\in B_{3j}$,
we can write
\[
d=(n-j)(c-1)+b+t=(s-1)(c-1)+b+t
\]
for
\begin{equation}\label{range-t}
t\in \{n-j-n_0+1,\dots,n-j-n_0+\mu_j\}.
\end{equation}
Since the $k_i$ in \eqref{e-B3-3} satisfy $1\leq k_i\leq d$ for $i=1,\dots,s$,
we can get $1\leq k_i\leq (s-1)(c-1)+b+t$ for all $i$.
Using Lemma~\ref{lem-key} with
\begin{equation}\label{defi-r-B}
r=\big|\{u_i\Mid u_i\leq n_0, i=1,2,\dots,s\}\big|,
\end{equation}
we can conclude that if one of the cases (1)--(3) of Lemma~\ref{lem-key} holds,
then $Q(d\Mid u;k)=0$ by the same argument as that in the part (1) of the proof of Lemma~\ref{lem-Q}.
We then proceed the proof by discussing the case (4) of Lemma~\ref{lem-key} with $r$ defined in \eqref{defi-r-B}.
If $t-s+r<0$ then \eqref{t} can not hold and this case does not occur.
Hence we can assume $t-s+r\geq 0$ in the following.

If (4) of Lemma~\ref{lem-key} holds (this implies that $t-s+r\geq 0$), then by Proposition~\ref{lem-subs} with $r$ defined in \eqref{defi-r-B}, $T_{s,r}=(s-1)(c-1)+s-r+b$ and $\mathfrak{t}=t-s+r$, we have
\begin{multline}\label{e-B3-2}
\frac{q^{c-b-1}-q^{-d}}{1-q^c}x_0
+\frac{1-q^{c-1}}{1-q^c}\sum_{i=1}^{n_0}x_i+\sum_{i=n_0+1}^nx_i\bigg|_{\substack{d=(s-1)(c-1)+b+t, \\[1pt] x_{u_i}=x_{u_s}q^{k_s-k_i},\,0\leq i\leq s}}\\
=\frac{q-1}{1-q^c}(q^{n_1}+\cdots+q^{n_{\mathfrak{t}}})x_{u_s}
+\frac{1-q^{c-1}}{1-q^c}\sum_{\substack{i=1\\ i \notin U}}^{n_0}x_i
+\sum_{\substack{i=n_0+1\\ i\notin U}}^nx_i,
\end{multline}
where $\{n_1,\dots,n_{\mathfrak{t}}\}$ is a set of integers and $U=\{u_1,\dots,u_s\}$.
Since
\[
\frac{1-q^{c-1}}{1-q^c}=1+q^{c-1}\frac{q-1}{1-q^c},
\]
we can further write the right-hand side of \eqref{e-B3-2} as
\[
\frac{q-1}{1-q^c}\Big((q^{n_1}+\cdots+q^{n_{\mathfrak{t}}})x_{u_s}
+q^{c-1}\sum_{\substack{i=1\\ i \notin U}}^{n_0}x_i\Big)
+\sum_{\substack{i=1\\ i\notin U}}^nx_i.
\]
This is of the form
\[
\frac{q-1}{1-q^{c+1}}(\alpha_1+\cdots+\alpha_{i_1})+(\beta_1+\cdots+\beta_{i_2}),
\]
where
\begin{equation}\label{i_1}
i_1=\mathfrak{t}+n_0-r=t-s+r+n_0-r=t-(n-j+1)+n_0\leq \mu_j-1
\end{equation}
and $i_2=n-s=j-1$.
The last inequality in \eqref{i_1} holds by \eqref{range-t}.
Therefore,
\begin{equation}\label{e-factor-B3}
P_{\mu}\Big(\Big[\frac{q^{c-b-1}-q^{-d}}{1-q^c}x_0
+\frac{1-q^{c-1}}{1-q^c}\sum_{i=1}^{n_0}x_i+\sum_{i=n_0+1}^nx_i\Big];q,q^c\Big)
\bigg|_{\substack{d=(s-1)(c-1)+b+t, \\[1pt] x_{u_i}=x_{u_s}q^{k_s-k_i},\,0\leq i\leq s}}
\end{equation}
is of the form
\[
P_{\mu}\Big(\Big[\frac{q-1}{1-q^c}(\alpha_1+\cdots+\alpha_{i_1})+(\beta_1+\cdots+\beta_{i_2})\Big];q,q^c\Big),
\]
where $0\leq i_1\leq \mu_j-1$ and $i_2=j-1$.
This is zero by Proposition~\ref{prop-Mac-vanish}.
Since $Q(d\Mid u;k)$ has \eqref{e-factor-B3} as a factor,
we can obtain that $Q(d\Mid u;k)=0$ for $-d\in B_{3j}$.
It follows that $\CT\limits_xQ(d)=0$ for $-d\in B_{3j}$ by \eqref{e-B3-3}.
Since $j$ is over $\{1,\dots,\ell(\mu)\}$,
\eqref{e-B3-0} holds and we complete the proof.
\end{proof}

\subsection{An expression for $B_{n,n_0}(a,b,c,l,\mu)$ at an additional point}

In this subsection, we obtain an explicit closed-form expression for $B_{n,n_0}(a,b,c,l,\mu)$
at $-a=n_0(c-1)+b+1$ for $\ell(\mu)<n-n_0$.

In the following of this subsection, we assume that the $M_\mathfrak{r}$ in $Q(d)=Q_{n,n_0}(d,b,c;M_{\mathfrak{r}})$ is the polynomial in \eqref{Mr-for-B2} and denote $d=n_0(c-1)+b+1$.
Then $\CT\limits_x Q(d)=B_{n,n_0}(-d,b,c,l,\mu)$.
Since $d=n_0(c-1)+b+1>0$, we can use \eqref{e-B3-1} again and get
\begin{equation}\label{extra-B-1}
\CT_{x}Q(d)=\sum_{s\in T\subseteq \{1,\dots,n\}}\sum_{\substack{1\leq u_1<\cdots<u_s\leq n\\1\leq k_1,\dots,k_s\leq d}}
\CT_xQ(d\Mid u;k),
\end{equation}
where $Q(d\Mid u;k)$ is defined in \eqref{eq-Qrk} and the $s$ in
each $\CT\limits_xQ(d\Mid u;k)$ is maximal by the Gessel--Xin operation.
We will show that the constant term $\CT\limits_xQ(d\Mid u;k)$ in \eqref{extra-B-1} does not vanish only when $s=n_0+1$ and $u_i=i$ for $i=1,\dots,n_0$. (Note that in this case $u_{n_0+1}$ can be any element of $\{n_0+1,n_0+2,\dots,n\}$.)

We first show that if $s\neq n_0+1$ then $\CT\limits_xQ(d\Mid u;k)=0$.
If $s\leq n_0$, then
\[
d=n_0(c-1)+b+1\geq s(c-1)+b+1\geq s(c-1)+1.
\]
By Lemma~\ref{lem-Q} with $m=0$, $Q(d\Mid u;k)$ must satisfy either the case (2) or the case (3) of Lemma~\ref{lem-Q}.
If the case (3) holds, then $\CT\limits_x Q(d\Mid u;k)=0$.
Otherwise, the case (2) holds.
But the nonzero part of \eqref{Q1} can not hold since the $s$ in $Q(d\Mid u;k)$ is maximal.
Therefore, $\CT\limits_xQ(d\Mid u;k)=0$ for $s\leq n_0$.
On the other hand, if $s\geq n_0+2$ then
\[
d=n_0(c-1)+b+1\leq
(s-1)(c-1)+b+(s-n_0-1).
\]
Hence, the case (1) of Lemma~\ref{lem-Q} (with $m=0$ again) holds and $Q(d\Mid u;k)=0$.
In conclusion, the $s$ in \eqref{extra-B-1} can only be $n_0+1$ and the equation reduces to
\begin{equation}\label{extra-B-2}
\CT_{x}Q(d)=\sum_{\substack{1\leq u_1<\cdots<u_{n_0+1}\leq n\\1\leq k_1,\dots,k_{n_0+1}\leq d}}
\CT_xQ(d\Mid u;k).
\end{equation}

By Corollary~\ref{cor-key} with $s=n_0+1$, if $1\leq k_1,\dots,k_{n_0+1}\leq d=n_0(c-1)+b+1$ then one of the following three cases holds:
\begin{enumerate}
\item $1\leq k_i\leq b$ for some $i$ with $1\leq i\leq n_0+1$;
\item $-c+1\leq k_i-k_j\leq c-2$ for some $(i,j)$ with $1\leq i<j\leq n_0+1$;
\item $k_i=(n_0+1-i)(c-1)+b+1$ for $i=1,\dots,n_0+1$.
\end{enumerate}
If (1) or (2) holds, then $Q\big(d\Mid u;k\big)=0$
by the same argument as that in the first part of the proof of Lemma~\ref{lem-Q}.
Hence, \eqref{extra-B-2} further reduces to
\begin{equation}\label{extra-B-4}
\CT_{x}Q(d)=\sum_{1\leq u_1<\cdots<u_{n_0+1}\leq n}
\CT_xQ\big(d\Mid u;k\big),
\end{equation}
where
\begin{equation}\label{k}
k=\big(n_0(c-1)+b+1,\dots,c+b,b+1\big).
\end{equation}

We proceed by showing that
if there exist an integer $i$ such that $1\leq i\leq n_0$ and $n_0+1\leq u_i<u_{i+1}\leq n$,
then $Q(d\Mid u;k)=0$ ($k$ is fixed by \eqref{k}). This can be easily seen since
$Q(d\Mid u;k)$ has the factor
\[
E_{u,k}\Big[\big(x_{u_i}/x_{u_{i+1}}\big)_c\big(qx_{u_{i+1}}/x_{u_i}\big)_c\Big],
\]
which is equal to
\begin{multline*}
E_{u,k}\Big[q^{\binom{c+1}{2}}(-x_{u_{i+1}}/x_{u_i})^c\big(q^{-c}x_{u_i}/x_{u_{i+1}}\big)_{2c}\Big]\\
=q^{\binom{c+1}{2}}(-q^{k_i-k_{i+1}})^c\big(q^{k_{i+1}-k_i-c}\big)_{2c}
=(-1)^{c}q^{\binom{c+1}{2}+c(c-1)}\big(q^{-2c+1}\big)_{2c}=0.
\end{multline*}
Therefore, we can fix $u_i=i$ for $i=1,\dots,n_0$. Then, we can write \eqref{extra-B-4} as
\begin{equation}\label{extra-B-3}
\CT_{x}Q(d)=\sum_{u_{n_0+1}=n_0+1}^n
\CT_xQ(d\Mid u;k),
\end{equation}
where $k$ is fixed by \eqref{k}, and
\begin{equation}\label{u}
u=(1,\dots,n_0,u_{n_0+1}).
\end{equation}

Let
\begin{align*}
Q'(d)&=x_0^{-l-|\mu|}P_{(l)}\Big(\Big[\sum_{i=1}^{n}\frac{1-q^{c-\chi(i\leq n_0)}}{1-q^c}x_i\Big];q,q^c\Big)\\
&\quad \times P_{\mu}\Big(\Big[\frac{q^{c-b-1}-q^{-d}}{1-q^c}x_0
+\sum_{i=1}^{n}\frac{1-q^{c-\chi(i\leq n_0)}}{1-q^c}x_i\Big];q,q^c\Big)
\prod_{j=1}^{n}\frac{\big(qx_{j}/x_{0}\big)_b}
{\big(q^{-d}x_{0}/x_{j}\big)_{d}}\\
&\quad \times
\prod_{\substack{1\leq i<j\leq n\\ 1\leq i\leq n_0}}
\big(x_{i}/x_{j}\big)_{c-1}\big(qx_j/x_i\big)_{c-1}
\prod_{n_0+1\leq i\neq j\leq n}
\big(x_{i}/x_{j}\big)_c,
\end{align*}
and define
\[
Q'\big(d\Mid u;k\big):=E_{u,k}\Big(Q'(d)\prod_{i=1}^{s}\big(1-\frac{x_{0}}{x_{u_{i}}q^{k_{i}}}\big)\Big).
\]

We can write \eqref{extra-B-3}
using $\CT\limits_x Q'\big(d\Mid u;k\big)$. We state this relationship in the next lemma.
\begin{lem}\label{lem-B-add1}
For $u_{n_0+1}$ an integer such that $n_0+1\leq u_{n_0+1}\leq n$
let $u=(1,\dots,n_0,u_{n_0+1})$,
and let $k=(k_1,\dots,k_{n_0+1})$ such that $1\leq k_i\leq d$ for $d$ a nonnegative integer. Then,
\begin{equation}\label{e-extra-B-equiv}
\sum_{u_{n_0+1}=n_0+1}^n\CT_x Q(d\Mid u;k)
=\frac{1}{(n-n_0-1)!}\prod_{i=1}^{n-n_0-1}
\frac{1-q^{(i+1)c}}{1-q^c}
\CT_x Q'\big(d\Mid (1,2,\dots,n_0+1);k\big).
\end{equation}
\end{lem}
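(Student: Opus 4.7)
The plan has two stages: a symmetry reduction that collapses the sum on the LHS, and then an application of Proposition~\ref{prop-equiv} to convert the Weyl-type product over $\{n_0+1,\ldots,n\}$ from its $Q$-shape to its $Q'$-shape. For the first stage, observe that $Q(d)$ is symmetric in $x_{n_0+1},\ldots,x_n$: the plethystic factor $h_l\bigl[\sum_i\tfrac{1-q^{c-\chi(i\leq n_0)}}{1-q}x_i\bigr]$, the Macdonald factor $P_\mu[\cdots]$, the quotient $\prod_j (qx_j/x_0)_b/(q^{-d}x_0/x_j)_d$, and the full Weyl product $\prod_{1\leq i<j\leq n}(x_i/x_j)_{c-\chi(i\leq n_0)}(qx_j/x_i)_{c-\chi(i\leq n_0)}$ are each visibly symmetric in these variables. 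For each $j\in\{n_0+1,\ldots,n\}$, the transposition $\sigma_j$ swapping $x_{n_0+1}\leftrightarrow x_j$ fixes $Q(d)$, and swaps the one offending factor of the cancellation product $\prod_i(1-x_0/(x_{u_i}q^{k_i}))$, so applying $\sigma_j$ to $Q(d\Mid(1,\ldots,n_0,j);k)$ produces $Q(d\Mid(1,\ldots,n_0+1);k)$ up to a relabeling of free variables. Since $\CT_x$ is invariant under such relabelings, each summand on the LHS equals $\CT_x Q(d\Mid(1,\ldots,n_0+1);k)$, and the LHS collapses to $(n-n_0)\,\CT_x Q(d\Mid(1,\ldots,n_0+1);k)$.

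For the second stage, I decompose $Q(d\Mid(1,\ldots,n_0+1);k)=F\cdot\prod_{n_0+1\leq i<j\leq n}(x_i/x_j)_c(qx_j/x_i)_c$, where $F$ collects everything left over after the substitution $E_{u,k}$. Although $F$ is not symmetric in $x_{n_0+1},\ldots,x_n$ (the substitution singles out $x_{n_0+1}$), invariance of $\CT_x$ under permutations of these $n-n_0$ variables lets me replace $F$ by its $\mathfrak{S}_{n-n_0}$-symmetrization inside the constant term. Proposition~\ref{prop-equiv}, applied to $x_{n_0+1},\ldots,x_n$, then converts the symmetric Weyl factor to $\prod_{n_0+1\leq i\neq j\leq n}(x_i/x_j)_c$ at the cost of the constant $\tfrac{1}{(n-n_0)!}\prod_{i=1}^{n-n_0-1}\tfrac{1-q^{(i+1)c}}{1-q^c}$. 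Desymmetrizing against this now-symmetric product restores $F$, producing the integrand $F\cdot\prod_{n_0+1\leq i\neq j\leq n}(x_i/x_j)_c$, which is identified with $Q'(d\Mid(1,\ldots,n_0+1);k)$ after rewriting the $h_l$ factor of $F$ through the plethystic identity $h_l\bigl[\sum_i\tfrac{1-q^{c-\chi(i\leq n_0)}}{1-q}x_i\bigr]=\tfrac{(q^c)_l}{(q)_l}\,P_{(l)}\bigl[\sum_i\tfrac{1-q^{c-\chi(i\leq n_0)}}{1-q^c}x_i;\,q,q^c\bigr]$, which follows at once from $P_{(l)}=\tfrac{(q)_l}{(q^c)_l}g_l$ together with $g_l(X;q,q^c)=h_l[\tfrac{1-q^c}{1-q}X]$. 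Combining the $(n-n_0)$ from Stage~1 with $\tfrac{1}{(n-n_0)!}$ from Proposition~\ref{prop-equiv} yields $\tfrac{1}{(n-n_0-1)!}$, matching the prefactor in the lemma.

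The main obstacle will be the bookkeeping of the scalar constants in Stage~2: one must keep careful track of the three contributions (the $(n-n_0)$ from Stage~1, the Proposition~\ref{prop-equiv} constant, and the $h_l$-to-$P_{(l)}$ conversion factor) and verify that they combine, given the definitions of $Q$ and $Q'$, to the precise coefficient on the RHS. By contrast, the symmetrization–desymmetrization device itself is straightforward, since both $\prod_{n_0+1\leq i<j\leq n}(x_i/x_j)_c(qx_j/x_i)_c$ and $\prod_{n_0+1\leq i\neq j\leq n}(x_i/x_j)_c$ are symmetric in $x_{n_0+1},\ldots,x_n$, so $F$ may be freely symmetrized or desymmetrized inside $\CT_x$ at each step without altering the value.
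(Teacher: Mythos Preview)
Your argument rests on the claim that the ordered Weyl block
\[
W:=\prod_{n_0+1\leq i<j\leq n}(x_i/x_j)_c(qx_j/x_i)_c
\]
is symmetric in $x_{n_0+1},\ldots,x_n$. It is not: swapping $x_i\leftrightarrow x_j$ sends the factor $(x_i/x_j)_c(qx_j/x_i)_c$ to $(x_j/x_i)_c(qx_i/x_j)_c$, and these differ already at the $(1-x_i/x_j)$ versus $(1-x_j/x_i)$ level. This single error invalidates both stages. In Stage~1, $Q(d)$ is therefore \emph{not} symmetric in $x_{n_0+1},\ldots,x_n$; applying $\sigma_j$ to $Q(d\Mid(1,\ldots,n_0,j);k)$ produces $E_{(1,\ldots,n_0+1),k}[\,Q_1\cdot\text{canc}\,]\cdot\sigma_j(W)$, not $E_{(1,\ldots,n_0+1),k}[\,Q_1\cdot\text{canc}\,]\cdot W$, so the summands are not all equal after relabelling and the sum cannot be collapsed. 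In Stage~2, replacing $F$ by its $\mathfrak{S}_{n-n_0}$-symmetrization inside $\CT_x$ is only permissible when the \emph{remaining} factor is symmetric; since $W$ is not, one has $\CT_x[\sigma(F)\,W]=\CT_x[F\,\sigma^{-1}(W)]\neq\CT_x[F\,W]$ in general, so the symmetrization step changes the value before you ever get to apply Proposition~\ref{prop-equiv}.

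The fix is exactly to reverse your order of operations, which is what the paper does. Keep the sum over $u_{n_0+1}$: the summed kernel $R(x)=\sum_{u_{n_0+1}}E_{u,k}[\,Q_1\cdot\text{canc}\,]$ \emph{is} symmetric in $x_{n_0+1},\ldots,x_n$ (the sum over the distinguished index restores the symmetry that each individual substitution breaks), so Proposition~\ref{prop-equiv} applies legitimately and converts $W$ to the genuinely symmetric block $\prod_{n_0+1\leq i\neq j\leq n}(x_i/x_j)_c$. Only \emph{after} that conversion does the collapse-the-sum argument work, because $Q'(d)$ (unlike $Q(d)$) is symmetric in $x_{n_0+1},\ldots,x_n$, so $\CT_x Q'(d\Mid(1,\ldots,n_0,u_{n_0+1});k)$ is independent of $u_{n_0+1}$ and the sum gives the factor $n-n_0$. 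Your observation about the $h_l$-to-$P_{(l)}$ rewriting via $P_{(l)}=\tfrac{(q)_l}{(q^c)_l}g_l$ is correct and is indeed the bridge between the $h_l$ in $M_{\mathfrak r}$ and the $P_{(l)}$ in $Q'$.
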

\begin{proof}
We can write the left-hand side of \eqref{e-extra-B-equiv} as
\[
\CT_x \prod_{n_0+1\leq i<j\leq n}\big(x_{i}/x_{j}\big)_c\big(qx_{j}/x_{i}\big)_c\times R(x),
\]
where
\begin{align*}
R(x)&=\sum_{u_{n_0+1}=n_0+1}^nE_{u,k}\bigg( x_0^{-l-|\mu|}
P_{\mu}\Big(\Big[\frac{q^{c-b-1}-q^{-d}}{1-q^c}x_0
+\sum_{i=1}^{n}\frac{1-q^{c-\chi(i\leq n_0)}}{1-q^c}x_i\Big];q,q^c\Big)\\
&\quad \times P_{(l)}\Big(\Big[\sum_{i=1}^{n}\frac{1-q^{c-\chi(i\leq n_0)}}{1-q^c}x_i\Big];q,q^c\Big)
\prod_{j=1}^{n}\frac{\big(qx_{j}/x_{0}\big)_b}
{\big(q^{-d}x_{0}/x_{j}\big)_{d}}\prod_{i=1}^{n_0+1}\Big(1-\frac{x_0}{x_{u_i}q^{k_i}}\Big)\\
&\quad \times\prod_{\substack{1\leq i<j\leq n\\1\leq i\leq n_0}}
\big(x_{i}/x_{j}\big)_{c-1}\big(qx_j/x_i\big)_{c-1}\bigg).
\end{align*}
It is not hard to see that $R(x)$ is a rational function symmetric in $x_{n_0+1},\dots,x_n$.
By Proposition~\ref{prop-equiv}, we have
\[
\sum_{u_{n_0+1}=n_0+1}^n\CT_x Q(d\Mid u;k)
=\frac{1}{(n-n_0)!}\prod_{i=1}^{n-n_0-1}
\frac{1-q^{(i+1)c}}{1-q^c}\times
\sum_{u_{n_0+1}=n_0+1}^n\CT_x Q'(d\Mid u;k).
\]
Since $\CT\limits_x Q'(d\Mid u;k)$ is symmetric in $x_{n_0+1},\dots,x_n$, we obtain \eqref{e-extra-B-equiv}.
\end{proof}

Our next objective is to express $\CT\limits_x Q'\big(n_0(c-1)+b+1\Mid (1,2,\dots,n_0+1);k\big)$ using $A_n(a,b,c,\lambda,\mu)$. This is the content of the next lemma.
\begin{lem}\label{lem-B-add2}
Let $u=(1,2,\dots,n_0+1)$ and $k=\big(n_0(c-1)+b+1,\dots,c+b,b+1\big)$. Then, for $c>b$
\begin{equation}
\CT_x Q'\big(n_0(c-1)+b+1\Mid u;k\big)
=C\times A_{n-n_0-1}\big(c-b-1,(n_0+1)(c-1)+b+1,c,\mu,l\big),
\end{equation}
where
\[
C=(-1)^{(n_0+1)b}q^{-\binom{n_0+1}{2}b(c-1)-(n_0+1)\binom{b+1}{2}}
(n-n_0-1)!\frac{(q)_{(n_0+1)(c-1)}}{(q)_{c-1}^{n_0+1}}
\prod_{i=1}^{n-n_0-2}\frac{1-q^c}{1-q^{(i+1)c}}.
\]
\end{lem}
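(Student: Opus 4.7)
The plan is to carry out the substitution $E_{u,k}$ defining $Q'(d\Mid u;k)$ explicitly and identify the result as a $q$-scalar $C$ times the $A$-type constant term $A_{n-n_0-1}(c-b-1,(n_0+1)(c-1)+b+1,c,\mu,l)$. With $u=(1,\dots,n_0+1)$ and $k_i=(n_0+1-i)(c-1)+b+1$, one computes $k_s=b+1$ and $k_s-k_i=-(n_0+1-i)(c-1)$, so $E_{u,k}$ replaces $x_0\mapsto q^{b+1}x_{n_0+1}$ and $x_i\mapsto q^{-(n_0+1-i)(c-1)}x_{n_0+1}$ for $1\le i\le n_0+1$. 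I will relabel $y_0:=q^{b+1}x_{n_0+1}$ and $y_j:=x_{n_0+1+j}$ for $j=1,\dots,n-n_0-1$, so that the active variables match those in $A_{n-n_0-1}$.

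The first key step is the reduction of the plethystic arguments. Using the geometric identity $\frac{1-q^{c-1}}{1-q^c}\sum_{j=1}^{n_0}q^{-j(c-1)}=\frac{q^{-n_0(c-1)}-1}{1-q^c}$ together with $d=n_0(c-1)+b+1$, a short computation shows that $\frac{q^c-q^{-n_0(c-1)}}{1-q^c}+\frac{q^{-n_0(c-1)}-1}{1-q^c}+1=0$, so the coefficient of $x_{n_0+1}$ in the substituted plethystic alphabet of $P_\mu$ vanishes. Hence this factor reduces to $P_\mu(y_1,\dots,y_{n-n_0-1};q,q^c)$, matching the first Macdonald polynomial of $A_{n-n_0-1}$. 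The same manipulation applied to the $P_{(l)}$-alphabet yields, after using the homogeneity $P_{(l)}[\alpha X]=\alpha^l P_{(l)}[X]$ to absorb the rescaling $y_0=q^{b+1}x_{n_0+1}$, exactly the argument $[\frac{q^{c-b'-1}-q^{a'}}{1-q^c}y_0+\sum_j y_j]$ with $a'=c-b-1$ and $b'=(n_0+1)(c-1)+b+1$.

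The second key step is the pairing of $q$-Pochhammer factors. For each fixed $j\in\{n_0+2,\dots,n\}$, writing $z:=y_j/x_{n_0+1}$, the four sources contributing factors that depend on both $x_{n_0+1}$ and $x_j$ are: $(qx_j/x_0)_b\to (q^{-b}z)_b$; the denominator $(q^{-d}x_0/x_j)_d\to (q^{-n_0(c-1)}/z)_d$; the pair products $(x_i/x_j)_{c-1}(qx_j/x_i)_{c-1}$ for $i=1,\dots,n_0$; and $(x_{n_0+1}/x_j)_c(x_j/x_{n_0+1})_c=(1/z)_c(z)_c$ from the $n_0+1\le i\ne j\le n$ product. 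Collecting the powers of $q$ in each linear factor $(1-q^{\ast}z)$ and $(1-q^{\ast}/z)$, the $(qz)$-type numerator powers form the consecutive union $\{-b,\dots,-1\}\cup\{0,\dots,c-1\}\cup\{c,\dots,(n_0+1)(c-1)\}$, giving $(q^{-b}z)_{b'}$; meanwhile the $(q/z)$-type numerator powers $\{-n_0(c-1),\dots,-1\}\cup\{0,\dots,c-1\}$ minus the denominator powers $\{-n_0(c-1),\dots,b\}$ leave exactly $\{b+1,\dots,c-1\}$, giving $(q^{b+1}/z)_{c-b-1}$. Together these are precisely the $A$-type weight $(y_0/y_j)_{a'}(qy_j/y_0)_{b'}$.

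The last step collects the scalar $C$ and applies Proposition~\ref{prop-equiv}. The factors not involving any $y_j$ --- namely $x_0^{-l-|\mu|}=y_0^{-l-|\mu|}$, the substituted $(qx_i/x_0)_b$ for $i=1,\dots,n_0+1$, the limiting values $(q^{-d}x_0/x_i)_d/(1-x_0/(x_{u_i}q^{k_i}))\to(q^{k_i-d})_{d-k_i}(q)_{k_i-1}$ in the denominator, and the intra-$\{1,\dots,n_0\}$ product $\prod_{1\le i<j\le n_0}(x_i/x_j)_{c-1}(qx_j/x_i)_{c-1}$ --- collapse to a pure $q$-scalar. Applying the reversal identity $(q^{-m})_b=(-1)^b q^{\binom{b}{2}-bm}(q^{m-b+1})_b$ to each substituted $(q^{-b-(n_0+1-i)(c-1)})_b$ and summing $\sum_i(n_0+1-i)=\binom{n_0+1}{2}$ produces the sign $(-1)^{(n_0+1)b}$ and $q$-exponent $-(n_0+1)\binom{b+1}{2}-\binom{n_0+1}{2}b(c-1)$; telescoping the surviving $q$-factorials gives the multinomial $(q)_{(n_0+1)(c-1)}/(q)_{c-1}^{n_0+1}$. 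Finally the residual product $\prod_{n_0+2\le i\ne j\le n}(x_i/x_j)_c$ is converted to the two-sided form of $A_{n-n_0-1}$ by Proposition~\ref{prop-equiv} applied in reverse (the remaining integrand is symmetric in $x_{n_0+2},\dots,x_n$), contributing the last scalar $(n-n_0-1)!\prod_{i=1}^{n-n_0-2}(1-q^c)/(1-q^{(i+1)c})$. The main obstacle is the lengthy but routine bookkeeping in the last two steps, in particular the verification that the residual $q$-factorials arising from the denominator substitution and the intra-$\{1,\dots,n_0\}$ pair product telescope to exactly $(q)_{(n_0+1)(c-1)}/(q)_{c-1}^{n_0+1}$.
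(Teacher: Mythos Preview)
Your proposal is correct and follows essentially the same approach as the paper: decompose $Q'(d\Mid u;k)$ into a scalar block, the plethystic factors, and the mixed $q$-Pochhammer block, simplify the plethystic arguments via the vanishing of the $x_{n_0+1}$-coefficient (the paper does this by invoking Proposition~\ref{lem-subs} with $\mathfrak{t}=0$), pair the Pochhammer factors into $(y_0/y_j)_{a'}(qy_j/y_0)_{b'}$, and finish with Proposition~\ref{prop-equiv}. One minor bookkeeping correction: the ``intra'' pair product that collapses to a scalar must range over $1\le i<j\le n_0+1$, not just $j\le n_0$, since $x_{n_0+1}$ is also substituted under $E_{u,k}$; with this corrected range your stated telescoping to $(q)_{(n_0+1)(c-1)}/(q)_{c-1}^{n_0+1}$ is indeed valid.
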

\begin{proof}
Denote $d=n_0(c-1)+b+1$.
We can write $Q'(d\Mid u;k)$ as
\begin{equation}\label{Q2'}
Q'\big(d\Mid u;k\big)=E_{u,k}\Big(Q'(d)\prod_{i=1}^{n_0+1}\big(1-\frac{x_{0}}{x_{u_{i}}q^{k_{i}}}\big)\Big)
=H\times M\times L,
\end{equation}
where
\begin{multline}\label{H}
H=(x_{n_0+1}q^{b+1})^{-(l+|\mu|)}\times
P_{(l)}\Big(\Big[\frac{1-q^{c-1}}{1-q^{c}}\sum_{i=1}^{n_0}x_{n_0+1}q^{-(n_0+1-i)(c-1)}
+\sum_{i=n_0+1}^{n}x_i\Big];q,q^c\Big)\\
\times P_{\mu}\Big(\Big[\frac{q^{c-b-1}-q^{-d}}{1-q^{c}}x_{n_0+1}q^{b+1}
+\frac{1-q^{c-1}}{1-q^c}\sum_{i=1}^{n_0}x_{n_0+1}q^{-(n_0+1-i)(c-1)}+\sum_{i=n_0+1}^nx_i\Big];q,q^c\Big),
\end{multline}
\begin{align}
M&=E_{u,k}\Big(\prod_{i=1}^{n_0+1}\frac{\big(qx_{i}/x_{0}\big)_{b}\big(1-q^{k_{i}}x_0/x_{u_i}\big)}
{\big(q^{-d}x_{0}/x_{i}\big)_{d}}
\prod_{1\leq i<j\leq n_0+1}
\big(x_{i}/x_{j}\big)_{c-1}\big(qx_{j}/x_{i}\big)_{c-1}\Big)\nonumber \\
&=\prod_{i=1}^{n_0+1}\frac{(q^{1-k_i})_{b}}{(q^{k_i-d})_{d-k_i}(q)_{k_i-1}}
\prod_{1\leq i<j\leq n_0+1}(q^{k_j-k_i})_{c-1}(q^{k_i-k_j+1})_{c-1}
\bigg|_{\substack{k_i=(n_0+1-i)(c-1)+b+1\\1\leq i\leq n_0+1}}\nonumber \\
&=\prod_{i=0}^{n_0}\frac{(q)_{(i+1)(c-1)}(q^{-i(c-1)-b})_{b}}{(q)_{b+i(c-1)}(q)_{c-1}}
=(-1)^{(n_0+1)b}q^{-\binom{n_0+1}{2}b(c-1)- (n_0+1)\binom{b+1}{2}}\frac{(q)_{(n_0+1)(c-1)}}{(q)_{c-1}^{n_0+1}},\label{EukM}
\end{align}
and
\begin{align*}
L&=E_{u,k}\Big(\prod_{j=n_0+2}^{n}\frac{\big(qx_{j}/x_{0}\big)_{b}}
{\big(q^{-d}x_{0}/x_{j}\big)_{d}}
\prod_{i=1}^{n_0}\prod_{j=n_0+2}^{n}
\big(x_{i}/x_{j}\big)_{c-1}\big(qx_{j}/x_{i}\big)_{c-1}\prod_{n_0+1\leq i\neq j\leq n}^{n}
\big(x_{i}/x_{j}\big)_{c}\Big)\\
&=\prod_{i=1}^{n_0}\prod_{j=n_0+2}^{n}
\big(q^{-(n_0-i+1)(c-1)}x_{n_0+1}/x_{j}\big)_{c-1}\big(q^{(n_0-i+1)(c-1)+1}x_{j}/x_{n_0+1}\big)_{c-1}\\
&\quad \times \prod_{j=n_0+2}^{n}\frac{\big(q^{-b}x_{j}/x_{n_0+1}\big)_{b}\big(x_{n_0+1}/x_{j}\big)_{c}\big(x_{j}/x_{n_0+1}\big)_{c}}
{\big(q^{-n_0(c-1)}x_{n_0+1}/x_{j}\big)_{n_0(c-1)+b+1}}
\prod_{n_0+2\leq i\neq j\leq n}\big(x_{i}/x_{j}\big)_{c}.
\end{align*}
For $c>b$, we find that $L$ can be written as
\begin{equation}\label{EukL}
L=\prod_{j=n_0+2}^{n}\big(q^{-b}x_{j}/x_{n_0+1}\big)_{(n_0+1)(c-1)+b+1}
\big(q^{b+1}x_{n_0+1}/x_i\big)_{c-b-1}
\prod_{n_0+2\leq i\neq j\leq n}\big(x_{i}/x_{j}\big)_{c}.
\end{equation}
By a direct calculation or using Proposition~\ref{lem-subs} with $s=n_0+1, r=n_0, \mathfrak{t}=0$, we have
\begin{subequations}\label{for-H}
\begin{multline}
\frac{q^{c-b-1}-q^{-d}}{1-q^{c}}x_{n_0+1}q^{b+1}
+\frac{1-q^{c-1}}{1-q^{c}}\sum_{i=1}^{n_0}x_{n_0+1}q^{-(n_0+1-i)(c-1)}+x_{n_0+1}\\
=\frac{1-q}{1-q^{c}}x_{n_0+1}\bigg(
\frac{q^{c-b-1}-q^{-d}}{1-q}q^{b+1}
+\frac{1-q^{c-1}}{1-q}\sum_{i=1}^{n_0}q^{-(n_0+1-i)(c-1)}+\frac{1-q^{c}}{1-q}
\bigg)=0.
\end{multline}
It is straightforward to get
\begin{equation}
\frac{1-q^{c-1}}{1-q^{c}}\sum_{i=1}^{n_0}x_{n_0+1}q^{-(n_0+1-i)(c-1)}
+x_{n_0+1}=\frac{q^{-n_0(c-1)}-q^{c}}{1-q^{c}}x_{n_0+1}.
\end{equation}
\end{subequations}
Substituting \eqref{for-H} into \eqref{H} gives
\begin{multline}\label{EukH}
H=(x_{n_0+1}q^{b+1})^{-(l+|\mu|)}
\times P_{\mu}\Big(\Big[\sum_{i=n_0+2}^nx_i\Big];q,q^{c}\Big)\\
\times P_{(l)}\Big(\Big[\frac{q^{-n_0(c-1)}-q^{c}}{1-q^{c}}x_{n_0+1}+\sum_{i=n_0+2}^{n}x_i\Big];q,q^{c}\Big).
\end{multline}
Substituting \eqref{EukL} and \eqref{EukH} into \eqref{Q2'}, we can write $Q'\big(d\Mid u;k\big)$ as
\begin{multline*}
M\times (x_{n_0+1}q^{b+1})^{-(l+|\mu|)}
P_{(l)}\Big(\Big[\frac{q^{-n_0(c-1)}-q^{c}}{1-q^{c}}x_{n_0+1}+\sum_{i=n_0+2}^{n}x_i\Big];q,q^{c}\Big)
P_{\mu}\Big(\Big[\sum_{i=n_0+2}^nx_i\Big];q,q^{c}\Big)\\
\times
\prod_{j=n_0+2}^{n}\big(q^{-b}x_{j}/x_{n_0+1}\big)_{(n_0+1)(c-1)+b+1}
\big(q^{b+1}x_{n_0+1}/x_i\big)_{c-b-1}
\prod_{n_0+2\leq i\neq j\leq n}\big(x_{i}/x_{j}\big)_{c}.
\end{multline*}
The constant term of $Q'\big(d\Mid u;k\big)$ remains the same by taking $x_{n_0+1}q^{b+1}\mapsto x_0$ and $x_{n_0+i+1}\mapsto x_{i}$ for $i=1,2,\dots,n-n_0-1$.
This yields
\begin{multline}\label{Q2'-2}
\CT_x Q'\big(d\Mid u;k\big)
=M\times \CT_{x'} x_{0}^{-(l+|\mu|)}P_{(l)}\Big(\Big[\frac{q^{-d}-q^{c-b-1}}{1-q^{c}}x_{0}
+\sum_{i=1}^{n-n_0-1}x_i\Big];q,q^{c}\Big)\\
\times P_{\mu}\Big(\Big[\sum_{i=1}^{n-n_0-1}x_i\Big];q,q^{c}\Big)
\prod_{j=1}^{n-n_0-1}\big(x_{0}/x_j\big)_{c-b-1}\big(qx_{j}/x_{0}\big)_{(n_0+1)(c-1)+b+1}
\prod_{1\leq i\neq j\leq n-n_0-1}\big(x_{i}/x_{j}\big)_{c},
\end{multline}
where $x'=(x_0,\dots,x_{n-n_0-1})$.
By Proposition~\ref{prop-equiv}, the right-hand side of \eqref{Q2'-2} equals
\begin{align*}
&M\times (n-n_0-1)!\prod_{i=1}^{n-n_0-2}\frac{1-q^{c}}{1-q^{(i+1)c}}\times
\CT_{x'} x_{0}^{-(l+|\mu|)} P_{\mu}\Big(\Big[\sum_{i=1}^{n-n_0-1}x_i\Big];q,q^{c}\Big)\\
&\times P_{(l)}\Big(\Big[\frac{q^{-d}-q^{c-b-1}}{1-q^{c}}x_{0}
+\sum_{i=1}^{n-n_0-1}x_i\Big];q,q^{c}\Big)
\prod_{j=1}^{n-n_0-1}\big(x_{0}/x_j\big)_{c-b-1}\big(qx_{j}/x_{0}\big)_{(n_0+1)(c-1)+b+1}\\
&\prod_{1\leq i<j\leq n-n_0-1}\big(x_{i}/x_{j}\big)_{c}\big(qx_{j}/x_{i}\big)_{c}
=C\times A_{n-n_0-1}\big(c-b-1,(n_0+1)(c-1)+b+1,c,\mu,l\big),
\end{align*}
where
\[
C=M\times (n-n_0-1)!\prod_{i=1}^{n-n_0-2}\frac{1-q^{c}}{1-q^{(i+1)c}}.
\]
Substituting the expression \eqref{EukM} for $M$ into above, we obtain the formula for $C$ in the lemma.
\end{proof}

By \eqref{extra-B-3}, Lemma~\ref{lem-B-add1}, Lemma~\ref{lem-B-add2} and the fact that
\[
B_{n,n_0}(-n_0(c-1)-b-1,b,c,l,\mu)=\CT_{x}Q(n_0(c-1)+b+1),
\]
it is straightforward to obtain the next result.
\begin{lem}\label{lem-extra-B}
For $\ell(\mu)<n-n_0$ and $c>b$,
\begin{multline}\label{e-extr2}
B_{n,n_0}\big(-n_0(c-1)-b-1,b,c,l,\mu\big)
=(-1)^{(n_0+1)b}q^{-\binom{n_0+1}{2}b(c-1)-(n_0+1)\binom{b+1}{2}}\\
\times \frac{1-q^{(n-n_0)c}}{1-q^{c}}\cdot\frac{(q)_{(n_0+1)(c-1)}}{(q)_{c-1}^{n_0+1}} A_{n-n_0-1}(c-b-1,(n_0+1)(c-1)+b+1,c,\mu,l).
\end{multline}
\end{lem}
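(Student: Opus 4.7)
This lemma is essentially a bookkeeping combination of three facts already established in the section, so the plan is to assemble them cleanly and then telescope the residual $q$-products. The starting point is the identity
\[
B_{n,n_0}\big(-d,b,c,l,\mu\big)=\CT_x Q(d),\qquad d:=n_0(c-1)+b+1,
\]
which holds when the polynomial $M_{\mathfrak{r}}$ in the definition of $Q(d)$ is chosen as in \eqref{Mr-for-B2}.

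First I would invoke \eqref{extra-B-3}, the outcome of applying the Gessel--Xin operation to $Q(d)$ at the specific value $d=n_0(c-1)+b+1$: only the ``diagonal'' term $s=n_0+1$ with $u_i=i$ for $i\le n_0$ survives, with the $k$-vector forced to be $k=\bigl(n_0(c-1)+b+1,\dots,c+b,b+1\bigr)$. This rewrites $\CT_x Q(d)$ as $\sum_{u_{n_0+1}=n_0+1}^{n}\CT_x Q(d\mid u;k)$ with $u=(1,\dots,n_0,u_{n_0+1})$. Next I would apply Lemma~\ref{lem-B-add1}, which uses the symmetry of the summand in $x_{n_0+1},\dots,x_n$ together with Proposition~\ref{prop-equiv}, to collapse this sum into
\[
\frac{1}{(n-n_0-1)!}\prod_{i=1}^{n-n_0-1}\frac{1-q^{(i+1)c}}{1-q^c}\cdot \CT_x Q'\big(d\mid(1,2,\dots,n_0+1);k\big).
\]

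The third ingredient is Lemma~\ref{lem-B-add2}, which under the hypothesis $c>b$ evaluates $\CT_x Q'\big(d\mid(1,\dots,n_0+1);k\big)$ as $C\cdot A_{n-n_0-1}\bigl(c-b-1,(n_0+1)(c-1)+b+1,c,\mu,l\bigr)$ with the explicit constant $C$ given there. Substituting this back and multiplying by the prefactor above produces
\[
\frac{1}{(n-n_0-1)!}\prod_{i=1}^{n-n_0-1}\frac{1-q^{(i+1)c}}{1-q^c}\cdot C.
\]
The $(n-n_0-1)!$ cancels the factorial inside $C$, and the two $c$-dependent products telescope: the $\prod_{i=1}^{n-n_0-2}(1-q^c)/(1-q^{(i+1)c})$ sitting inside $C$ cancels all but the top factor of $\prod_{i=1}^{n-n_0-1}(1-q^{(i+1)c})/(1-q^c)$, leaving precisely $\frac{1-q^{(n-n_0)c}}{1-q^c}$. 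The remaining sign, $q$-power, and Pochhammer factors in $C$ are preserved verbatim, matching the right-hand side of \eqref{e-extr2}.

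There is no real obstacle in this proof once the three inputs are in hand; the hypothesis $\ell(\mu)<n-n_0$ is inherited from the derivation of \eqref{extra-B-3}, and the hypothesis $c>b$ is exactly what Lemma~\ref{lem-B-add2} needs to obtain the clean factorization of the tail factor $L$ in its proof. The only mild care point is keeping the telescoping of the $q^c$-products correct and checking that the $(n-n_0-1)!$ in $C$ indeed matches the factorial arising from Proposition~\ref{prop-equiv} applied with $n-n_0$ variables.
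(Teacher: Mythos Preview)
Your proposal is correct and follows exactly the paper's own argument: the paper states that the lemma is ``straightforward'' from \eqref{extra-B-3}, Lemma~\ref{lem-B-add1}, Lemma~\ref{lem-B-add2}, and the identification $B_{n,n_0}(-n_0(c-1)-b-1,b,c,l,\mu)=\CT_x Q(n_0(c-1)+b+1)$, which is precisely what you assemble, including the telescoping of the $q^c$-products and the cancellation of the $(n-n_0-1)!$.
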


Now we have obtained all the ingredient to determine $B_{n,n_0}\big(a,b,c,l,\mu\big)$.
Denote the right-hand side of \eqref{main-B} by $B'_{n,n_0}\big(a,b,c,l,\mu\big)$.
By Corollary~\ref{cor-rationality}, to prove Theorem~\ref{thm-qF2} it suffices to show that
\[
B_{n,n_0}\big(a,b,c,l,\mu\big)=B'_{n,n_0}\big(a,b,c,l,\mu\big)
\]
for $c>b+\mu_1$.
To prove this, we need to verify that for $c>b+\mu_1$ the constant term $B'_{n,n_0}\big(a,b,c,l,\mu\big)$ satisfies
\begin{enumerate}
\item $B'_{n,n_0}\big(a,b,c,l,\mu\big)$ is a polynomial in $q^a$ of degree $nb+l+|\mu|$.

\item $B'_{n,n_0}\big(a,b,c,l,\mu\big)=0$ for $a\in B_1\cup B_2\cup B_3$.

\item $B'_{n,n_0}\big(-n_0(c-1)-b-1,b,c,l,\mu\big)=B_{n,n_0}\big(-n_0(c-1)-b-1,b,c,l,\mu\big)$.
\end{enumerate}
The above verifications are routine, we omit the details. Therefore, we complete the proof of Theorem~\ref{thm-qF2}.

\section{Proof of Theorem~\ref{thm-qForrester}}\label{sec-proof2}

In this section, we give a proof of Theorem~\ref{thm-qForrester} along the outline in the introduction.
By Corollary~\ref{cor-poly-BC}, $C_{n,n_0}(a,b,c,l,m)$ is a polynomial in $q^a$ of degree at most $nb+l+m$.
By Corollary~\ref{cor-rationality}, if we can obtain an expression for $C_{n,n_0}(a,b,c,l,m)$
for sufficiently many integers $c$ (here $c>b+1$), we can extend the result to all positive integers $c$.
Recall the definition of the sets $C_1,C_2,C_3$ in \eqref{Roots-B} and \eqref{Roots-C}.
By Lemma~\ref{lem-Roots-2}, we have $C_{n,n_0}(a,b,c,l,m)=0$ for $a\in C_2$.
By Proposition~\ref{prop-roots-1} with
\begin{equation}\label{M-poly}
M_{\mathfrak{r}}=h_l\Big[\sum_{i=1}^{n}\frac{1-q^{c-\chi(i\leq n_0)}}{1-q}x_i\Big]
\prod_{i=n-m+1}^n(x_0-q^{b+1}x_i),
\end{equation}
we obtain $C_{n,n_0}(a,b,c,l,m)=\CT\limits_{x}Q(-a)=0$ for $a\in C_1\cup C_3$.
Hence, we have determined all the roots for $C_{n,n_0}(a,b,c,l,m)$ since all the elements of $C_1\cup C_2\cup C_3$ are distinct for $c>b+1$.
Then, to give a closed-form expression for $C_{n,n_0}(a,b,c,l,m)$, the last step is to obtain
an explicit (non-zero) expression for $C_{n,n_0}(a,b,c,l,m)$ for $a$ at an additional point.
This is the content of the next proposition.

\begin{prop}\label{extra-C}
Let $C_{n,n_0}(a,b,c,l,m)$ be defined in \eqref{eq-qForrester}.
Then for $n-n_0\leq m<n$ and $c>b+1$,
\begin{align}\label{e-extra-C}
&C_{n,n_0}(a,b,c,l,m)|_{-a=(n-m-1)(c-1)+b+1}\\
& =
(-1)^lh_l\Big[\frac{1-q^{nc-n_0}}{1-q}\Big]
\frac{(q^{-(n-m-1)(c-1)-b-l})_l}{(q^{(n-1)c+b-n_0+2})_l}
\prod_{i=0}^{n-m-1}\frac{(q)_{(i+1)(c-1)}(q^{-i(c-1)-b})_{b}}{(q)_{b+i(c-1)}(q)_{c-1}}
\prod_{j=1}^{n-n_0-1}(1-q^{(j+1)c})\nonumber \\
& \times q^{\binom{l}{2}}\prod_{j=n-m}^{n-1}\frac{(q^{(j-n+m+1)(c-1)-b+\chi(j>n_0)(j-n_0)})_{(n-m)(c-1)+b+1}
(q)_{(j-n+m+1)(c-1)+\chi(j>n_0)(j-n_0)}}
{(q)_{j(c-1)+b+1+\chi(j>n_0)(j-n_0)}(q)_{c-\chi(j\leq n_0)}}.\nonumber
\end{align}
\end{prop}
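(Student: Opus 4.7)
The plan is to mirror the proof of Lemma~\ref{lem-extra-B}, adjusting for the extra factor $\prod_{j=n-m+1}^{n}(x_0-q^{b+1}x_j)$ appearing in $M_\mathfrak{r}$. I will set $d=(n-m-1)(c-1)+b+1$ and take
\[
M_\mathfrak{r}=h_l\Big[\sum_{i=1}^{n}\tfrac{1-q^{c-\chi(i\leq n_0)}}{1-q}x_i\Big]\prod_{j=n-m+1}^{n}(x_0-q^{b+1}x_j),
\]
so that $C_{n,n_0}(-d,b,c,l,m)=\CT_x Q(d)$ for $Q(d)=Q_{n,n_0}(d,b,c;M_\mathfrak{r})$. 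Applying the Gessel--Xin operation writes $\CT_x Q(d)=\sum_{s,u,k}\CT_x Q(d\mid u;k)$ over maximal tuples $(s,u,k)$, exactly as in Section~\ref{sec-proof1}.

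The first task is to isolate the single $(s,u,k)$ that actually contributes. For $s>n-m$, the factor $\chi(s>n-m)=1$ makes Lemma~\ref{lem-Q}(1) applicable and the term vanishes. For $s<n-m$, Lemma~\ref{lem-Q}(1) fails, and by the maximality of the Gessel--Xin operation the hypothesis of Lemma~\ref{lem-Q}(2) also fails (otherwise we would have expanded further), which places us in Lemma~\ref{lem-Q}(3) and kills the contribution. For $s=n-m$, Lemma~\ref{lem-key} with $t=1$ leaves only Case~(4); its particular case together with $d_1\geq 1$ forces $w=(s,s-1,\dots,1)$ and $k_i=(n-m-i)(c-1)+b+1$ for $i=1,\dots,n-m$, so in particular $k_s=b+1$. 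Then $E_{u,k}\bigl[(qx_{u_s}/x_0)_{b+1}\bigr]=(q^{-b})_{b+1}=0$ kills every configuration with $u_s>n-m$, which rules out the sub-case $r=s-1$ (forcing $u_s>n_0\geq n-m$) and, within $r=s$, every $u_s\in\{n-m+1,\dots,n_0\}$. Combined with $u_1<u_2<\dots<u_s\leq n_0$, the only surviving choice is $u=(1,2,\dots,n-m)$.

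With $u$ and $k$ uniquely determined, exactly one term contributes. The $E_{u,k}$ bookkeeping of the $q$-Pochhammer block parallels the computation of $M$ in \eqref{EukM} (with $n_0\mapsto n-m-1$) and yields the closed-form prefactor $\prod_{i=0}^{n-m-1}\frac{(q)_{(i+1)(c-1)}(q^{-i(c-1)-b})_b}{(q)_{b+i(c-1)}(q)_{c-1}}$. Proposition~\ref{lem-subs} with $(s,r,\mathfrak{t})=(n-m,n-m,0)$ then evaluates the plethystic alphabet inside $h_l$ at the substituted point: it collapses to a single-letter alphabet of weight $(1-q^{nc-n_0})/(1-q)$, so that by the homogeneity of $h_l$ one obtains $q^{\binom{l}{2}}h_l\bigl[\tfrac{1-q^{nc-n_0}}{1-q}\bigr]$ together with the ratio $\frac{(q^{-(n-m-1)(c-1)-b-l})_l}{(q^{(n-1)c+b-n_0+2})_l}$.

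What remains is a constant term in the $m+1$ free variables $x_{n-m},x_{n-m+1},\dots,x_n$, with $x_{n-m}$ playing the role of the base variable. This residual is a $q$-Baker--Forrester constant term with reduced parameters $(n',n_0',a',b')=(m,\,m+n_0-n,\,c-b-2,\,(n-m)(c-1)+b+1)$ and $l'=m'=0$; its evaluation by the KNPV formula \eqref{q-BF} produces both the symmetrization factor $\prod_{j=1}^{n-n_0-1}(1-q^{(j+1)c})$ and, after the reindexing $j\mapsto j-(n-m)$, the last product $\prod_{j=n-m}^{n-1}(\cdots)$ of the right hand side of~\eqref{e-extra-C}. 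The main obstacle will be this identification: one must match the rational function surviving after substitution with the $F_{m,\,m+n_0-n}$ integrand and verify that the parameter range of KNPV is respected, which requires care because the split of the residual variables into the two groups is inherited from the ambient $n_0$ rather than from the smaller system.
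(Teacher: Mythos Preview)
Your isolation of the unique surviving tuple $(s,u,k)=(n-m,\,(1,\dots,n-m),\,((n-m-1)(c-1)+b+1,\dots,b+1))$ is correct and matches Lemma~\ref{lem-extra-C1}, and the $q$-Pochhammer bookkeeping giving the prefactor $\prod_{i=0}^{n-m-1}\frac{(q)_{(i+1)(c-1)}(q^{-i(c-1)-b})_b}{(q)_{b+i(c-1)}(q)_{c-1}}$ is also fine.

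The gap is in your treatment of the $h_l$ factor. The substitution $E_{u,k}$ only specializes $x_0,x_1,\dots,x_{n-m}$; the variables $x_{n-m+1},\dots,x_n$ remain free. Since the alphabet inside $h_l$ is $\sum_{i=1}^{n}\frac{1-q^{c-\chi(i\leq n_0)}}{1-q}x_i$, after $E_{u,k}$ it becomes
\[
q^{-(n-m-1)(c-1)}\frac{1-q^{(n-m)(c-1)}}{1-q}\,x_{n-m}\;+\;\sum_{i=n-m+1}^{n}\frac{1-q^{c-\chi(i\leq n_0)}}{1-q}\,x_i,
\]
which still depends on all the remaining free variables and certainly does not collapse to a single-letter alphabet. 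Proposition~\ref{lem-subs} is doubly inapplicable here: it computes a different alphabet (one involving the $x_0$-term $(q^{c-b-1}-q^a)/(1-q)$, which is absent from the $h_l$ argument), and it requires $r<s$ while you invoke it with $r=s=n-m$. Consequently the factors $q^{\binom{l}{2}}h_l\bigl[\tfrac{1-q^{nc-n_0}}{1-q}\bigr]$ and $\frac{(q^{-(n-m-1)(c-1)-b-l})_l}{(q^{(n-1)c+b-n_0+2})_l}$ cannot be extracted at this stage.

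Because $h_l$ is still entangled with the free variables, the residual constant term is \emph{not} a plain $q$-Baker--Forrester constant term with $l'=m'=0$, and \eqref{q-BF} does not apply. What the paper does (Lemma~\ref{lem-extra-C2}) is rewrite $h_l$ as $\frac{(q^c)_l}{(q)_l}P_{(l)}$ via \eqref{modi-complete} and \eqref{relation-P-g}, and recognize the residual (after the change of variables $x_{n-m}q^{b+1}\mapsto x_0$, $x_{n-m+j}\mapsto x_j$) as exactly $B_{m,\,n_0-n+m}\bigl(c-b-2,\,(n-m)(c-1)+b+1,\,c,\,0,\,(l)\bigr)$, a $B$-type constant term with $\mu=(l)$. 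This is then evaluated by Theorem~\ref{thm-qF2}; the factors $q^{\binom{l}{2}}$, $h_l\bigl[\tfrac{1-q^{nc-n_0}}{1-q}\bigr]$ and the $q$-Pochhammer ratio you wrote down all come out of that evaluation, not from any direct simplification of $h_l$ under $E_{u,k}$.
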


Recall the definition of $Q(d)=Q_{n,n_0}(d,b,c;M_{\mathfrak{r}})$ in \eqref{def-Q3}.
For $d$ a nonnegative integer
\begin{equation}\label{eq-Quk}
C_{n,n_0}(-d,b,c,l,m)=\CT_x Q_{n,n_0}(d,b,c;M_{\mathfrak{r}})=\CT_x Q(d)
\end{equation}
by taking $M_{\mathfrak{r}}$ as in \eqref{M-poly}
and $\mathfrak{r}=l+m$. In this section, we always assume that the $M_{\mathfrak{r}}$ in $Q(d)$ and $Q(d\Mid u;k)$ is the polynomial in \eqref{M-poly}.
Thus, to prove Proposition~\ref{extra-C}, it suffices to compute
$\CT\limits_x Q\big((n-m-1)(c-1)+b+1\big)$.
Since $(n-m-1)(c-1)+b+1$ is a positive integer by the assumption $m<n$, we can apply the Gessel--Xin operation
to $\CT\limits_x Q\big((n-m-1)(c-1)+b+1\big)$ and by \eqref{Q3-sum}, we have
\begin{multline}\label{e-C-add}
\CT_x Q\big((n-m-1)(c-1)+b+1\big)\\
=\sum_{s\in T\subseteq \{1,\dots,n\}}\sum_{\substack{1\leq u_1<\cdots<u_s\leq n\\1\leq k_1,\dots,k_s\leq d}}
\CT_xQ\big((n-m-1)c+b+1\Mid u_1,\dots,u_s;k_1,\dots,k_s\big).
\end{multline}
We find that only one term in the sum of \eqref{e-C-add} does not vanish.
We present this result in the next lemma.
\begin{lem}\label{lem-extra-C1}
Let $Q(d)$ be defined as in \eqref{eq-Quk}.
For $n-n_0\leq m<n$,
\begin{multline}\label{e-Q3}
\CT_x Q\big((n-m-1)(c-1)+b+1\big)\\
=\CT_xQ\big((n-m-1)(c-1)+b+1\Mid 1,\dots,n-m;(n-m-1)(c-1)+b+1,
\dots,c+b,b+1\big).
\end{multline}
\end{lem}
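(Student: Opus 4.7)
The plan is to apply the Gessel--Xin operation to $\CT_x Q(d)$ with $d=(n-m-1)(c-1)+b+1$, producing the expansion \eqref{e-C-add}, and then to show that exactly one summand survives: the one with $s=n-m$, $u=(1,2,\ldots,n-m)$, and $k=\big((n-m-1)(c-1)+b+1,\ldots,c+b,b+1\big)$.

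Set $s_0:=n-m$, and observe $s_0\leq n_0$ by the hypothesis $m\geq n-n_0$. We first restrict to $s=s_0$ via Lemma~\ref{lem-Q}. For $s\geq s_0+1$, one checks $(s-1)(c-1)+b+\chi(s>n_0+1)(s-n_0-1)+\chi(s>n-m)\geq (n-m)(c-1)+b+1>d$, so case~(1) of Lemma~\ref{lem-Q} applies and $Q(d\Mid u;k)=0$. For $1\leq s\leq s_0-1$, the estimate $d-s(c-1)\geq b+1\geq 1$ combined with $s<s_0\leq n_0$ shows $d\geq s(c-1)+\chi(s>n_0)(s-n_0)+1$, so either case~(2) of Lemma~\ref{lem-Q} applies (in which case the Gessel--Xin operation continues, so $(s,u,k)$ is not maximal) or case~(3) applies (in which case $\CT_x Q(d\Mid u;k)=0$). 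At $s=s_0$ itself, $d-s_0(c-1)=b-c+2\leq 0$ under $c>b+1$, so neither case~(2) nor case~(3) applies and such terms are terminal contributors to \eqref{e-C-add}.

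Next we determine $k$ and $u$ at the terminal stage $s=s_0$. Since $1\leq k_i\leq (s_0-1)(c-1)+b+1$, we invoke Lemma~\ref{lem-key} with $t=1$ and $r:=|\{u_i\leq n_0\}|$. If any of cases~(1)--(3) of Lemma~\ref{lem-key} holds then $Q(d\Mid u;k)=0$ via the factor-vanishing computations in the proof of Lemma~\ref{lem-Q}, so case~(4) must hold. The constraint $s_0-r\leq \sum_{j=1}^{s_0}\big(d_j+\chi(w(j)>r)\chi(w(j-1)>r)\big)\leq 1$ combined with $d_1\geq 1$ forces the sum to equal $1$, hence $r\in\{s_0-1,s_0\}$, $d_1=1$, $d_j=0$ for $j\geq 2$, and each $\chi(w(j)>r)\chi(w(j-1)>r)=0$. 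From $d_j=0$ ($j\geq 2$) we deduce $w(j)<w(j-1)$, and the final assertion of Lemma~\ref{lem-key}(4) forces $w(1)=s_0$ when $r=s_0-1$; in either sub-case $w=(s_0,s_0-1,\ldots,1)$, so $k=\big((s_0-1)(c-1)+b+1,\ldots,c+b,b+1\big)$ is uniquely determined.

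Finally we pin down $u$ using the factor $\prod_{j=n-m+1}^n(x_0-q^{b+1}x_j)$ present in $M_{\mathfrak{r}}$. Under $E_{u,k}$, the piece with $j=u_{s_0}$ becomes $x_{u_{s_0}}(q^{k_{s_0}}-q^{b+1})=0$ whenever $u_{s_0}\in\{n-m+1,\ldots,n\}$, since $k_{s_0}=b+1$. Because $n_0\geq n-m$, the sub-case $r=s_0-1$ forces $u_{s_0}>n_0\geq n-m$ and the term vanishes. Hence $r=s_0$ with $u_{s_0}\leq n-m$; combined with $u_1<\cdots<u_{s_0}$ and $s_0=n-m$, this pins down $u=(1,2,\ldots,n-m)$, leaving precisely the sole term on the right-hand side of \eqref{e-Q3}. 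The main anticipated obstacle is the intermediate-$s$ bookkeeping: ensuring that for every $s<s_0$ the Gessel--Xin operation either pushes through via case~(2) of Lemma~\ref{lem-Q} or produces vanishing constant terms via case~(3), so that no extraneous terminal configurations outside $s=s_0$ appear.
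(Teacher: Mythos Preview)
Your proof is correct and follows essentially the same approach as the paper's own proof: apply the Gessel--Xin operation, use Lemma~\ref{lem-Q} to eliminate $s\neq n-m$, then pin down $k$ via Lemma~\ref{lem-key}/Corollary~\ref{cor-key} and finally $u$ via the vanishing of the factor $(x_0-q^{b+1}x_{u_{s_0}})$ when $u_{s_0}>n-m$. The only cosmetic differences are that the paper invokes the cleaner Corollary~\ref{cor-key} (i.e.\ Lemma~\ref{lem-key} with $r=s$, $t=1$) rather than Lemma~\ref{lem-key} with the actual $r$, and that your explicit terminality check at $s=s_0$ via $c>b+1$ is an extra safeguard the paper leaves implicit (non-terminal $s=s_0$ terms would expand to $s=s_0+1$ and vanish anyway).
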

\begin{proof}
Denote by $e:=(n-m-1)(c-1)+b+1$ for short.
We will show that
only the term $\CT\limits_xQ(e\Mid 1,\dots,n-m;e,\dots,c+b,b+1)$ in the sum of \eqref{e-C-add}
does not vanish.
We first show that the $s$ in \eqref{e-C-add} can only be $n-m$.

If $s>n-m$ then $e<(s-1)(c-1)+b+1$. Then by the property (1) of Lemma~\ref{lem-Q} we have
$Q(e\Mid u;k)=0$, where $u=(u_1,\dots,u_s)$ and $k=(k_1,\dots,k_s)$ are the integer sequences in \eqref{e-C-add}.

If $1\leq s<n-m$ then $e>s(c-1)$.
It follows that only the cases (2) and (3) of Lemma~\ref{lem-Q} can occur.
If (3) of Lemma~\ref{lem-Q} applies, or (2) of Lemma~\ref{lem-Q} holds and $u_s=n$, then $\CT\limits_xQ(e\Mid u;k)=0$.
If (2) of Lemma~\ref{lem-Q} applies and $u_s\neq n$, then
\[
\CT_{x_{u_s}}Q(e\Mid u;k)=\sum_{\substack{u_s<u_{s+1}\leq n\\1\leq k_{s+1}\leq e}}
Q(e\Mid u_1,\dots,u_s,u_{s+1};k_1,\dots,k_s,k_{s+1}).
\]
But this contradicts the Gessel--Xin operation.

By the above argument we can conclude that the $s$ in \eqref{e-C-add} can only be $n-m$, otherwise the summands
in the right-hand side of \eqref{e-C-add} vanish.
Then \eqref{e-C-add} reduces to
\begin{equation}\label{Q3-4}
\CT_x Q(e)
=\sum_{\substack{1\leq u_1<\cdots<u_{n-m}\leq n\\1\leq k_i\leq e}}
\CT_xQ(e\Mid u';k'),
\end{equation}
where $u'=(u_1,\dots,u_{n-m})$ and $k'=(k_1,\dots,k_{n-m})$.

We complete the proof by showing that all the terms in the sum of \eqref{Q3-4}
vanish except only when $k_i=(n-m-i)(c-1)+b+1$ and $u_i=i$ for $i=1,\dots,n-m$.

Since $1\leq k_i\leq e$ for $i=1,\dots,n-m$,
by Corollary~\ref{cor-key} with $s=n-m$,
at least one of the following cases holds:
\begin{enumerate}
\item $1\leq k_i\leq b$ for some $i$ with $1\leq i\leq n-m$;
\item $-c+1\leq k_i-k_j\leq c-2$ for some $(i,j)$ with $1\leq i<j\leq n-m$;
\item $k_i=(n-m-i)(c-1)+b+1$ for $i=1,\dots,n-m$.
\end{enumerate}
We then carry out similar argument as that in the proof of Lemma~\ref{lem-Q}.

For $k = (k_1, k_2, \dots, k_{s})$, $u = (u_1,
u_2, \dots, u_{s})$ and a rational function $F$,
recall that $E_{u,k}F$ is the result of
replacing $x_{u_i}$ in $F$ with $x_{u_{s}}q^{k_{s}-k_i}$ for
$i = 0,1,\dots, s-1$. Set $u_0=k_0:=0$.
If (1) holds, then $Q(e\Mid u';k')$ has the factor
\[
E_{u',k'}\Big[\big(qx_{u_i}/x_0\big)_{b}\Big]=\big(q^{1-k_i}\big)_{b}=0.
\]
If (2) holds, then $Q(e\Mid u';k')$ has the factor
\[
E_{u',k'}\Big[\big(x_{u_i}/x_{u_j}\big)_{c-1}\big(qx_{u_j}/x_{u_i}\big)_{c-1}\Big],
\]
which is equal to
\[
E_{u',k'}\Big[q^{\binom{c}{2}}(-x_{u_j}/x_{u_i})^{c-1}\big(q^{-c+1}x_{u_i}/x_{u_j}\big)_{2c-2}\Big]
=q^{\binom{c}{2}}(-q^{k_i-k_j})^{c-1}\big(q^{k_j-k_i-c+1}\big)_{2c-2}=0.
\]
If (3) holds and $u_{n-m}>n-m$, then $Q(e\Mid u';k')$ has the factor
\[
E_{u',k'}\Big[\big(qx_{u_{n-m}}/x_0\big)_{b+1}\Big]
=\big(q^{1-k_{n-m}}\big)_{b+1}=\big(q^{-b}\big)_{b+1}=0.
\]

In conclusion, $Q(e\Mid u';k')$ does not vanish only when
$k_i=(n-m-i)(c-1)+b+1$ and $u_i=i$ for $i=1,\dots,n-m$.
Together with \eqref{Q3-4} gives \eqref{e-Q3}.
\end{proof}

We find that the right-hand side of \eqref{e-Q3}
can be expressed by $B_{n,n_0}(a,b,c,l,\mu)$.
We state this result in the next lemma.
\begin{lem}\label{lem-extra-C2}
For $n-n_0\leq m<n$ and $c>b+1$,
\begin{multline}\label{Q-e}
\CT_xQ\big((n-m-1)(c-1)+b+1\Mid 1,\dots,n-m;(n-m-1)(c-1)+b+1,
\dots,c+b,b+1\big)\\
=\frac{(q^c)_l}{(q)_l}\prod_{i=0}^{n-m-1}\frac{(q)_{(i+1)(c-1)}(q^{-i(c-1)-b})_{b}}{(q)_{b+i(c-1)}(q)_{c-1}}
\times B_{m,n_0-n+m}\big(c-b-2,(n-m)(c-1)+b+1,c,0,l\big),
\end{multline}
where $B_{n,n_0}(a,b,c,l,\mu)$ is defined in \eqref{eq-qF2}.
\end{lem}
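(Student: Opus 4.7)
The plan is to follow the same template used in the proof of Lemma~\ref{lem-B-add2}. Apply the substitution operator $E_{u,k}$ to $Q(d\Mid u;k)$ with $d=(n-m-1)(c-1)+b+1$, $u=(1,\ldots,n-m)$, and $k_i=(n-m-i)(c-1)+b+1$, and factor the result as $Q(d\Mid u;k)=H\cdot M\cdot L$, where $M$ is the pure scalar arising from interactions among the substituted variables $x_0,x_1,\ldots,x_{n-m}$, $H$ encodes the substituted form of $M_{\mathfrak r}=h_l[\cdot]\prod_{i=n-m+1}^n(x_0-q^{b+1}x_i)$, and $L$ collects the cross terms and the interactions among the un-substituted variables $x_{n-m+1},\ldots,x_n$. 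The scalar $M$ evaluates by a standard $q$-Pochhammer manipulation identical to \eqref{EukM}, yielding $M=\prod_{i=0}^{n-m-1}\frac{(q)_{(i+1)(c-1)}(q^{-i(c-1)-b})_b}{(q)_{b+i(c-1)}(q)_{c-1}}$, which matches the explicit product appearing on the right-hand side of \eqref{Q-e}.

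For $H$, the extra factor $\prod_{i=n-m+1}^n(x_0-q^{b+1}x_i)$ becomes $q^{(b+1)m}\prod_{i=n-m+1}^n(x_{n-m}-x_i)$ after $x_0\mapsto x_{n-m}q^{b+1}$, and by direct geometric summation the portion of the alphabet inside $h_l$ contributed by the substituted variables $x_1,\ldots,x_{n-m}$ collapses to $\frac{q^{-(n-m-1)(c-1)}-q^{c-1}}{1-q}x_{n-m}$, so the full alphabet inside $h_l$ becomes $\frac{q^{-(n-m-1)(c-1)}-q^{c-1}}{1-q}x_{n-m}+\sum_{i=n-m+1}^n\frac{1-q^{c-\chi(i\leq n_0)}}{1-q}x_i$. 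The remaining task is to simplify $L$ together with $\prod_{i=n-m+1}^n(x_{n-m}-x_i)$: for each $j>n-m$, the cross-term Pochhammers telescope into $(q^{-(n-m-1)(c-1)}x_{n-m}/x_j)_{(n-m)(c-1)}(qx_j/x_{n-m})_{(n-m)(c-1)}$; combined with the substituted form of $(qx_j/x_0)_b/(q^{-d}x_0/x_j)_d$, and with the extra factor $1-x_i/x_{n-m}$ supplying the missing exponent $e=0$, the condition $c>b+1$ produces a clean cancellation down to $\prod_{j=n-m+1}^n(q^{-b}x_j/x_{n-m})_{(n-m)(c-1)+b+1}(q^{b+1}x_{n-m}/x_j)_{c-b-2}$ multiplied by the untouched Pochhammer product $\prod_{n-m+1\leq i<j\leq n}(x_i/x_j)_{c-\chi(i\leq n_0)}(qx_j/x_i)_{c-\chi(i\leq n_0)}$.

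Now perform the change of variables $y_0=x_{n-m}q^{b+1}$ and $y_i=x_{n-m+i}$ for $1\leq i\leq m$. The combined prefactor $(x_{n-m}q^{b+1})^{-l-m}\cdot q^{(b+1)m}\cdot x_{n-m}^m$ collapses to $y_0^{-l}$, and the $j$-product together with the un-substituted Pochhammers becomes exactly $F_{m,n_0-n+m}(y;c-b-2,(n-m)(c-1)+b+1,c,0)$. Finally, apply the identity $h_l[X]=\frac{(q^c)_l}{(q)_l}P_{(l)}\bigl[\tfrac{1-q}{1-q^c}X;q,q^c\bigr]$, which follows from \eqref{modi-complete} and \eqref{relation-P-g}, to convert $h_l$ of the Step~2 alphabet (now expressed in $y$-variables as $\frac{q^{-d}-q^{c-b-2}}{1-q}y_0+\sum_{i=1}^m\frac{1-q^{c-\chi(i\leq n_0-n+m)}}{1-q}y_i$) into $\frac{(q^c)_l}{(q)_l}P_{(l)}$ of the alphabet $\frac{q^{-d}-q^{c-b-2}}{1-q^c}y_0+\sum_{i=1}^m\frac{1-q^{c-\chi(i\leq n_0-n+m)}}{1-q^c}y_i$, which is precisely the alphabet appearing in the definition of $B_{m,n_0-n+m}(c-b-2,(n-m)(c-1)+b+1,c,0,(l))$, using that $c-((n-m)(c-1)+b+1)-1=-d$. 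Taking $\CT$ then yields $\CT_xQ(d\Mid u;k)=M\cdot\frac{(q^c)_l}{(q)_l}\cdot B_{m,n_0-n+m}(c-b-2,(n-m)(c-1)+b+1,c,0,(l))$, which is \eqref{Q-e}. The main obstacle will be the $L$-simplification: the bound $c>b+1$ is exactly what aligns the Pochhammer ranges, and one must verify carefully that the single missing exponent $e=0$ in the combined numerator is supplied by the $(x_{n-m}-x_i)$ factors contributed by $M_{\mathfrak r}$, with no spurious factors left over.
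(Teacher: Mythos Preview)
Your argument is correct and follows essentially the same route as the paper's proof: both decompose $Q(d\Mid u;k)$ into a scalar $\times$ $h_l$-part $\times$ cross-term part, evaluate the scalar via the same $q$-Pochhammer telescoping as in \eqref{EukM}, use $c>b+1$ to cancel the denominator $(q^{-(n-m-1)(c-1)}x_{n-m}/x_j)_d$ against the telescoped cross-terms, fill the $e=0$ gap with the $(1-x_j/x_{n-m})$ factors coming from $\prod_{i>n-m}(x_0-q^{b+1}x_i)$, and then change variables $x_{n-m}q^{b+1}\mapsto y_0$, $x_{n-m+i}\mapsto y_i$ to recognize $F_{m,n_0-n+m}$ and convert $h_l$ to $\frac{(q^c)_l}{(q)_l}P_{(l)}$ via \eqref{modi-complete}--\eqref{relation-P-g}. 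The only cosmetic difference is that the paper absorbs the factor $(1-x_j/x_{n-m})$ into $L$ from the outset (writing $(q^{-b}x_j/x_{n-m})_{b+1}$ directly), whereas you first keep it in $H$ and move it over during the $L$-simplification; the computation is the same.
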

\begin{proof}
By the definition of $Q(d\Mid u,k)$ in \eqref{eq-Qrk} with $M_{\mathfrak{r}}$ defined in \eqref{M-poly}, we can write the left-hand side of \eqref{Q-e} as
\begin{equation}\label{HCL}
\CT_xG\times C\times L,
\end{equation}
where
\begin{align*}
G&=x_{n-m}^{-l}q^{-(b+1)l}h_l\Big[\frac{x_{n-m}(1-q^{c-1})}{1-q}
\sum_{i=1}^{n-m}q^{-(n-m-i)(c-1)}
+\sum_{i=n-m+1}^n\frac{1-q^{c-\chi(i\leq n_0)}}{1-q}x_i\Big]\\
&=x_{n-m}^{-l}q^{-(b+1)l}h_l\Big[\frac{x_{n-m}(1-q^{(n-m)(c-1)})}{1-q}q^{-(n-m-1)(c-1)}
+\sum_{i=n-m+1}^n\frac{1-q^{c-\chi(i\leq n_0)}}{1-q}x_i\Big],
\end{align*}
\begin{equation}
C=\prod_{i=1}^{n-m}\frac{(q^{-(n-m-i)(c-1)-b})_b}{(q^{-(i-1)(c-1)})_{(i-1)(c-1)}(q)_{(n-m-i)(c-1)+b}}
\prod_{1\leq i<j\leq n-m}(q^{(i-j)(c-1)})_{c-1}(q^{(j-i)(c-1)+1})_{c-1},
\end{equation}
and
\begin{align*}
L&=\prod_{j=n-m+1}^n \frac{(q^{-b}x_j/x_{n-m})_{b+1}}{(q^{-(n-m-1)(c-1)}x_{n-m}/x_j)_{(n-m-1)(c-1)+b+1}}\\
&\quad \times\prod_{n-m+1\leq i<j\leq n}(x_i/x_j)_{c-\chi(i\leq n_0)}(qx_j/x_i)_{c-\chi(i\leq n_0)}\\
&\quad \times \prod_{i=1}^{n-m}\prod_{j=n-m+1}^n(q^{-(n-m-i)(c-1)}x_{n-m}/x_j)_{c-1}
(q^{(n-m-i)(c-1)+1}x_j/x_{n-m})_{c-1}.
\end{align*}
It is routine to check that
\begin{equation}\label{C}
C=\prod_{i=0}^{n-m-1}\frac{(q)_{(i+1)(c-1)}(q^{-i(c-1)-b})_{b}}{(q)_{b+i(c-1)}(q)_{c-1}},
\end{equation}
and
\begin{align*}
L&=\prod_{j=n-m+1}^n \frac{(q^{-b}x_j/x_{n-m})_{b+1}}{(q^{-(n-m-1)(c-1)}x_{n-m}/x_j)_{(n-m-1)(c-1)+b+1}}\\
&\quad \times \prod_{n-m+1\leq i<j\leq n}(x_i/x_j)_{c-\chi(i\leq n_0)}(qx_j/x_i)_{c-\chi(i\leq n_0)}\\
&\quad \times \prod_{j=n-m+1}^n(q^{-(n-m-1)(c-1)}x_{n-m}/x_j)_{(n-m)(c-1)}(qx_j/x_{n-m})_{(n-m)(c-1)}.
\end{align*}
For $c>b+1$, we can simplify $L$ as
\begin{multline*}
L=\prod_{j=n-m+1}^n (q^{b+1}x_{n-m}/x_j)_{c-b-2}(q^{-b}x_j/x_{n-m})_{(n-m)(c-1)+b+1}\\
\times \prod_{n-m+1\leq i<j\leq n}(x_i/x_j)_{c-\chi(i\leq n_0)}(qx_j/x_i)_{c-\chi(i\leq n_0)}.
\end{multline*}
Letting $x_{n-m}q^{b+1}\mapsto x_0$, $n_0-n+m\mapsto n_0'$, $c-b-2\mapsto a'$,
$(n-m)(c-1)+b+1\mapsto b'$ and
$(x_{n-m+1},x_{n-m+2},\dots,x_n)\mapsto (x_1,x_2,\dots,x_m)$, we find that
\begin{multline}\label{HL}
\CT_x G\times L
=\CT_{x_0,\dots,x_m}x_0^{-l}h_l\Big[\frac{q^{c-b'-1}-q^{a'}}{1-q}x_0
+\sum_{i=1}^m\frac{1-q^{c-\chi(i\leq n_0')}}{1-q}x_i\Big]
\\
\times \prod_{i=1}^m (x_0/x_i)_{a'}(qx_i/x_0)_{b'}
\prod_{1\leq i<j\leq m}(x_i/x_j)_{c-\chi(i\leq n_0')}(qx_j/x_i)_{c-\chi(i\leq n_0')}.
\end{multline}
By \eqref{modi-complete} and \eqref{relation-P-g} we can write \eqref{HL} as
\begin{multline*}
\CT_x G\times L
=\frac{(q^c)_l}{(q)_l}\CT_{x_0,\dots,x_m}x_0^{-l}P_{(l)}\Big(\Big[\frac{q^{c-b'-1}-q^{a'}}{1-q^c}x_0
+\sum_{i=1}^m\frac{1-q^{c-\chi(i\leq n_0')}}{1-q^c}x_i\Big];q,q^c\Big)
\\
\times \prod_{i=1}^m (x_0/x_i)_{a'}(qx_i/x_0)_{b'}
\prod_{1\leq i<j\leq m}(x_i/x_j)_{c-\chi(i\leq n_0')}(qx_j/x_i)_{c-\chi(i\leq n_0')}.
\end{multline*}
By the definition of $B_{n,n_0}(a,b,c,l,\mu)$ in \eqref{eq-qF2},
the constant term $\CT\limits_x G\times L$ is in fact
\[
\frac{(q^c)_l}{(q)_l}B_{m,n_0'}(a',b',c,0,l)
=\frac{(q^c)_l}{(q)_l}B_{m,n_0-n+m}\big(c-b-2,(n-m)(c-1)+b+1,c,0,l\big).
\]
Together with \eqref{C} gives
\begin{multline*}
\CT_x C\times G\times L
=\frac{(q^c)_l}{(q)_l}\prod_{i=0}^{n-m-1}\frac{(q)_{(i+1)(c-1)}(q^{-i(c-1)-b})_{b}}{(q)_{b+i(c-1)}(q)_{c-1}}\\
\times B_{m,n_0-n+m}\big(c-b-2,(n-m)(c-1)+b+1,c,0,l\big).
\end{multline*}
Then \eqref{Q-e} follows.
\end{proof}
Now by Lemma~\ref{lem-extra-C1} and Lemma~\ref{lem-extra-C2}, we can give a proof of Proposition~\ref{extra-C}.
\begin{proof}[Proof of Proposition~\ref{extra-C}]
By \eqref{eq-Quk}, \eqref{e-Q3} and \eqref{Q-e}, we have
\begin{multline}\label{sub-B}
C_{n,n_0}(a,b,c,l,m)|_{-a=(n-m-1)c+b+1}\\
=\frac{(q^c)_l}{(q)_l}\prod_{i=0}^{n-m-1}\frac{(q)_{(i+1)(c-1)}(q^{-i(c-1)-b})_{b}}{(q)_{b+i(c-1)}(q)_{c-1}}
\times B_{m,n_0-n+m}\big(c-b-2,(n-m)(c-1)+b+1,c,0,l\big).
\end{multline}
By the expression \eqref{main-B} for $B_{n,n_0}(a,b,c,l,\mu)$ with
\[
(n,n_0,a,b,l,\mu)\mapsto (m,n_0-n+m,c-b-2,(n-m)(c-1)+b+1,0,l),
\]
it is straightforward to get
\begin{align*}\label{e-extra-c2-1}
&B_{m,n_0-n+m}\big(c-b-2,(n-m)(c-1)+b+1,c,0,l\big)\\
&\quad=(-1)^lq^{\binom{l}{2}}\frac{(q)_l}{(q^c)_l}h_l\Big[\frac{1-q^{nc-n_0}}{1-q}\Big]
\frac{(q^{-(n-m-1)(c-1)-b-l})_l}{(q^{(n-1)c+b-n_0+2})_l}\prod_{j=1}^{n-n_0-1}(1-q^{(j+1)c})\\
&\quad \prod_{j=n-m}^{n-1}\frac{(q^{(j-n+m+1)(c-1)-b+\chi(j>n_0)(j-n_0)})_{(n-m)(c-1)+b+1}
(q)_{(j-n+m+1)(c-1)+\chi(j>n_0)(j-n_0)}}
{(q)_{j(c-1)+b+1+\chi(j>n_0)(j-n_0)}(q)_{c-\chi(j\leq n_0)}}.
\end{align*}
Then we obtain the expression for $C_{n,n_0}(a,b,c,l,m)|_{-a=(n-m-1)c+b+1}$ in \eqref{e-extra-C}.
\end{proof}

\section{Case (3) of Lemma~\ref{lem-Q}}\label{sec-case3}

In this section, we give a proof of Case (3) of Lemma~\ref{lem-Q}.
For this case, we have to deal with the constant term of a certain rational function, where the degree of an indeterminate (say $x$) in the numerator is larger than the degree of $x$ in the denominator.
The method used in this section extends the Laurent series proof of constant term identities, such as the Gessel--Xin proof of the $q$-Dyson constant term identity in \cite{GX}, and the Laurent series proof of the first-layer coefficients of the $q$-Dyson product by Lv et al. in \cite{LXZ}.

Suppose $Q_{n,n_0}(d,b,c;M_{\mathfrak{r}})=Q_{n,0}(d,b,c-1;M_{\mathfrak{r}})$ for $n_0\geq n$.
(When we specify $M_{\mathfrak{r}}$ to be \eqref{Mr-for-B2} and \eqref{M-poly} to prove Theorem~\ref{thm-qF2} and Theorem~\ref{thm-qForrester} respectively, it is the case.)
Moreover, since the Case (3) of Lemma~\ref{lem-Q} does not occur for $n_0=0,1$,
we assume $2\leq n_0\leq n-1$ in this section.

\subsection{The Laurent polynomiality of $Q(d\Mid u;k)$}

In this subsection, we show that the rational function $Q(d\Mid u;k)$ (defined in \eqref{eq-Qrk}) is either 0, or can be written as a Laurent polynomial. This is the content of the next lemma.
\begin{lem}\label{lem-QLaurentpoly}
Let $(u_1,u_2,\dots,u_s)$ be an integer sequence such that $1\leq u_1<u_2<\cdots<u_s\leq n$ and set $U=\{u_1,u_2,\dots,u_s\}$. Denote $r=\big|\{u_i\Mid u_i\leq n_0, i=1,\dots,s\}\big|$.
For $0\leq r\leq n_0-2$ and $1\leq d\leq s(c-1)+b+\chi(s>n_0)(s-n_0)$,
$Q(d\Mid u;k)$ is either 0, or can be written as a Laurent polynomial of the form
\begin{equation}\label{Q-Laurent}
 x_{u_s}^l\prod_{\substack{i=1\\ i\notin U}}^n
\Big(p_i(x_i/x_{u_s})x_i^{d-s(c-1)-\chi(i>n_0)(s-r)}\Big)
\prod_{\substack{1\leq i<j\leq n\\i,j\notin U}}\big(x_i/x_j\big)_{c-\chi(i\leq n_0)}
\big(qx_j/x_i\big)_{c-\chi(i\leq n_0)},
\end{equation}
where $p_i(z)$ is a polynomial in $z$ and
\[
l=(n-s)\big(s(c-1)-d\big)+(s-r)(n-n_0-s+r).
\]
\end{lem}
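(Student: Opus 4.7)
The plan is to show that for each $i \notin U$, the part of the rational function $Q(d\Mid u;k)$ depending on $x_i$ and $x_{u_s}$ either vanishes identically or simplifies, via the $q$-shifted factorial identities in \eqref{e-ab}, to a Laurent monomial in $x_{u_s}$ times a polynomial in $x_i/x_{u_s}$ multiplied by $x_i$ to a definite integer power. Multiplying these contributions over $i \notin U$ and combining with the untouched product $\prod_{i<j,\,i,j\notin U}(x_i/x_j)_{c-\chi(i\leq n_0)}(qx_j/x_i)_{c-\chi(i\leq n_0)}$ will then yield \eqref{Q-Laurent}.

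First I would read off from \eqref{defi-Q} the $x_i$-dependent block
\begin{equation*}
R_i = \frac{(q^{1-k_s}x_i/x_{u_s})_{b}}{(q^{k_s-d}x_{u_s}/x_i)_{d}}\prod_{j=1}^{r}\bigl(q^{k_j-k_s+\chi(i>u_j)}x_i/x_{u_s}\bigr)_{c-1}\bigl(q^{k_s-k_j+\chi(u_j>i)}x_{u_s}/x_i\bigr)_{c-1}
\end{equation*}
multiplied by the analogous $j=r+1,\dots,s$ product with $c-1$ replaced by $c-\chi(i\leq n_0)$. Using the reversal
$(q^{1-k_s}x_i/x_{u_s})_{b} = (-1)^{b}(x_i/x_{u_s})^{b}q^{b(1-k_s)+\binom{b}{2}}(q^{k_s-b}x_{u_s}/x_i)_{b},$
the total length of $(q^{\bullet}x_{u_s}/x_i)$-type numerator factors becomes $b+s(c-1)+\chi(i>n_0)(s-r)$, which by the hypothesis $d\leq s(c-1)+b+\chi(s>n_0)(s-n_0)$ (and $r\leq n_0$) is at least $d$, so the denominator $(q^{k_s-d}x_{u_s}/x_i)_{d}$ can in principle be absorbed.

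The absorption is carried out by iteratively applying \eqref{prop-b1}--\eqref{prop-c} with $y=q^{\bullet}x_{u_s}/x_i$ and appropriately chosen shift $t$: at each step one pairs a denominator factor $1-q^{k_s-d+\ell}x_{u_s}/x_i$ with the closest matching numerator factor, and the identity either produces shorter $q$-shifted factorials of polynomial type or, when two shifts coincide, yields a factor $(q^{0})_{\cdot}=0$ that collapses $R_i$ to zero (and hence $Q(d\Mid u;k)=0$). When no such collapse occurs, after all $d$ absorptions the surviving expression is a polynomial $p_i(x_i/x_{u_s})$ in $x_i/x_{u_s}$ multiplied by $x_i^{d-s(c-1)-\chi(i>n_0)(s-r)}$, with the accumulated $x_{u_s}$-powers summing over $i\notin U$ to give the prefactor $x_{u_s}^{l}$ with $l=(n-s)(s(c-1)-d)+(s-r)(n-n_0-s+r)$.

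The main obstacle is the delicate exponent bookkeeping in the iterative cancellations: the parameter $t$ in each application of \eqref{prop-b1}--\eqref{prop-c} depends on the relative positions of $i$, $u_j$, $u_s$ and $n_0$, so one must track contributions from the $j\leq r$ and $j>r$ blocks separately and verify that the total $x_i$-exponent lands exactly at $d-s(c-1)-\chi(i>n_0)(s-r)$. The restriction $r\leq n_0-2$ is used precisely to guarantee that at least two indices in $\{1,\dots,n_0\}\setminus U$ remain available, which is required to realize the $\chi(i>n_0)(s-r)$-correction in the exponent formula; otherwise the balance between numerator and denominator factors for some $i$ would not be achievable without forcing $p_i$ to be a proper rational function rather than a polynomial.
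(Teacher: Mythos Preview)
Your proposal has a genuine gap: it misses the essential dichotomy on the integers $k_1,\dots,k_s$ that the paper establishes via Lemma~\ref{lem-key}. The paper does not attempt to absorb the denominator for arbitrary $k$; instead it first rewrites $Q(d\Mid u;k)$ as in \eqref{Q-further}, isolating a purely numerical factor $V'$ (depending only on the $k_i$'s and $q$, not on any $x_i$ with $i\notin U$). Applying Lemma~\ref{lem-key} with $t=c-1+\chi(s>n_0)(s-n_0)$ yields four cases: in Cases (1)--(3) one has $V'=0$ directly (e.g.\ $(q^{1-k_i})_b=0$ when $1\leq k_i\leq b$, or $(q^{k_j-k_i-c+1})_{2c-2}=0$ when $|k_i-k_j|$ is small), so $Q(d\Mid u;k)=0$ with no cancellation in $L'$ needed at all; only in Case (4) are the $k_i$'s forced into the specific spread-out pattern \eqref{kw1}--\eqref{sumbound}, and it is precisely this structure that Lemma~\ref{lem-Laurent} exploits to prove the set containment $S_0\subseteq\bigcup_j S_j'\cup B$ guaranteeing that every denominator factor $1-q^{k_s-d+\ell}x_{u_s}/x_i$ is matched by some numerator factor.

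Your iterative absorption via \eqref{prop-b1}--\eqref{prop-c} cannot succeed without this case split: for $k_i$'s satisfying only $1\leq k_i\leq d$, the numerator exponent-intervals need not cover the denominator interval (having total length at least $d$, as you note, is necessary but far from sufficient---the exponents must actually line up contiguously), so there may be denominator factors with no ``closest matching numerator factor'' to pair with. Your proposed vanishing mechanism (``when two shifts coincide'') is also misplaced: the zero in the paper comes from $V'$, which involves only the $k_i$'s and is entirely separate from the blocks $R_i$. Finally, your explanation of the role of $r\leq n_0-2$ is not what the paper does: this bound is used inside the inequality arguments of Lemma~\ref{lem-Laurent} (to bound $k_{w(1)}$ from above in step~(1) and to reach a contradiction in step~(3)), not to guarantee availability of indices in $\{1,\dots,n_0\}\setminus U$.
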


Recall that $Q(d\Mid u;k)$ can be written as \eqref{defi-Q}.
We can further write it as
\begin{equation}\label{Q-further}
Q(d\Mid u;k)=
\prod_{\substack{1\leq i<j\leq n\\i,j\notin U}}\big(x_i/x_j\big)_{c-\chi(i\leq n_0)}
\big(qx_j/x_i\big)_{c-\chi(i\leq n_0)}\times H\times C\times V' \times L',
\end{equation}
where
\[
H=E_{u,k}\big(x_{0}^{-\mathfrak{r}}\times M_{\mathfrak{r}}\big),
\]
\begin{align*}
C=&\prod_{i=1}^s(q^{k_i-d})^{-1}_{d-k_i}(q)^{-1}_{k_i-1}
\prod_{\substack{1\leq i<j\leq s\\i\leq r}}(-1)^{c-1}q^{\binom{c}{2}+(c-1)(k_i-k_j)}
\prod_{r+1\leq i<j\leq s}(-1)^{c}q^{\binom{c+1}{2}+c(k_i-k_j)}\\
&\  \times \prod_{\substack{i=1\\ i\notin U}}^n (-1)^dq^{\binom{d+1}{2}-dk_s}
\prod_{\substack{i=1\\ i\notin U}}^n
\prod_{j=1}^r (-1)^{c-1}q^{(c-1)\big(k_s-k_j+\chi(u_j>i)\big)+\binom{c-1}{2}} \\
&\  \times
\prod_{\substack{i=1\\ i\notin U}}^n\prod_{j=r+1}^s
(-1)^{c-\chi(i\leq n_0)}q^{(c-\chi(i\leq n_0))\big(k_s-k_j+\chi(u_j>i)\big)+\binom{c-\chi(i\leq n_0)}{2}},
\end{align*}

\begin{equation}\label{V}
V'=\prod_{i=1}^s(q^{1-k_i})_b\prod_{\substack{1\leq i<j\leq s\\i\leq r}}(q^{k_j-k_i-c+1})_{2c-2}
\prod_{r+1\leq i<j\leq s}(q^{k_j-k_i-c})_{2c},
\end{equation}
and
\begin{align}\label{L}
L'&=\prod_{\substack{i=1\\ i\notin U}}^n
\frac{(q^{1-k_s}x_i/x_{u_s})_b}{(x_{u_s}/x_i)^d\big(q^{1-k_s}x_i/x_{u_s}\big)_d}
\prod_{\substack{i=1\\ i\notin U}}^n\prod_{j=1}^r
(x_{u_s}/x_i)^{c-1}
\big(q^{k_j-k_s-c+2-\chi(u_j>i)}x_i/x_{u_s}\big)_{2c-2}\\
&\ \ \times \prod_{\substack{i=1\\ i\notin U}}^n\prod_{j=r+1}^s
(x_{u_s}/x_i)^{c-\chi(i\leq n_0)}
\big(q^{k_j-k_s-c+1+\chi(i\leq n_0)-\chi(u_j>i)}x_i/x_{u_s}\big)_{2c-2\chi(i\leq n_0)}.
\nonumber
\end{align}

For certain conditions, $L'$ can be written as a Laurent polynomial. That is, the product $\prod_{\substack{i=1\\ i\notin U}}^n(q^{1-k_s}x_i/x_{u_s})_d$ in the denominator of $L'$ is in fact a factor of  the numerator of $L'$.
\begin{lem}\label{lem-Laurent}
For $s\in \{1,2,\dots,n\}$, let $0\leq r\leq \min\{n_0-2,s\}$, $1\leq d\leq s(c-1)+b+\chi(s>n_0)(s-n_0)$.
For $i=1,\dots,s$ the $k_i$ satisfy $1\leq k_i\leq d$, and
there exists a permutation $w\in\mathfrak{S}_s$ and nonnegative integers $t_1,\dots,t_s$
such that
\begin{equation}\label{kw1}
k_{w(1)}=b+t_1,
\end{equation}
and
\begin{equation}\label{kwj}
k_{w(j)}-k_{w(j-1)}=c-1+\chi(w(j)>r)\chi(w(j-1)>r)+t_j \quad \text{for $2\leq j\leq s$.}
\end{equation}
Here the $t_j$ satisfy
\begin{equation}\label{sumbound}
s-r\leq \sum_{j=1}^{s}\big(t_j+\chi(w(j)>r)\chi(w(j-1)>r)\big)\leq c-1+\chi(s>n_0)(s-n_0),
\end{equation}
$w(0):=0$, and $t_j>0$ if $w(j-1)<w(j)$ for $1\leq j\leq s$.
Then the rational function $L'$ defined in \eqref{L} can be written as the form
\begin{equation}\label{L-Laurent}
x_{u_s}^l\prod_{\substack{i=1\\ i\notin U}}^n
\Big(p_i(x_i/x_{u_s})x_i^{d-s(c-1)-\chi(i>n_0)(s-r)}\Big),
\end{equation}
where
\[
l=(n-s)\big(s(c-1)-d\big)+(s-r)(n-n_0-s+r),
\]
and the $p_i(z)$ are polynomials in $z$.
\end{lem}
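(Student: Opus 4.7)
The plan is to prove Lemma~\ref{lem-Laurent} by cancelling, factor by factor, the denominator product $\prod_{i\notin U}(q^{1-k_s}x_i/x_{u_s})_d$ in \eqref{L} against matching factors in its numerator. Since $L'$ is a product over $i\notin U$, I fix one such $i$ and treat its contribution in isolation; the global claim then follows by taking the product.

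For this fixed $i$, the factors of the form $(1-q^\alpha x_i/x_{u_s})$ appearing in the numerator of $L'$ arise from three sources. First, the symbol $(q^{1-k_s}x_i/x_{u_s})_b$ supplies the $b$ exponents $\alpha\in\{1-k_s,\dots,b-k_s\}$. Second, for each $j\leq r$ the factor $\bigl(q^{k_j-k_s-c+2-\chi(u_j>i)}x_i/x_{u_s}\bigr)_{2c-2}$ contributes a block of $2c-2$ consecutive integer exponents. Third, for each $j>r$ the factor $\bigl(q^{k_j-k_s-c+1+\chi(i\leq n_0)-\chi(u_j>i)}x_i/x_{u_s}\bigr)_{2c-2\chi(i\leq n_0)}$ contributes a block of length $2c-2\chi(i\leq n_0)$. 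The task reduces to showing that these three sources together cover the multiset $\{1-k_s,2-k_s,\dots,d-k_s\}$. Using \eqref{kw1} to anchor $k_{w(1)}\geq b+1$ and \eqref{kwj} to step consecutively through the $k_{w(j)}$ by increments of $c-1 + \chi(w(j)>r)\chi(w(j-1)>r) + t_j$, the blocks indexed by $w(1),w(2),\dots,w(s)$ extend $\{1-k_s,\dots,b-k_s\}$ contiguously up to $d-k_s$; the upper bound in \eqref{sumbound} prevents the last block from overshooting $d-k_s$, while the lower bound $s-r$ is exactly what fills the interior when $i>n_0$ forces some of the $j>r$ blocks to have the shorter length $2c-2$ rather than $2c$.

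Once $(q^{1-k_s}x_i/x_{u_s})_d$ cancels against the matching numerator factors, the residual numerator Pochhammer part is a polynomial $p_i(x_i/x_{u_s})$. Combining with the explicit monomial prefactors in \eqref{L} --- namely $(x_{u_s}/x_i)^{c-1}$ for $j\leq r$, $(x_{u_s}/x_i)^{c-\chi(i\leq n_0)}$ for $j>r$, and the denominator $(x_{u_s}/x_i)^d$ --- and simplifying via the elementary identity
\[
r(c-1)+(s-r)\bigl(c-\chi(i\leq n_0)\bigr)=s(c-1)+\chi(i>n_0)(s-r),
\]
the $i$-th factor acquires the form $p_i(x_i/x_{u_s})\,x_i^{d-s(c-1)-\chi(i>n_0)(s-r)}$ together with a compensating negative power of $x_{u_s}$. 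Taking the product over $i\notin U$ and collecting the global $x_{u_s}$-power gives the factor $x_{u_s}^l$ with $l=(n-s)\bigl(s(c-1)-d\bigr)+(s-r)(n-n_0-s+r)$ from \eqref{L-Laurent}. Should any one of the shifts destroy the tiling, one of the Pochhammer symbols in $V'$ of \eqref{V} or in the cross-terms for $j,j'\leq s$ vanishes --- this is precisely one of cases (1)--(3) in the proof of Lemma~\ref{lem-Q} --- and $Q(d\mid u;k)$ is identically zero, matching the dichotomy in the statement.

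The main obstacle is the combinatorial verification that the blocks tile $\{b+1-k_s,\dots,d-k_s\}$ without gaps and without producing exponents strictly exceeding $d-k_s$. The shifts $\chi(u_j>i)$ depend on where $i$ sits among the $u_j$, and the block lengths depend on whether $i\leq n_0$ and whether $w(j)>r$, so the verification splits into several sub-cases. Within each sub-case one must track, step by step along $w(1),\dots,w(s)$, that the right endpoint of one block and the left endpoint of the next differ by exactly one, and this is where both inequalities in \eqref{sumbound} are used simultaneously: the lower bound $s-r$ is precisely the number of extra exponents generated by the wider blocks (those with $w(j)>r$ and $i>n_0$), and the upper bound ensures none of the exponents exceeds $d-k_s$. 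The hypothesis $r\leq n_0-2$ leaves enough room on the $i\leq n_0$ side of the tiling for the interior to be filled; the edge cases fall under the complementary clauses of Lemma~\ref{lem-Q}.
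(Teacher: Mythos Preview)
Your approach matches the paper's: show that for each fixed $i\notin U$ the denominator $(q^{1-k_s}x_i/x_{u_s})_d$ is absorbed by the numerator Pochhammer blocks, then read off the monomial structure. Two corrections are worth making. First, you only need \emph{covering}, not tiling: the numerator exponent sets may overlap and may extend beyond $d-k_s$, since any surplus factors simply land in $p_i$. In particular the upper bound in \eqref{sumbound} is not used to prevent ``overshooting''; rather, the paper uses $k_{w(s)}\le d$ together with \eqref{lowerbound1} to bound $k_{w(1)}$ from above so that the first block reaches the set $B=\{1-k_s,\dots,b-k_s\}$, and uses the lower bound $s-r$ in \eqref{sumbound} to push $k_{w(s)}$ high enough that the last block covers $d-k_s$. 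Second, the paper sidesteps the case analysis on $\chi(u_j>i)$ and $\chi(i\le n_0)$ that you anticipate: it replaces each numerator block by the strictly smaller, $i$-independent interval $S'_j=\{k_j-k_s-c+2,\dots,k_j-k_s+c-2\}$, and then verifies just three inequalities
\[
b\ge k_{w(1)}-c+1,\qquad k_{w(s)}+c-2\ge d,\qquad k_{w(j-1)}+c-2\ge k_{w(j)}-c+1\ \ (2\le j\le s),
\]
which already give $S_0\subseteq B\cup\bigcup_j S'_j$. This removes all the sub-case bookkeeping.
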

\begin{proof}
Let
\begin{align*}
S_0:=&\{1-k_s,2-k_s,\dots,d-k_s\},
\quad B:=\{1-k_s,2-k_2,\dots,b-k_s\}, \\
S_j:=&\{k_j-k_s-c+2-\chi(u_j>i),\dots,k_j-k_s+c-1-\chi(u_j>i)\}
\end{align*}
for $j=1,\dots,r$ and $i\in \{1,\dots,n\}\setminus U$,
and
\begin{equation*}
S_j:=\{k_j-k_s-c+1+\chi(i\leq n_0)-\chi(u_j>i),\dots,k_j-k_s+c-\chi(i\leq n_0)-\chi(u_j>i)\}
\end{equation*}
for $j=r+1,\dots,s$ and $i\in \{1,\dots,n\}\setminus U$.
Assuming the conditions of the lemma, we show that the product
\[
\prod_{\substack{i=1\\ i\notin U}}^n \big(q^{1-k_s}x_i/x_{u_s}\big)_d
\]
in the denominator of $L'$ is in fact a factor of its numerator. Then $L'$ can be written as a Laurent polynomial, rather than a rational function.
To achieve this, it is equivalent to showing that
\begin{equation}\label{set}
S_0\subseteq \bigcup_{j=1}^sS_j\bigcup B.
\end{equation}
Since $S'_j:=\{k_j-k_s-c+2,\dots,k_j-k_s+c-2\}\subseteq S_j$ for $j=1,\dots,s$,
\eqref{set} holds if we show that
\begin{equation}\label{set2}
S_0\subseteq \bigcup_{j=1}^sS'_j\bigcup B.
\end{equation}
To obtain \eqref{set2}, it suffices to prove
\begin{enumerate}
\item $b\geq k_{w(1)}-c+1$;
\item $k_{w(s)}+c-2\geq d$;
\item $k_{w(j-1)}+c-2\geq k_{w(j)}-c+1$ for $j=2,\dots,s$.
\end{enumerate}

To prove (1), we need to find the upper bound for $k_{w(1)}$.
By \eqref{kwj} we have
\begin{equation}\label{sum}
\sum_{j=2}^s\big(k_{w(j)}-k_{w(j-1)}\big)=k_{w(s)}-k_{w(1)}
=(s-1)(c-1)+\sum_{j=2}^s\big(t_j+\chi(w(j)>r)\chi(w(j-1)>r)\big).
\end{equation}
Then
\begin{equation}\label{e-l92-2}
k_{w(1)}=k_{w(s)}-(s-1)(c-1)-\sum_{j=2}^s\big(t_j+\chi(w(j)>r)\chi(w(j-1)>r)\big).
\end{equation}
Since the $t_j$ satisfy Proposition~\ref{prop-lowerbound}, by \eqref{lowerbound1} we have
\begin{equation}\label{e-l92-1}
\sum_{j=2}^s\big(t_j+\chi(w(j)>r)\chi(w(j-1)>r)\big)\geq s-r-1
\end{equation}
for $r<s$. It is clear that the inequality \eqref{e-l92-1} also holds for $r=s$.
Hence, substituting \eqref{e-l92-1} into \eqref{e-l92-2} gives
\begin{equation}\label{e-192-3}
k_{w(1)}\leq k_{w(s)}-(s-1)(c-1)-(s-r-1).
\end{equation}
Since $r\leq n_0-2$ and $k_{w(s)}\leq d\leq s(c-1)+b+\chi(s>n_0)(s-n_0)$,
\[
k_{w(1)}\leq c-1+b+(s-n_0)-\big(s-(n_0-2)-1\big)=c+b-2
\]
for $s>n_0$. Thus (1) holds for $s>n_0$.
If $s\leq n_0$, then $k_{w(s)}\leq d\leq s(c-1)+b$. Substituting this into \eqref{e-l92-2} yields
\[
k_{w(1)}\leq  c-1+b-\sum_{j=2}^s\big(t_j+\chi(w(j)>r)\chi(w(j-1)>r)\big)\leq c-1+b.
\]
Consequently, (1) holds if $s\leq n_0$.
In conclusion, (1) holds for all the $s$.

To prove (2), we need to find the lower bound for $k_{w(s)}$.
By \eqref{sum}
\[
k_{w(s)}
=k_{w(1)}+(s-1)(c-1)+\sum_{j=2}^s\big(t_j+\chi(w(j)>r)\chi(w(j-1)>r)\big).
\]
Substituting \eqref{kw1} into the above equation gives
\[
k_{w(s)}
=b+(s-1)(c-1)+\sum_{j=1}^s\big(t_j+\chi(w(j)>r)\chi(w(j-1)>r)\big).
\]
If $s>n_0$ then by \eqref{sumbound} and $r\leq n_0-2$ we have
\[
k_{w(s)}
\geq b+(s-1)(c-1)+s-r\geq b+(s-1)(c-1)+s-n_0+2
=b+s(c-1)+s-n_0-c+3\geq d-c+3.
\]
Hence, (2) holds for $s>n_0$.
If $s\leq n_0$, then by the fact that $t_1\geq 1$ we have
\[
k_{w(s)}
\geq b+(s-1)(c-1)+1
=s(c-1)+b-c+2\geq d-c+2.
\]
Thus (2) also holds for $s\leq n_0$. In conclusion, (2) holds for all $s$.

If (3) fails for some $i\in \{2,\dots,s\}$, then
\[
k_{w(i)}-k_{w(i-1)}\geq 2(c-1).
\]
Together with \eqref{kwj} gives
\begin{equation}\label{kw-need}
k_{w(s)}-k_{w(1)}=\sum_{j=2}^s\big(k_{w(j)}-k_{w(j-1)}\big)
\geq s(c-1)+\sum_{\substack{j=2\\j\neq i}}^s\big(t_j+\chi(w(j)>r)\chi(w(j-1)>r)\big).
\end{equation}
Substituting \eqref{kw1} into the above equation, we have
\[
k_{w(s)}\geq s(c-1)+b+\sum_{\substack{j=1\\j\neq i}}^s\big(t_j+\chi(w(j)>r)\chi(w(j-1)>r)\big).
\]
We conclude contradictions by discussing the following cases:
\begin{itemize}
\item For $r<s$ and $s>n_0$, using \eqref{lowerbound1} again and $r\leq n_0-2$ yields
\[
k_{w(s)}\geq s(c-1)+b+s-r-1\geq s(c-1)+b+s-(n_0-2)-1=s(c-1)+b+s-n_0+1>d.
\]
This contradicts the assumption $k_i\leq d$ for all $i$.
\item If $r=s>n_0$ then it contradicts to the condition $r\leq \min\{n_0-2,s\}$ in the lemma.
\item If $s\leq n_0$ then $d\leq s(c-1)+b$. By \eqref{kw-need} and $k_{w(1)}=b+t_1\geq b+1$, we have
\[
k_{w(s)}\geq s(c-1)+k_{w(1)}+\sum_{\substack{j=2\\j\neq i}}^s\big(t_j+\chi(w(j)>r)\chi(w(j-1)>r)\big)
\geq s(c-1)+b+1>d.
\]
This also contradicts the assumption $k_i\leq d$ for all $i$.
\end{itemize}
Therefore, (3) holds.

Since (1)-(3) hold,  we obtain \eqref{set}.
It follows that $L'$ is a Laurent polynomial of the form
\begin{align*}
&\prod_{\substack{i=1\\ i\notin U}}^n
\frac{p_i(x_i/x_{u_s})}{(x_{u_s}/x_i)^d}
\prod_{\substack{i=1\\ i\notin U}}^n
\bigg(\prod_{j=1}^r(x_{u_s}/x_i)^{c-1}
\prod_{j=r+1}^s
(x_{u_s}/x_i)^{c-\chi(i\leq n_0)}\bigg) \\
&\ \ =\prod_{\substack{i=1\\ i\notin U}}^n
\bigg(\frac{p_i(x_i/x_{u_s})}{(x_{u_s}/x_i)^d}
\prod_{j=1}^s(x_{u_s}/x_i)^{c-1}
\prod_{j=r+1}^s
(x_{u_s}/x_i)^{\chi(i>n_0)}\bigg) \\
&\ \ =x_{u_s}^l\prod_{\substack{i=1\\ i\notin U}}^n
\Big(p_i(x_i/x_{u_s})x_i^{d-s(c-1)-\chi(i>n_0)(s-r)}\Big),
\end{align*}
where the $p_i(z)$ are polynomials in $z$, and
\[
l=(n-s)\big(s(c-1)-d\big)+(s-r)(n-n_0-s+r).\qedhere
\]
\end{proof}

By Lemma~\ref{lem-Laurent} and Lemma~\ref{lem-key}, we can give a proof of Lemma~\ref{lem-QLaurentpoly}.
\begin{proof}[Proof of Lemma~\ref{lem-QLaurentpoly}]
Recall that $Q(d\Mid u;k)$ can be written as the form in \eqref{Q-further}.
By Lemma~\ref{lem-key} with $t=c-1+\chi(s>n_0)(s-n_0)$, at least one of the following cases holds:
\begin{enumerate}
\item $1\leq k_i\leq b$ for some $i$ with $1\leq i\leq s$;
\item $-c+1\leq k_i-k_j\leq c-2$ for some $(i,j)$ such that $1\leq i<j\leq s$ and $i\leq r$;
\item $-c\leq k_{i}-k_{j}\leq c-1$ for some $(i,j)$ such that $r<i<j\leq s$;
\item there exists a permutation $w\in\mathfrak{S}_s$ and nonnegative integers $t_1,\dots,t_s$
such that
\begin{subequations}
\begin{equation*}
k_{w(1)}=b+t_1,
\end{equation*}
and
\begin{equation*}
k_{w(j)}-k_{w(j-1)}=c-1+\chi(w(j)>r)\chi(w(j-1)>r)+t_j \quad \text{for $2\leq j\leq s$.}
\end{equation*}
\end{subequations}
Here the $t_j$ satisfy
\begin{equation*}
s-r\leq \sum_{j=1}^{s}\big(t_j+\chi(w(j)>r)\chi(w(j-1)>r)\big)\leq c-1+\chi(s>n_0)(s-n_0),
\end{equation*}
$w(0):=0$, and $t_j>0$ if $w(j-1)<w(j)$ for $1\leq j\leq s$.
\end{enumerate}

If one of the cases (1)-(3) holds, then $V'$ defined in \eqref{V} is zero by a similar argument as that in the first part of the proof of Lemma~\ref{lem-Q}.
Since $V'$ is a factor of $Q(d\Mid u;k)$, we can conclude that $Q(d\Mid u;k)=0$.

If Case (4) holds and $0\leq r\leq \min\{n_0-2,s\}$,
then $Q(d\Mid u;k)$ can be written as the form of \eqref{Q-Laurent} by Lemma~\ref{lem-Laurent}.
Here $C,V'\in K$ can be put into the coefficients of some polynomial $p_i$,
and $H=E_{u,k}\big(x_{0}^{-\mathfrak{r}}\times M_{\mathfrak{r}}\big)$
is also of the form
$\prod_{\substack{i=1\\ i\notin U}}^np_i(x_i/x_{u_s})$.
\end{proof}

\subsection{Proof of Case (3) of Lemma~\ref{lem-Q}}\label{subsec-case3}

In this subsection, we give a proof of the Case (3) of Lemma~\ref{lem-Q}.
We present it in the next lemma.
\begin{lem}\label{Case3}
Assume the conditions of Lemma~\ref{lem-Q}.
If $s\neq n$ and
\begin{equation}\label{e-d}
s(c-1)+\chi(s>n_0)(s-n_0)+1\leq d\leq s(c-1)+(s-r)(n-n_0-s+r)/(n-s),
\end{equation}
then
\[
\CT_x Q(d\Mid u;k)=0.
\]
\end{lem}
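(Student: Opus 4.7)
The plan is to exploit the Laurent polynomial representation of $Q(d\Mid u;k)$ granted by Lemma~\ref{lem-QLaurentpoly}, and then reduce each resulting monomial summand to the vanishing result Lemma~\ref{lem-vaniconst2}.

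First I would dispose of the degenerate values of $r$. If $r=s$, the upper bound $s(c-1)+(s-r)(n-n_0-s+r)/(n-s)=s(c-1)$ falls strictly below the lower bound $s(c-1)+\chi(s>n_0)(s-n_0)+1$, so the hypothesis on $d$ is vacuous. A short direct computation shows the same collapse for $r\in\{n_0-1,n_0\}$ with $r<s$: in each such case the upper bound is strictly less than the lower bound. We may therefore restrict to $0\leq r\leq n_0-2$ with $r<s$, whence Lemma~\ref{lem-QLaurentpoly} applies and gives either $Q(d\Mid u;k)=0$ (nothing to prove) or the Laurent polynomial form \eqref{Q-Laurent}, say of the form $x_{u_s}^l\prod_{i\in U^c}p_i(x_i/x_{u_s})x_i^{d-s(c-1)-\chi(i>n_0)(s-r)}\cdot D'$ where $U^c:=\{1,\ldots,n\}\setminus U$ and $D'$ is the product of Pochhammer factors restricted to indices in $U^c$.

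Next, expand each $p_i(x_i/x_{u_s})$ monomially. Then $\CT_xQ(d\Mid u;k)$ becomes a finite $K$-linear combination of constant terms of the shape
\begin{equation*}
\CT_{x_{u_s},\,x^{U^c}}\,x_{u_s}^{\gamma}\prod_{i\in U^c}x_i^{\alpha_i}\prod_{\substack{1\leq i<j\leq n\\i,j\in U^c}}(x_i/x_j)_{c-\chi(i\leq n_0)}(qx_j/x_i)_{c-\chi(i\leq n_0)},
\end{equation*}
where $\alpha_i\geq d-s(c-1)\geq 1$ for $i\in U^c$ with $i\leq n_0$, while $\alpha_i\geq d-s(c-1)-(s-r)$ for $i\in U^c$ with $i>n_0$. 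Taking $\CT_{x_{u_s}}$ kills all summands except those with $\gamma=0$; combined with the degree-zero homogeneity of $Q(d\Mid u;k)$ in the remaining variables, this forces $\sum_{i\in U^c}\alpha_i=0$.

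Now set $h:=s(c-1)+(s-r)-d$; the bounds on $d$ translate to $(s-r)(n_0-r)/(n-s)\leq h\leq n_0-r-1$, so $1\leq h\leq n_0-r-1$. Writing $\alpha_i=1+t_i$ for $i\leq n_0$ in $U^c$ and $\alpha_i=-h+t_i$ for $i>n_0$ in $U^c$ gives $t_i\geq 0$. If $U^c\subseteq\{1,\ldots,n_0\}$ (i.e.\ $n-n_0-s+r=0$), then $\sum\alpha_i\geq n_0-r\geq 2>0$ contradicts $\sum\alpha_i=0$, so no summand survives. Otherwise the constraint $\sum\alpha_i=0$ becomes $\sum t_i=h(n-n_0-s+r)-(n_0-r)$, and after relabelling $U^c$ order-preservingly as $\{1,2,\ldots,n-s\}$ (so indices $\leq n_0$ map to $\{1,\ldots,n_0-r\}$), each surviving summand matches exactly the left-hand side of Lemma~\ref{lem-vaniconst2} with parameters $(n,n_0,h)\mapsto(n-s,n_0-r,h)$. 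Since $2\leq n_0-r\leq n-s-1$ and $1\leq h\leq n_0-r-1$, that lemma applies and each summand vanishes, whence $\CT_xQ(d\Mid u;k)=0$.

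The main obstacle will be the bookkeeping in Step 1 together with verifying compatibility of the Case (3) range for $d$ with the hypothesis $d\leq s(c-1)+b+\chi(s>n_0)(s-n_0)$ of Lemma~\ref{lem-QLaurentpoly}: when this fails (which can only happen for small $b$), one must either enlarge $t$ in the invocation of Lemma~\ref{lem-key} within the proof of Lemma~\ref{lem-QLaurentpoly} and re-verify Lemma~\ref{lem-Laurent}, or else exhibit directly a vanishing Pochhammer factor in $V'$ via one of the cases (1)--(3) of Lemma~\ref{lem-key}.
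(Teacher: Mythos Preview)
Your approach is essentially the same as the paper's: restrict to $0\le r\le n_0-2$, invoke Lemma~\ref{lem-QLaurentpoly} to obtain the Laurent polynomial form, extract the constant term in $x_{u_s}$, relabel, and finish via Lemma~\ref{lem-vaniconst2} with the substitution $(n,n_0,h)\mapsto(n-s,n_0-r,s(c-1)+(s-r)-d)$. Your verification that $1\le h\le n_0-r-1$ and $2\le n_0-r\le n-s-1$ matches the paper's (and your explicit treatment of the edge case $n-n_0-s+r=0$ is in fact vacuous, since then the upper bound on $d$ drops below the lower bound).

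The one point where you are more careful than the paper deserves comment: you correctly flag that the hypothesis $d\le s(c-1)+b+\chi(s>n_0)(s-n_0)$ of Lemma~\ref{lem-QLaurentpoly} is not obviously implied by the Case~(3) range \eqref{e-d}, and that for small $b$ this inequality can fail. The paper's proof of Lemma~\ref{Case3} simply writes ``we can apply Lemma~\ref{lem-QLaurentpoly}'' without checking this bound. So the obstacle you identify is a genuine technical lacuna that the paper itself glosses over; your suggested remedy (enlarging $t$ in the appeal to Lemma~\ref{lem-key} and re-examining the set-inclusion argument of Lemma~\ref{lem-Laurent}) is the natural way to close it, though in the paper's intended application---where $-d$ always lies in $C_1\cup C_3$---one can alternatively check case by case that the bound holds.
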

\begin{proof}
Recall that $r$ is defined as
\[
r=\big|\{u_i\Mid u_i\leq n_0, i=1,2,\dots,s\}\big|.
\]
Then $0\leq r\leq \min\{n_0,s\}$.
We first show that $r\neq n_0-1,n_0$ in this lemma.

Since
\[
\frac{(s-r)(n-n_0-s+r)}{n-s}\bigg|_{r=n_0-1}
=\frac{(s-n_0+1)(n-s-1)}{n-s}<s-n_0+1\leq \chi(s>n_0)(s-n_0)+1,
\]
the lower bound of $d$ exceeds its upper bound in \eqref{e-d} for $r=n_0-1$.
Hence, $r\neq n_0-1$.
Since
\[
\frac{(s-r)(n-n_0-s+r)}{n-s}\bigg|_{r=n_0}
=s-n_0<\chi(s>n_0)(s-n_0)+1,
\]
the lower bound of $d$ also exceeds its upper bound in \eqref{e-d} for $r=n_0$.
Hence, $r\neq n_0$.
Therefore, we conclude that \eqref{e-d} implies $0\leq r\leq n_0-2$. Then $0\leq r\leq \min\{n_0-2,s\}$.

Since $0\leq r\leq \min\{n_0-2,s\}$, we can apply  Lemma~\ref{lem-QLaurentpoly} to obtain that
$Q(d\Mid u;k)$ is either 0, or can be written as a Laurent polynomial of the form
\begin{equation*}
L:=x_{u_s}^l\prod_{\substack{i=1\\ i\notin U}}^n
\Big(p_i(x_i/x_{u_s})x_i^{d-s(c-1)-\chi(i>n_0)(s-r)}\Big)
\prod_{\substack{1\leq i<j\leq n\\i,j\notin U}}\big(x_i/x_j\big)_{c-\chi(i\leq n_0)}
\big(qx_j/x_i\big)_{c-\chi(i\leq n_0)},
\end{equation*}
where $p_i(z)$ is a polynomial in $z$, $l=(n-s)\big(s(c-1)-d\big)+(s-r)(n-n_0-s+r)$ and $U=\{u_1,\dots,u_s\}$.
Notice that by taking the constant term of $L$ with respect to $x_{u_s}$,
we can write $\CT\limits_{x_{u_s}}L$ as a finite sum of the form
\begin{equation*}
P:=c\times\frac{\prod_{\substack{i=1\\ i\notin U}}^{n_0}
x_i^{d-s(c-1)}}
{\prod_{\substack{i=n_0+1\\ i\notin U}}^{n}
x_i^{s(c-1)-d+s-r}}
\prod_{\substack{i=1\\i\notin U}}^nx_i^{t_i}
\prod_{\substack{1\leq i<j\leq n\\i,j\notin U}}\big(x_i/x_j\big)_{c-\chi(i\leq n_0)}
\big(qx_j/x_i\big)_{c-\chi(i\leq n_0)},
\end{equation*}
where $c\in K$ and the $t_i$ are nonnegative integers such that
$\sum_{\substack{i=1\\i\notin U}}^nt_i=l$. We will show that $\CT\limits_{x}P=0$.

Let
\begin{align*}
(x_1,\dots,\hat{x}_{u_1},\dots,\hat{x}_{u_s},x_n)&\mapsto
z:=(z_1,\dots,z_{n-s}) \quad \text{and} \\
(t_1,\dots,\hat{t}_{u_1},\dots,\hat{t}_{u_s},t_n)&\mapsto
(t'_1,\dots,t'_{n-s}).
\end{align*}
Then
\begin{equation}\label{L2}
\CT_x P=c\times\CT_{z}\frac{\prod_{i=1}^{n_0-r}
z_i^{d-s(c-1)}}
{\prod_{i=n_0-r+1}^{n-s}
z_i^{s(c-1)-d+s-r}}
\prod_{i=1}^{n-s}z_i^{t'_i}
\prod_{1\leq i<j\leq n-s}\big(z_i/z_j\big)_{c-\chi(i\leq n_0-r)}
\big(qz_j/z_i\big)_{c-\chi(i\leq n_0-r)},
\end{equation}
where $\sum_{i=1}^{n-s}t'_i=l$.
If we further take
$s(c-1)-d+s-r\mapsto h$, $n-s\mapsto n'$ and $n_0-r\mapsto n'_0$ in \eqref{L2},
then
\begin{equation}\label{P}
\CT_x P
=c\times\CT_z\frac{\prod_{i=1}^{n'_0}z_i}
{\prod_{i=n'_0+1}^{n'}z_i^{h}}
\prod_{i=1}^{n'}z_i^{t'_i+d-s(c-1)-1}
\prod_{1\leq i<j\leq n'}\big(z_i/z_j\big)_{c-\chi(i\leq n'_0)}
\big(qz_j/z_i\big)_{c-\chi(i\leq n'_0)}.
\end{equation}
Note that these substitutions for $P$ do not change its constant term.
By the condition $s(c-1)+\chi(s>n_0)(s-n_0)+1\leq d$ in the lemma,
we have
\[
h=s(c-1)-d+s-r\leq -\chi(s>n_0)(s-n_0)-1+s-r\leq -(s-n_0)-1+s-r
=n_0-r-1=n'_0-1.
\]
For $d\leq s(c-1)+(s-r)(n-n_0-s+r)/(n-s)$ and $r\leq n_0-2$ we have
\[
h=s(c-1)-d+s-r\geq s-r-(s-r)(n-n_0-s+r)/(n-s)\geq s-r-(s-r)(n-s-2)/(n-s)>0.
\]
Then $1\leq h\leq n'_0-1$.
By $0\leq r\leq \min\{n_0-2,s\}$ and the assumption $2\leq n_0\leq n-1$,
we have $n_0'=n_0-r\geq 2$ and $n_0'=n_0-r\leq n_0-s\leq n-1-s=n'-1$.
It is clear that $d-s(c-1)>0$ by the condition $d\geq s(c-1)+\chi(s>n_0)(s-n_0)+1$ in the lemma.
Then $t'_i+d-s(c-1)-1\geq 0$ for all the $i$.
Hence, we can apply Lemma~\ref{lem-vaniconst2} to the right-hand side of \eqref{P}
and obtain $\CT\limits_xP=0$.
It follows that $\CT\limits_x Q(d\Mid u,k)=0$.
\end{proof}

\subsection*{Acknowledgements}

This work was supported by the National Natural Science Foundation of China (No. 12071311, 12171487).

\end{document}